\theoremstyle{plain}
\newtheorem{thm}{Theorem}[section]
\crefname{thm}{theorem}{theorems}
\newtheorem{thmintro}{Theorem}
\crefname{thmintro}{theorem}{theorems}
\newtheorem{prop}[thm]{Proposition}
\crefname{prop}{proposition}{propositions}
\newtheorem{cor}[thm]{Corollary}
\newtheorem{corintro}[thmintro]{Corollary}
\crefname{corintro}{corollary}{corollaries}
\newtheorem{lemma}[thm]{Lemma}
\theoremstyle{definition}
\newtheorem{example}[thm]{Example}
\newtheorem{defn}[thm]{Definition}
\newtheorem{remark}[thm]{Remark}
\newtheorem*{remark*}{Remark}
\numberwithin{equation}{section}
\newcommand{\RR}{\mathbb{R}}
\newcommand{\QQ}{\mathbb{Q}}
\newcommand{\CC}{\mathbb{C}}
\newcommand{\NN}{\mathbb{N}}
\newcommand{\ZZ}{\mathbb{Z}}
\newcommand{\DD}{\mathbb{D}}
\newcommand{\BB}{\mathbb{B}}
\newcommand{\xistd}{\xi_{\mathrm{std}}}
\def\bbC{{\mathbb C}}
\def\bbP{{\mathbb P}}
\def\bbR{{\mathbb R}}
\def\bbS{{\mathbb S}}
\def\bbZ{{\mathbb Z}}
\def\cA{{\mathcal A}}
\def\cC{{\mathcal C}}
\def\cD{{\mathcal D}}
\def\cE{{\mathcal E}}
\def\cH{{\mathcal H}}
\def\cL{{\mathcal L}}
\def\cM{{\mathcal M}}
\def\cN{{\mathcal N}}
\def\cO{{\mathcal O}}
\def\cP{{\mathcal P}}
\def\cS{{\mathcal S}}
\def\cU{{\mathcal U}}
\def\cV{{\mathcal V}}
\def\cW{{\mathcal W}}
\def\cX{{\mathcal X}}
\def\bC{{\bm C}}
\def\bD{{\bm D}}
\def\bE{{\bm E}}
\def\bU{{\bm U}}
\def\bV{{\bm V}}
\def\bW{{\bm W}}
\def\bg{{\bm g}}
\def\be{{\bm e}}
\newcommand{\partbar}{\overline{\partial}}
\def\con{{\mathfrak{Con}}}
\DeclareMathOperator{\supp}{Supp}
\DeclareMathOperator{\dom}{Dom}
\DeclareMathOperator{\Id}{id}
\DeclareMathOperator{\Diff}{Diff}
\DeclareMathOperator{\Fix}{Fix}
\DeclareMathOperator{\CR}{CR}
\DeclareMathOperator{\CZ}{CZ}
\DeclareMathOperator{\Obj}{Obj}
\DeclareMathOperator{\Mor}{Mor}
\DeclareMathOperator{\coker}{coker}
\DeclareMathOperator{\indexop}{index}
\DeclareMathOperator{\rd}{d}
\DeclareMathOperator{\diag}{diag}
\DeclareMathOperator{\age}{age}
\DeclareMathOperator{\stab}{stab}
\DeclareMathOperator{\ind}{ind}
\DeclareMathOperator{\Conj}{Conj}
\def\std{{\rm std}}
\def\vit{{\rm Viterbo}}
\def\pt{{\rm pt}}
\def\orb{{\rm orb}}
\def\i{\mathbf{i}}
\newcommand{\Addresses}{{% additional braces for segregating \footnotesize
		\bigskip
		\footnotesize
		
	    Fabio Gironella, \par\nopagebreak
         \textsc{Department of Mathematics, Humboldt University, Berlin, Germany,}\par\nopagebreak
         \textit{E-mail address:} \href{mailto: fabio.gironella@math.hu-berlin.de}{ fabio.gironella@math.hu-berlin.de}, 
         \href{mailto: fabio.gironella.math@gmail.com}{ fabio.gironella.math@gmail.com}
		
		\medskip
		
	     Zhengyi Zhou, \par\nopagebreak
	    \textsc{Morningside Center of Mathematics and Institute of Mathematics, AMSS, CAS, China}\par\nopagebreak
		\textit{E-mail address}: \href{mailto:zhyzhou@amss.ac.cn}{zhyzhou@amss.ac.cn}

}}
\title{Exact orbifold fillings of contact manifolds}
\author{Fabio Gironella, Zhengyi Zhou}
\date{}
\begin{document}
	
	\maketitle
	
	\begin{abstract}
		We study exact orbifold fillings of contact manifolds using Floer theories. Motivated by Chen-Ruan's orbifold Gromov-Witten invariants, we define symplectic cohomology of an exact orbifold filling as a group using classical techniques, i.e.\ choosing generic almost complex structures. By studying moduli spaces of pseudo-holomorphic/Floer curves in orbifolds, we obtain various non-existence, restrictions and uniqueness results for orbifold singularities of exact orbifold fillings of many contact manifolds. For example, we show that exact orbifold fillings of $(\mathbb{RP}^{2n-1},\xi_{\std})$ always have exactly one singularity modeled on $\CC^n/(\ZZ/2\ZZ)$ if $n\ne 2^k$. Lastly, we show that in dimension at least $3$ there are pairs of contact manifolds without exact cobordisms in either direction, and that the same holds for exact \emph{orbifold} cobordisms in dimension at least $5$. 
	\end{abstract}
	\section{Introduction}
Orbifolds are a natural generalization of manifolds.
They have been first introduced by Satake \cite{Satake57} under the name of ``V-manifolds'', and were later rediscovered by Thurston \cite{thurston1979geometry}, to whom they owe their current name, in the study of the geometry of $3-$manifolds, as well as by Haefliger \cite{Hae90}, under the name ``orbihedra'', in the study of $\mathrm{CAT(}k\mathrm{)}$ spaces. 
Since then, the geometry of orbifolds has been widely studied in the literature. 
For instance, besides the fundamental fact that moduli spaces of pseudo-holomorphic curves can be orbifolds, another important appearance of orbifolds in symplectic geometry dates back to the original Mirror Conjecture, as the first mirror pair contains a Calabi-Yau orbifold \cite{MR1115626}.  
In view of this, Chen--Ruan \cite{CheRua01} developed the Gromov-Witten theory of symplectic orbifolds. 
The algebraic analogue was developed by Abramovich--Graber--Vistoli \cite{abramovich2008gromov} and was intensely investigated in enumerative geometry.

It is worth pointing out straight away that, additionally to the fact that the study of symplectic orbifolds is of independent interest, these singular objects can be very useful in the study of smooth symplectic topology and dynamics. 
For example, Mak--Smith \cite{mak2021non} used orbifold Lagrangian Floer theory developed by Cho--Poddar \cite{orbiLagrangian} to obtain new families of non-displaceable Lagrangian links in symplectic four-manifolds. The idea was further exploited by Polterovich--Shelukhin \cite{polterovich2021lagrangian} as well as Cristofaro-Gardiner--Humili{\`e}re--Mak-Seyfaddini--Smith \cite{cristofaro2021quantitative} leading to recent breakthroughs on dynamics on surfaces and $C^0$ symplectic geometry. 

In this paper, we initiate the study of the symplectic topology of exact orbifold fillings. 
In particular, we will focus on exact orbifold fillings of contact manifolds and their applications to \emph{smooth} contact topology (as opposed to the contact topology of contact orbifolds). 
We obtain results on uniqueness, existence and restrictions of possible orbifold singularities of exact orbifold fillings.  
This turns out to be useful in showing that the partial order given by the existence of an exact (smooth) cobordism considered in \cite{RSFT} is not a total order.

Our main tool is the symplectic cohomology for exact orbifold fillings of contact manifolds. 
Although Floer theory/SFT is expected to exist for general symplectic orbifolds, i.e.\ symplectic orbifolds with contact orbifolds boundary, only considering exact orbifold fillings of contact manifolds brings up several advantages in the construction, both on the technical side and structural side of the theory. 
For example, an exact orbifold filling of a contact manifold can only have \emph{isolated} singularities modeled on $\CC^n/G$ for a finite subgroup $G\subset U(n)$. 
This then allows us to define the symplectic cohomology \emph{group} using a generic almost complex structure, without the need to appeal to virtual techniques. 
Moreover, it turns out that the symplectic cohomology group can in fact be defined over any coefficient ring, and that many of the moduli spaces of Floer cylinders we consider are manifolds instead of orbifolds; our geometric applications rely on this last point crucially.

\begin{remark}
	Although the symplectic cohomology group is well defined for a generic almost complex structure, the study of additional structures on it, like its ring structure, cannot be achieved just by choosing a generic almost complex structure, and more sophisticated techniques are required.
	We make some speculation assuming the ring structure, but all of the results in this paper are obtained using classical transversality methods.
\end{remark}

\subsection{Symplectic cohomology of exact orbifold fillings of contact manifolds}

\begin{thmintro}\label{thm:SH}
	Let $W$ be an exact orbifold filling of a smooth contact manifold and $R$ a ring.
	Then, there are well-defined $\ZZ/2\ZZ$ graded groups $SH^*(W;R),SH^*_+(W;R)$, such that they fit into an exact triangle,
	$$\xymatrix{
		H^*_{\CR}(W;R)\ar[rr] & &SH^*(W;R)\ar[ld] \\
		& SH^*_+(W;R) \ar[lu]^{[1]} &}
	$$
	where $H^*_{\CR}(W;R)$ is the Chen-Ruan orbifold cohomology \emph{group}\footnote{That is the cohomology of the orbit space of the inertia orbifold with coefficients in $R$ (see \S \ref{SS:CR}).}
	\cite{CheRua04}.  
\end{thmintro}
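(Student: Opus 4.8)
The plan is to construct $SH^*(W;R)$ and $SH^*_+(W;R)$ by adapting the classical Hamiltonian Floer/symplectic cohomology package to the orbifold setting, and then to read off the exact triangle as the long exact sequence of an action-filtration short exact sequence of complexes. First I complete $W$ to $\hat W = W \cup_M \bigl([1,\infty)\times M\bigr)$ by gluing a positive cylindrical end to the smooth contact boundary $M = \partial W$; since, as recalled above, all orbifold points of $W$ are isolated and lie in the interior, $\hat W$ is a genuine manifold outside a compact set. I then fix an admissible Hamiltonian $H_\tau$ which is a $C^2$-small time-independent Morse function on $W$ and is linear of slope $\tau$ (not the period of any closed Reeb orbit of $M$) on the end, together with a generic $S^1$-dependent almost complex structure $J$ that is cylindrical near infinity. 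The $1$-periodic orbits fall into three families: (i) the critical points of the Morse function in the smooth part of $W$; (ii) non-constant orbits clustered near Reeb orbits of period $<\tau$; and (iii) the \emph{twisted-sector generators} --- for each singular point $p$ with isotropy group $G_p$ and each non-trivial conjugacy class $(g)$ in $G_p$, a generator obtained from a small Morse--Bott perturbation of the constant orbifold loop at $p$ lying in the $(g)$-twisted component of the orbifold free loop space (this component being contractible, it contributes one generator). Families (i) and (iii) together form the \emph{constant part} of the complex and, with the $\ZZ/2\ZZ$ grading by parity of index plus age shift (which for the present statement we need not compute), constitute a Morse cochain model for $H^*_{\CR}(W;R)$, i.e.\ for the cohomology of the orbit space of the inertia orbifold; crucially, since the twisted sectors are distinct connected components of the orbifold loop space, no Floer trajectory connects generators of different sectors.

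Next I define the differential on $CF^*(\hat W,H_\tau,J;R)$ by counting rigid Floer cylinders modulo translation, with signs from coherent orientations, which exist over $\ZZ$ and hence over any ring $R$. The essential point is that the moduli spaces relevant to $\partial$ and to $\partial^2 = 0$ are smooth manifolds of the expected dimension. Orbifold singular points occur only when $\dim W \geq 4$, so the singular locus has real codimension $\geq 4$; a dimension count then forces Floer cylinders of parametrized index $\leq 2$ to avoid the singular points, so that they lie in the manifold part of $\hat W$, where the standard transversality theory for generic $S^1$-dependent $J$ applies (exactness of $d\lambda$ and asymptotic distinctness rule out the multiply-covered pathologies, or one sets things up Morse--Bott with cascades). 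For $\partial^2 = 0$ one inspects the compactified one-dimensional moduli of index-$2$ cylinders modulo translation: exactness of $d\lambda$ on $\hat W$ and on the local models $\CC^n/G$ excludes every holomorphic sphere bubble, \emph{including orbifold sphere bubbles}, while the linear slope together with the cylindrical $J$ near infinity yields a maximum principle preventing escape into the end, so the only boundary configurations are two-level broken cylinders.

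Continuation maps between different admissible pairs $(H,J)$ are built by counting rigid continuation cylinders, which by the same codimension count again avoid the singular points and obey the maximum principle; the usual arguments make them chain maps inducing isomorphisms and compatible with the monotone direct system in $\tau$, and I set $SH^*(W;R) = \varinjlim_\tau HF^*(\hat W,H_\tau,J;R)$. For the action reasons standard in this setup the constant generators span a subcomplex $CF^*_{\mathrm{const}} \subset CF^*$; by an action-window argument its differential reduces to the Morse differential on the orbit space of the inertia orbifold, whence $H^*(CF^*_{\mathrm{const}}) = H^*_{\CR}(W;R)$ (the sectors not interacting, as noted). Defining $CF^*_+ = CF^*/CF^*_{\mathrm{const}}$ and $SH^*_+(W;R) = \varinjlim_\tau H^*(CF^*_+)$, the short exact sequence $0 \to CF^*_{\mathrm{const}} \to CF^* \to CF^*_+ \to 0$ --- preserved under the exact functor $\varinjlim$ --- induces the long exact sequence
\[ \cdots \to H^*_{\CR}(W;R) \to SH^*(W;R) \to SH^*_+(W;R) \to H^{*+1}_{\CR}(W;R) \to \cdots, \]
which is precisely the asserted $\ZZ/2\ZZ$-graded exact triangle, the connecting homomorphism being the $[1]$-labelled arrow.

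I expect the main obstacle to be the control of moduli spaces in a neighbourhood of the orbifold points. The codimension count disposes of all cases except the borderline one $\dim W = 4$ (that is, $G_p \subset U(2)$), where isolated index-$2$ cylinders might pass through a singular point; there one must argue via the orbifold-curve bookkeeping --- such a configuration is a genuine orbifold map whose domain carries an orbifold point, and the corresponding index formula still cuts it out as a manifold of the expected even dimension --- to ensure that no spurious codimension-$0$ or codimension-$1$ boundary strata arise, so that $\partial^2 = 0$ and the invariance arguments survive. A secondary technical issue is the asymptotic Fredholm analysis at the twisted-sector punctures, needed to confirm non-degeneracy of those generators and that the constant complex is genuinely the Chen--Ruan one; for the group-level, $\ZZ/2\ZZ$-graded statement proved here, only the enumeration of generators and the sector decomposition --- not the age grading or any product --- are actually required.
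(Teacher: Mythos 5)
Your overall outline matches the architecture of the paper's proof: complete $W$ to $\widehat{W}$, choose admissible Hamiltonians with linear slope at infinity, identify the three families of generators (ordinary critical points, twisted-sector generators, non-constant orbits), and obtain the exact triangle as the long exact sequence of an action-filtration short exact sequence of complexes. However, the way you propose to handle transversality and compactness has two genuine gaps, one of which would make the argument fail as stated.

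Your transversality step is wrong. You argue that a codimension count (``the singular locus has real codimension $\geq 4$, so Floer cylinders of parametrized index $\leq 2$ avoid the singular points'') lands the relevant moduli spaces in the smooth part of $\widehat{W}$. This cannot hold for the moduli spaces whose asymptotic ends are twisted-sector generators $(p,(g))$ or constant orbits $(p,(\Id))$ at a singular point $p$: such cylinders necessarily converge to $p$, and these are exactly the moduli spaces needed to define the complex and to compare its quotient with $H^*_{\CR}(W;R)$. The mechanism the paper actually uses is different and more robust: a non-constant Floer cylinder, being a map from the manifold $\RR\times S^1$, is a \emph{smooth} (non-orbifold) point of the Banach orbifold of maps as soon as its image contains at least one smooth point of $\widehat{W}$ (\Cref{cor:smooth}); since non-constant cylinders with positive energy cannot have image contained in the zero-dimensional singular locus, the whole Fredholm problem lives in a Banach manifold and generic transversality follows exactly as in the smooth case — without ever claiming that curves stay away from singularities.

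Your compactness/$\delta^2=0$ step is incomplete, and your sector-separation claim hides the actual subtlety. When a breaking occurs at a constant orbit with nontrivial isotropy (the case $(p,(\Id))$ at a singularity $p$), there are $|\stab_p|$ inequivalent ways to glue the two pieces, so the boundary of the compactified one-dimensional moduli space is an \emph{orbifold fiber product} over the orbit rather than a Cartesian product; the paper develops the pullback--pushforward count $t_*\circ s^*$ and the composition formula (\Cref{prop:composition}), and proves that broken trajectories with these fiber-product weights form the boundary (\Cref{prop:glue,prop:glueI}), precisely so that Stokes' theorem gives $\delta^2=0$ with integer coefficients. Your statement that ``no Floer trajectory connects generators of different sectors'' is only true for the nontrivially twisted sectors; the generator $(p,(\Id))$ lies in the \emph{untwisted} sector, it interacts with the ordinary Morse complex on $W$, and that interaction is exactly where the fiber-product bookkeeping is needed to recognize the quotient complex as $H^*_{\CR}(W;R)$. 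Finally, your assertion that coherent orientations ``exist over $\ZZ$'' needs the additional check that the isotropy group of each generator acts trivially on its orientation line (\Cref{prop:orientation}); without it the signs are not well defined.
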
	
\begin{remark}
If the integral first Chern class $c^{\ZZ}_1(W)\in H^2_{\orb}(W;\ZZ):=H^2(BW;\ZZ)$ vanishes, where $BW$ is the classifying space, then one gets a $\ZZ-$grading, which mod $2$ coincides with the $\ZZ/2\ZZ$ grading in the statement and which is moreover canonically defined on generators that have torsion homotopy class. 
If the rational first Chern class $c^{\QQ}_1(W)$ vanishes, then one gets a $\QQ-$grading, whose relation with the $\ZZ/2\ZZ$ grading is in general less direct.
\end{remark}
We have the following basic computation.
\begin{thmintro}\label{thm:quotient}
	Let $G\subset U(n)$ be a finite subgroup, such that $\ker(g-I)=\{0\}$ for $g\ne \Id \in G$, i.e.\ $\CC^n/G$ is an isolated singularity. Then $SH^*(\CC^n/G;R)=0$ iff $|G|$ is invertible in $R$. 
\end{thmintro}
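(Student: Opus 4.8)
The plan is to feed everything through the exact triangle of Theorem~\ref{thm:SH} and to compare $SH^*(\CC^n/G;R)$ with the symplectic cohomology of the ball $\CC^n$, which vanishes; the two directions of the ``iff'' reflect whether one may average over $G$, i.e.\ whether $|G|$ is invertible in $R$.

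\emph{Setup.} Since $\ker(g-I)=\{0\}$ for $g\ne\Id$, the finite group $G$ acts freely on $S^{2n-1}$, so $Y:=S^{2n-1}/G$ is a smooth contact manifold, $\CC^n/G$ is an exact orbifold filling of it with the single isolated singularity $0$, and $\CC^n/G$ is already its own completion. Its inertia orbifold is the disjoint union of the contractible untwisted sector $\CC^n/G$ (contributing $R$ in degree $0$) with one sector, a point, for each nontrivial conjugacy class of $G$, so $H^*_{\CR}(\CC^n/G;R)\cong R^{\oplus\#\Conj(G)}$. By the exact triangle, $SH^*(\CC^n/G;R)=0$ exactly when the connecting map $SH^*_+(\CC^n/G;R)\to H^{*+1}_{\CR}(\CC^n/G;R)$ is an isomorphism. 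For a quadratic Hamiltonian descended from $\CC^n$ and generic data, the symplectic cochain complex splits according to the conjugacy class $[h]$ of the monodromy around $0$ of a lift of a generator to $\CC^n$, and (for $n\ge2$, since a generic Floer cylinder meets $0$ only when asymptotic to the constant orbit there) the differential preserves $[h]$; thus $SH^*(\CC^n/G;R)=\bigoplus_{[h]}SH^*_{[h]}(\CC^n/G;R)$, and for the non-vanishing it will be enough to understand the untwisted sector $[h]=[e]$, which is obtained from the symplectic cochain complex of $\CC^n$ by passing to $R$-coefficient $G$-orbits of generators. (The case $n=1$, i.e.\ $\CC/(\ZZ/m)$, is done by the same mechanism on an explicit model.)

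\emph{The implication ``$|G|$ invertible $\Rightarrow SH^*(\CC^n/G;R)=0$''.} The quotient $\pi\colon\CC^n\to\CC^n/G$ is a degree-$|G|$ orbifold covering with deck group $G$; the induced restriction and transfer maps on symplectic cohomology — constructed on the level of the inertia orbifolds, compatibly with the exact triangle — satisfy $\pi_!\circ\pi^*=|G|\cdot\mathrm{id}$. As $\CC^n=B^{2n}$ is subcritical, $SH^*(\CC^n;R)=0$ for every ring $R$ (Cieliebak; over $\ZZ$ this follows, say, from the Künneth formula and the vanishing of $SH^*(\DD^2;\ZZ)$). Hence $|G|$ annihilates $SH^*(\CC^n/G;R)$, which therefore vanishes as soon as $|G|$ is invertible in $R$.

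\emph{The implication ``$|G|$ not invertible $\Rightarrow SH^*(\CC^n/G;R)\ne 0$''.} By a universal-coefficients argument (the symplectic cochain complex is a filtered colimit of bounded complexes of finite free $R$-modules, so $SH^*(\CC^n/G;R)=0$ would force $SH^*(\CC^n/G;k)=0$ for every residue field $k$ of $R$) it is enough to find a nonzero class in $SH^*(\CC^n/G;k)$ when $k$ is a field of characteristic $p\mid|G|$. Work in the untwisted sector. The constant Hamiltonian orbit $\mathrm{pt}_0$ at the origin of $\CC^n$ is $G$-fixed (the action being free away from $0$), while the Reeb-orbit families carry free $G$-actions; and $SH^*(\CC^n;k)=0$ amounts to: $SH^*_+(\CC^n;k)=k$, concentrated in a single degree, and the connecting map $SH^*_+(\CC^n;k)\to H^{*+1}(\CC^n;k)$ is an isomorphism — i.e.\ the count of Floer cylinders from the surviving class in the Reeb part to $\mathrm{pt}_0$ is a unit. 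Passing to $G$-orbits, the untwisted-sector connecting map landing on the generator dual to $\mathrm{pt}_0$ becomes that count times $|G|$ — because $\mathrm{pt}_0$ is $G$-fixed, so a cylinder count into it along a free $G$-orbit of sources is $|G|$ times the count for a single source — hence equals $|G|$ times a unit, which is $0$ in $k$. So that connecting map is not onto, its cokernel $k$ injects into $SH^*_{[e]}(\CC^n/G;k)$, and $SH^*(\CC^n/G;k)\ne 0$.

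\emph{Main obstacle.} The technical heart is the last claim, that passing from $\CC^n$ to $\CC^n/G$ multiplies the relevant connecting-map component by $|G|$: one must set up the moduli spaces of Floer cylinders asymptotic to the constant orbit at the singularity (they are $G$-orbits of cylinders in $\CC^n$ asymptotic to $0$), control their automorphisms with the proper transversality and coherent orientations, and track the resulting multiplicities — the same input being what makes the untwisted sector cancel over $\QQ$. The other point needing care is the construction of the transfer maps for orbifold symplectic cohomology, in particular their compatibility with the twisted sectors and with the exact triangle; that is comparatively soft.
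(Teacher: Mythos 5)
Your overall strategy has some contact with the paper's, but the two directions of the ``iff'' are proved in genuinely different ways, and the transfer-map step has a real gap.

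For the vanishing direction ($|G|$ invertible $\Rightarrow SH^*=0$), the paper does \emph{not} use a restriction/transfer pair for the orbifold cover $\pi\colon \CC^n\to\CC^n/G$. It first analyzes Reeb dynamics on $S^{2n-1}/G$ following McLean--Ritter and singles out the orbit $\gamma_0$ (the minimum of $f_{(\Id),2\pi}$). Proposition~\ref{prop:curve} then computes $\#\cM_{\gamma_0,(0,(\Id))}=1$, via neck-stretching plus a covering argument showing $\#\cM_{\gamma_0,q}=|G|$ for a smooth point $q$ and Stokes' theorem à la Proposition~\ref{prop:restriction}. Combined with the pushforward factor from the isotropy of $(0,(\Id))$, this gives $\delta\gamma_0=|G|\cdot(0,(\Id))$. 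To kill all of $SH^*$ (not just the identity component of the loop space) they then construct a specific pair-of-pants null-homotopy $\psi$ with $|G|\phi=\delta\psi-\psi\delta$, and they verify --- and this is the technical crux --- that the only low-energy constant curves appearing in the relevant compactifications are of type $(\Id,g,g)$, which \emph{are} transversely cut out, so that classical perturbation of $J$ suffices.

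Your proposed $\pi_!\circ\pi^*=|G|\cdot\mathrm{id}$ argument would be a slicker replacement, but it is not ``comparatively soft''. First, there is no established construction of restriction/transfer maps on symplectic cohomology for an orbifold cover of Liouville domains whose contact boundaries $S^{2n-1}\to S^{2n-1}/G$ differ: contractible Reeb orbits on $S^{2n-1}/G$ lift to $|G|$ orbits, but non-contractible ones lift only to higher-period covers, so there is no naïve chain-level $\pi^*$ respecting Hamiltonian slopes. Second, and more structurally, $\wedge \CC^n$ has no twisted sectors, so any reasonable $\pi^*$ necessarily kills the twisted-sector generators of $C^*(\CC^n/G)$; thus $\pi_!\circ\pi^*$ is zero there, not $|G|\cdot\mathrm{id}$. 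To promote a vanishing statement on the untwisted sector to the vanishing of all of $SH^*$ you need either the (not-yet-available) ring structure or exactly the hands-on null-homotopy the paper constructs.

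For the non-vanishing direction, your picture (lift $\gamma_0$ to $|G|$ orbits in $\CC^n$, each contributing one cylinder to the origin) is the correct intuition, and it is indeed the mechanism behind $\delta\gamma_0=|G|\cdot(0,(\Id))$. The paper's proof is however cleaner than the universal-coefficients reduction you propose: by the explicit Reeb computation, $\gamma_0$ is the \emph{unique} contractible generator of cohomological grading $-1$, so $(0,(\Id))$ can only die if $|G|$ is invertible. This grading argument sidesteps any discussion of whether $SH^*_+(\CC^n;k)$ is concentrated in a single degree and whether the connecting map is genuinely controlled by a single moduli-space count.
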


The fact that symplectic cohomology can be defined over any coefficient ring $R$ follows crucially from the geometry of exact orbifold fillings of contact manifolds. 
On the other hand, it is not clear whether such property will hold for exact orbifold fillings of contact \emph{orbifolds}. 
Moreover, symplectic cohomology is not defined as the direct limit of the Hamiltonian-Floer cohomology of Hamiltonians with slope converging to infinity, but rather as the direct limit over a restricted family that are ($C^2$ close to) zero on the filling. 
The necessity of such difference, which is explained in \S \ref{S4}, can be seen from \Cref{thm:quotient} as $SH^*(\CC^n/G; R)$ should always be zero if we are allowed to define it by the direct limit of the Hamiltonian-Floer cohomology of $H_i=ir^2$ on $\CC^n/G$ like the $\CC^n$ case explained in \cite[(3f)]{biased}.
(c.f.\ \Cref{ex:non_vanish}).

\begin{remark}
McLean--Ritter \cite{McKay} proved that crepant resolutions of isolated quotient singularity have vanishing symplectic cohomology. 
Combined with \Cref{thm:quotient}, this can be viewed as a local case of Ruan's crepant resolution conjecture \cite{Rua01}. It is worth noting that this local crepant resolution conjecture fails with $\ZZ$ coefficient, even though the symplectic cohomology of $\CC^n/G$ and Calabi-Yau strong fillings (e.g.\ crepant resolutions) can be defined over $\ZZ$ (in the latter case, the coefficient ring needs to be the Novikov ring $\ZZ((t))$ over $\ZZ$ in view of Gromov compactness). Indeed, we have $SH^*(\CC^2/(\ZZ/(2\ZZ));\ZZ((t)))\ne 0$, while $SH^*(\cO(-2);\ZZ((t)))=0$ \cite{ritter}. 
We also point out that the elimination pattern of the unit in these two cases are different, as a comparison between the proof of \Cref{thm:quotient} and \cite[Remark 2.16]{Zho} shows.
\end{remark}

\subsection{Existence, uniqueness and restrictions of orbifold singularities}
A natural question regarding orbifold fillings of contact manifolds is that of understanding what kind of singularities can they have. 
Using symplectic cohomology defined above and that certain moduli spaces of Floer cylinders are manifolds instead of orbifolds, we obtain the following restrictions on the order of isotropy.

\begin{thmintro}
\label{thm:sphere_have_smooth_fillings}
Let $V$ be a Liouville domain.
Then, any exact orbifold filling of $\partial(V\times \DD)$ does not have orbifold singularities. 
In particular, \cite[Theorem 1.1, 1.2]{zhou2020} can be applied to get cohomology/diffeomorphism-type uniqueness of exact orbifold fillings. 
\end{thmintro}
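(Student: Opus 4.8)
The plan is to use the vanishing of symplectic cohomology of the product $V \times \mathbb{D}$ together with a Viterbo-transfer-type argument for the putative orbifold filling, and then combine this with \Cref{thm:quotient} to rule out each singularity individually. First I would recall that $V \times \mathbb{D}$ is itself an exact (smooth) filling of $\partial(V \times \mathbb{D})$ whose symplectic cohomology vanishes: indeed, $V \times \mathbb{D}$ is a flexible-type/subcritical-type construction — more precisely, $V \times \mathbb{D}$ deformation retracts onto $V \times \{0\}$ which is of codimension $2$, and the standard Künneth/subcriticality argument (as in Cieliebak's splitting theorem, or Oancea's Künneth formula) gives $SH^*(V \times \mathbb{D}; R) = 0$ for any ring $R$. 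So the contact boundary $\partial(V \times \mathbb{D})$ has a smooth filling with vanishing symplectic cohomology over every coefficient ring.

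Next, suppose $W$ is an exact orbifold filling of $\partial(V \times \mathbb{D})$ with a singular point $p$ modeled on $\mathbb{C}^n/G$ for some nontrivial $G \subset U(n)$ with $\ker(g - I) = \{0\}$ for all $g \neq \Id$. The key step is to establish a Viterbo-type transfer map or a neck-stretching/localization argument that extracts, from the Floer theory of $W$, a contribution of the local model $\mathbb{C}^n/G$ near $p$. Concretely: pick $|G|$ to be a prime $\ell$ dividing $|G|$ and work over $R = \mathbb{Z}/\ell\mathbb{Z}$; by \Cref{thm:quotient}, $SH^*(\mathbb{C}^n/G; \mathbb{Z}/\ell\mathbb{Z}) \neq 0$ since $|G|$ is not invertible mod $\ell$. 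The strategy is to show that if $W$ has such a singularity, then the unit in $H^*_{\CR}(W; \mathbb{Z}/\ell\mathbb{Z})$ cannot be killed in $SH^*(W; \mathbb{Z}/\ell\mathbb{Z})$ — i.e.\ $SH^*(W) \neq 0$ — because the local Reeb orbits around the singularity survive. One clean way: surger out a neighborhood of $p$, replacing the cone on the lens-space-type boundary $S^{2n-1}/G$ by a Liouville cobordism; one obtains a smooth exact filling $W'$ of $\partial(V \times \mathbb{D})$ together with a Liouville cobordism from $S^{2n-1}/G$, and the composition of transfer maps, together with the known computation $SH^*(\mathbb{C}^n/G) \neq 0$ and the vanishing $SH^*(V \times \mathbb{D}) = 0$, forces a contradiction via the commuting triangle relating $H^*_{\CR}$, $SH^*$, $SH^*_+$ of \Cref{thm:SH}. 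Alternatively, and perhaps more in the spirit of this paper, one shows directly that the generators of $SH^*_+(W)$ coming from short Reeb orbits near $p$ (which in the local model $\mathbb{C}^n/G$ are exactly those detecting non-vanishing) cannot bound Floer cylinders into the smooth part, using that the relevant moduli spaces of Floer cylinders are \emph{manifolds}, not orbifolds, so that a standard cobordism/degree count applies; this pins down a nonzero class in $SH^*(W; \mathbb{Z}/\ell\mathbb{Z})$, contradicting the transfer from the smooth filling $V \times \mathbb{D}$ which gives $SH^*(W;\mathbb{Z}/\ell\mathbb{Z}) = 0$. Either way, no singularity $\mathbb{C}^n/G$ with $G \neq \Id$ can occur, so $W$ is smooth.

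Once $W$ is shown to be a smooth exact filling of $\partial(V \times \mathbb{D})$, the final sentence follows immediately by quoting \cite[Theorem 1.1, 1.2]{zhou2020}, which provides cohomology and diffeomorphism-type uniqueness for exact fillings of such boundaries — so there is nothing further to prove there.

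The main obstacle I expect is the Viterbo transfer / localization step: one must make sense of a transfer-type map (or a neck-stretching argument) in the orbifold setting that relates $SH^*(W)$ to the local model $SH^*(\mathbb{C}^n/G)$, and in particular one needs to know that the map $SH^*(V\times\mathbb{D}) \to SH^*(W)$ — or the appropriate compositions of such maps through the surgered manifold — behaves as in the smooth case, i.e.\ is unital and functorial with respect to Liouville (or orbifold-Liouville) cobordisms. Establishing the requisite compactness and gluing for these transfer moduli spaces when an orbifold point is present — while checking that the pieces one needs are genuine manifolds so that the classical count is valid — is where the real work lies; everything else (the two input computations $SH^*(V\times\mathbb{D})=0$ and $SH^*(\mathbb{C}^n/G)\neq 0$ over $\mathbb{Z}/\ell\mathbb{Z}$, and the concluding appeal to \cite{zhou2020}) is either already available in the excerpt or standard.
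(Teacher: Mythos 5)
There is a genuine gap in your argument, and it lies precisely where you locate the "main obstacle": the Viterbo transfer step. Viterbo transfer maps only exist for \emph{nested} Liouville (sub)domains $V \subset W$, going $SH^*(W)\to SH^*(V)$; there is no transfer map between two distinct, non-nested fillings of the same contact boundary. Since $V\times\DD$ and your orbifold filling $W$ are two unrelated fillings of $\partial(V\times\DD)$, you cannot deduce $SH^*(W;R)=0$ from $SH^*(V\times\DD;R)=0$. The theorem of \cite{zhou2020} that \emph{all} exact fillings of $\partial(V\times\DD)$ have vanishing $SH^*$ is a hard theorem proved via the ring structure, not via transfer from the standard filling; and the paper explicitly flags that the ring structure is not available with classical transversality in the orbifold setting (see \S\ref{SSS:product}, where the argument "assuming the ring structure" is labelled as a heuristic, not a proof). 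The surgery variant you sketch (replacing a cone neighborhood of the singularity by a Liouville cobordism) would require a specific cobordism whose existence you do not justify, and the phrase "short Reeb orbits near $p$" is also confused: $p$ is an interior singular point of $W$, not a boundary point, so there are no Reeb orbits near it.

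The paper's actual mechanism is different and more elementary: it does not use $SH^*(V\times\DD)=0$ or $SH^*(\CC^n/G)\neq 0$ at all. Instead it uses the integer-valued map $\eta:SH^*_+(W;\ZZ)\to\ZZ$, which factors through $H^{*+1}_{\CR}(W;\ZZ)\to H^{*+1}(W;\ZZ)\to H^{*+1}(Y;\ZZ)\to H^0(Y;\ZZ)$, together with \Cref{prop:restriction}: if there is a class $x$ with $\eta(x)=N$ then $|\stab_p|$ divides $N$ for every singularity $p$ of $W$. The proof of \Cref{prop:restriction} is a one-parameter Stokes argument on moduli spaces of Floer cylinders whose evaluation constraint slides along a path from a smooth boundary point to the singularity $p$, and the key point is exactly the one you correctly anticipated: the relevant moduli spaces are manifolds (not orbifolds), so the classical count applies, and the isotropy factor $|\stab_p|$ appears as a multiplicative factor in the fiber product over the orbifold point. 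Then, following the Hamiltonian construction of \cite{zhou2020}, there is a class $x\in SH^*_+(\partial(V\times\DD);\ZZ)$ with $\eta(x)=1$, with this value independent of the choice of orbifold filling; applying \Cref{prop:restriction} with $N=1$ forces $\stab_p$ trivial. So the paper replaces your qualitative vanishing/non-vanishing dichotomy (which would need the unavailable ring structure or a non-existent transfer between unrelated fillings) with a quantitative, integer-level divisibility constraint that can be proved with classical transversality. You should rework the argument around $\eta$ and \Cref{prop:restriction} rather than around transfer maps.
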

For instance, it follows that any exact orbifold filling of  $(S^{2n-1},\xi_{std})$ is diffeomorphic to $\DD^{2n}$ for $n\ge 2$.

\begin{thmintro}\label{thm:order_Brieskorn}
	Let $B(k,n)$ denote the contact manifold given by the link of the singularity $z^k_0+\ldots+z^k_n=0$. 
	Let $W$ be an exact orbifold filling of $B(k,n)$. 
	If $k<n$ and $p$ is an orbifold singularity of $W$, then we have the following.
	\begin{enumerate}
		\item $|\stab_p|$ divides $k!$.
		\item If $k<\frac{n+1}{2}$, then $|\stab_p|$ divides $(k-1)!$.
		\item If $k=\frac{n+1}{2}$ and is not divided by a non-trivial square, then $|\stab_p|$ divides $(k-1)!$.
	\end{enumerate}
\end{thmintro}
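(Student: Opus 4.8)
The plan is to reduce the statement to a local computation at the singular point and then to extract the factorial from the Brieskorn structure of the boundary. Fix a singularity $p$ of $W$ and set $G=\stab_p$, so that a neighbourhood of $p$ in $W$ is a Liouville orbifold subdomain modelled on a ball in $\CC^n/G$. First I would set up the associated Viterbo-type restriction map $\mathrm{res}\colon SH^*(W;R)\to SH^*(\CC^n/G;R)$, which is unital, and record the refinement of \Cref{thm:quotient} that over $\ZZ$ the annihilator of the unit $u_{\CC^n/G}\in SH^0(\CC^n/G;\ZZ)$ is exactly the ideal $(|G|)$ — equivalently, the unit generates a cyclic summand of order $|G|$; this is visible from the same count of holomorphic planes that eliminates the unit modulo $|G|$ in the proof of \Cref{thm:quotient}, which does not eliminate it modulo any proper divisor. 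Since $\mathrm{res}(u_W)=u_{\CC^n/G}$, any integer annihilating the class $\mathrm{res}(u_W)$ is divisible by $|G|$. Hence all three items follow once one proves the following: if $W$ is an exact orbifold filling of $B(k,n)$ with $k<n$ and $p$ is any singularity, then $k!$ annihilates $\mathrm{res}(u_W)=u_{\CC^n/G}$, with $k!$ improved to $(k-1)!$ under the hypotheses of (2) or (3).

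To prove this I would use the Reeb dynamics of $B(k,n)=\partial W$ for the contact form obtained by restricting the standard Liouville form of $\CC^{n+1}$: its Reeb flow is the free circle action $e^{\i\theta}\cdot z$, exhibiting $B(k,n)$ as a prequantisation circle bundle over the Fermat hypersurface $X_k=\{z_0^k+\cdots+z_n^k=0\}\subset\CC P^n$, which is smooth and, since $k<n+1$, Fano. The closed Reeb orbits form $S^1$-families $\{\gamma^j_x\}_{x\in X_k}$, $j\ge 1$, and the Conley--Zehnder index computation for Brieskorn manifolds (Ustilovsky, Kwon--van Koert) shows that, in the Hamiltonian--Floer model of \S\ref{S4}, the generators coming from these families occupy degrees that grow with $j$ and, for $k<n$, lie strictly above the relevant range except for the lowest family $\{\gamma^1_x\}$. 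As a consequence, $SH^*_+(W;\ZZ)$ is independent of the filling in that range, and the only Floer trajectories that can contribute a torsion relation on $u_W$ (transported through $\mathrm{res}$) live in a moduli space $\cM$ of pseudo-holomorphic planes in the completion $\widehat W$ asymptotic to a simple Reeb orbit $\gamma^1_x$ and constrained to pass through $p$ (or, in the improved cases, through a point representing a low-age twisted sector at $p$). Exactness of $W$ and the curves-in-orbifolds package underpinning \Cref{thm:SH} make the signed count $\#\cM\in\ZZ$ well defined, with no breaking into the cylindrical end and only ghost constant components at orbifold points to track.

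The heart of the argument — and the step I expect to be the main obstacle — is the identification $\#\cM=\pm k!$, respectively $\pm(k-1)!$. Here I would localise to a collar of $\partial W$, which is a piece of the disk bundle of $\cO(-1)|_{X_k}$, and exploit that the defining equation has degree $k$: after projecting away the normal directions to $X_k$, a plane asymptotic to $\gamma^1_x$ becomes a degree-$k$ branched cover of a standard base plane, so a Hurwitz-type count with the forced simple branching evaluates the contribution to $\pm k!$. Under hypothesis (2), i.e.\ $k<\tfrac{n+1}{2}$, the index constraint pushes one of the branch conditions to infinity, pinning a branch point and dropping the count to $\pm(k-1)!$; under hypothesis (3), $k=\tfrac{n+1}{2}$ squarefree, the same conclusion is obtained by a parity/covering argument exploiting that $k$ has no repeated prime factor. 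Carrying out this branched-cover count rigorously — transversality for the relevant multiply-covered curves, the bookkeeping of the ghost components at orbifold points, and pinning down the exact integer together with its sign — is the technical crux; the transfer-map reduction and the inputs from \Cref{thm:SH} and \Cref{thm:quotient} are comparatively formal.
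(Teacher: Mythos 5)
Your reduction is sound up to a point but relies on the wrong theory. Showing $k!\,u_W=0$ in $SH^*(W;\ZZ)$, transferring via the Viterbo map to get $k!\,u_{\CC^n/G}=0$, and then invoking the refinement of \Cref{thm:quotient} that $\operatorname{Ann}_\ZZ(u_{\CC^n/G})=(|G|)$ (which is indeed what the proof of \Cref{thm:quotient} gives, since $\gamma_0$ is the unique contractible degree-$(-1)$ generator and $\delta(\gamma_0)=|G|\,(0,(\Id))$) would certainly yield $|G|\mid k!$. However, the paper does \emph{not} establish that $k!\,u_W=0$ in the non-equivariant $SH^*(W;\ZZ)$, and I doubt this holds. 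What is shown is that $SH^*_{+,S^1}(W;\ZZ)$ carries a class $x$ with $\eta_{S^1}(x)=k!$ (respectively $(k-1)!$), where $\eta_{S^1}$ passes through the $S^1$-equivariant chain-level operations $\{\delta^m_+\}$ and \emph{not} through the non-equivariant connecting map $\delta_+$. An equivariant relation $\delta^{S^1}_+(\tilde x)=k!\,[\mathrm{pt}]$ does not descend to $\delta_+(x)=k!\,[\mathrm{pt}]$, so the non-equivariant annihilator of $u_W$ can be strictly larger than $(k!)$. Compare the two halves of \Cref{thm:order}: there the non-equivariant $\eta$ produces the divisor $k^n$ while $\eta_{S^1}$ produces $k!$; these are genuinely different outputs. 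Once the equivariant class is available, the divisibility $|\stab_p|\mid\eta_{S^1}(x)$ is supplied directly by \Cref{prop:restriction}, which runs a Stokes argument over an arc in $W$ from $p$ to a smooth point $q$ and sidesteps the Viterbo transfer to the local model entirely.

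The second, more serious gap is in how the factorial is produced. The paper does not compute $\#\cM=\pm k!$ by a Hurwitz branched-cover count: it imports the values from \cite[Theorem A and Remark 5.8]{dilation}, and the factorial there arises from the $S^1$-equivariant structure on the Milnor fiber $W_{k,n}$, not from Hurwitz numbers. Your sketches for cases (2) and (3) --- ``the index constraint pushes one of the branch conditions to infinity'' and ``a parity/covering argument exploiting that $k$ has no repeated prime factor'' --- are not arguments, and the squarefree hypothesis in (3) enters \cite{dilation} for a specific algebraic reason that is invisible at that level of description. Finally, you gloss over the two filling-independence steps the proof actually needs: (i) the equivariant class $x$ remains closed for \emph{any} exact orbifold filling $W$, by an action-room argument since $x$ comes from the minimal-period Boothby--Wang orbits; and (ii) $\eta_{S^1}(x)$ is independent of the filling when $k<n$, proved by neck-stretching together with an SFT-grading count showing that any configuration leaving the symplectization would have a single negative puncture and hence negative expected dimension. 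Step (ii) is precisely where the hypothesis $k<n$ is used, and the argument breaks at $k=n$ where the minimal orbit has SFT grading $0$.
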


For $k=2,n=3$, \Cref{thm:order_Brieskorn} implies that any exact orbifold filling of the cosphere bundle $ST^*S^3$ is a manifold. Such property is preserved under products and Lefschetz fibrations by the following result. (In fact, from the proof of \Cref{cor:order_dilation} one can see that all the restrictions of singularities in \Cref{thm:order_Brieskorn} are preserved under products\footnote{For example, the order of an orbifold singularity of a filling of $\partial (\{z^k_0+\ldots+z^k_n=1\}\times \{z^{k'}_0+\ldots+z^{k'}_n=1\})$ would divide $\min\{k!,k'!\}$ when $k,k'<n$.} and Lefschetz fibrations.)

\begin{corintro}\label{cor:order_dilation}
	Let $S$ be the minimal subset of the set of Liouville domains with vanishing first Chern class that contains $T^*S^3$ and such that products with Liouville domains in $S$ are also in $S$, and that the total space of any Lefschetz fibration with regular fiber in $S$ is also in $S$.
	Then, for every $W\in S$, any exact orbifold filling of $\partial W$ is a manifold. 
\end{corintro}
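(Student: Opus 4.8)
The plan is to induct over the construction of $S$: it suffices to handle the seed $T^*S^3$ and to show the relevant property is inherited by products and by total spaces of Lefschetz fibrations. Since ``every exact orbifold filling of $\partial W$ is a manifold'' is a statement about fillings other than $W$ itself, it is unlikely to be stable on its own, so what I would actually propagate is the Floer-theoretic input behind it, namely the property $(\dagger)$: ``$c_1(V)=0$, and $SH^*(V;\ZZ)$ admits a dilation,'' i.e.\ a class $b\in SH^1(V;\ZZ)$ with $\Delta b=1$ for the BV operator $\Delta$ in the sense of Seidel--Solomon (equivalently, a compatible family of dilations over every $\ZZ/p$). The base case is that $T^*S^3$ satisfies $(\dagger)$: its first Chern class vanishes, and the dilation on it is exactly the structure driving the case $k=2=(n+1)/2$, $n=3$ of \Cref{thm:order_Brieskorn}, which gives $|\stab_p|\mid(k-1)!=1$ and hence that $\partial T^*S^3=B(2,3)$ admits only manifold exact orbifold fillings.

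First I would show that $(\dagger)$ on $V$ forces every exact orbifold filling $W$ of $\partial V$ to be a manifold. If $W$ had an orbifold point $q$, set $G\coloneqq\stab_q$, pick a prime $p_0\mid |G|$, and work over $R=\ZZ/p_0$. The dilation on $V$ descends to a filling-independent piece of structure on the contact manifold $\partial V=\partial W$ (this is where the language of \cite{RSFT} enters); feeding it into the exact triangle of \Cref{thm:SH} for $W$ and comparing gradings against the twisted-sector classes that $q$ contributes to $H^*_{\CR}(W;R)$ --- nonzero over $R$ precisely because $p_0\mid|G|$, with local structure governed by \Cref{thm:quotient} --- yields a contradiction. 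This is the $k=2$ instance of the mechanism behind \Cref{thm:order_Brieskorn}, and it relies, as elsewhere in the paper, on the Floer cylinders involved being manifolds rather than orbifolds, so that no denominators are introduced and the argument survives with $\ZZ/p_0$ coefficients. Hence $W$ has no orbifold singularities.

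It remains to propagate $(\dagger)$. For products, $c_1(V_1\times V_2)=0$ whenever $c_1(V_i)=0$, and the Künneth map $SH^*(V_1;\ZZ)\otimes SH^*(V_2;\ZZ)\to SH^*(V_1\times V_2;\ZZ)$ is a unital ring map intertwining BV operators, so if $b_1$ is a dilation of $V_1$ then $\Delta(b_1\otimes 1)=(\Delta b_1)\otimes 1$ is the unit and $V_1\times V_2$ satisfies $(\dagger)$ (the degenerate sub-case in which one factor is the disk, where $SH$ vanishes, recovers \Cref{thm:sphere_have_smooth_fillings}). For a Lefschetz fibration $E\to\DD$ with regular fiber $F$ satisfying $(\dagger)$ and with $c_1(E)=0$ --- a condition built into the definition of $S$ --- I would propagate the dilation from $F$ to $E$ via the description of $SH^*(E)$ in terms of $SH^*(F)$ and the wrapped Fukaya category of the vanishing cycles, the Lefschetz-fibration analogue of the Künneth argument. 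With these two closure properties, the base case, and the first step, the induction closes.

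The main obstacle is the Lefschetz-fibration step, carried out integrally: propagating a dilation through a Lefschetz fibration is already subtle over $\QQ$, and here the $\ZZ$ (equivalently, all-$\ZZ/p$) version is indispensable, since by \Cref{thm:quotient} a purely rational dilation detects no orbifold point at all, whereas its characteristic-$p$ incarnation is precisely what registers, in the first step, an orbifold point whose isotropy order is divisible by $p$. A second delicate point is the orbifold bookkeeping in that step: one must verify that the moduli spaces entering the local analysis at $q$ genuinely are manifolds, so that the obstruction persists over $\ZZ/p$, and make precise the sense in which a dilation on the smooth Liouville domain $V$ becomes a filling-independent statement about $\partial V$ applicable to the singular filling $W$ --- once again through the exact triangle of \Cref{thm:SH} and the structural results of \cite{RSFT}.
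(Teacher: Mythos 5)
Your high-level plan (base case on $T^*S^3$, propagate a Floer-theoretic positivity property through products and Lefschetz fibrations, and argue the propagated property forces manifoldness of fillings) resembles the paper's, and you correctly identify that the base case is the $k=2=(n+1)/2$, $n=3$ instance of \Cref{thm:order_Brieskorn}. However, both the \emph{invariant you propagate} and your \emph{mechanism for killing orbifold points} diverge from the paper's, and the latter is a genuine gap.

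The paper does not carry a dilation $b$ with $\Delta b=1$ along, nor does it argue by ``feeding the dilation into the exact triangle and comparing gradings against the twisted-sector classes.'' Its mechanism is \Cref{prop:restriction}: one exhibits a class $x$ in $SH^*_{+,S^1}(W;\ZZ)$ with $\eta_{S^1}(x)=N$, and then a Stokes'-theorem argument (sliding the point constraint from a smooth boundary point to a singular interior point $p$ along a path) shows $|\stab_p|$ divides $N$. For $\partial T^*S^3$, the class $x$ arises from the minimal-period Boothby--Wang orbits and satisfies $\eta_{S^1}(x)=1$; the two key properties that make this \emph{filling-independent} are (i) $x$ is automatically closed for any exact orbifold filling because the orbits representing it have minimal period (no action room for the differential), and (ii) the SFT grading of every shorter orbit is positive, so neck-stretching cannot produce negative punctures when computing $\eta_{S^1}(x)$. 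What the paper then propagates through products and Lefschetz fibrations is \emph{the existence of a contact form with these two Reeb-dynamical properties}, not a BV-theoretic structure on $SH^*$. Your proposal never isolates the invariant ``$\exists\,x\in SH^*_{+,S^1}(\,\cdot\,;\ZZ)$ with $\eta_{S^1}(x)=1$, represented by minimal-period orbits, with positive SFT grading below,'' and its replacement --- matching gradings against twisted sectors in $H^*_{\CR}(W;\ZZ/p_0)$ --- is not worked out and does not obviously close: the presence of extra Chen--Ruan classes over $\ZZ/p_0$ is necessary but by itself gives no contradiction against anything you have actually constructed on $W$, because you have not shown how the dilation on the smooth domain $V$ produces a cocycle in the complex of the \emph{singular} filling $W$ whose image under a \emph{specific} map is forced to be $1$. \Cref{prop:restriction} is precisely the bridge that lets you go from ``nice structure on $\partial V$'' to ``$|\stab_p|$ divides $N$'' without ever needing any ring or BV structure on the orbifold $SH^*(W)$ --- a structure the paper explicitly declines to construct by classical methods --- and without it your first step has a hole. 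The Künneth and Lefschetz-fibration steps you sketch are in spirit consonant with what the paper says, and you are honest that the Lefschetz step is delicate; but even granting them, they give you a dilation on $V\in S$, not the filling-independent class on the boundary that \Cref{prop:restriction} consumes, so the conclusion still does not follow.
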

For example, \Cref{cor:order_dilation} applies to the link of a singularity in the form of $z_0^2+z_1^2+z_2^2+z_3^2+p(z_4,\ldots,z_n)$ for a polynomial $p$ which has an isolated singularity at $0$.
\begin{thmintro}\label{thm:order}
	Let $W$ be an exact orbifold filling of the lens space $(L(k;1,\ldots,1)=S^{2n-1}/(\ZZ/k\ZZ),\xi_{\std})$ for $n\ge 2$.
	Then the following holds for any singular point $p\in W$.
	\begin{enumerate}
		\item\label{G1} $|\stab_p|$ divides $k!$.
		\item\label{G2} If $k<n$, then $|\stab_p|$ divides $k^n$.  In particular, if $k$ is a prime, then $|\stab_p|=k$.
	\end{enumerate}
\end{thmintro}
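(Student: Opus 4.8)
The plan is to run the exact triangle of \Cref{thm:SH} over $\ZZ$ and over the prime fields $\mathbb{F}_q$, feeding it with three inputs: the vanishing criterion of \Cref{thm:quotient}, a unital Viterbo-type restriction map onto a neighbourhood of a singular point, and an explicit description of the Reeb dynamics of $(L(k;1,\dots,1),\xistd)$.

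First I would fix the model filling $W_0=\CC^n/(\ZZ/k\ZZ)$ with $\ZZ/k\ZZ$ acting by scalars, whose unique singularity has stabiliser $\ZZ/k\ZZ\subset U(n)$. Since $W_0$ is a global quotient of $\CC^n$, its Chen--Ruan cohomology is transparent: $H^*_{\CR}(W_0;R)$ is $R$ in degree $0$ together with one copy of $R$ in degree $2\age(\zeta^j)=2nj/k$ for $j=1,\dots,k-1$. By \Cref{thm:quotient}, $SH^*(W_0;R)=0$ whenever $k$ is invertible in $R$, so there the connecting map is an isomorphism $SH^{*-1}_+(W_0;R)\xrightarrow{\ \sim\ }H^*_{\CR}(W_0;R)$, which pins down the degrees of all generators of $SH^*_+(W_0;R)$. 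In parallel I would record the Reeb spectrum of $(L(k;1,\dots,1),\xistd)$: it is of Morse--Bott type with one $\mathbb{CP}^{n-1}$-family of closed orbits at each action level $mT_0$, $m\ge1$, the linearised return map of the $m$-th family being $e^{2\pi i m/k}\cdot\mathrm{Id}$ on the contact hyperplane $\cong\CC^{n-1}$; a Morse--Bott perturbation turns the $m$-th family into generators whose degrees fill an arithmetic progression of length $2(n-1)$ located (in the grading of \Cref{thm:SH}) at an index governed by $m/k$ and monotone in $m$. Matching this with the isomorphism above identifies which family accounts for which twisted sector of $W_0$.

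Now let $W$ be an arbitrary exact orbifold filling of $L(k;1,\dots,1)$ and $p$ a singularity, $G=\stab_p$. A small neighbourhood of $p$ is an exact orbifold filling of $(S^{2n-1}/G,\xistd)$ modelled on $\CC^n/G$, so Viterbo functoriality gives a unital map $SH^*(W;R)\to SH^*(\CC^n/G;R)$; by \Cref{thm:quotient} the target is non-zero (with non-zero unit) exactly when $|G|$ is not invertible in $R$, so this detects non-vanishing of $SH^*(W;R)$ over such $R$. On the other hand $SC^*_+(W;R)$ has the same generators as for $W_0$ (the Reeb orbits of $L(k;1,\dots,1)$), and an action/index filtration shows that the part of $SH^*_+(W;R)$ in any fixed degree only involves families of bounded multiplicity; for those, the rigid Floer cylinders --- and the curves underlying the maps in the exact triangle --- stay in a collar of the boundary, miss the isolated singular points, and form \emph{manifolds} rather than orbifolds, so their counts are integral and $SH^*_+(W;R)$ agrees with $SH^*_+(W_0;R)$ in the relevant range. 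Feeding this into the exact triangle over $\mathbb{F}_q$ for each prime $q\mid |G|$: an element $g\in G$ of order $q$ produces a class $e_g\in H^{2\age(g)}_{\CR}(W;\mathbb{F}_q)$ with $2\age(g)\in\tfrac{2}{q}\ZZ\cap(0,2n)$, which is either seen in $SH^{2\age(g)}(W;\mathbb{F}_q)$ or killed by $\delta$ from $SH^{2\age(g)-1}_+(W;\mathbb{F}_q)$; by the previous two points, in either alternative the degree $2\age(g)$ must be occupied by a Reeb family whose location (a rational with denominator dividing $k$) is compatible with $\age(g)$ (a rational with denominator dividing $q$), and tracking this through all $g\in G$, together with a Sylow/composition-series bookkeeping to pass from primes to the order, yields $|G|\mid k!$ --- part~(1). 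Under the hypothesis $k<n$, the families of multiplicity $\ge k$ (whose return map has rotation number $\ge1$) sit outside the window in which the twisted-sector classes live, so only multiplicities $<k$ can be used; this bounds the $q$-adic valuation of $|G|$ by $n\,v_q(k)$, i.e.\ $|G|\mid k^n$, and for $k$ prime this intersects with part~(1) to give $|G|\mid\gcd(k!,k^n)=k$, hence $|G|=k$ whenever $p$ is genuinely singular --- part~(2).

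I expect the main obstacle to be precisely the comparison step together with the index bookkeeping: establishing that the bounded-multiplicity part of $SH^*_+(W;R)$ is filling-independent --- in particular that no rigid Floer cylinder in $W$ reaches the orbifold locus in the relevant degrees, which is where the isolatedness of the singularities and the manifold (as opposed to orbifold) structure of the moduli are used --- and making the Conley--Zehnder/Robbin--Salamon computation for the $\mathbb{CP}^{n-1}$-families (including the degenerate ones with $k\mid m$) precise enough that the degree-window argument is airtight in the borderline situations; this is presumably the same delicacy responsible for the square-free hypothesis in the analogous \Cref{thm:order_Brieskorn}.
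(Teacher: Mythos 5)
Your approach is genuinely different from the paper's, and it contains gaps that I do not think can be repaired along the lines you sketch. The paper does not go through the exact triangle plus a unitality argument at all; instead it produces, for \emph{any} exact orbifold filling $W$, explicit closed classes in $SH^*_+(W;\ZZ)$ and in $SH^*_{+,S^1}(W;\ZZ)$ with prescribed image under the maps $\eta$ and $\eta_{S^1}$ of \S\ref{SS:property}, and then invokes \Cref{prop:restriction} directly: for part \eqref{G2}, the class $x=k^{n-1}\check\gamma_0^k+\sum c_{i,j}\check\gamma_j^i$ from \cite[Proposition~3.1]{Zho} satisfies $\delta_+x=0$ and $\eta(x)=k^n$; for part \eqref{G1}, a class $x=(k-1)!\check\gamma_0^k+\sum c_{i,j}\check\gamma_0^i u^{-j}$ is built in the $S^1$-equivariant complex, closed because the $S^1$-structure maps $\delta_+^m$ satisfy $\delta_+^1(\check\gamma_0^l)\equiv l\hat\gamma_0^l$ modulo lower terms, and one checks $\eta_{S^1}(x)=k!$. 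The factorial appears as an integrality normalization for the equivariant primitives $a_i'=a_i(k-1)!/i$; it has no Sylow or composition-series content.

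The first gap in your proposal is the appeal to ``a unital Viterbo-type restriction map.'' The Viterbo transfer map constructed in \Cref{thm:property} is a map of groups compatible with the tautological triangle, but the paper has not constructed a ring structure on $SH^*(W;R)$ and is explicitly careful not to use one: \S\ref{SSS:product} explains that the pair-of-pants moduli spaces have non-transverse constant components concentrated at the singularity, and \Cref{thm:unique} is proved precisely to avoid assuming the ring structure. Your argument ``target has a non-zero unit, hence the source is non-zero'' therefore uses an input that is not available in the framework of the theorem.

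The second gap is the ``Sylow/composition-series bookkeeping'' step. Even granting the degree-matching picture you describe, matching $2\age(g)$ (a rational with denominator dividing the order of $g$) against the location of a Reeb family (a rational with denominator dividing $k$) can at best give information about the prime divisors of $|\stab_p|$, and in fact does not even give $q\mid k$ cleanly because the numerators $nj$ may share factors with $k$. There is no route from this observation to the divisibility $|\stab_p|\mid k!$ (nor to $k^n$): the conclusion $k!$ in the paper is not a divisibility-of-denominators phenomenon but the value of $\eta_{S^1}$ on an explicit integral class. Relatedly, your claim that $SH^*_+(W;R)$ agrees with $SH^*_+(W_0;R)$ ``in the relevant range'' because rigid cylinders stay in a collar is not justified by action alone; the paper handles filling-independence by the finer homotopy-class and action arguments of \cite[Proposition~3.1 Step~4]{Zho}, applied to each structure map $\delta_+^m$ separately.

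Finally, a small logical issue: in part \eqref{G2} your conclusion ``$|G|\mid\gcd(k!,k^n)=k$'' is not the argument the paper makes (the paper proves $|\stab_p|\mid k^n$ directly, and for $k$ prime one then simply reads off $|\stab_p|\in\{1,k\}$), but it is also not quite accurate for a general prime $k$ with $k<n$: $\gcd(k!,k^n)=k$ indeed when $k$ is prime, so the arithmetic is fine, but this is an a~posteriori simplification rather than a proof that $\eta$ sees $k^n$, which is the actual content needed.
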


It was proved in \cite{Zho} that $(\mathbb{RP}^{2n-1},\xi_{std})$ is not exactly fillable when $n\ne 2^k$. The key step is to deduce that $\dim H^*(W;\QQ)\le 2$ for an exact filling $W$ of $\mathbb{RP}^{2n-1}$. 
Although considering exact orbifold fillings does not yield a new proof of the results in \cite{Zho}, it provides an explanation of the fact that $\dim H^*(W;\QQ)\le 2$, since $\dim H^*_{\CR}(\CC^n/(\ZZ/2\ZZ);\QQ)=2$. 
It is moreover plausible that exact orbifold fillings of $(\mathbb{RP}^{2n-1},\xi_{\std})$ are unique when $n\ge 3$; the following result provides some evidence for this conjecture.

\begin{thmintro}\label{thm:unique}
	If $n\ne 2^k$, exact orbifold fillings of $(\mathbb{RP}^{2n-1},\xi_{std})$ have exactly one orbifold singularity modeled on $\CC^n/(\ZZ/2\ZZ)$. 
\end{thmintro}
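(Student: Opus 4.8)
The plan is to combine the isotropy bound of \Cref{thm:order}, the exact triangle of \Cref{thm:SH}, the computation \Cref{thm:quotient}, and the non-fillability theorem of \cite{Zho}. Throughout let $W$ be an exact orbifold filling of $(\mathbb{RP}^{2n-1},\xi_{\std})$, and note that $n\neq 2^{k}$ forces $n\geq 3$. First I would pin down the local model. Since $(\mathbb{RP}^{2n-1},\xi_{\std})$ is the lens space $(L(2;1,\dots,1),\xi_{\std})$ of \Cref{thm:order}, the first conclusion of that theorem, applied with $k=2$, shows that every orbifold point $p$ of $W$ has $|\stab_{p}|$ dividing $2!=2$, hence $\stab_{p}\cong\ZZ/2\ZZ$. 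As exact orbifold fillings of contact manifolds have only isolated singularities, the generator $g$ of $\stab_{p}\subset U(n)$ satisfies $g^{2}=I$, $g\neq I$ and $\ker(g-I)=\{0\}$, which forces $g=-I$; thus $p$ is modeled on $\CC^{n}/(\ZZ/2\ZZ)$. It therefore remains to prove that $W$ has exactly one singular point; write $m\geq 0$ for their number.

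Next I would record two elementary points. (i) If $m=0$, then $W$ is a smooth exact filling of $(\mathbb{RP}^{2n-1},\xi_{\std})$, contradicting \cite{Zho}; so $m\geq 1$ (this is where $n\neq 2^{k}$ is used). (ii) By the footnote description of Chen--Ruan cohomology, $H^{*}_{\CR}(W;\QQ)=H^{*}(|IW|;\QQ)$, where $|IW|$ is the orbit space of the inertia orbifold of $W$. Away from the $m$ singular points $W$ is smooth, and at each of them the unique non-trivial isotropy element $-I$ has fixed locus $\{0\}$, so its twisted sector is a single point; hence $|IW|$ is the disjoint union of $|W|$ with $m$ points, and (using that $W$ is connected, since a disconnected filling would have a closed component, which cannot carry an exact symplectic form by Stokes)
\[
\dim_{\QQ}H^{*}_{\CR}(W;\QQ)=\dim_{\QQ}H^{*}(|W|;\QQ)+m\;\geq\;1+m .
\]

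The heart of the proof is the estimate $\dim_{\QQ}H^{*}_{\CR}(W;\QQ)\leq 2$ for every exact orbifold filling $W$ of $(\mathbb{RP}^{2n-1},\xi_{\std})$ with $n\geq 3$; this is the orbifold analogue of the key estimate of \cite{Zho}, run with $H^{*}_{\CR}$ in place of the ordinary cohomology of the filling. I would proceed as follows. For $n\geq 3$ the contact manifold $(\mathbb{RP}^{2n-1},\xi_{\std})$ is asymptotically dynamically convex, so $SH^{*}_{+}(-;\QQ)$ depends only on its contact structure --- an invariance proved by neck-stretching along the smooth contact boundary, which persists for orbifold fillings. Applying this to the exact orbifold filling $\CC^{n}/(\ZZ/2\ZZ)$ (the quotient of the ball by the antipodal involution), for which $SH^{*}(\CC^{n}/(\ZZ/2\ZZ);\QQ)=0$ by \Cref{thm:quotient} since $2$ is invertible in $\QQ$, and for which $\dim_{\QQ}H^{*}_{\CR}(\CC^{n}/(\ZZ/2\ZZ);\QQ)=2$ (the contractible underlying space contributes one generator and the single twisted sector $\{0\}$ the other), the exact triangle of \Cref{thm:SH} gives $\dim_{\QQ}SH^{*}_{+}(\CC^{n}/(\ZZ/2\ZZ);\QQ)=2$, whence $\dim_{\QQ}SH^{*}_{+}(W;\QQ)=2$. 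Following \cite{Zho} one also has $SH^{*}(W;\QQ)=0$ (the relevant Reeb dynamics of $(\mathbb{RP}^{2n-1},\xi_{\std})$, encoded in a dilation, enter here, and the argument is once more insensitive to orbifold points of order invertible in $\QQ$). Feeding $SH^{*}(W;\QQ)=0$ into the exact triangle of \Cref{thm:SH} identifies $H^{*+1}_{\CR}(W;\QQ)\cong SH^{*}_{+}(W;\QQ)$, so $\dim_{\QQ}H^{*}_{\CR}(W;\QQ)=2$. Combined with (ii) this yields $1+m\leq 2$, i.e.\ $m\leq 1$, and with (i) we conclude $m=1$.

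I expect the crux step to be the main obstacle: transplanting the machinery of \cite{Zho} --- ADC-invariance of $SH^{*}_{+}$, the vanishing $SH^{*}(W;\QQ)=0$, and the reduction of the $SH^{*}_{+}$ computation to the model filling $\CC^{n}/(\ZZ/2\ZZ)$ --- to exact \emph{orbifold} fillings. This should cause no genuinely new difficulty, precisely because $\QQ$ has characteristic $0$, so every isotropy order $|\stab_{p}|=2$ is invertible and the relevant moduli spaces of Floer cylinders are the ones treated (and in the key cases shown to be manifolds) in this paper; but it is where essentially all of the work lies. The remaining steps --- the local model via \Cref{thm:order}, existence via \cite{Zho}, and the inertia-orbifold count --- are short.
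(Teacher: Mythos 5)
Your overall plan — bound the Chen–Ruan cohomology of $W$ by $2$, then read off the number of singularities from the inertia orbifold — is exactly the route the paper sketches as a \emph{speculation} in \S\ref{SSS:product}, and it is explicitly flagged there (and in the remark following \Cref{thm:unique}) as unavailable to classical transversality techniques. The step where your argument breaks down is the claim that ``following \cite{Zho} one also has $SH^*(W;\QQ)=0$.'' In \cite{Zho} this vanishing comes from the unital ring structure on symplectic cohomology: a dilation shows that the unit dies, forcing the whole ring to vanish. For exact orbifold fillings the pair-of-pants product is not accessible by perturbing the almost complex structure alone, because constant pair-of-pants concentrated at an orbifold singularity (the elements of $W^3\setminus W$ in the notation of \Cref{ex:constant}) generically have negative Fredholm index and yet cannot be perturbed away; they appear in the Gromov–Floer compactification of moduli spaces of non-constant pairs of pants, so one can neither guarantee compactness nor compatibility with the differential without obstruction-bundle gluing or virtual techniques. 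Your remark that the invertibility of $|\stab_p|=2$ in $\QQ$ resolves the issue conflates two separate problems: invertibility fixes the \emph{counting}, not the \emph{transversality/compactness} of the pair-of-pants moduli spaces. (Note also that the moduli spaces shown in this paper to be manifolds are moduli of Floer \emph{cylinders}, not of pairs of pants; the distinction is precisely where the obstruction lives.) Without $SH^*(W;\QQ)=0$, the exact triangle of \Cref{thm:SH} gives no upper bound on $\dim H^*_{\CR}(W;\QQ)$, even granting ADC-invariance of $SH^*_+$.

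The paper's actual proof sidesteps the ring structure entirely. After using \Cref{thm:order} and \cite{Zho} as you do to reduce to ruling out two singularities $p_1,p_2$ each modeled on $\CC^n/(\ZZ/2\ZZ)$, it constructs, for a type (I) autonomous Hamiltonian of slope just below $2+2\epsilon$, moduli spaces of thrice-punctured spheres with one puncture constrained to $(p_i,(\tau))$. These define maps $\phi:\langle(p_1,(\tau)),(p_2,(\tau))\rangle\to C_+(H)$ and $\psi:C_+(H)\to\langle(p_1,(\tau)),(p_2,(\tau))\rangle$ (the latter via projection through $C_0(H)$), and a careful compactness and transversality check — possible precisely because the only constant curves concentrated at singularities that can appear are $(p_*,(\Id,\tau,\tau))$, which are cut out transversely — shows $\psi\circ\phi=2\,\Id$. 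On the other hand an action and homotopy-class argument, plus \cite[Proposition~2.9]{Zho}, shows that $\hat\gamma^2_0,\check\gamma^2_0$ do not lie in the image of $\phi$, so $\mathrm{im}(\psi\circ\phi)$ has rank at most $1$, a contradiction. In effect the paper realizes the null-homotopy of $2\cdot\Id$ (which the ring structure would have furnished abstractly) by hand on exactly the sub-module where transversality is available. Your inertia-orbifold count (the $1+m$ lower bound on $\dim H^*_{\CR}$) and your appeal to \Cref{thm:order} and \cite{Zho} are fine, but the central estimate $\dim H^*_{\CR}(W;\QQ)\le 2$ is not established and would need either the more delicate moduli-space argument of the paper or virtual machinery for the product.
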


\begin{remark}
	\Cref{thm:unique} should also hold for the lens spaces considered in \cite[Theorem 1.3]{Zho}. 
	However, this cannot be done with classical transversality tools. 
	The easiest way to obtain such generalization would be equipping $SH^*(W;\QQ)$ with a unital ring structure.
	However, the pair of pants construction is almost never cut out transversely if one only perturbs the almost complex structure, see \S \ref{SSS:product}. 
	We will discuss algebraic structures on Floer theory/SFT of symplectic orbifolds in more general settings with virtual techniques in a sequel work.
\end{remark}

\subsection{The partial order on contact manifolds}
In \cite{RSFT}, Moreno and the second author studied the partial order endowed on the collection of contact manifolds by the existence of exact cobordisms. 
It is a natural question to ask whether this partial order is a total order, i.e.\ are there pairs of contact manifolds without exact cobordisms in either directions? 
Although it is tempting to believe the existence of such pairs given the richness of contact manifolds, examples are far from obvious. 
Indeed, the only known example in dimension $3$, explained to us by Chris Wendl, uses several rather deep results in $3$-dimensional contact topology from \cite{bowden2012exactly,etnyre2004planar,ghiggini2005strongly,massot2013weak}; see \S\ref{sec:non_exist_cobord} for the description of the example and the argument. 
With the help of exact orbifold fillings, the question can also be solved for all higher dimensions.
\begin{corintro}
\label{cor:pair_without_exact_cobord}
	For every $n\ge 2$, there exist two $(2n-1)-$dimensional contact manifolds $Y_1,Y_2$, between which there are no exact cobordisms in either direction. 
\end{corintro}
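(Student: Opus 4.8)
The plan is to play off against each other the asymmetric constraints on orbifold singularities supplied by \Cref{thm:sphere_have_smooth_fillings}, \Cref{thm:order_Brieskorn} and \Cref{thm:order}, using the elementary fact that exact cobordisms propagate exact orbifold fillings. The first thing to record is the gluing principle: if $X$ is an exact cobordism with concave end $Y_a$ and convex end $Y_b$, and $W_a$ is an exact orbifold filling of $Y_a$, then $W_a\cup_{Y_a}X$, after the usual rescaling of the Liouville forms in the collar, is an exact orbifold filling of $Y_b$ with the same orbifold points and isotropy groups as $W_a$; this is immediate, since the singularities of an exact orbifold filling are isolated and interior, hence disjoint from the smooth gluing hypersurface $Y_a$, where the Liouville data is untouched. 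Consequently, for contact manifolds $Y_1,Y_2$ an exact cobordism with concave end $Y_1$ and convex end $Y_2$ sends every exact orbifold filling of $Y_1$ to one of $Y_2$ with the same singularities, and symmetrically with the roles exchanged. It therefore suffices, for each $n$, to produce $Y_1,Y_2$ of dimension $2n-1$ whose controlled orbifold-filling behaviours are incompatible in both directions.

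\textbf{The case $n\ge 4$.} Here I would take $Y_1=(\mathbb{RP}^{2n-1},\xi_{\std})=L(2;1,\ldots,1)$ and $Y_2=L(3;1,\ldots,1)$, both with their standard contact structures as quotients of $(S^{2n-1},\xi_{\std})$; they are distinct, as $\pi_1(Y_1)=\ZZ/2\ZZ$ and $\pi_1(Y_2)=\ZZ/3\ZZ$. For $k\in\{2,3\}$ the orbifold $\CC^n/(\ZZ/k\ZZ)$ is the standard exact orbifold filling of $L(k;1,\ldots,1)$: it has a single orbifold point, of isotropy order $k$, and it is an isolated singularity in the sense of \Cref{thm:quotient}. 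An exact cobordism with concave end $Y_1$ and convex end $Y_2$ would, by the gluing principle applied to $\CC^n/(\ZZ/2\ZZ)$, produce an exact orbifold filling of $L(3;1,\ldots,1)$ with an orbifold point of isotropy order $2$; but \Cref{thm:order}, applied with $k=3$ (a prime with $3<n$), forces every orbifold singularity of every exact orbifold filling of $L(3;1,\ldots,1)$ to have isotropy of order exactly $3$, a contradiction. Exchanging the roles, an exact cobordism with concave end $Y_2$ and convex end $Y_1$ would give an exact orbifold filling of $(\mathbb{RP}^{2n-1},\xi_{\std})$ with an orbifold point of isotropy order $3$, contradicting \Cref{thm:order} applied with $k=2<n$. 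Hence no exact cobordism between $Y_1$ and $Y_2$ exists in either direction; note this case uses only \Cref{thm:order} and no smooth non-fillability input.

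\textbf{The cases $n=3$ and $n=2$.} For $n=3$ the lens-space mechanism degenerates, since $k<n$ forces $k=2$; instead I would take $Y_1=(\mathbb{RP}^5,\xi_{\std})$ and $Y_2=(S^5,\xi_{\std})$ (one could equally take $Y_2=ST^*S^3=B(2,3)$, using \Cref{thm:order_Brieskorn} in place of \Cref{thm:sphere_have_smooth_fillings}). On one hand, every exact orbifold filling of $(S^5,\xi_{\std})$ is diffeomorphic to $\DD^3$ by \Cref{thm:sphere_have_smooth_fillings}, so an exact cobordism with concave end $Y_1$ and convex end $Y_2$ would, by the gluing principle applied to the exact orbifold filling $\CC^3/(\ZZ/2\ZZ)$ of $\mathbb{RP}^5$, yield an exact orbifold filling of $(S^5,\xi_{\std})$ with an orbifold point, which is impossible. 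On the other hand, $(S^5,\xi_{\std})$ bounds the smooth exact filling $\DD^3$, so an exact cobordism with concave end $Y_2$ and convex end $Y_1$ would produce a smooth exact filling of $(\mathbb{RP}^5,\xi_{\std})$, contradicting its non-fillability \cite{Zho} (as $3$ is not a power of $2$). Finally, $n=2$ is the three-dimensional case, which lies outside the scope of the orbifold techniques here; there I would invoke Wendl's example described in \S\ref{sec:non_exist_cobord}, resting on the deep three-dimensional results of \cite{bowden2012exactly,etnyre2004planar,ghiggini2005strongly,massot2013weak}.

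\textbf{Main obstacle.} Granting \Cref{thm:sphere_have_smooth_fillings}, \Cref{thm:order_Brieskorn} and \Cref{thm:order}, the orbifold part of the argument is essentially formal; the real difficulty is to cover \emph{all} dimensions uniformly. Dimension $3$ ($n=2$) must be outsourced completely to three-dimensional contact topology, and dimension $5$ ($n=3$) still needs the external non-fillability of $(\mathbb{RP}^5,\xi_{\std})$, since \Cref{thm:order} supplies essentially only one ``singularity-forcing'' contact manifold in that dimension. For $n$ a power of $2$ no non-fillability of $(\mathbb{RP}^{2n-1},\xi_{\std})$ is available, which is precisely why the $n\ge 4$ argument is engineered to run entirely through the $2$-versus-$3$ isotropy dichotomy of \Cref{thm:order} rather than through smooth fillability; as a side benefit, that argument also rules out exact \emph{orbifold} cobordisms between $Y_1$ and $Y_2$ in those dimensions, because \Cref{thm:order} constrains the singularities of \emph{all} exact orbifold fillings, however produced.
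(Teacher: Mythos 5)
Your proof is correct, and for $n\ge 4$ it takes a genuinely different route from the paper's. The paper's argument, uniformly for all $n\ge 3$, uses the pair $Y_1=L(k;1,\ldots,1)$ (taking $k=2$ when $n$ is not a power of $2$, and $k=3$ when $n=2^m$, $m\ge 2$) and $Y_2=\partial(V\times\DD)$ for an arbitrary Liouville domain $V$ of dimension $2n-2$: a cobordism $Y_1\to Y_2$ is excluded by gluing on $\CC^n/(\ZZ/k\ZZ)$ and invoking \Cref{thm:sphere_have_smooth_fillings}, while a cobordism $Y_2\to Y_1$ is excluded because gluing on $V\times\DD$ would give a smooth exact filling of $L(k;1,\ldots,1)$, contradicting \cite{Zho}. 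Your $n=3$ case is a specialization of this with $V=\DD^2$. For $n\ge 4$, by contrast, you replace $Y_2$ with a second lens space $L(3;1,\ldots,1)$ and feed \emph{both} directions through \Cref{thm:order}\eqref{G2}: gluing $\CC^n/(\ZZ/2\ZZ)$ onto a hypothetical cobordism to $L(3;\ldots)$ produces an orbifold filling with a $\ZZ/2\ZZ$-singularity, forbidden since $3<n$ forces $|\stab_p|=3$, and symmetrically with $2$ and $3$ exchanged. This buys two things: it avoids the case split on whether $n$ is a power of $2$ and removes the reliance on the smooth non-fillability results of \cite{Zho}; and, as you observe, since both obstructions constrain \emph{all} exact orbifold fillings, the same pair already rules out exact \emph{orbifold} cobordisms for $n\ge 4$, recovering most of \Cref{cor:no_orbifold_cobordism} (which the paper proves separately via $\partial(T^*S^3\times V)$ and the Viterbo transfer map). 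The price is that your dichotomy degenerates at $n=3$, where \Cref{thm:order}\eqref{G2} only supplies the single prime $k=2$, so you must fall back on \cite{Zho} there, exactly as the paper does.
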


Moreover, the same holds even if we consider exact orbifold cobordisms.
\begin{corintro}\label{cor:no_orbifold_cobordism}
    For every $n\ge 3$, there exist two $(2n-1)-$dimensional contact manifolds $Y_1,Y_2$, between which there are no exact \emph{orbifold} cobordisms in either direction. 
\end{corintro}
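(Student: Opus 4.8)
The plan is to produce, for each $n\ge 3$, a pair of exactly orbifold-fillable contact $(2n-1)$-manifolds whose exact orbifold fillings carry \emph{incompatible} orbifold singularities, and to rule out an exact orbifold cobordism between them by gluing a fixed exact orbifold filling of the source onto the cobordism: the result is an exact orbifold filling of the target that violates one of the restriction statements \Cref{thm:sphere_have_smooth_fillings,thm:order_Brieskorn,thm:order,thm:unique} (or the computation \Cref{thm:quotient}). Concretely I would take $Y_1=(\mathbb{RP}^{2n-1},\xi_{\std})=(L(2;1,\ldots,1),\xi_{\std})$, which is filled by $\CC^n/(\ZZ/2\ZZ)$, and $Y_2=(L(3;1,\ldots,1),\xi_{\std})$, which is filled by $\CC^n/(\ZZ/3\ZZ)$, using that $\CC^n/(\ZZ/m\ZZ)$ is an exact orbifold filling of $L(m;1,\ldots,1)$ with exactly one orbifold point, of isotropy order $m$, sitting in the interior.

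First I would forbid an exact orbifold cobordism $X$ from $Y_2$ to $Y_1$. Gluing $\CC^n/(\ZZ/3\ZZ)$ to $X$ along $Y_2$ gives an exact orbifold filling $W$ of $\mathbb{RP}^{2n-1}$, and since attaching a cobordism along a boundary leaves the interior untouched, the isotropy-$3$ point persists in $W$. This contradicts the first restriction in \Cref{thm:order} with $k=2$, which forces every orbifold point of an exact orbifold filling of $\mathbb{RP}^{2n-1}=L(2;1,\ldots,1)$ to have isotropy order dividing $2!=2$. (When $n$ is not a power of $2$ one may instead cite \Cref{thm:unique}.)

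Next I would forbid an exact orbifold cobordism $X$ from $Y_1$ to $Y_2$: gluing $\CC^n/(\ZZ/2\ZZ)$ to $X$ produces an exact orbifold filling $W$ of $L(3;1,\ldots,1)$ carrying an orbifold point of isotropy order $2$. For $n\ge 4$ this is immediately impossible by the second restriction in \Cref{thm:order}, which (as $k=3$ is prime and $3<n$) makes every orbifold point of such a filling have isotropy order exactly $3$. The case $n=3$ is the crux and needs the extra input that exact orbifold fillings of $(L(3;1,1,1),\xi_{\std})$ cannot carry a $\ZZ/2\ZZ$-isotropy point; I would obtain this from symplectic cohomology. A neighbourhood of the isotropy-$2$ point of $W$ is a Liouville subdomain isomorphic to a ball in $\CC^3/(\ZZ/2\ZZ)$, so a Viterbo transfer homomorphism $SH^*(W;\ZZ/2\ZZ)\to SH^*(\CC^3/(\ZZ/2\ZZ);\ZZ/2\ZZ)$ — set up in the exact orbifold setting and compatible with the maps from $H^*_{\CR}$ of \Cref{thm:SH}, hence sending unit to unit — together with \Cref{thm:quotient} (and the step of its proof that pins the surviving class to the unit) forces $SH^*(W;\ZZ/2\ZZ)\ne 0$, whereas the complementary vanishing statement the present methods should also yield — that every exact orbifold filling of $L(3;1,1,1)$ has trivial symplectic cohomology over any ring in which $3$ is invertible — gives $SH^*(W;\ZZ/2\ZZ)=0$, a contradiction.

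The routine ingredients are that $\CC^n/(\ZZ/m\ZZ)$ really is an exact orbifold filling of $L(m;1,\ldots,1)$ with the stated single interior singularity, that gluing an exact orbifold cobordism along a contact boundary preserves all orbifold singularities, and that a Liouville subdomain inclusion induces a transfer map on the symplectic cohomology \emph{groups} of \Cref{thm:SH}. The genuine difficulty is the five-dimensional case: one must either sharpen \Cref{thm:order} so as to forbid even-order isotropy in fillings of $L(3;1,1,1)$, or establish the symplectic-cohomology vanishing above, and in either $SH^*$-based route one must verify that the Viterbo transfer respects the unit even though the ring structure on $SH^*$ lies outside the reach of the classical transversality methods used in this paper.
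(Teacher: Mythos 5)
Your argument for $n\ge 4$ is correct and is a genuine alternative to the paper's. You take $Y_1=(\mathbb{RP}^{2n-1},\xi_{\std})$ and $Y_2=(L(3;1,\ldots,1),\xi_{\std})$ and rule out cobordisms in both directions by the same symmetric gluing trick plus \Cref{thm:order} alone: gluing $\CC^n/(\ZZ/3\ZZ)$ onto a cobordism $Y_2\rightsquigarrow Y_1$ would give an exact orbifold filling of $\mathbb{RP}^{2n-1}$ with an isotropy-$3$ point, contradicting part (1) of \Cref{thm:order} with $k=2$; gluing $\CC^n/(\ZZ/2\ZZ)$ onto a cobordism $Y_1\rightsquigarrow Y_2$ would give an exact orbifold filling of $L(3;1,\ldots,1)$ with an isotropy-$2$ point, contradicting part (2) of \Cref{thm:order} with $k=3<n$. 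The paper instead takes $Y_1=\mathbb{RP}^{2n-1}$ and $Y_2=\partial(T^*S^3\times V)$ for an auxiliary Liouville domain $V$, and the two directions are handled asymmetrically: \Cref{cor:order_dilation} (built on the Brieskorn restriction \Cref{thm:order_Brieskorn}) rules out singularities in any exact orbifold filling of $Y_2$, while the other direction uses that $\eta$ is nontrivial for fillings of $\mathbb{RP}^{2n-1}$ together with the \emph{group-level} Viterbo transfer of \Cref{thm:property}, since capping the cobordism with $T^*S^3\times V$ yields a filling of $\mathbb{RP}^{2n-1}$ containing it as a subdomain and the connecting map $SH^*_+(T^*S^3\times V)\to H^{*+1}(T^*S^3\times V)$ vanishes. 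For $n\ge 4$ your route is arguably more economical, since it cites only \Cref{thm:order} and both directions are structurally identical.

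However, the case $n=3$ is a genuine gap, as you yourself flag. Part (2) of \Cref{thm:order} requires $k<n$, which fails for $k=n=3$, so nothing in the paper forbids an isotropy-$2$ point in an exact orbifold filling of $(L(3;1,1,1),\xi_{\std})$; part (1) only gives $|\stab_p|\,\big|\, 6$, which allows it. Your proposed repair rests on two unestablished ingredients: a vanishing statement $SH^*(W;\ZZ/2\ZZ)=0$ for \emph{arbitrary} exact orbifold fillings $W$ of $L(3;1,1,1)$ (analogous to but strictly stronger than \Cref{thm:quotient}, which concerns only the model $\CC^n/G$), and a unital Viterbo transfer — but the paper is explicit that the ring structure on orbifold symplectic cohomology is out of reach with the classical-transversality toolkit used here, so "transfer respects the unit" cannot be invoked as stated. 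The paper sidesteps both issues for $n=3$ by choosing $Y_2=ST^*S^3$ and running the $\eta$-nontriviality argument in the group category, where no unit is needed. If you want a self-contained argument in your framework, the cleanest fix would be to replace $Y_2$ by $ST^*S^3$ for $n=3$ (and more generally $\partial(T^*S^3\times V)$) and import the paper's $\eta$-plus-transfer step for the one direction that your restriction-theorem gluing does not reach.
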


\begin{remark}
	On the other hand, the analogous question for strong cobordisms seem to be open even for dimension $3$. One of the challenges is that we lack the effective tools from SFT to obstruct strong cobordisms. It is worth noting that strong cobordism in dimension $3$ is in fact fairly flexible by the work of  Wendl \cite{wendl2013non}.
\end{remark}

\subsection*{Organization of the paper}
We review basics of orbifolds in \S \ref{S2} and give an almost self-contained construction of the Banach orbifold of maps between orbifolds, as well as basic algebraic topology of orbifolds. 
In \S \ref{S3}, we define symplectic cohomology of exact orbifold fillings of contact manifolds, focusing on the differences between manifolds and orbifolds and discuss its basic properties, including where classical transversality fails. 
We compute the symplectic cohomology of $\CC^n/G$ in \S \ref{S4}, and prove all the topological applications in \S \ref{S5}.

\subsection*{Acknowledgments}
We would like to thank Chris Wendl for explaining to us the $3-$dimensional examples of pairs of contact manifolds with no exact cobordisms between them in either direction, and Marc Kegel for useful discussions concerning orbifold complex line bundles. F.G. acknowledges the support from the European Research Council (ERC) under the European Union’s Horizon 2020 research and innovation programme (grant agreement No. 772479). Z.Z. is partially supported by National Science Foundation under Grant No. DMS-1926686 and is pleased to acknowledge the Institute for Advanced Study for its warm hospitality.

	\section{Exact orbifolds}\label{S2}
In this section, we review briefly the basics of orbifold theory, including their algebraic/differential topology as well as their Chen-Ruan cohomology \cite{CheRua04}. 
Then we introduce the main geometric object of this paper, namely exact orbifold fillings of contact manifolds, and study their basic properties. 

\subsection{Preliminary on orbifolds}\label{ss:orbifold}
Originally, orbifolds were defined as paracompact Hausdorff spaces which are locally modeled on quotients of the form $\RR^n/G$, where $G\subset \Diff(\RR^n)$ is a finite group. 
Such objects are now referred to as \emph{reduced}, or \emph{effective}, orbifolds. 
Using orbifolds defined with such local charts, Chen--Ruan initiated the mathematical study of the string theory of orbifolds, namely they introduced a new cohomology theory of orbifolds \cite{CheRua01} as well as the corresponding orbifold quantum cohomology, or more generally, the Gromov-Witten invariants of orbifolds \cite{CheRua04}. 
In \cite{Moerdijk97}, Moerdijk--Pronk showed that effective orbifolds can be described using effective proper \'etale Lie groupoids. 

Nowadays, the groupoidal definition of orbifolds is considered as the ``right" definition, because it has the advantage of including non-reduced orbifolds and provides a better framework to describe the (bi)category of orbifolds. 
In this paper, we will also adopt the groupoidal perspective. 
Some of the best references on groupoidal orbifolds are \cite{Ruan07, Moerdijk01}. 
We will also often refer to \cite{polyfold}, since it contains detailed proofs of many basic properties of groupoidal orbifolds\footnote{\cite{polyfold} is phrased in sc-calculus and M-polyfolds, but it specializes to the finite dimensional Lie groupoid setting that we consider here.}. 

\subsubsection{Orbifolds and orbibundles.}
We first recall that a groupoid category is a small category in which each arrow is an isomorphism. 
\begin{defn}[{\cite[\S 1]{Moerdijk01}}, {\cite[Definition 1.29, 1.30, 1.36]{Ruan07}}]\label{def:groupoid}
	A proper \'etale  Lie groupoid $\cC$ is a groupoid with $\Obj(\cC)=C$ and $\Mor(\cC)=\bC$, such that the following holds.
	\begin{enumerate}
		\item $C,\bC$ are both Hausdorff spaces locally modeled on $\RR^n$ with smooth transition maps.
		\item (\'Etale.) The source and target maps $s,t:\bC\to C$ are local diffeomorphisms.
		\item The inverse map $i:\bC\to \bC$, unit map $u:C\to \bC$ and multiplication $m:\bC\tensor[_s]{\times}{_t} \bC \to \bC$ are smooth. 
		(Here, $\bC\tensor[_s]{\times}{_t} \bC$ denotes $\{(\phi,\psi)\in\bC\times\bC\vert s(\phi)=t(\psi)\}$.)
		\item (Proper.) $(s,t):\bC\to C\times C$ is a proper map. 
	\end{enumerate}
\end{defn}

\begin{remark}
Instead of stating $C,\bC$ as finite dimensional manifolds as in \cite{Ruan07,Moerdijk01}, we only require them to be Hausdorff spaces locally modeled on $\RR^n$ (or more generally Banach spaces) with smooth transition maps. 
The missing of paracompactness or second countability cost us the partition of unity on $C$ and $\bC$.
However, what we really need is that the quotient space $|\cC|$ is paracompact and hence has partition of unity (for the purpose of constructing Riemannian metrics etc.), which is stated as one of conditions in the definition of orbifold below. 
Such simplification on $C,\bC$ relieves us from checking paracompactness for the Banach orbifold of maps between orbifolds in \S \ref{ss:morphism}. 
On the other hand, under the assumption that the quotient space $|\cC|$ is second countable, which is the case for most of our applications including the  Banach orbifold of maps between orbifolds in \S \ref{ss:morphism}, then one can show that there is an equivalent groupoid $\cC'$ such that $\Obj\cC',\Mor\cC'$ are second countable.
\end{remark}

Given a proper \'etale Lie groupoid $\cC$, the \emph{orbits set} $|\cC|=C/\bC$, i.e.\ the set of equivalence classes with the equivalence relation $x\sim y$ iff $\phi(x)=y$ for a $\phi\in \bC$, is equipped with the quotient topology and is a Hausdorff space by the proper and étale condition in \Cref{def:groupoid}.  

A functor is called \emph{smooth} if it is smooth both on the object and morphism level. 
An \emph{equivalence} from $\cC$ to another proper \'etale Lie groupoid $\cD$ is a fully faithful functor $F$ that is a local diffeomorphism on the object level and such that $|F|:|\cC|\to \vert\cD\vert$ is a homeomorphism\footnote{This is the definition in \cite[Definition 10.1.1]{polyfold}, which is equivalent to the ones in \cite[\S 2.4]{Moerdijk01} and \cite[Definition 1.42]{Ruan07}.}. Let $F,G$ be two smooth functors from $\cC$ to $\cD$; a \emph{natural transformation} from $F$ to $G$, denoted by $F\Rightarrow G$, is a smooth map $\tau:C \to \bD$ such that the following commutes for any $\phi\in \bC$,
\[
\xymatrix{
F(x) \ar[r]^{F(\phi)}\ar[d]^{\tau(x)} & F(y)\ar[d]^{\tau(y)} \\
G(x) \ar[r]^{G(\phi)} & G(y)}
\]
In the groupoidal situation, every natural transformation is invertible. 

We now consider a diagram of smooth functors
\[f:\cC \stackrel{F}{\leftarrow}\cE \stackrel{G}{\to }\cD,\]
where $F$ is an equivalence in the above sense  and $G$ is smooth.
If $G$ is also an equivalence, then $\cC$ and $\cD$ are called \emph{Morita equivalent} to each other. A \emph{refinement of the diagram} is a diagram $\cC \stackrel{F'}{\leftarrow}\cE' \stackrel{G'}{\to }\cD$, such that we have the following diagram
\[
\xymatrix{
\cC & \cE\ar[l]_{F} \ar[r]^{G} \ar@{}[ld]^(.15){}="a"^(.5){}="b" \ar@{=>} "a";"b" \ar@{}[rd]^(.15){}="c"^(.5){}="d" \ar@{=>} "c";"d"  & \cD \\
& \cE'\ar[u]_{I}\ar[ru]_{G'}\ar[lu]^{F'} &}
\]
where $I:\cE'\to \cE$ is an equivalence and there are natural transformations from $F\circ I$ to $F'$ and $G\circ I$ to $G'$. 
Two diagrams are then declared equivalent if there is a common refinement; this is indeed an equivalence relation \cite[Lemma 10.3.4]{polyfold}. 
A \emph{generalized map} $\mathfrak{f}$ from $\cC$ to $\cD$ is an equivalence of diagrams from $\cC$ to $\cD$. 
This induces in particular a well-defined map $|\mathfrak{f}|:|\cC|\to |\cD|$ between the orbits spaces. 

We point out that generalized maps indeed define morphisms between proper \'etale Lie groupoids, with the composition of two generalized maps $[\cC_1\leftarrow \cE_1\to \cC_2]$ and $[\cC_2\leftarrow \cE_2\to \cC_3]$  defined as 
$[\cC_1 \leftarrow \cE_1\times_{\cC_2}\cE_2\to \cC_3]$, where $\cE_1\times_{\cC_2}\cE_2$ is the pull back of $\cE_1\to \cC_2\leftarrow \cE_2$ \cite[Theorem 10.2.2]{polyfold} (c.f.\ the notion of ``weak fibered product'' recalled in \S\ref{sec:fiber_products}), where $\cE_1\times_{\cC_2}\cE_2\to \cC_1$ is the composition of $\cE_1\times_{\cC_2}\cE_2\to \cE_1$ and $\cE_1\to \cC_1$, and $\cE_1\times_{\cC_2}\cE_2\to \cC_3$ is the composition of $\cE_1\times_{\cC_2}\cE_2\to \cE_2$ and $\cE_2\to \cC_3$; the arrows can be schematically depicted as follows:
\begin{equation*}
\begin{tikzcd}
      &                             & \cE_1\times_{\cC_2}\cE_2 \arrow[ld] \arrow[rd] &                             &       \\
      & \cE_1 \arrow[rd] \arrow[ld] &                                                & \cE_2 \arrow[ld] \arrow[rd] &       \\
\cC_1 &                             & \cC_2                                          &                             & \cC_3
\end{tikzcd}
\end{equation*}

With such definition of generalized maps, we get a category of proper \'etale Lie groupoids.

\begin{defn}[{\cite[Definitions 1.47 and 1.48]{Ruan07}}]
\label{defn:orbifold}
	An \emph{orbifold structure} $(\cC,\alpha)$ on a paracompact Hausdorff space $X$ is a proper \'etale Lie groupoid $\cC$ with a homeomorphism $\alpha\colon|\cC|\to X$. 
	Two orbifold structures $(\cC,\alpha),(\cD,\beta)$ are said to be \emph{equivalent} if there is a Morita equivalence $\cC\stackrel{\mathfrak{f}}{\to} \cD$ such that the following diagram commutes,
	\begin{equation}
	\label{eqn:commutative_diagr_def_orbifold}
	\xymatrix{
	|\cC| \ar[r]^{\alpha}\ar[d]^{|\mathfrak{f}|} & X \ar[d]^{\Id}\ar[d]\\
	|\cD| \ar[r]^{\beta} & X
    }
	\end{equation}
	An \emph{orbifold} $X$ is a paracompact Hausdorff space $X$ equipped with an equivalence class of orbifold structures $(\cC,\alpha)$.
\end{defn}

A \emph{morphism} from the orbifold structure $(W,\cC,\alpha)$ to $(V,\cD,\beta)$ consists of a continuous map $f:W\to V$ and a generalized map $\mathfrak{f}:\cC\to \cD$ such that $\beta\circ|\mathfrak{f}|=f\circ \alpha$. Note that $f$ is completely determined by $\mathfrak{f}$ and the two orbifold structures.
Two morphisms $(f,\mathfrak{f}):(W,\cC,\alpha)\to (V,\cD,\beta)$ and $(g,\mathfrak{g}):(W,\cC',\alpha')\to (V,\cD',\beta') $ between orbifold structures for orbifolds $W $ and $V$ are \emph{equivalent} iff $f=g$ and there exist generalized isomorphisms $\mathfrak{h}_1:\cC\to \cC',\mathfrak{h}_2:\cD\to \cD'$ such that $|\mathfrak{h}_2|\circ |\mathfrak{f}|=|\mathfrak{g}|\circ |\mathfrak{h}_1|$, as in the following diagram:
\begin{equation*}
\begin{tikzcd}
                                                                                                & \vert\cC'\vert \arrow[rrd, "\alpha'"] \arrow[dd, "|\mathfrak{g}|"'] &  &                     \\
\vert\cC\vert \arrow[ru, "|\mathfrak{h}_1|"] \arrow[dd, "|\mathfrak{f}|"] \arrow[rrr, "\alpha"] &                                                                     &  & W \arrow[dd, "f=g"] \\
                                                                                                & \vert\cD'\vert \arrow[rrd, "\beta'"]                                &  &                     \\
\vert\cD\vert \arrow[ru, "|\mathfrak{h}_2|"] \arrow[rrr, "\beta"]                               &                                                                     &  & V                  
\end{tikzcd}
\end{equation*}
One can check that this is indeed an equivalence relation.

\begin{remark}
    So far, we phrased the orbifold category as a honest category, where the set of morphisms is a set of equivalence classes. However, a better way of describing the morphisms is keeping the information of equivalences, by realizing the set of equivalences classes of morphisms as the orbit space of another groupoid. In particular, the $C^k-$ and $C^\infty-$morphisms between orbifolds form a Banach and Fr\'echet orbifold respectively. 
    This gives the foundations for describing the structure of moduli spaces of holomorphic curves in an orbifold, which has been done by Chen \cite{Chen06} and will be reviewed in \S \ref{ss:morphism}.
\end{remark}

\begin{defn}[{\cite[Defintition 2.25]{Ruan07}}]\label{def:vb}
	A \emph{vector bundle} over a proper \'etale Lie groupoid $\cC$ consists of a vector bundle $\pi:E\to C$ and a smooth $\cC$-action $\mu: \bC \tensor[_s]{\times}{_\pi}  E = \{(\phi,e)\in\bC\times E \vert s(\phi)=\pi(e)\}\to E$ such that the following holds:
	\begin{enumerate}
		\item $\pi(\mu(\phi,e))=t(\phi)$,
		\item $\mu$ is a linear isomorphism on fibers of $E$,
		\item $\mu(\Id_x,w)=w$ for $x\in C,w\in E_x$,
		\item $\mu(\phi\circ \psi,w)=\mu(\phi,\mu(\psi,w))$. 
	\end{enumerate}
\end{defn}

Now, one can define a category $\cE$ as follows: $\Obj(\cE)=E$ and $\Mor(\cE)= \bC \tensor[_s]{\times}{_\pi}  E$, with structure maps $s(\phi,w)=w$ and $t(\phi,w)=\mu(\phi,w)$. 
Such $\cE$ is actually a proper \'etale Lie groupoid equipped with a smooth functor $\pi:\cE \to \cC$, which is a vector bundle on both object and morphism levels. 
It is easy to check that Definition \ref{def:vb} is equivalent to having a smooth functor $\pi:\cE\to \cC$ such that is made of a vector bundle morphism on both object and morphism levels. 
One can similarly define generalized morphisms between vector bundles over Lie groupoids and introduce Morita equivalences between them. 
Then, one can also naturally define \emph{orbibundle structures}, \emph{orbibundles} and \emph{orbibundle morphisms}, just as done for the base orbifold case.
In particular, for an orbifold $W$ modeled on an orbifold structure $(\cC,\alpha)$, the $\cE$ constructed as above represents an orbifold $X$, which is moreover an orbibundle with a natural projection map $X\to W$ induced by the smooth functor $\pi: \cE\to\cC$.

Given a functor $F:\cC\to \cD$, one can pull back any vector bundle $\cE$ over $\cD$ to a vector bundle $F^*\cE$ over $\cC$, which on the objects level is $C \tensor[_F]{\times}{_\pi} E$, i.e.\ the pullback of the vector bundle on the objects level. 
Moreover, if $F:\cC\to \cD$ is an equivalence and $\cE$ is a vector bundle over $\cC$, then one can define the pushforward $F_*\cE$ over $\cD$ \cite[\S 2.2]{Ruan07}. 
As a consequence, one can pullback a vector bundle by a generalized map, and hence pull back an orbibundle by an orbifold morphism \cite[Theorem 2.43]{Ruan07}.

\subsubsection{Differential forms and integration.}
Two particular examples of orbibundles are the tangent and cotangent bundles $TW, T^*W$ of an orbifold $W$ \cite[Theorem 8.1.1, 10.1.7]{polyfold}. 
One can then define the space of differential forms on $W$ as the space of smooth sections of the bundle $\wedge^*T^*W$, i.e.\ the space of smooth functors $\sigma: \cC \to \cE$ such that $\pi\circ \sigma = \Id$, where $\pi:\cE\to \cC$ is a proper \'etale Lie groupoid bundle representing the orbibundle $\wedge^*T^*W \to W$. By \cite[Theorem 11.2.1]{polyfold}, the differential graded algebra of differential forms is canonically isomorphic for different choices of orbifold structures, which yields a well-defined and functorial construction of the de Rham complex on orbifolds. 

An alternative (and equivalent) point of view on differential forms on orbifolds is the following.

\begin{defn}[{\cite[Definition 8.2.1]{polyfold}}]
	Let $\cC$ be a proper \'etale Lie groupoid. 
	For every $\phi\in \bC$, there is a local diffeomorphism $\widehat{\phi}=t\circ (s\vert_{U})^{-1}$ from a neighborhood $U$ of $s(\phi)$ to a neighborhood of $t(\phi)$. 
	The space of \emph{differential $k$-forms on $\cC$} is then
	\[
	\Omega^k(\cC)=\left\{\omega\in \Omega^k(C)\left| \widehat{\phi}^*\omega=\omega, \forall \phi \in \bC\right. \right\}.
	\]
	As the exterior derivative $\rd$ on $C$ naturally restricts to $\Omega^*(\cC)$, one defines the \emph{de Rham complex} of $\cC$ to be $(\Omega^*(\cC),\rd)$. 
	Moreover, as Morita equivalences induce isomorphisms of de Rham complexes \cite[\S 11.2]{polyfold}, one can define the \emph{de Rham complex $\Omega^*(W)$ of an orbifold $W$} to be that of any of any orbifold structure representing it\footnote{This is a slightly imprecise version of the definition. 
	A more accurate interpretation of the de Rham complex is as a functor from the category of equivalent orbifold structures of $W$, where a morphism is an equivalence of orbifold structures. The functoriality of the above definition follows from \cite[Lemma 11.2.5]{polyfold}.}.
\end{defn}

One can then define the \emph{de Rham cohomology} of $W$ as the cohomology of $(\Omega^*(W),\rd)$. As it turns out, this is actually isomorphic to the singular cohomology with real coefficients of the underlying topological space of $W$ \cite[\S 2.1]{Ruan07}. 
Therefore we will abuse notation and just denote the de Rham cohomology by $H^*(W)$. 
In this and all the sections that follow, we will also make another notational simplification, namely we will denote with the same symbols differential forms on an orbifold $W$ and their lifts to the object space $C$ of a proper \'etale Lie groupoid $\cC$ representing $W$. 

A differential form $\omega$ on $\cC$ is called \emph{compactly supported} if $|\supp \omega|=\supp \omega/ \bC$ is compact. 
Then, one can also consider the de Rham complex of compactly supported differential forms $\Omega^*_c(W)$. 
Both the de Rham complexes, i.e.\ $\Omega^*(W)$ and $\Omega^*_c(W)$, satisfy, under (proper, in the compactly supported case) orbifold morphisms, the same functoriality properties as de Rham complexes in the smooth manifold setting. 

Moreover, we also have an integration theory, for which an equivalent of the smooth Stokes' theorem holds. 
To introduce it, we first recall the concept of local uniformizers, which are local charts of orbifolds that can be used to recover Satake's ``V-manifold" from a groupoidal orbifold \cite[Theorem 1.45]{Ruan07}. 

\begin{example}
   Given an action of a finite group $G$ on a manifold $M$, we can form the action groupoid $G\ltimes M$ with $\Obj(G\ltimes M)=M$ and $\Mor(G\ltimes M)=G\times M$, where source map is $s(g,x)=x$ and target map is $t(g,x)=g\cdot x$. Then $G\ltimes M$ is a proper \'etale Lie groupoid. A special case of such groupoid is for $M=\RR^n$ and $G\subset GL(n,\RR)$ finite.
\end{example}

\begin{defn}[{\cite[Definition 7.1.20]{polyfold}}]
\label{defn:loc_unif}
	Let $\cC$ be a proper \'etale Lie groupoid and $x\in C$. 
	A \emph{local uniformizer} around $x$ is a smooth and fully faithful functor 
	\[\Psi_x: \bC_x\ltimes U_x\to \cC,\]
	with $U_x\subset C$ a neighborhood of $x$ and $\bC_x\subset \bC$ the isotropy group of $x$, such that the following holds:
	\begin{enumerate}
		\item\label{inclusion} on the object level $\Psi_x$ is the inclusion $U_x\to C$,
		\item\label{orbit_homeo} $|\Psi_x|:U_x/\bC_x\to |\cC|$ is a homeomorphism onto an open subset of $|\cC|$.
	\end{enumerate}
\end{defn}
To be precise, \eqref{orbit_homeo} actually follows from fully-faithfulness and \eqref{inclusion}, but it has been stated directly in the definition for clarity.

Local uniformizers always exists, see e.g.\ \cite[Proposition 7.1.19]{polyfold}.
An orbifold/groupoid is called \emph{effective} if, for every $x$, the $\bC_x-$action in the local uniformizer around $x$ is effective. 
We also point out that, in the effective case, one can moreover prove that local uniformizers exist with $U_x$ the unit ball in $\RR^n$ and $\bC_x$ a subgroup of $O(n)$, where the latter acts on $U_x$ by the standard linear action.

An \emph{orientation} of an orbifold $W$ is a section of the projectivization of the line orbibundle $\wedge^{\dim W} TW$ over $W$.
Then, for a $G$-action on an oriented smooth manifold $M$, the action groupoid $W=G\ltimes M$ is orientable if and only if the $G$-action is by orientation preserving diffeomorphisms of $M$. 
In this case, given $\omega\in \Omega^*_c(G\ltimes M)$, i.e.\ given a $G$-invariant compactly supported form $\omega$ on $M$, we define
\[\int_{M/G} \omega \coloneqq \frac{1}{|G|}\int_M \omega.\]
In the general case of oriented orbifolds, $W$ has a cover $\{|U_x|\}$, where $U_x$ is a local uniformizer around $x$. Then we can pick a partition of unity $\{\overline{\rho}_x\}$ subordinate to $\{|U_x|\}$, such that $\overline{\rho}_x$ is induced from a $\bC_x$-invariant compactly supported smooth function $\rho_x$ on $U_x$. Then we define
$$\int_W \omega \coloneqq \sum \frac{1}{|\bC_x|}\int_{U_x}\rho_x \omega.$$
By \cite[\S 9.5]{polyfold}, the integration is well-defined, i.e.\ independent of every choice in the construction. 
Using this and the resulting Stokes' theorem, we have the Poincar\'e pairing on an oriented orbifold $W$ of dimension $n$ given by
\begin{equation}
\label{eqn:PD_singular_cohomol}
H^*(W)\otimes H_c^{n-*}(W)\to \RR, \quad \alpha \otimes \beta \mapsto \int_{W} \alpha \wedge \beta.
\end{equation}
Such pairing is non-degenerate for any compact orbifold by the Mayer-Vietoris arguments in \cite[\S 5]{Bott82}.

\subsubsection{Fiber products of orbifolds.}
\label{sec:fiber_products}

Given two groupoid functors $F:\cW\to \cU,G:\cV\to \cU$, one can define the \emph{weak fiber product} $\cW\times_\cU \cV$ by
$$\Obj(\cW\times_\cU \cV)\coloneqq W\times_U \bU \times_U V =\{ (w,g,v)\in W\times \bU \times V| g:F(w)\to G(v) \}.$$
A morphism from $(w,g,v)$ to $(w',g',v')$ is pair of morphisms $(f,h)\in \bW\times \bV$ from $w,v$ to $w',v'$ respectively, such that $g\circ F(f)=G(h)\circ g'$.
Then, the following diagram is commutative up to a natural transformation:
\[\xymatrix{
\cW \times_{\cU} \cV \ar[r]\ar[d] & \ar[d] \cW \ar[d]  \ar@{}[ld]^(.3){}="a"^(.7){}="b" \ar@{=>} "a";"b"   \\
\cV \ar[r] & \cU}\]
Notice that, when the fiber product $W\times_U \bU \times_U V=\Obj(\cW\times_{\cU} \cV)$ is cut out transversely, $\cW\times_{\cU}\cV$ is a proper \'etale Lie groupoid.

Now, consider two orbifold maps $f:W\to U, g:V\to U$ such that $\mathrm{im} (\rd f(x)) + \mathrm{im} (\rd g(y))=T_pU$\footnote{One can interpret the relation on the level of objects with any choice of local uniformizers, it is straightforward to check that the relation does not depend on the choice of local uniformizers.} for any $x,y$ with $f(x)=g(y)=p$. 
Suppose that $f,g$ are represented by $F:\cW \to \cU_1,G:\cV\to \cU_2$, and that we have a Morita equivalence $\cU_1\leftarrow \cU \to \cU_2$. 
Then $f,g$ are also represented by $\cW\times_{\cU_1}\cU \to \cU$ and $\cV\times_{\cU_2}\cU\to \cU$. 
Hence one can form their weak fiber product, and one can explicitly check that this gives rise to a well-defined proper \'etale Lie groupoid up to equivalence, whose orbit space shall be denoted by $W\tensor[_f]{\times}{_g} V$ and called the \emph{orbifold fiber product}.

In the sections that follow, we will also need to count zero dimensional compact orbifolds in order to define the differential for the symplectic cohomology. 
Since the critical points (periodic orbits) could be in our situation orbifold points, the differential is defined through pullback and pushforward instead of just counting the moduli space. 
A typical situation is as follows. 
Assume that we have two orbifold maps/functors $\bullet/H_1\stackrel{s}{\leftarrow} \bullet/G\stackrel{t}{\to}\bullet/H_2$, where the point-orbifold $\bullet/G:=G\ltimes \{\mathrm{pt}\}$ will be the moduli spaces of Floer trajectories, $\bullet/H_1,\bullet/H_2$ are critical points and $s,t$ are the evaluation maps at the two ends. 
Then contribution of the diagram to the differential  will be given by $t_*\circ s^*\colon H^*(\bullet/H_1)\to H^*(\bullet/H_2)$, where $t_*$ is defined by $\int t_*(\alpha)\wedge \beta = \int \alpha \wedge t^*\beta$. As the dual of $1\in H^*(\bullet/K)$ is $|K|\cdot 1$, we have 
\begin{equation}\label{eqn:pushpull}
    t_*\circ s^* (1) =\frac{|H_2|}{|G|}\cdot 1.
\end{equation}

The key property that will be used in the proof of the fact that the differential squares to zero is the following.

\begin{prop}\label{prop:composition}
Given diagrams $\bullet/H_1\stackrel{s_1}{\leftarrow}\bullet/G_1\stackrel{t_1}{\to}\bullet/H_2 \stackrel{s_2}{\leftarrow} \bullet/G_2\stackrel{t_2}{\to}\bullet/H_3$, consider the associated diagram for the fiber product $\bullet/H_1\stackrel{s_3}{\leftarrow}\bullet/G_1\times_{\bullet/H_2}\bullet/G_2 \stackrel{t_3}{\to}\bullet/H_3$. 
Then,  $(t_2)_*\circ s_2^*\circ(t_1)_*\circ s_1^*=(t_3)_*\circ s^*_3$. In other words,  we have 
$$(t_3)_*\circ s^*_3(1)=(t_2)_*\circ s_2^*\circ(t_1)_*\circ s_1^*(1) = \left((t_2)_*\circ s_2^*(1)\right) \cdot \left((t_1)_*\circ s_1^*(1)\right)$$
\end{prop}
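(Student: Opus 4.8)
The plan is to compute both sides explicitly using the formula \eqref{eqn:pushpull} and then match them by analyzing the isotropy group of the fiber product point-orbifold. First I would recall that all the orbifolds in sight are point-orbifolds $\bullet/K$, so $H^*(\bullet/K)\cong R$ (concentrated in degree $0$) for any finite group $K$, and the only data carried by the maps $s_i, t_i$ is the orders of the groups involved. By iterating \eqref{eqn:pushpull}, the left-hand side sends $1\mapsto \frac{|H_2|}{|G_1|}\cdot 1$ under $(t_1)_*\circ s_1^*$, and then this is carried by $(t_2)_*\circ s_2^*$ to $\frac{|H_2|}{|G_1|}\cdot \frac{|H_3|}{|G_2|}\cdot 1$. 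So the left-hand side is multiplication by $\frac{|H_2|\,|H_3|}{|G_1|\,|G_2|}$.

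For the right-hand side, \eqref{eqn:pushpull} applied to the composed diagram gives that $(t_3)_*\circ s_3^*$ is multiplication by $\frac{|H_3|}{|G_1\times_{\bullet/H_2}\bullet/G_2|}$, where $G_1\times_{\bullet/H_2}\bullet/G_2$ denotes the isotropy group of (the unique point of) the fiber-product point-orbifold. Hence the proposition reduces to the purely group-theoretic identity
\[
\bigl|G_1\times_{\bullet/H_2}\bullet/G_2\bigr| \;=\; \frac{|G_1|\,|G_2|}{|H_2|}.
\]
To prove this I would unwind the definition of the weak fiber product of groupoids from \S\ref{sec:fiber_products}: with $\cW=\bullet/G_1$, $\cV=\bullet/G_2$, $\cU=\bullet/H_2$, and $F,G$ the evident functors, the object set of $\cW\times_{\cU}\cV$ is $\{(\mathrm{pt}, h, \mathrm{pt})\mid h\in H_2\}\cong H_2$, and a morphism $(\mathrm{pt},h,\mathrm{pt})\to(\mathrm{pt},h',\mathrm{pt})$ is a pair $(g_1,g_2)\in G_1\times G_2$ with $h\cdot F(g_1) = G(g_2)\cdot h'$, i.e.\ $h' = G(g_2)^{-1} h\, F(g_1)$ (interpreting $F(g_1), G(g_2)$ as the images of $g_1, g_2$ in $H_2$). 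Thus $\cW\times_{\cU}\cV$ is the action groupoid $(G_1\times G_2)\ltimes H_2$ for the two-sided action $h\mapsto G(g_2)^{-1} h\, F(g_1)$. Its orbit space is a single point exactly when this action is transitive — which is the relevant transversality hypothesis here (the images of $dF$ and $dG$ span, trivially true for point-orbifolds, so we take the component through the identity, or one argues the relevant diagram is connected) — and the isotropy group of the point $1\in H_2$ is $\{(g_1,g_2)\mid F(g_1) = G(g_2)\}$, whose order is $|G_1|\,|G_2|/|H_2|$ by the orbit-stabilizer theorem applied to the transitive action (orbit has size $|H_2|$, total group has size $|G_1|\,|G_2|$). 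Substituting gives the claimed identity, and hence the proposition.

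The main obstacle I anticipate is bookkeeping rather than conceptual: one must be careful that the weak fiber product $\bullet/G_1\times_{\bullet/H_2}\bullet/G_2$ appearing in the statement is the one that actually arises geometrically (a single point-orbifold with a well-defined isotropy group), which presupposes that the two evaluation functors land in a connected piece and that the fiber product is cut out transversely in the sense of \S\ref{sec:fiber_products} — this is automatic for point-orbifolds but should be stated. A secondary subtlety is making sure \eqref{eqn:pushpull} is being applied to $s_3$ and $t_3$ with the correct identification of the fiber-product isotropy group; once the orbit-stabilizer count above is in hand this is immediate. I would also remark that the identity is symmetric and matches the expected "associativity/composition" behaviour needed for $d^2=0$, which is a useful sanity check but not part of the formal argument.
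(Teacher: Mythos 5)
Your proof has a genuine gap: the claim that the $G_1\times G_2$ action on $H_2$ is transitive is false in general, and so the fiber product $\bullet/G_1\times_{\bullet/H_2}\bullet/G_2$ is typically \emph{not} a single point-orbifold. For instance, take $G_1=G_2=\{1\}$ with $H_2$ nontrivial: the action is trivial, every element of $H_2$ is its own orbit, and the fiber product is a disjoint union of $|H_2|$ free points. Your formula $|G_1||G_2|/|H_2|=1/|H_2|$ then fails even to be an integer, so the orbit-stabilizer count as you set it up cannot be right. The transversality condition from \S\ref{sec:fiber_products} that you invoke guarantees only that the fiber product \emph{is} a proper \'etale groupoid (automatic here since everything is finite/discrete); it says nothing about transitivity, and restricting attention to ``the component through the identity'' drops contributions that the left-hand side $\tfrac{|H_2||H_3|}{|G_1||G_2|}$ genuinely depends on.

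The fix is precisely not to assume transitivity. Decompose $H_2$ into its $G_1\times G_2$-orbits $\overline{h}_1\cup\cdots\cup\overline{h}_k$ and let $\stab_i$ be the stabilizer of a point in $\overline{h}_i$; then $\bullet/G_1\times_{\bullet/H_2}\bullet/G_2$ is equivalent to $\sqcup_{i=1}^k \bullet/\stab_i$, and the correct reading of the right-hand side sums over components:
\[
(t_3)_*\circ s_3^*(1)=\sum_{i=1}^k\frac{|H_3|}{|\stab_i|}=\sum_{i=1}^k\frac{|H_3|\,|\overline{h}_i|}{|G_1||G_2|}=\frac{|H_2||H_3|}{|G_1||G_2|},
\]
where the middle step uses $|\stab_i|\cdot|\overline{h}_i|=|G_1||G_2|$ for each orbit and the last uses $\sum_i|\overline{h}_i|=|H_2|$. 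Everything else in your computation — the identification of the left-hand side, the description of $\cW\times_{\cU}\cV$ as the action groupoid $(G_1\times G_2)\ltimes H_2$ with the two-sided action — is correct and matches the paper; the only missing ingredient is the sum over orbits.
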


\begin{proof}
By \eqref{eqn:pushpull}, we have $(t_2)_*\circ s_2^*\circ(t_1)_*\circ s_1^*(1)=\frac{|H_2||H_3|}{|G_1||G_2|}\cdot 1$. 
On the other hand, $\bullet/G_1\times_{\bullet/H_2}\bullet/G_2$ is the groupoid $(G_1\times G_2)\ltimes H_2$ given by the action $(g_1,g_2)\cdot h = s_1(g_2)\circ h \circ t_1(g^{-1}_1)$. 
Then, at the morphisms level, $t_3$ is defined to be $(g_1,g_2)\mapsto g_2$. 

Now, $H_2$ has a decomposition into orbits $\overline{h}_1\cup\ldots \cup \overline{h}_k$ w.r.t.\ the $G_1\times G_2$ action. 
Let then $\stab_i$ denote the isotropy group of an element in $\overline{h}_i$. 
Hence, $|\stab_i|\cdot |\overline{h}_i|=|G_1| \cdot |G_2|$ and  $\bullet/G_1\times_{\bullet/H_2}\bullet/G_2$ is equivalent to $\cup_{i=1}^k \bullet/\stab_i$. Therefore we have
\[
(t_3)_*\circ s_3^*(1)=\sum_{i=1}^k \frac{|H_3|}{|\stab_i|}\cdot 1 = \sum_{i=1}^k\frac{|H_3||\overline{h}_i|}{|G_1||G_2|}\cdot 1= \frac{|H_2||H_3|}{|G_1||G_2|}\cdot 1=(t_2)_*\circ s_2^*\circ(t_1)_*\circ s_1^*(1).
\qedhere
\]
\end{proof}

In applications, $(t_2)_*\circ s_2^*\circ(t_1)_*\circ s_1^*$ will correspond to the square of the differential and the fiber product $\bullet/G_1\times_{\bullet/H_2}\bullet/G_2$ to broken curves that are boundaries of a one-dimensional moduli space. 
Hence, the fact that the differential squares to zero will follow from Stokes' theorem.

\subsection{Symplectic orbifolds} 

We now introduce the main object of interest of this paper, namely symplectic orbifolds, and study their properties in the setting of Liouville orbifolds with convex contact manifold boundaries.
\begin{defn}
	A \emph{symplectic orbifold} $(W,\omega)$ is a $2n$-dimensional (oriented) orbifold $W$ equipped with a smooth closed $2$-form $\omega$ such that $\omega^n> 0$. 
	An \emph{exact orbifold} $(W,\lambda)$ is an orbifold $W$ equipped with a smooth $1$-form $\lambda$, such that $\rd \lambda$ is a symplectic form. 
	An \emph{exact orbifold filling} of a smooth contact manifold $(Y,\xi)$ is an exact orbifold $(W,\lambda)$, such that $\partial W$ has trivial isotropy and $\partial W=Y$ with $\xi=\ker \lambda|_{\partial W}$, and the Liouville vector field points out along $\partial W$.
\end{defn} 

Our primary examples are \emph{global quotients}:
\begin{example}\label{ex:quotient}
	Let $G$ be a finite subgroup of $U(n)$ such that $\ker (g-I) = \{0\}$ for $g\ne \Id  \in G$.
	Then, $G$ preserves the standard Liouville form $-\frac{1}{2}\rd r^2\circ J$ and $G$ acts freely on the unit sphere $S^{2n-1}$. 
	The quotient $(S^{2n-1}/G,\xistd)$ is a smooth contact manifold with an exact orbifold filling $\CC^n/G$, where the orbifold structure is given by the action groupoid $G\ltimes \CC^n$ with Liouville form  $-\frac{1}{2}\rd r^2\circ J$.
\end{example} 

\begin{remark}
	For any finite subgroup $G$ of $U(n)$, $\CC^n/G$ is always an exact orbifold. However if there is a nontrivial $g$ with $\dim \ker(g-I)\ge 1$, the contact boundary $S^{2n-1}/G$ is not smooth, but rather a contact orbifold. In a sequel paper, we will develop the theory for contact orbifolds.
\end{remark}

An \emph{almost complex structure} on a proper \'etale Lie groupoid $\cC$ is an almost complex structure $J$ on $C$, such that $\widehat{\phi}_*J=J$ for any $\phi \in \bC$; recall that $\widehat{\phi}$ denotes the (well defined near $s(\phi)$) local diffeomorphism $t\circ s^{-1}\vert_{Op(s(\phi))}\colon Op(s(\phi))\to Op(t(\phi))$. 
Similarly, a \emph{Riemannian metric} on $\cC$ is a Riemannian metric $\mu$ on $C$, such that $\widehat{\phi}^*\mu=\mu$. 
Then we can define almost complex structures and Riemannian metrics on orbifolds as the respective structures on orbifold structures (or, more precisely, functors from the category of equivalent orbifold structures). Given a symplectic orbifold, an almost complex structure $J$ is said to be \emph{compatible} with $\omega$ if $\omega(\cdot,J\cdot)$ is a metric, as in the smooth setting. 
Then, by an analogous argument to that of \cite[Proposition 4.1.1]{McDuff17}, the set of compatible almost complex structures is non-empty and contractible.

\begin{prop}\label{prop:sub}
	Let $G\ltimes U$ be a local uniformizer of an orbifold structure $(\cC,\omega)$ representing a symplectic orbifold $(W,\omega)$, around a point $x\in C$. 
	For $g\in G$, the fixed point set $\Fix(g)$ is a symplectic submanifold of $U$. 
	If $J$ is a compatible almost complex structure, then $\Fix(g)$ is a $J$-holomorphic submanifold. 
	In particular, denoting by $C(g)$ the centralizer of $g$, the natural functor $C(g)\ltimes \Fix(g)\to G\ltimes U$ induced by inclusions is,  at the level of orbit spaces, an embedding of topological spaces. 
\end{prop}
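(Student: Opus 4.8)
The plan is to linearize the finite cyclic action of $\langle g\rangle$ near a fixed point and to read off all three assertions from the resulting linear model.

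First I would fix a compatible almost complex structure $J$ on $\cC$; by definition $\widehat{\phi}_*J=J$ for every $\phi\in\bC$, so on the local uniformizer $G\ltimes U$ the structure $J$ is $G$-invariant, and $\omega(\cdot,J\cdot)$ is a $G$-invariant Riemannian metric on $U$. Bochner's linearization theorem then applies: near any $p\in\Fix(g)$ the Riemannian exponential map at $p$ conjugates the $\langle g\rangle$-action to the linear action of $A:=\rd g_p\in GL(T_pU)$, whose fixed locus is the linear subspace $\ker(A-\Id)$. Hence $\Fix(g)$ is a smooth submanifold of $U$ with $T_p\Fix(g)=\ker(\rd g_p-\Id)$ at every $p\in\Fix(g)$.

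The symplectic and $J$-holomorphic assertions now follow at once from the linear picture. At $p\in\Fix(g)$ the map $\rd g_p$ preserves $\omega_p$ and, since $J$ is $G$-invariant, commutes with $J_p$, so it is unitary for the Hermitian structure $(T_pU,\omega_p,J_p)$; therefore $T_p\Fix(g)=\ker(\rd g_p-\Id)$ is a complex subspace of $(T_pU,J_p)$. A complex subspace of a Hermitian vector space is automatically $\omega_p$-nondegenerate and $J_p$-invariant, so, letting $p$ vary over $\Fix(g)$, we conclude that $\Fix(g)$ is a symplectic submanifold on which $J$ restricts, i.e.\ a $J$-holomorphic submanifold. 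Note that this uses only the $G$-invariance already built into the notion of an almost complex structure on $\cC$, and no genericity of $J$.

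Finally, for the claim about orbit spaces: since $g$ fixes $\Fix(g)$ pointwise, every $h\in C(g)$ preserves $\Fix(g)$, because $g(hp)=h(gp)=hp$; hence the object-level inclusion $\Fix(g)\hookrightarrow U$ is equivariant for $C(g)\hookrightarrow G$ and descends to a continuous map $\iota\colon \Fix(g)/C(g)\to U/G$ on orbit spaces. Since $G$ is finite the projection $U\to U/G$ is a closed map, and $\Fix(g)$ is closed in $U$, so the composite $\Fix(g)\to U\to U/G$ is closed; as it equals $\iota$ precomposed with the surjection $\Fix(g)\to\Fix(g)/C(g)$, the map $\iota$ is itself closed. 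It therefore only remains to verify that $\iota$ is injective, i.e.\ that if $p,q\in\Fix(g)$ lie in a common $G$-orbit then they lie in a common $C(g)$-orbit: writing $q=hp$ with $h\in G$, both $g$ and $h^{-1}gh$ lie in the isotropy group $G_p$, and the issue is to conjugate one to the other inside $G_p$. I expect this last point to be the only genuinely delicate step; it is immediate in the situations relevant to this paper, where either $g=\Id$ (and $\iota$ is the identity) or $\ker(g-\Id)=0$ (and $\Fix(g)$ is a single point). Granting it, $\iota$ is a continuous closed injection, hence a homeomorphism onto its closed image, i.e.\ a topological embedding, which completes the proof.
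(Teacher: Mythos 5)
Your treatment of the first two assertions is essentially the paper's own, with a useful extra detail: the paper simply asserts that $\Fix(g)$ is a locally closed submanifold and then checks that $g_*J=J$ forces $Jv\in T_p\Fix(g)$ for $v\in T_p\Fix(g)$, concluding the symplectic condition from compatibility. Your Bochner linearization supplies the justification for the submanifold claim that the paper leaves implicit, and your unitarity argument for $\rd g_p$ is an equivalent route to the same conclusion. So for the first two sentences of the proposition, your proof is sound and matches the paper's approach.

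For the final assertion you have correctly put your finger on the real issue, and I would in fact go further than your cautious wording: the orbit-space map $\iota\colon\Fix(g)/C(g)\to U/G$ can genuinely fail to be injective, so the proposition as stated is too strong. Your reduction is exactly right: injectivity requires that whenever $q=hp$ with $p,q\in\Fix(g)$, the two elements $g$ and $h^{-1}gh$ of $G_p$ be conjugate \emph{inside} $G_p$. This can fail. Take $G=S_4$ acting on $U\subset\CC^4$ by the (unitary) permutation representation and $g=(12)$. Then $C(g)=\{e,(12),(34),(12)(34)\}$, and its action on $\Fix(g)=\{(a,a,b,c)\}$ is just the swap $b\leftrightarrow c$. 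The points $p=\epsilon(1,1,2,2)$ and $q=\epsilon(2,2,1,1)$ both lie in $\Fix(g)$, satisfy $q=(13)(24)\cdot p$, and are each fixed by all of $C(g)$; hence $[p]\neq[q]$ in $\Fix(g)/C(g)$ but they map to the same point of $U/G$, at every scale $\epsilon$. (Group-theoretically: $g$ and $(13)(24)g(13)(24)=(34)$ both lie in the abelian isotropy group $G_p\cong V_4$, so they cannot be $G_p$-conjugate.) The paper dispatches this part with ``the rest of the claim follows directly,'' which is not a proof, and the claim is in fact false in this generality; what is true, and what the paper actually uses (via \Cref{prop:decomposition}), is that the functor $C(g)\ltimes\Fix(g)\to G\ltimes U$ is an orbifold embedding in the sense of local charts, and that in the exact-filling setting $\Fix(g)$ is a single point for $g\neq\Id$ so the orbit-space statement is trivially correct. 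Your honest flagging of this step, together with the observation that the paper's applications only invoke the trivial cases $g=\Id$ or $\Fix(g)=\{\mathrm{pt}\}$, is precisely the right assessment.
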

\begin{proof}
	Firstly, $\Fix(g)$ is a locally closed submanifold of $U$. 
	Now, consider an auxiliary compatible almost complex structure $J$; in particular, $g_*J=J$. Hence, for $x\in \Fix(g)$ and $v\in T_x \Fix(g)$, one has $g_*Jv=Jv$, i.e.\ $Jv\in T_{g(x)}\Fix(g)$; in other words, $J$ restricts to an almost complex structure on $\Fix(g)$. 
	Since any $J$-holomorphic submanifold is automatically symplectic by compatibility of $J$ with $\omega$, we have that $\Fix(g)$ is a symplectic submanifold of $U$.  The rest of the claim follows directly.
\end{proof}

\begin{remark}[The notion of suborbifold]
	In \cite[Definition 7.1.21]{polyfold},  a subgroupoid of $\cC$ is a full subcategory $\cD$ such that $D$ is a submanifold of $C$; suborbifolds are then defined naturally in \cite[Definition 16.1.15]{polyfold} as subsets in the quotient space of the orbifold that are represented by subgroupoid structure. 
	With this definition, $C(g)\ltimes \Fix(g)$ is not a suborbifold of $G\ltimes U$ in general. 
	Suborbifolds defined as such arise for instance from transverse zero sets of sections of orbibundles. 
	On the other hand, \cite[Definition 2.3]{Ruan07} defines the notion of \emph{embedding} of orbifolds and, relying on it, the notion of suborbifolds. 
	In our setting, the functor $C(g)\ltimes \Fix(g)\to G\ltimes U$ \emph{is} an embedding in the latter sense. 
	A note on conventions in this paper: we will adhere to the notion of suborbifold in \cite{polyfold} and to the notion of embedding in \cite{Ruan07}; in particular, not all orbifolds embedding give rise to suborbifolds with this convention.
	(The interested reader can consult \cite{BorBru15,MesWei20} for  detailed discussions concerning these notions.)
\end{remark}

Let $\cC$ be a proper \'etale Lie groupoid, and consider the pullback diagram
$$
\xymatrix{
S_{\cC} \ar[r]\ar[d]^{\beta} & \bC \ar[d]^{(s,t)}\\
C  \ar[r]^{\Delta} & C\times C}
$$
where $\Delta$ is the diagonal map and $S_{\cC}=\{g\in \bC| s(g)=t(g)\}$ is naturally equipped with the map $\beta:S_{\cC}\to C, g \mapsto t(g)=s(g)$. 
Any $h\in \bC$ induces a diffeomorphism $\beta^{-1}(s(h))\to \beta^{-1}(t(h))$ via the action by conjugation $h\cdot g = h g h^{-1}$ for $g\in \beta^{-1}(s(h))$. 
In particular, we can form an action groupoid $\cC \ltimes S_{\cC}$ with $\Obj=S_{\cC}$ and $\Mor=\bC\tensor[_s]{\times}{_\beta}S_{\cC}=\{(h,g)\in\bC\times S_{\cC} \vert s(h)=\beta(g)\}$, with source map $s(h,g)=g$ and target map $t(h,g)=hgh^{-1}$.

\begin{defn}[{\cite[Definition 2.49]{Ruan07}}]
The \emph{inertia groupoid} $\wedge \cC$ is the action groupoid $\cC \ltimes S_{\cC}$. 
\end{defn}

One can check that Morita equivalences between two groupoids induce Morita equivalences between their inertia groupoids, hence there is a well defined \emph{inertia orbifold} associated to each orbifold. 
Notice also that, as explained in \cite[Example 2.48]{Ruan07}, $\wedge \cC$ should be understood as the space of constant generalized maps from $\bbS^1$ to $\cC$, i.e.\ the space of constant loops in the orbifold context. We shall explain this in more detail in \S \ref{ss:morphism}.

We now describe a useful decomposition of $\wedge \cC$ into connected components. 
For this, we define an equivalence relation $\simeq$ on $S_{\cC}$ as follows. 
On each local uniformizer $G\ltimes U$, for $x,y\in U$, any two $g_1\in G_x\subset G$ and $g_2\in G_y\subset G$ are equivalent with respect to $\simeq$ if $g_1,g_2$ are conjugated in $G$. 
More generally, $g_1\simeq g_2$ iff $g_1,g_2$ are connected by a sequence $\{h_1=g_1,h_2,\ldots,h_{n-1}, h_n=g_2\}\subset S_{\cC}$, such that $h_i,h_{i+1}$ are equivalent in a local uniformizer. 
We denote by $(g)$ the equivalence class of $g\in S_{\cC}$ under the just defined relation $\simeq$, and by $T^1$ the set $S_{\cC}/\simeq$ of all such equivalence classes. 
Then $\wedge \cC$ can be decomposed as 
\[\wedge \cC  = \bigsqcup_{(g)\in T^1}  \cC^1_{(g)},\] 
where $\cC^{1}_{(g)}$ is the $\cC$-action groupoid on the $(g)$-component of $S_{\cC}$.
Notice in particular that $\cC^1_{(\Id)}$ is naturally isomorphic to $\cC$.

Let's now go back to the symplectic setting.
Since a local uniformizer of $\wedge \cC$ around $g\in \bC$ is given by $C(g)\ltimes \Fix(g)$ according to \cite[p. 76]{Kaw78} (cf.\ \cite[Lemma 3.1.1]{CheRua01}), Proposition \ref{prop:sub} implies that $\wedge \cC$ is a(n exact) symplectic orbifold if $\cC$ is a(n exact) symplectic orbifold. 
What's more, according to \cite[Proposition 4.1]{Ruan07} each connected component $\cC^1_{(g)}$ of the inertia groupoid $\wedge \cC$ of a proper \'etale Lie groupoid $\cC$ gives, via the map $\beta\colon \cS_{\cC}\to\cC$, an immersion into $\cC$.

\begin{prop}
\label{prop:decomposition}
	Let $W$ be an exact orbifold filling of a smooth contact $Y$. 
	Then, $W$ is (up to Liouville homotopies) a composition of:
	\begin{enumerate}[label=(\roman*)]
	    \item exact orbifold fillings $\sqcup_{i\in I} \CC^n/G_i$ of contact manifolds $\sqcup_i(S^{2n-1}/G_i,\xistd)$, with $G_i\subset U(n)$ for $i$ in a finite set $I$ of indices;
	    \item an exact (smooth/manifold) cobordism from $\sqcup_i(S^{2n-1}/G_i,\xistd)$ to $Y$.
	\end{enumerate}
\end{prop}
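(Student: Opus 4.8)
The plan is to first show that $W$ has only finitely many singular points, each carrying a neighbourhood exact-symplectomorphic to the standard orbifold filling of some $(S^{2n-1}/G_i,\xistd)$, and then to produce the asserted decomposition by cutting $W$ along small contact-type hypersurfaces surrounding these points. The first point — \emph{that the singularities are isolated} — is where I would start. Fix a proper \'etale Lie groupoid $\cC$ representing $W$. For $g\ne\Id$ the twisted sector $W_{(g)}$ is, in a local uniformizer, represented by $C(g)\ltimes\Fix(g)$, so by \Cref{prop:sub} it is a symplectic suborbifold of $W$, and pulling back $\lambda$ shows that its symplectic form is exact. Since $\partial W$ has trivial isotropy, the component $\cC^1_{(g)}$ of $\wedge\cC$ is empty over a collar of $\partial W$, so $W_{(g)}$ is disjoint from $\partial W$; being closed in the compact $W$, it is a \emph{closed} exact symplectic orbifold. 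If some component of $W_{(g)}$ had positive dimension $2k$ we would get, via the orbifold Stokes' theorem, $0<\int_{W_{(g)}}\omega^{k}=\int_{W_{(g)}}\rd(\lambda\wedge\omega^{k-1})=0$, a contradiction; hence $\dim W_{(g)}=0$ for all $g\ne\Id$. Concretely, in every local uniformizer $G_p\ltimes U$ around a singular point $p$ one has $\Fix(g)=\{p\}$ for each $\Id\ne g\in G_p$, so $p$ is the only point of $U$ with nontrivial isotropy; the singular set of $W$ is therefore discrete, hence finite by compactness. Denote its points $p_1,\dots,p_m$ and let $G_i$ be the isotropy group of $p_i$.

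Next I would put $W$ in normal form near each $p_i$. The group $G_i$ acts on a neighbourhood of $p_i$ fixing $p_i$; after Bochner linearization it acts linearly on $T_{p_i}U\cong\RR^{2n}$ preserving $\omega_{p_i}$, hence $G_i\hookrightarrow Sp(2n,\RR)$, and averaging a compatible complex structure conjugates $G_i$ into $U(n)$. By the previous step $\ker(g-I)=\{0\}$ for $g\ne\Id$, so each $G_i$ is of the type in \Cref{ex:quotient}. An equivariant Darboux argument (Moser's trick carried out $G_i$-equivariantly on a local uniformizer) yields a $G_i$-equivariant symplectomorphism $\phi_i$ from a $G_i$-invariant neighbourhood of $0\in(\CC^n,\omega_{\std})$ onto a neighbourhood $N_i\ni p_i$ with $\phi_i(0)=p_i$; shrinking, the $N_i$ are pairwise disjoint and disjoint from $\partial W$. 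The two primitives $\lambda$ and $(\phi_i)_*\lambda_{\std}$ of $\omega|_{N_i}$ differ by $\rd h_i$ for a $G_i$-invariant function $h_i$, and replacing $\lambda$ by $\widetilde\lambda\coloneqq\lambda-\rd\bigl(\sum_i\chi_i h_i\bigr)$, with $\chi_i$ a bump function equal to $1$ near $p_i$ and supported in $N_i$, changes $\lambda$ and $\omega$ nowhere near $\partial W$ — so $(W,\widetilde\lambda)$ is again an exact orbifold filling of $(Y,\xi)$, equivalent to the original — while making $\widetilde\lambda$ equal to the standard Liouville form $-\tfrac12\rd r^2\circ J$ on a neighbourhood of each $p_i$.

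Finally I would cut. For $\rho_i$ small the hypersurface $\phi_i(S^{2n-1}_{\rho_i}/G_i)$ is a smooth closed manifold (the $G_i$-action on $S^{2n-1}$ is free, as $\ker(g-I)=\{0\}$) lying where $\widetilde\lambda$ is standard, hence transverse to the Liouville field $\tfrac12 r\,\partial_r$; it is therefore a contact-type hypersurface of $(W,\widetilde\lambda)$ with induced contact structure contactomorphic to $(S^{2n-1}/G_i,\xistd)$. Writing $\overline B_i$ for the compact region it bounds around $p_i$, the pair $(\overline B_i,\widetilde\lambda|_{\overline B_i})$ is, after a radial rescaling, exact-symplectomorphic to the standard exact orbifold filling $\CC^n/G_i$ of \Cref{ex:quotient}, with the Liouville field pointing outward along $\partial\overline B_i$. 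Then $X\coloneqq W\setminus\bigsqcup_i\Int\overline B_i$ has trivial isotropy everywhere, hence is a smooth compact manifold, and $\widetilde\lambda|_X$ restricts to a contact form on $\partial X$, with $\bigsqcup_i\partial\overline B_i\cong\bigsqcup_i S^{2n-1}/G_i$ as concave boundary (the Liouville field points into $X$ there) and $Y$ as convex boundary; that is, $X$ is an exact smooth cobordism from $\bigsqcup_i(S^{2n-1}/G_i,\xistd)$ to $(Y,\xi)$. Gluing back, $W=\bigl(\bigsqcup_i\CC^n/G_i\bigr)\cup_{\bigsqcup_i S^{2n-1}/G_i}X$ is the claimed composition.

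I expect the genuine obstacle to be the isolatedness step: it is the only place where exactness of $\lambda$ is essential, entering through \Cref{prop:sub} and the orbifold Stokes' theorem applied to the twisted sectors. The remainder is a fairly standard, if slightly delicate, orbifold adaptation of the smooth procedure of excising a Liouville ball around a point — the one point requiring care there being the cutoff correction $\widetilde\lambda=\lambda-\rd(\sum_i\chi_i h_i)$, which is needed to make the cut compatible with the exact symplectic structures on both sides.
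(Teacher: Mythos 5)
Your proof is correct and follows essentially the same route as the paper's: exactness of the symplectic form on $\wedge\cC$ together with \Cref{prop:sub} and the orbifold Stokes' theorem forces the twisted sectors to be zero-dimensional (the paper compresses this into a single sentence), and a $G_i$-equivariant Moser/Darboux argument, using $H^1=0$ near each singularity to match primitives, gives the standard local model of \Cref{ex:quotient}. The only difference is that you make the final cutting step — excising Liouville balls $\overline{B_i}\cong\CC^n/G_i$ and identifying the complement as a smooth exact cobordism — fully explicit, whereas the paper leaves it implicit once the normal form is in place.
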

\begin{proof}
	Recall that the inertia groupoid $\wedge \cC$ contains $\cC$ as a component; the other components come from $(g)\ne (\Id)$. 
	Since $\partial W$ is a smooth contact manifold, $\cC^1_{(g)}$ must have closed orbit space if $(g)\ne (\Id)\in T^1$. 
	Hence, by exactness of the symplectic form on $\wedge \cC$ and \Cref{prop:sub}, $\vert\cC^1_{(g)}\vert$ must be discrete for $(g)\ne (\Id)$. 
	In particular, singularities of $W$ must be isolated.
	
	Let now $\{x_i\}_{i\in I}\subset \cC$ be a set such that $\{|x_i|\}_{i\in I}$ is the set of points in $|\cC|$ with nontrivial isotropy group. 
	Notice that $I$ must be a finite set by compactness of $W$. 
	Let also $G_i$ be the isotropy group at $x_i$ and $G_i^*$ the set $G_i\setminus \{\Id\}$;
	then, $\wedge \cC$ is Morita equivalent to $\cC \cup_i G_i\ltimes G_i^*$, where $G_i$ acts on $G^*_i$ by conjugation.  
	Moreover, up to Morita equivalence, one has 
	\[G_i\ltimes G_i^*=\bigsqcup_{(g)\in \mathrm{Conj}^*(G_i)} \bullet/C(g),\]
	where $\mathrm{Conj}^*(G_i)$ is the set of nontrivial conjugacy classes and $\bullet/C(g)=C(g)\ltimes \{\mathrm{pt}\}$. 

    Lastly, using (the equivariant version of) Moser's trick and the fact that $H^1(\CC^n/G_i)=0$, one can see that, near any $x\in \cC$ with nontrivial isotropy $G_i$, there are local uniformizers with Liouville form as in Example \ref{ex:quotient}. 
    %Moreover, as $H_1(\CC^n/G_i)=0$, there is a Liouville form on $\cC$ that is the one in Example \ref{ex:quotient} when restricted near points with nontrivial isotropy.
    This concludes the proof.
\end{proof}

We point out that the singular locus need not be made of only isolated points in the case of non-exact orbifold symplectic fillings, even if one restricts to the smooth boundary setting, as the following example shows.
\begin{example}
    Let $(a_1,\ldots,a_n)$ be an $n$-tuple of positive natural numbers, and consider the action $\rho\colon \bbC^*\to U(n)$ on $\bbC^n\setminus\{0\}$ given by $\rho(\lambda)=diag(\lambda^{a_1},\ldots,\lambda^{a_n})$.
    The quotient $(\bbC^{n}\setminus\{0\})/\bbC^*$ is then naturally an orbifold, called \emph{weighted projective space}  $W = W\bbP(a_1,\ldots,a_n)$.
    Moreover, the action $\rho|_{S^1}$ is a Hamiltonian circle action, and $W$ can be viewed as the symplectic reduction. 
    In particular, $W$ carries a canonical symplectic form $\omega$ and is hence a symplectic orbifold. 
    Up to rescaling, one can moreover assume that $\omega/2\pi$ is an integer cohomology class, i.e.\ in the image of $H^*_{\orb}(W,\ZZ) \to H^*_{\orb}(W,\bbR)$; here, $H^*_{\orb}(W;R)$ denotes the \emph{orbifold cohomology} of $W$ with $R$-coefficients, defined as the (singular) homology of the classifying space $BW$ with $R$-coefficients (see e.g.\ \cite[p. 26-27]{Ruan07}). 
    Given an integral lift $e\in H^*_{\orb}(W,\bbZ)$, there is then an associated Boothby--Wang contact orbifold $(M,\alpha)$, with $\pi\colon M\to W$ a principal $\bbS^1$-orbibundle and $d\alpha = \pi^* \omega$.
    According to \cite[discussion after Corollary 1.6]{KegLan20}, for any choice of $(a_1,\ldots,a_n)$, one can find an integral lift $e$ such that the resulting $M$ is actually a smooth manifold.
    In this case then the complex line orbibundle $P=M\times_{\bbS^1}\bbC\to W$ associated to the principal $\bbS^1$-bundle $M\to W$ (i.e.\ given by the action $\lambda\cdot(p,z)=(\overline{\lambda}\cdot p, \lambda z)$ of $\bbS^1$ on $M\times \bbC$) is a symplectic orbifold filling of $(M,\alpha)$; cf.\ \cite[Proposition 4.4]{NiePas09}.
    Now, $P$ has $ M\times_{\bbS^1}\{0\} = W$ as symplectic suborbifold.
    In particular, if for instance  $(a_1,\ldots,a_n)$ are not coprime, then $W$, hence $P$ too, will have non-isolated orbifold singular locus.
\end{example}

\subsection{Chen-Ruan orbifold cohomology}\label{SS:CR}
From a string theory point of view, the correct cohomology theory of an almost complex orbifold is the Chen-Ruan cohomology introduced in \cite{CheRua04}, which can be viewed as the constant part of the orbifold quantum product introduced in \cite{CheRua01}. 
Therefore, in the definition of orbifold symplectic cohomology, the natural substitute of the zero action part of symplectic cohomology, i.e.\ the regular/Morse cohomology of the filling, is the Chen-Ruan cohomology of the orbifold filling. 
In this section we thus recall its definition and main properties that we will need in the rest of the paper.
\\

Given an almost complex proper \'etale Lie groupoid $\cC$, the isotropy group $\bC_x$ acts on $T_xC$ giving rise to a representation $\rho_x:\bC_x\to GL(n,\CC)$. Since $g\in \bC_x$ has finite order, $\rho_x(g)$ is conjugated to a diagonal matrix with eigenvalues $\{e^{2\pi \i m_{1}/o(g) },\ldots, e^{2\pi \i m_{n}/o(g)}\}$, where $o(g)$ is the order of $\rho_x(g)$ and $0\le m_{i}<o(g)$. 
Moreover, such matrix only depends on the conjugacy class of $g$. 
In other words, we can define a locally constant function $\age:\wedge \cC \to \QQ$\footnote{We follow \cite{McKay}  to use $\age$ instead of $\iota$ in \cite[\S 4.2]{CheRua04}.}by 
\begin{equation}\label{eqn:age}
    \age(g)=\sum_{i=1}^n \frac{m_{i}}{o(g)},
\end{equation}
with the property that $\age(g)$ only depends on $(g)\in T^1$.

\begin{defn}[{\cite[Definition 3.2.3]{CheRua04},\cite[Definition 4.8]{Ruan07}}]
\label{defn:CR_cohomol}
	The \emph{Chen-Ruan cohomology} of an almost complex proper étale Lie groupoid $\cC$ is 
	\[H^*_{\CR}(\cC)=\bigoplus_{(g)\in T^1} H^*(\cC^1_{(g)})[-2\age(g)].\]
\end{defn}

\noindent
Here, $[-2\age(g)]$ denotes a degree shift of $-2\age(g)$. In particular, $H^*_{\CR}$ has in general a rational grading.
By \cite[Theorem 4.13]{Ruan07}, the Chen-Ruan cohomology of an almost complex groupoid is invariant under Morita equivalence, hence it defines a cohomology for almost complex orbifolds. 
Lastly, we point out that \emph{compactly supported} Chen-Ruan cohomology groups $H^*_{CR,c}(\cC)$ for an almost complex proper étale groupoid $\cC$ can be defined in a completely analogous fashion using the compactly supported cohomology of the $\cC^1_{(g)}$'s, and this gives a well defined invariant of almost complex orbifolds as well.

Let now $I$ be the map sending $g$ to $g^{-1}$; then $I$ induces a functor, still denoted by $I$,  from $\wedge \cC$ to $\wedge \cC$, mapping the component $\cC^1_{(g)}$ to $\cC^{1}_{(g^{-1})}$.
One can then equip the Chen-Ruan cohomology with the following non-degenerate pairing:
\begin{equation}
\label{eqn:CR_pairing}
\langle \cdot, \cdot \rangle_{\CR} \colon H^*_{\CR}(\cC)\otimes H^{2n-*}_{\CR,c}(\cC)\to \RR, \quad 
\langle \alpha,\beta \rangle_{\CR}=\int_{\wedge \cC} \alpha \wedge I^*\beta.
\end{equation}
\\

The most interesting part of the Chen-Ruan cohomology is its product structure, which we shall review next. 
This ring structure will correspond to the ring structure on the zero-action part of symplectic cohomology when we give a tentative definition the ring structure on symplectic cohomology.

First, similarly to $S_{\cC}$, one can define 
\[S^k_{\cC}=\left\{(g_1,\ldots,g_k)\left|g_i\in \bC, s(g_1)=t(g_1)=\ldots=s(g_k)=t(g_k)\right.\right\},\]
which is equipped with a map $\beta_k(g_1,\ldots,g_k)=s(g_1)$. 
Now, $S^k_{\cC}$ admits a left action of $h\in \bC$ by $h(g_1,\ldots,g_k)=(hg_1h^{-1},\ldots,hg_kh^{-1})$ if $s(h)=s(g_1)$. 
The resulting action groupoid $\cC\ltimes S^k_{\cC}$ is denoted by $\cC^k$. 
Moreover, similarly to what done for $\wedge \cC$, we can define $T^k$ to the equivalence classes of $(g_1,\ldots,g_k)$ up to conjugacy in a chain of charts, this results in a similar decomposition in disjoint action groupoids: $\cC^k=\sqcup_{(\bg)\in T^k} \cC^k_{(\bg)}$.  

Let's now denote $T^k_0\subset T^k$ the subset of equivalence classes $(\bg)=(g_1,\ldots,g_k)$ such that the product $g_1\ldots g_k$ is $\Id$, and define 
\[\cM_k(\cC):=\bigsqcup_{(\bg)\in T^k_0} \cC^k_{(\bg)}.\]
Then, following \cite[Example 2.50]{Ruan07}, $\cM_k(\cC)$ can be viewed as the space of constant orbifold maps from an orbifold Riemann sphere to $\cC$, where the orbifold Riemann sphere is equipped with $k$ fixed marked points, and only marked points can have non-trivial isotropy $\ZZ/o(g_i)\ZZ$, where $o(g_i)$ is the order of $g_i$. 
Notice also that $\cM_k(\cC)$ comes equipped with $k$ different evaluation maps $e_1,\ldots,e_k$ to $\wedge \cC$, given by $e_i((\bg))=g_i$, for $(\bg)=(g_1,\ldots,g_k)$.
\\

Now, in order to obtain the product structure on the Chen--Ruan cohomology, one needs to look at the moduli spaces of constant maps from $S^2$ with three marked points, i.e.\ $\cM_{3}(\cC)$, and to a certain obstruction orbibundle $E$ over it;
we summarize here the description from \cite[p. 89-90]{Ruan07}, to which we refer for details.

More precisely, the linearized Cauchy Riemann operator defines a Fredholm section\footnote{To be more precise, one should use Sobolev version of  $\Omega^0(f_y^*T\cC)$  and $\Omega^{0,1}(f_y^*T\cC)$, see \S \ref{ss:Morphism}.}  $\overline{\partial}_y\colon \Omega^0(f_y^*T\cC) \to \Omega^{0,1}(f_y^*T\cC)$, where $f_y\colon \cS\to\cC$ is the constant map with image $y\in C$, and $\cS$ is a $2$-dimensional orbifold Riemann surface with genus $0$ and three marked points $z_1,z_2,z_3$ of multiplicities $o_1,o_2,o_3$. 
Notice that $f_y$ can also be seen as an element $(\bg)=(g_1,g_2,g_3)$ of $\cM_3(\cC)$ with $o(g_1)=o_1,o(g_2)=o_2,o(g_3)=o_3$.
The index of $\partbar_y $ is then 
\[
\indexop(\partbar_y) = 2n-2\age(g_1)-2\age(g_2)-2\age(g_3).
\]
As $f_y\in\cC^3_{(\bg)}$, one can also prove that $\ker(\overline{\partial}_y)$ can be canonically identified with $T_{f_y}\cC^3_{(\bg)}$; in particular, such kernel has constant dimension over $\cC^3_{(\bg)}$.
Therefore, so does $\coker(\overline{\partial}_y)$, thus giving a vector bundle $E_{(\bg)}\to \cC^3_{(\bg)}$.
Then, $E\to \cM_3(\cC)$ can just be defined as the vector bundle that restricts to $E_{(\bg)}\to \cC^3_{(\bg)}$ over each $\cC^3_{(\bg)}\subset \cM_3(\cC)$.
\\

\begin{remark}
In this situation, one can actually describe $E_{(\bg)}$
%$\ker$ and $\coker$ of $\partbar_y$ 
explicitly.
For $(\bg)\in \cC^3$, we consider the vector bundle $e^*T\cC$ over $\cC^3_{(\bg)}$, where $e\colon \cC^3_{(\bg)}\to \cC$ is given by mapping a triple of $g_i$'s to their base point in $C$.
Now, let $N$ be the subgroup of $\bC_{e((\bg))}$ generated by the three elements $g_1,g_2,g_3$ such that $(\bg)=(g_1,g_2,g_3)$.
One can prove (c.f.\ \cite[Lemma 4.5]{Ruan07}) that $N$ is locally constant on  $\cC^3$.
Now, for the orbifold Riemann sphere $\cS = (S^2,(z_1,z_2,z_3),(o_1,o_2,o_3))$, where $o_i$ is the order of $g_i$ in $\bC_{e((\bg))}$, we have $\pi_1^{\orb}(\cS)=\{\lambda_1,\lambda_2,\lambda_3\vert \lambda_i^{o_i}=1,\lambda_1\lambda_2\lambda_3=1\}$ (see \S \ref{sec:homotopy_groups} for the definition of $\pi_1^{\orb}$), 
where $\lambda_i$ is a loop around $z_i$.
One then has a natural surjective homomorphism $\phi\colon\pi_1^{\orb}(\cS)\to N$, having kernel of finite index.
Denote then $\Sigma \coloneqq \widetilde{\cS}/\ker(\phi)$, where $\widetilde{\cS}$ is the \emph{smooth} universal cover of the orbifold $\cS$.
One then gets a covering map $p\colon \Sigma \to \cS = \Sigma/N$; moreover, as $N$ contains the relations $g_i^{o_i}=1$, $\Sigma$ is smooth as well.

Consider now a uniformizer chart $\bC_y\ltimes U_y$ around some point $y\in C$.
Then, the constant orbifold morphism $f_y$ lifts to a constant map $\widetilde{f}_y\colon \Sigma \to U_y$.
In particular, $\widetilde{f}_y^*T\cC=T_y\cC$ is a trivial bundle over $\Sigma$.
One can also lift $\partbar_y$ to an operator $\partbar_{\Sigma}\colon \Omega^0(\widetilde{f}^*_y T\cC)\to \Omega^{0,1}(\widetilde{f}^*_y T\cC)$,
in such a way that $\partbar_y$ is just the $N$-invariant part of $\partbar_{\Sigma}$.
Now, %$\ker(\partbar_{\Sigma})=T_y\cC$ and
$\coker(\partbar_{\Sigma})=H^{0,1}(\Sigma)\otimes T_y\cC$; varying $y$, one then obtains the bundle %$e^*_{(\bg)}T\cC$ from the kernel, and
$H^{0,1}(\Sigma)\otimes e^*T\cC$. 
Then, $N$ acts on such bundle, and what has been previously defined as $E_{(\bg)}$ is nothing else than the $N-$invariant part, i.e.\ $E_{(\bg)}=(H^{0,1}(\Sigma)\otimes e^*T\cC)^N$.
\end{remark}

The three-point function for $\alpha,\beta \in H^*_{\CR}(\cC),\gamma \in H^*_{\CR,c}(\cC)$ is defined by
\begin{equation}
    \label{eq:3pt_funct}
    \langle \alpha,\beta,\gamma \rangle = \int_{\cM_3(\cC)} e_1^*(\alpha) \wedge e_2^*(\beta) \wedge e_3^*(\gamma) \wedge \be(E),
\end{equation}
where $e_i:\cM(\cC)\to \Lambda \cC$ are the (proper) evaluation map and $\be(E)\in H^*(\cM_3(\cC))$ is the Euler class of $E$. 
Then the \emph{Chen-Ruan cup product} is defined via the following relation, involving the Chen-Ruan pairing in \Cref{eqn:CR_pairing} and the three-point function in \Cref{eq:3pt_funct}:
$$\langle \alpha \cup \beta, \gamma \rangle_{\CR}=\langle \alpha,\beta, \gamma\rangle.$$

\begin{example}
\label{ex:CR_coeff}
	For an orbifold of the type $\CC^n/G$, with $G\subset U(n)$ having only isolated singularity at $0$, we have 
	$$H^*_{\CR}(\CC^n/G)=\oplus_{(g)\in \mathrm{Conj}(G)} \RR[-2\age(g)].$$
	We now want to describe the product structure explicitly. 
	Here, we denote by $[(g)]$ the (positive) generator in cohomology for the conjugacy class $(g)$. For $(g_1),(g_2)\ne (\Id)$, one can define 
	\[I_{g_1,g_2}=\{(h_1,h_2)|h_i\in (g_i), h_1h_2\ne \Id, \age(h_1)+\age(h_2)=\age(h_1h_2)\}.\]
	Then by \cite[Examples 4.26 and 4.27]{Ruan07}, we have 
	\begin{equation}
	\label{eqn:decomposition}
	[(g_1)]\cup [(g_2)]=\sum_{(h_1,h_2)\in I_{g_1,g_2}} \frac{\vert C(h_1h_2)\vert}{|C(h_1)\cap C(h_2)|} [(h_1h_2)],
	\end{equation}
	for $(g_1),(g_2)\ne (\Id)$, while $[(\Id)]$ is the ring unit in $H^*_{\CR}(\CC^n/G)$. 
	In particular, denoting $\mathrm{Conj}^*(G)$ the set of nontrivial conjugacy classes in $G$, $\oplus_{(g)\in \mathrm{Conj}^*(G)} \RR[-2\age(g)]$ is a sub-monoid in $H^*_{\CR}(\CC^n/G)$.
	
	\noindent
	In the general case of an exact orbifold filling $W$ of a contact manifold, let $\{x_i\in W\}_{i\in I}$ be the finite set of points with nontrivial isotropy $\{G_i\}_{i\in I}$ (recall \Cref{prop:decomposition}). 
	Then, 
    \[H^*_{\CR}(W)=H^*(W)\oplus \bigoplus_{\substack{i \in I \\ (g)\in \mathrm{Conj}^*(G_i)}} \RR[-2\age(g)],\]
	where the ring structure is the direct sum of the usual ring structure on $H^*(W)$ and, for each $i\in I$, the monoidal structure on $\oplus_{(g)\in\mathrm{Conj}^*(G_i)}\RR[-2\age(g)]$.

	\noindent
	For any ring $R$, we define the Chen-Ruan cohomology of an exact orbifold filling $W$ of a contact manifold to be 
	\begin{equation}\label{eqn:CR_R}
	H^*_{\CR}(W;R)=H^*(W;R)\oplus \bigoplus_{\substack{i \in I \\ (g)\in \mathrm{Conj}^*(G_i)}} R[-2\age(g)],
	\end{equation}
	where $H^*(W;R)$ is the $R$-coefficient cohomology of the underlying topological space $W$. Since $C(h_1)\cap C(h_2)$ is a subgroup of $C(h_1h_2)$, the coefficients in \Cref{eqn:decomposition} are integer hence the ring structure is also defined on $H^*_{\CR}(W;R)$.
\end{example}

\subsection{Morphisms between orbifolds}\label{ss:Morphism}
\label{ss:morphism}
In the classical construction of moduli spaces of holomorphic curves in a symplectic manifold \cite{MS12}, one first realizes the space of \emph{holomorphic maps} as the zero set of a Fredholm section on the Banach manifold of maps from a Riemann surface to the target symplectic manifold. 
In this subsection, we explain the analogue process in the orbifold setting, which will lay the foundation for the construction of Floer cylinders in orbifolds.
The key difference is that the space of maps between two orbifolds is an orbifold; this shall also explain the appearance of the inertial orbifold and $\cM_k(\cC)$ in the previous subsection. 

In \S \ref{ss:orbifold}, we introduced the \emph{set} of morphisms between two orbifolds. 
One can then similarly define the set $C^k(\Sigma,W)$ of \emph{$C^k$ morphisms} from $\Sigma$ to $W$, and the set $W^{k,p}(\Sigma,W)$ of \emph{Sobolev $W^{k,p}$ morphisms} if $W$ is equipped with a Riemannian metric. 
(Notice that the latter set is independent of the choice of the metric if $\Sigma$ is compact.)

\begin{thm}[\cite{Chen06}]\label{thm:morphism_orbifold}
Let $\Sigma$ be a compact orbifold, and $W$ an orbifold.
Then, $C^k(\Sigma,W)$ and $W^{k,p}(\Sigma,W)$ are canonically Banach orbifolds for $k\ge 2$ and $\dim \Sigma < (k-2)p$ respectively.
Moreover, $C^{\infty}(\Sigma,W)$ is canonically a Fr\'echet orbifold.
\end{thm}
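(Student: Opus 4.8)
The strategy is to reduce the statement to the corresponding classical fact for maps between smooth manifolds, using the groupoid presentation of orbifolds together with the local uniformizer technology from \S\ref{ss:orbifold}. First I would fix proper \'etale Lie groupoid presentations $\cS$ of $\Sigma$ and $\cC$ of $W$, and recall that a morphism $\Sigma\to W$ is an equivalence class of generalized maps, i.e.\ a diagram $\cS\leftarrow\cE\to\cC$ with $\cE\to\cS$ an equivalence. The key observation is that, since $\Sigma$ is compact, one may choose a \emph{finite} good cover by local uniformizers $\bC_{x_i}\ltimes U_{x_i}$, and then any $C^k$ (resp.\ $W^{k,p}$) morphism is, up to the equivalence in $\cE$, given by compatible equivariant local data: on each chart a $\bC_{x_i}$-equivariant map $U_{x_i}\to C$ into the object space of $\cC$, together with transition morphisms valued in $\bC$ on the overlaps. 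This is precisely the content of Chen's structure theorem, so the bulk of the work is organizing this local data into Banach charts.

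The main steps are as follows. (1) For a fixed generalized map $\mathfrak f\colon\Sigma\to W$, produce a local model for a neighborhood of $\mathfrak f$ in $C^k(\Sigma,W)$: pull back the tangent orbibundle $TW$ along $\mathfrak f$ to get an orbibundle over $\Sigma$, take its space of $C^k$ (resp.\ $W^{k,p}$) sections — which is a Banach space by the already-established theory of sections of orbibundles over a compact orbifold — and use an orbifold exponential map (built from a Riemannian metric on $\cC$, which exists by the partition of unity on $|\cC|$) to define a chart $\exp_{\mathfrak f}$. Here the Sobolev embedding $W^{k,p}\hookrightarrow C^0$ under the hypothesis $\dim\Sigma<(k-2)p$ is what guarantees the exponential construction lands in the right space and is well defined; for $C^k$ one needs $k\ge 2$ so that composition and the chart transition maps remain $C^1$. (2) Check that the transition maps between two such charts $\exp_{\mathfrak f}$ and $\exp_{\mathfrak g}$ are smooth, which reduces — exactly as in \cite{MS12} — to smoothness of the map induced by postcomposition with a smooth orbibundle map, now carried out chart-by-chart on the finite cover and patched using the equivariance. (3) Identify the local isotropy: the automorphisms of a generalized map $\mathfrak f$ (the natural transformations from $\mathfrak f$ to itself) form a finite group acting on the section space, coming from the finitely many isotropy groups $\bC_{x_i}$ met by the image; this is what makes $C^k(\Sigma,W)$ a Banach \emph{orbifold} rather than a Banach manifold, and it ties directly to the appearance of $\wedge\cC$ and $\cM_k(\cC)$ in \S\ref{SS:CR}. (4) Finally, assemble these charts into a proper \'etale Banach groupoid and check the orbifold axioms, and observe that the construction is independent of the chosen presentations up to Morita equivalence, by the same refinement argument as for finite-dimensional orbifolds. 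For the Fr\'echet statement, take the inverse limit over $k$ of the Banach structures, using that the transition maps are compatible.

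I expect the main obstacle to be step (2) together with the patching in step (1): verifying that the locally defined equivariant charts genuinely glue to a \emph{globally} well-defined Banach-orbifold chart around $\mathfrak f$, and that the resulting transition maps are smooth, requires carefully tracking the natural transformations on chart overlaps and showing the ambiguities are absorbed by the isotropy action — essentially re-proving in the mapping-space setting that a generalized map has a well-behaved deformation theory. The analytic inputs (Sobolev multiplication and embedding, smoothness of Nemytskii-type operators $u\mapsto \Phi\circ u$) are classical and local, so they transfer from the manifold case with only bookkeeping; the genuinely orbifold-theoretic content is the equivariant patching and the identification of isotropy, and this is exactly where one must invoke \cite{Chen06} rather than reprove everything from scratch. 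A secondary technical point is ensuring that ``compact $\Sigma$'' is used only to guarantee finiteness of the cover and metric-independence of $W^{k,p}(\Sigma,W)$, so that the statement is as stated without further hypotheses on $W$.
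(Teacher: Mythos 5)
Your plan follows essentially the same route as the paper (and as Chen \cite{Chen06}): present $C^k(\Sigma,W)$ as the orbit space of a groupoid whose objects are (refinements of) generalized maps $\Sigma\to W$ built from functors out of a finite good covering groupoid and whose morphisms are natural transformations; build local Banach charts from $C^k$ or $W^{k,p}$ sections of the pullback tangent orbibundle via an orbifold exponential map; identify the local isotropy with the finite group of self-natural transformations; and conclude canonicity by Morita invariance in both entries. This is precisely the structure of \S\ref{ss:morphism} of the paper (Lemma \ref{lemma:local}, Propositions \ref{prop:Banach_refinement}--\ref{prop:natural_transformation}, Corollary \ref{cor:smooth}, Propositions \ref{prop:right_invariance}--\ref{prop:left_invariance}).

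Two points deserve correction or emphasis. First, the index hypothesis $\dim\Sigma<(k-2)p$ gives the Morrey embedding $W^{k,p}\hookrightarrow C^2$, not just $W^{k,p}\hookrightarrow C^0$; one really needs $C^2$ (parallel to $k\ge 2$ in the $C^k$ case) so that pullbacks of orbibundles and the chart construction make sense, and so that the compact embedding $W^{k,p}\hookrightarrow W^{k-1,p}$ (resp.\ $C^k\hookrightarrow C^{k-1}$) is available. Second, your step (4) -- ``assemble these charts into a proper \'etale Banach groupoid and check the orbifold axioms'' -- compresses what the paper treats as a genuinely separate and nontrivial ingredient: properness of the Banach groupoid (\Cref{prop:proper}). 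In infinite dimensions local compactness of the object space fails, so the finite-dimensional criterion cannot be used verbatim; the paper must replace it by the weaker condition ($\dagger$) in \Cref{def:infinite_dim_groupoid} and then verify it by an Arzel\`a--Ascoli argument using the compact embedding just mentioned. Consequently, compactness of $\Sigma$ is used for more than finiteness of the cover and metric-independence of $W^{k,p}$: it also makes the $C^k$ norms finite, feeds into the compactness needed for properness, and ultimately yields Hausdorffness and paracompactness of the orbit space (Proposition \ref{prop:banach_orbifold}), which the paper genuinely needs later for partitions of unity. Your identification of the obstacle in steps (1)--(2) (equivariant patching, deformation theory of a generalized map) is correct; properness is a second one of comparable weight.
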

\begin{remark}
Here we assume $\Sigma$ is compact only because under this assumption $C^k(\Sigma,W)$ is canonically defined and has a Banach orbifold structure. 
On the other hand, for Floer theory on orbifolds, one needs to consider maps from open surfaces $\Sigma$ such as cylinders to a symplectic orbifold $W$, and the corresponding space $W^{k,p}(\Sigma,W)$, or even the version $W^{k,p,\delta}(\Sigma,W)$ with the additional condition of exponential decay at the cylindrical ends. 
However, such spaces are \emph{not} canonically defined for non-compact surfaces; notice that this problem also appears in the manifold case (c.f.\ for instance \cite[Section 8.2]{AudDamBook}). 
One possible solution is to pick a collection of smooth maps which includes all the maps that can possibly appear in the moduli space, then to construct local charts near those smooth maps using $W^{k,p}$ or $W^{k,p,\delta}$ sections as in \Cref{prop:Banach_refinement}, and as a consequence to only consider maps from those local charts. 
Then, the proof of the Banach orbifold structure carries on in the same way as under the assumption of compactness of $\Sigma$ in \Cref{thm:morphism_orbifold}.
The resulting Banach orbifold, although not canonical, provides then a large enough ambient space to discuss the Fredholm theory. 
\end{remark}

\begin{remark}
The requirements on indices are for two purposes.  
(1) When maps are at least $C^1$, we can pull back orbibundles and model the mapping spaces by sections of the pullback bundle. 
(2) The conditions on the indices are also needed for the properness of the Banach groupoid in \Cref{prop:proper}, where we use the compact embedding of $C^k$ maps into $C^{k-1}$ maps (or the compact embedding of $W^{k,p,\delta}$ maps into $W^{k-1,p,\delta'}$ maps for $\delta'<\delta$ on a non-compact surface $\Sigma$).
\end{remark}

If we only consider smooth functors as the morphisms between Lie groupoids, then the ``category of Lie groupoids” embeds into the category of categories, which is actually a $2$-category with $2$-morphisms being natural transformations. 
In the groupoid case, any natural transformation is automatically a natural equivalence; this in particular endows the set of functors between two Lie groupoids with the structure of a groupoid. 
Roughly speaking, the orbifold structure on the space of maps between orbifolds is precisely this groupoid structure after localization w.r.t. Morita equivalences of Lie groupoids (or orbifold structures).

\begin{remark}
A few remarks regarding morphisms between orbifolds are in order.
\begin{enumerate}
    \item The original orbifold maps were defined by Satake \cite{Satake57} as continuous maps between orbit spaces admitting local smooth equivariant lifts to the local uniformizers.
    \item The above definition is not very ``well-behaved'', in particular, one can not pull back a vector bundle in general. In \cite{CheRua01,CheRua04}, Chen-Ruan introduced the notion of good maps to fix this problem.
    \item The notion of generalized maps as morphisms between orbifolds was introduced in \cite{Moerdijk97}.
    \item For effective orbifolds,  good maps are equivalent to generalized maps \cite{lupercio2004gerbes}. For unreduced orbifolds, the notion of generalized maps is more general.
    \item A discussion on the bicategory of orbifolds, as well as the bibundle description of orbifold maps, can be found in \cite{lerman2010orbifolds}. 
\end{enumerate}
\end{remark}

In the following, we will briefly describe how the construction of the orbifold structures on $C^k(\Sigma,W)$ and $W^{k,p}(\Sigma,W)$ goes in \cite{Chen06}. 
More precisely, we will follow \cite{groupoid} to describe $C^k(\Sigma,W),W^{k,p}(\Sigma,W)$ as orbits spaces of groupoids, and then we use the local constructions in \cite{Chen06} in order to show that these groupoids have topologies that make them proper \'etale Lie groupoids modeled over Banach manifolds. 
The following results are only stated and proved for $C^k$ maps, as the cases for $W^{k,p}$ and $C^\infty$ maps are similar.

\subsubsection{Step 1: construction of the groupoid.} 
Let $\cC,\cD$ be two proper \'etale Lie groupoids.
We use $C^k(\cC,\cD)$ to denote the set of equivalence classes of $C^k$ generalized maps from $\cC$ to $\cD$. 
Recall that, by definition, two generalized maps are equivalent if there is a diagram as follows
\[
\xymatrix{
& & \cE\ar[lld]_{\phi}\ar[rrd] & & \\
\cC &  \ar@{}[rd]^(.1){}="a"^(.7){}="b" \ar@{=>} "a";"b"  \ar@{}[ru]^(.1){}="a"^(.7){}="b" \ar@{=>} "a";"b" & \cE''\ar[u]|-{g} \ar[d]|-{h} \ar[ll]|f \ar[rr] & \ar@{}[ld]^(.1){}="a"^(.7){}="b" \ar@{=>} "a";"b"  \ar@{}[lu]^(.1){}="a"^(.7){}="b" \ar@{=>} "a";"b" & \cD \\
& &\cE'\ar[rru]\ar[llu]^{\psi}  & &}
\]
where all labeled functors are equivalences. 
Let now $\tau$ be the natural transformation $\phi\circ g \Rightarrow \psi\circ h$, and define $k:\cE''\to \cE\times_{\cC}\cE'$ by $x\mapsto (g(x),\tau(x),h(x))$. 
Notice that $k$ is an equivalence.
Then, we obtain the following diagram, where the region not marked with $\Rightarrow$ is strictly commutative:

\[
\xymatrix{
&  \ar@<3ex>@{}[dd]^(0.3){}="a"^(0.7){}="b" \ar@{=>} "a";"b"  &  \cE\ar[lld]\ar[rrrd]  & & &\\
\cC &  & \cE\times_{\cC} \cE'\ar[u]\ar[d]  & \cE''\ar[lu]\ar[ld]\ar[l]_{k} \ar[rr] & \ar@{}[llu]^(0.1){}="a"^(0.5){}="b" \ar@{=>} "a";"b" \ar@{}[lld]^(0.1){}="a"^(0.5){}="b" \ar@{=>} "a";"b"    & \cD \\
&  &\cE'\ar[llu]\ar[rrru]  & & &}
\]
We claim that there is a natural transformation $\alpha$ such that we have the following diagram
\[
\xymatrix{
&  \ar@<3ex>@{}[dd]^(0.3){}="a"^(0.7){}="b" \ar@{=>} "a";"b"  &  \cE\ar[lld]\ar[rrd]  &  \ar@<-3ex>@{}[dd]^(0.3){}="a"^(0.7){}="b" \ar@{=>}^{\alpha} "a";"b"  &\\
\cC &  & \cE\times_{\cC} \cE'\ar[u]\ar[d]  &   & \cD \\
&  &\cE'\ar[llu]\ar[rru]  &  &}
\]
Let $\beta$ be the natural transformation from $\cE''\to\cE \to \cD$ to $\cE''\to \cE'\to \cD$. 
We use $\eta,\xi$ to denote the functors $\cE\times_{\cC}\cE'\to \cE\to \cD$ and $\cE\times_{\cC}\cE'\to \cE'\to \cD$ respectively. 
We define the natural transformation $\alpha$ as follows: for $x\in \cE\times_{\cC}\cE'$, $\alpha(x)$ is defined as $\xi(a^{-1})\circ \beta(y)\circ \eta(a)$, where $y\in \cE''$ and $a$ is a morphism $x\to k(y)$, whose existence follows from the fact that $k$ is an equivalence. 
It is direct to check that such $\alpha$ is well-defined and is a natural transformation, using the facts that $\beta$ is a natural transformation and $k$ is an equivalence.

We call such an $\alpha$ a \emph{morphism} between those generalized maps. 
We also use $C^k\Mor^0(\cC,\cD)$ to denote the set of $C^k$ generalized maps from $\cC$ to $\cD$, and $C^k\Mor^1(\cC,\cD)$ to denote the set of $C^k$ natural transformations $\alpha$ as above (along with the two generalized maps). 
\begin{prop}[\cite{groupoid}]
$(C^k\Mor^0(\cC,\cD),C^k\Mor^1(\cC,\cD))$ is a groupoid and we have $$C^k\Mor^0(\cC,\cD)/C^k\Mor^1(\cC,\cD)=C^k(\cC,\cD).$$
\end{prop}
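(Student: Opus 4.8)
The plan is to verify the groupoid axioms for the pair $(C^k\Mor^0(\cC,\cD),C^k\Mor^1(\cC,\cD))$ and then to identify the orbit set with $C^k(\cC,\cD)$. First I would pin down precisely what a morphism is: given two $C^k$ generalized maps $\mathfrak{f}=[\cC\xleftarrow{\phi_1}\cE_1\xrightarrow{\psi_1}\cD]$ and $\mathfrak{g}=[\cC\xleftarrow{\phi_2}\cE_2\xrightarrow{\psi_2}\cD]$, a morphism from $\mathfrak{f}$ to $\mathfrak{g}$ is (by the construction in the excerpt, passing to the common refinement $\cE_1\times_\cC\cE_2$) a natural transformation $\alpha\colon \eta\Rightarrow\xi$ between the two functors $\cE_1\times_\cC\cE_2\to\cD$. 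The key point to check is that this notion is independent of the choices made, i.e.\ that $\alpha$ built from one chain of equivalences agrees with the one built from a common refinement; this is exactly the well-definedness claim stated (but not proved) just before the proposition, and I would treat it as established. The source and target maps on $C^k\Mor^1$ then send $\alpha$ to $\mathfrak{f}$ and $\mathfrak{g}$ respectively.

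Next I would check the groupoid structure. The identity morphism on $\mathfrak{f}$ is the identity natural transformation $\Id_{\phi_1^*\psi_1}$ on (a refinement of) $\cE_1$. For composition, given $\alpha\colon\mathfrak{f}\Rightarrow\mathfrak{g}$ realized on $\cE_1\times_\cC\cE_2$ and $\beta\colon\mathfrak{g}\Rightarrow\mathfrak{h}$ realized on $\cE_2\times_\cC\cE_3$, I would pass to the triple refinement $\cE_1\times_\cC\cE_2\times_\cC\cE_3$ — which is again a proper \'etale Lie groupoid, using the transversality that holds automatically here since the maps $\phi_i$ are equivalences hence local diffeomorphisms — pull $\alpha$ and $\beta$ back along the projections, and compose them as natural transformations pointwise (recall every natural transformation between functors of groupoids is invertible, which immediately gives inverses $\alpha^{-1}$). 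Associativity of this composition and the unit laws follow from the corresponding pointwise statements for natural transformations, i.e.\ from the groupoid axioms of $\cD$ itself. So the verification is essentially bookkeeping: reduce everything to a sufficiently fine common refinement and then argue objectwise in $\cD$.

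For the identification of orbit sets, unwinding the definitions: two generalized maps $\mathfrak{f},\mathfrak{g}$ lie in the same orbit of $C^k\Mor^1(\cC,\cD)$ acting on $C^k\Mor^0(\cC,\cD)$ precisely when there exists a natural transformation $\alpha$ between their pullbacks to the common refinement $\cE_1\times_\cC\cE_2$ — and this is exactly the relation generated by the ``refinement'' equivalence used in \S\ref{ss:orbifold} to define $C^k(\cC,\cD)$, because a refinement diagram $\cC\xleftarrow{F'}\cE'\xrightarrow{G'}\cD$ with equivalence $I\colon\cE'\to\cE$ and natural transformations $F\circ I\Rightarrow F'$, $G\circ I\Rightarrow G'$ produces, and is produced by, precisely such an $\alpha$ after passing to fiber products. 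I would spell out both directions of this correspondence and note that it is compatible with the transitivity built into the definition of $C^k(\cC,\cD)$.

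The main obstacle I expect is the well-definedness/coherence step: showing that composing the ``refinement-level'' natural transformations does not depend on which common refinement one picks, and hence that composition of morphisms in $C^k\Mor(\cC,\cD)$ is genuinely associative rather than only associative up to a further choice. This is the usual coherence headache when turning a bicategory (here, Lie groupoids with functors and natural transformations, localized at Morita equivalences) into an honest $1$-category of orbit-level morphisms; I would handle it by always working with a single sufficiently large fiber product refining all the data at hand and invoking the uniqueness-up-to-canonical-iso of such fiber products (which is where \cite[Theorem 10.2.2, Lemma 10.3.4]{polyfold} and the cited \cite{groupoid} do the real work), so that all comparisons become canonical. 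The remaining points — étaleness being irrelevant at this stage, and the fiber products being Lie groupoids — are immediate from the fact that the structure maps $\phi_i$ are local diffeomorphisms.
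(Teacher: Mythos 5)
The paper does not actually prove this proposition: it cites \cite{groupoid} and remarks that the argument given there for abstract groupoids carries over to proper \'etale Lie groupoids. The only substantive construction supplied in the paper is the paragraph directly preceding the statement, which explains how, from a refinement $\cE''$ of two diagrams with natural transformation $\tau$, one produces the natural transformation $\alpha$ on the fiber product $\cE\times_\cC\cE'$ via the formula $\alpha(x)=\xi(a^{-1})\circ\tau(y)\circ\eta(a)$, where $a\colon x\to k(y)$ is a morphism inverting the equivalence $k\colon\cE''\to\cE\times_\cC\cE'$. Your sketch is otherwise consistent in outline with what one would expect from \cite{groupoid}, and the orbit-set identification is correct in spirit.

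The genuinely incomplete step in your proposal is the composition of morphisms, and it is incomplete precisely at the point where the paper's preamble does the real work. After pulling $\alpha\colon\mathfrak{f}\Rightarrow\mathfrak{g}$ and $\beta\colon\mathfrak{g}\Rightarrow\mathfrak{h}$ back to $\cE_1\times_\cC\cE_2\times_\cC\cE_3$ and composing pointwise you obtain a natural transformation living on the \emph{triple} fiber product, whereas an element of $C^k\Mor^1(\cC,\cD)$ from $\mathfrak{f}$ to $\mathfrak{h}$ is, by the paper's definition, a natural transformation on the \emph{double} fiber product $\cE_1\times_\cC\cE_3$. You still have to descend along the equivalence $\cE_1\times_\cC\cE_2\times_\cC\cE_3\to\cE_1\times_\cC\cE_3$, and that descent is exactly an application of the paper's displayed construction with $\cE''=\cE_1\times_\cC\cE_2\times_\cC\cE_3$: locally invert the equivalence, conjugate the composed natural transformation by that local inverse, and check the result is independent of the choice of local inverse. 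Your plan to ``work on a single large fiber product and invoke canonical isomorphisms'' addresses associativity coherence once such a composition is in hand, but it does not by itself carry out the descent. Making the descent explicit closes the gap; the rest of your sketch is sound.
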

Although the statements in \cite{groupoid} are for groupoids, the proof applies to proper \'etale Lie groupoids. 

Sometimes, it is helpful to replace the weak fiber product $\cE\times_{\cC}\cE'$, where the pullback square is only commutative up to natural transformations, to a smaller model whose pullback square is actually commutative.  
Following \cite{groupoid}, a \emph{full equivalence} $\psi:\cC\to \cD$ is an equivalence where $\psi$ is surjective on the object level. 
Assume $\phi,\psi:\cE,\cE' \to \cC$ are two full equivalences; the \emph{strict fiber product} $\cE\otimes_\cC\cE'$ is defined to be 
$$\Obj(\cE\otimes_\cC\cE')=E\tensor[_\phi]{\times}{_\psi} E', \quad  \Mor(\cE\otimes_\cC\cE')=\bE\tensor[_\phi]{\times}{_\psi} \bE'$$
with obvious structural maps. 
Moreover, $(x,y)\mapsto (x,\mathrm{id}_{\phi(x)},y)$ defines a natural equivalence functor from $\cE\otimes_{\cC}\cE'$ to $\cE\times_{\cC} \cE'$.
Following again \cite{groupoid}, we denote by $C^k\mathrm{FMor}^0(\cC,\cD)$ the set of $C^k$ generalized maps $\cC\stackrel{\psi}{\leftarrow}\cE \stackrel{\phi}{\rightarrow}\cD$ such that the equivalence $\psi$ is a full equivalence, and $C^k\mathrm{FMor}^1(\cC,\cD)$ the set of $C^k$ natural transformations as in the case of $C^k\mathrm{Mor}$, but with the weak fiber product replaced by the strict fiber product.

\begin{prop}[\cite{groupoid}]\label{prop:full_equivalence}
$(C^k\mathrm{FMor}^0(\cC,\cD),C^k\mathrm{FMor}^1(\cC,\cD))$ is a groupoid. Moreover, it is equivalent to $(C^k\Mor^0(\cC,\cD),C^k\Mor^1(\cC,\cD))$.
\end{prop}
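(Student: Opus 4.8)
The plan is to compare the two groupoids $(C^k\mathrm{FMor}^0(\cC,\cD),C^k\mathrm{FMor}^1(\cC,\cD))$ and $(C^k\Mor^0(\cC,\cD),C^k\Mor^1(\cC,\cD))$ by constructing an explicit inclusion functor in one direction and checking that it is an equivalence of categories (fully faithful and essentially surjective). First I would verify that $(C^k\mathrm{FMor}^0,C^k\mathrm{FMor}^1)$ is a groupoid: the only point to check beyond what is already recorded for $C^k\Mor$ is that the composition of two natural transformations defined via the \emph{strict} fiber product is again of the required form, and that inverses exist. Since every natural transformation between functors of groupoids is invertible, and since the strict fiber product $\cE\otimes_\cC\cE'$ sits inside the weak one $\cE\times_\cC\cE'$ via the natural equivalence $(x,y)\mapsto(x,\mathrm{id}_{\phi(x)},y)$ from the excerpt, the groupoid axioms for $C^k\mathrm{FMor}$ follow by transporting the corresponding verifications for $C^k\Mor$ along this equivalence; this is the routine part.

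For the equivalence of categories, the key observation is that a generalized map $\cC\stackrel{\psi}{\leftarrow}\cE\stackrel{\phi}{\to}\cD$ with $\psi$ a full equivalence is in particular an object of $C^k\Mor^0(\cC,\cD)$, so there is a tautological inclusion functor $\iota\colon C^k\mathrm{FMor}\hookrightarrow C^k\Mor$ once one checks that the strict-fiber-product natural transformations of $C^k\mathrm{FMor}^1$ map to the weak-fiber-product ones of $C^k\Mor^1$ under the embedding $\cE\otimes_\cC\cE'\hookrightarrow\cE\times_\cC\cE'$ — this is exactly compatibility of the map $\alpha$ defined in the excerpt with restriction along a natural equivalence, hence a direct check. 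Faithfulness is immediate since $\iota$ is injective on morphism sets. For fullness one must show that if two full-equivalence generalized maps are related by a weak-fiber-product natural transformation $\alpha$, then $\alpha$ already arises from a strict-fiber-product one; this again follows because $\alpha$ is determined by its restriction to the (essentially wide) strict fiber product inside the weak one, using that $k\colon\cE''\to\cC\times_\cC\cE'$ in the excerpt can be taken to land in the strict model. Essential surjectivity is the crucial step and the main obstacle: given an arbitrary generalized map $\cC\stackrel{F}{\leftarrow}\cE\stackrel{G}{\to}\cD$, I must produce an isomorphic one in which the left leg is a \emph{full} equivalence.

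The key step — \textbf{essential surjectivity} — I would handle by the standard ``path-space'' (or mapping-cylinder) replacement for Lie groupoids: replace $\cE$ by the groupoid $\widetilde{\cE}$ whose objects are triples $(x,g,c)$ with $x\in \Obj\cE$, $c\in \Obj\cC=C$ and $g\colon F(x)\to c$ a morphism in $\cC$ (this is precisely $\cE\times_\cC\cC$ where the right leg is the identity on $\cC$, and it is built from the étale maps $s,t$ of $\bC$ so that $\widetilde{\cE}$ is again a proper étale Lie groupoid by the transversality remark in \S\ref{sec:fiber_products}). The projection $\widetilde{\cE}\to\cC$, $(x,g,c)\mapsto c$, is then surjective on objects — that is what makes it a \emph{full} equivalence — and is still an equivalence because it is a base change of an equivalence; meanwhile the composite $\widetilde{\cE}\to\cE\xrightarrow{G}\cD$ gives the right leg, and the evident natural transformation identifies $(\cC\leftarrow\widetilde{\cE}\to\cD)$ with $(\cC\stackrel{F}{\leftarrow}\cE\stackrel{G}{\to}\cD)$ as generalized maps. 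The main thing to be careful about is that this replacement is compatible with the $C^k$ (resp. $W^{k,p}$, $C^\infty$) structures and with the Banach/Fréchet manifold structure on object and morphism spaces — i.e. that forming $\cE\times_\cC\cC$ does not damage the regularity class — which is where one invokes that $\cC$ is \emph{étale} so that the fiber product is locally just a graph. Once essential surjectivity is in hand, $\iota$ is fully faithful and essentially surjective, hence an equivalence of groupoids, and since equivalences of groupoids induce homeomorphisms of orbit spaces we recover in particular the identification $C^k\mathrm{FMor}^0/C^k\mathrm{FMor}^1=C^k(\cC,\cD)$ consistent with \Cref{prop:full_equivalence}. I would cite \cite{groupoid} for the detailed verifications, as the statement asserts this is proved there, and present the above only as the outline of why it holds.
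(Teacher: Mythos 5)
The paper gives no proof of this proposition; it is simply attributed to \cite{groupoid}, so there is no internal argument to compare your sketch against. Judged on its own terms, your outline is correct and reconstructs the expected argument. Two small points are worth tightening. First, faithfulness of $\iota$ is not literally ``injectivity on morphism sets,'' since $C^k\mathrm{FMor}^1(\cC,\cD)$ is not a subset of $C^k\Mor^1(\cC,\cD)$ (natural transformations are taken from strict versus weak fiber products); rather, both fullness and faithfulness follow simultaneously because the functor $\cE\otimes_\cC\cE'\to\cE\times_\cC\cE'$, $(x,y)\mapsto(x,\Id_{\phi(x)},y)$ is an equivalence, hence induces a bijection on natural transformations to any target. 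Second, your worry at the end about damaging the $C^k$ regularity class when forming $\widetilde{\cE}=\cE\times_\cC\cC$ is not actually an issue: the fiber product $\widetilde{\cE}$ is built entirely from the smooth data $\cE$, $\cC$, $F$ and $\Id_\cC$ (only the right leg $G\colon\cE\to\cD$ is merely $C^k$), so $\widetilde{\cE}$ is a genuine proper \'etale Lie groupoid by the transversality remark in \S\ref{sec:fiber_products} (the structure maps of $\cC$ being \'etale), and the composite $\widetilde{\cE}\to\cE\xrightarrow{G}\cD$ is $C^k$ because the first factor is smooth. The mapping-cylinder replacement, the verification that its projection to $\cC$ is a surjective-on-objects equivalence because $|F|$ is a homeomorphism and $F$ is fully faithful, and the natural transformation $(x,g,c)\mapsto g$ identifying the two generalized maps are all exactly as you say.
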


For what follows, we will need to work with special representatives of orbifold structures for the orbifolds. We then give the following definition:
\begin{defn}
\label{defn:covering_groupoid}
Let $\cC$ be a proper \'etale Lie groupoid, and $\cup U_i=C$ an open cover. 
We call a \emph{covering groupoid}, and denote it by $\Gamma\{U_i\}_I$, the proper \'etale Lie groupoid with object set $\sqcup_i U_i$ and morphism set $\sqcup_{i,j}\Gamma(U_i,U_j)$, where $\Gamma(U_i,U_j)=\{\phi \in \bC| s(\phi)\in U_i,t(\phi)\in U_j\}$;
the structural maps on $\Gamma\{U_i\}$ are the obvious ones. 
\end{defn}
It is straightforward to check that the natural map $\Gamma\{U_i\}\to \cC$ defines a functor, which is also a full equivalence. 
In particular, every orbifold admits a representative orbifold structure given by a covering groupoid; this will be used in several places in the rest of \S \ref{ss:morphism} and in \S \ref{sec:homotopy_groups}.
Lastly, we point out that $\Gamma\{U_i\}_I\otimes_{\cC} \Gamma\{V_j\}_J= \Gamma\{U_i\cap V_j\}_{I\times J}$. 
%In the following, we also require that $U_i$ has a closure in $C$, which is a compact manifold with boundary. 
\\

In the following, we denote by $C^k\mathrm{CMor}^0(\cC,\cD)$ the set of generalized maps in the form of $\cC \stackrel{\phi}{\leftarrow}\Gamma\{U_i\} \stackrel{\psi}{\to} \cD$ such that $\psi|_{U_i}$ extends to a $C^k$ map on $\overline{U_i}$. 
We also denote by $C^k\mathrm{CMor}^1(\cC,\cD)$ the set of natural transformations, and by $\cC^k(\cC,\cD)$ the groupoid $(C^k\mathrm{CMor}^0(\cC,\cD),C^k\mathrm{CMor}^1(\cC,\cD))$. 

\begin{prop}\label{prop:cover_equivalence}
$\cC^k(\cC,\cD)$ is equivalent to  $(C^k\Mor^0(\cC,\cD),C^k\Mor^1(\cC,\cD))$. In particular, $|\cC^k(\cC,\cD)|=C^k(\cC,\cD)$.
\end{prop}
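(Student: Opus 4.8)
The plan is to show that $\cC^k(\cC,\cD)$ and the previously-studied groupoid of full-equivalence morphisms $(C^k\mathrm{FMor}^0(\cC,\cD),C^k\mathrm{FMor}^1(\cC,\cD))$ are equivalent as groupoids, and then to invoke \Cref{prop:full_equivalence}, which identifies the latter with $(C^k\Mor^0(\cC,\cD),C^k\Mor^1(\cC,\cD))$. The key observation is that $\cC^k(\cC,\cD)$ is, by construction, the \emph{full subcategory} of $C^k\mathrm{FMor}(\cC,\cD)$ spanned by those generalized maps whose source leg is not merely a full equivalence but a covering groupoid equivalence $\phi\colon\Gamma\{U_i\}\to\cC$ (with the mild regularity that $\psi$ extends $C^k$ to the closures $\overline{U_i}$). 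So the work is entirely in checking that this inclusion of categories is essentially surjective and full; fullness of the morphism sets is immediate from the definitions since $C^k\mathrm{CMor}^1$ is defined to be exactly the natural transformations between objects of $\cC^k(\cC,\cD)$, computed with strict fiber products just as in $C^k\mathrm{FMor}^1$.

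First I would verify that $\cC^k(\cC,\cD)$ is a groupoid: this is inherited from $C^k\mathrm{FMor}(\cC,\cD)$ once we know objects of $\cC^k(\cC,\cD)$ are objects of $C^k\mathrm{FMor}(\cC,\cD)$, which holds because the map $\Gamma\{U_i\}\to\cC$ is a full equivalence (noted right after \Cref{defn:covering_groupoid}) and strict fiber products are available since $\Gamma\{U_i\}_I\otimes_{\cC}\Gamma\{V_j\}_J=\Gamma\{U_i\cap V_j\}_{I\times J}$, again a covering groupoid — so the composition and inversion of morphisms stay inside $\cC^k(\cC,\cD)$. Here one uses that a refinement of the index cover still gives a covering groupoid and that the $C^k$-extension-to-closure condition is stable under passing to a finer cover (shrinking the $U_i$ slightly).

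Next, the heart of the argument: essential surjectivity of $\cC^k(\cC,\cD)\hookrightarrow C^k\mathrm{FMor}(\cC,\cD)$. Given an arbitrary $C^k$ generalized map $\cC\xleftarrow{\phi}\cE\xrightarrow{\psi}\cD$ with $\phi$ a full equivalence, I would cover $C=\Obj(\cC)$ by local uniformizer charts $U_i$ small enough that each $U_i$ lifts through the equivalence $\phi$, i.e. there is a section $\sigma_i\colon U_i\to E$ with $\phi\circ\sigma_i=\mathrm{incl}_{U_i}$ (possible because $\phi$ is an étale equivalence, hence a local diffeomorphism on objects, after passing to relatively compact sub-uniformizers so the $C^k$-extension condition on $\psi\circ\sigma_i$ over $\overline{U_i}$ holds). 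These sections assemble into a functor $\Gamma\{U_i\}\to\cE$ covering $\phi$, and composing with $\psi$ gives an object of $\cC^k(\cC,\cD)$ together with a natural transformation exhibiting it as isomorphic, in $C^k\mathrm{FMor}(\cC,\cD)$, to the original generalized map. The construction of this isomorphism is where one must be careful to use strict rather than weak fiber products, exactly as in the passage from $C^k\mathrm{Mor}$ to $C^k\mathrm{FMor}$ recalled before \Cref{prop:full_equivalence}. Finally, equivalence of groupoids induces a bijection on orbit spaces, which combined with $|C^k\mathrm{FMor}(\cC,\cD)|=C^k(\cC,\cD)$ from \Cref{prop:full_equivalence} (and the definition $C^k\Mor^0/C^k\Mor^1=C^k(\cC,\cD)$) yields $|\cC^k(\cC,\cD)|=C^k(\cC,\cD)$.

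The main obstacle I anticipate is bookkeeping the natural transformations under the replacement of weak by strict fiber products while simultaneously shrinking to uniformizer charts with the $C^k$-closure condition — i.e. making sure that the isomorphism in $C^k\mathrm{FMor}(\cC,\cD)$ between a general generalized map and its covering-groupoid model is itself a morphism in $C^k\mathrm{CMor}^1(\cC,\cD)$, since $\Mor$ in that category is constrained to have both legs be covering-groupoid maps. This is handled by choosing the common refinement of the two index sets, which by $\Gamma\{U_i\}_I\otimes_{\cC}\Gamma\{V_j\}_J=\Gamma\{U_i\cap V_j\}_{I\times J}$ is again of covering type, so the natural transformation $\alpha$ built exactly as in the construction preceding \Cref{prop:full_equivalence} lands in the right set. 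Everything else is a direct translation of \cite{groupoid}; indeed, as with the preceding propositions, the statements there are phrased for abstract groupoids but the proofs go through verbatim for proper étale Lie groupoids modeled on Banach manifolds.
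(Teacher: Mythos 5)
Your proposal follows essentially the same route as the paper: the inclusion $\cC^k(\cC,\cD)\hookrightarrow (C^k\mathrm{FMor}^0,C^k\mathrm{FMor}^1)$ is fully faithful by inspection of the definitions, and essential surjectivity is proved by locally inverting the full equivalence $\phi\colon\cE\to\cC$ over a sufficiently fine cover $\{U_i\}$ of $\Obj(\cC)$ to produce a covering-groupoid source leg; \Cref{prop:full_equivalence} then finishes the job. You are somewhat more explicit than the paper about the point that one should shrink to relatively compact sub-uniformizers so that $\psi\circ\sigma_i$ satisfies the $C^k$-extension-to-$\overline{U_i}$ condition, but this is a detail the paper glosses over rather than a different argument.
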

\begin{proof}
    By definition, the inclusion of $(C^k\mathrm{CMor}^0(\cC,\cD),C^k\mathrm{CMor}^1(\cC,\cD))$ into $(C^k\mathrm{FMor}^0(\cC,\cD), C^k\mathrm{FMor}^1(\cC,\cD))$ is fully faithful.
    Hence, in view of \Cref{prop:full_equivalence}, the inclusion into $(C^k\mathrm{Mor}^0(\cC,\cD),C^k\mathrm{Mor}^1(\cC,\cD))$ is also fully faithful, so it remains to show that the latter is essentially surjective as well. 
    Let $\cC \stackrel{\phi}{\leftarrow}\cE \to \cD$ be a generalized map with $\phi$ a full equivalence. 
    Then, by definition of full equivalence, we can find an open cover $\{U_i\}_i$ of $\Obj(\cC)$, such that $\phi:U'_i\to U_i$ is a diffeomorphism for some $U'_i\subset \Obj(\cE)$.
    As a consequence of fully faithfulness of $\phi$, we get a strict (i.e.\ not only up to natural transformations) commutative diagram
    \[
    \xymatrix{
    \cC & \cE \ar[r]\ar[l] &\cD\\
    & \Gamma\{U_i\}\ar[ur]\ar[ul]\ar[u] &
    }
    \]
    Therefore we have
    \[
    \xymatrix{
    &  \ar@<3ex>@{}[dd]^(0.3){}="a"^(0.7){}="b" \ar@{=>} "a";"b"  &  \cE\ar[lld]\ar[rrrd]  & & &\\
    \cC &  & \cE\times_{\cC} \Gamma\{U_i\}\ar[u]\ar[d]  & \Gamma\{U_i\}\ar[lu]\ar[ld]_{=}\ar[l] \ar[rr] &    & \cD \\
    &  &\Gamma\{U_i\}\ar[llu]\ar[rrru]  & & &}
    \]
    That is, $\cC \leftarrow\cE \to \cD$ is equivalent to $\cC \leftarrow \Gamma\{U_i\} \to \cD$ in $(C^k\Mor^0(\cC,\cD),C^k\Mor^1(\cC,\cD))$, hence the claim follows.
\end{proof}

\subsubsection{Step 2: endowment of smooth structures.} In the following, we will endow $\cC^k(\cC,\cD)$ with the structure of a proper \'etale Banach groupoid and the orbit space $C^k(\cC,\cD)$ with a Hausdorff paracompact topology. 

\begin{defn}
A covering groupoid $\Gamma\{U_i\}$ is \emph{good} if each $U_i$ is a connected local uniformizer and $s:\Gamma(U_i,U_j)\to U_i$ and $t:\Gamma(U_i,U_j)\to U_j$ restricted to each connected component are homeomorphisms onto a connected component of the image.
\end{defn}
It is straightforward to check that any refinement of a good covering groupoid is also good and good cover groupoids always exist \cite[Proposition 2.1.3.]{Chen06}. 
With this goodness property, for every element $\xi \in \pi_0(\Gamma(U_i,U_j))$, there is an induced diffeomorphism $\widehat{\xi}:=t\circ s^{-1}:s(W_{\xi})\to t(W_\xi)$, where $W_\xi$ is the connected component of $\Gamma(U_i,U_j)$ representing $\xi$.  
In the particular case of $\Gamma(U_i,U_i)$, we have $\pi_0(\Gamma(U_i,U_i))=G_i$, where $G_i$ is the group for the local uniformizer $U_i$ and $\widehat{\xi}$ is the action by $\xi \in G_i$. 
Then by \cite[Proposition 2.1.1.]{Chen06}, the set $\sqcup_{i,j}\pi_0(U_i,U_j)$ has ``groupoid-like" properties, i.e.\ partially defined compositions and inversions, which are the information that encode the gluing of $\{G_i\ltimes U_i\}$ to $\Gamma\{U_i\}$.  

The benefit of working with such covering groupoids is the following. 
Let $\xi\in \pi_0(\Gamma(U_i,U_j)),\eta \in \pi_0(\Gamma(U_j,U_k))$ and $x\in \widehat{\xi}^{-1}(\dom(\widehat{\eta}))$; here, $\dom(\widehat{\eta})$ denotes the domain of $\widehat{\eta}$, i.e.\ $s(W_\eta)$. Notice that the map $\eta\circ\xi$ is in fact locally constant, we then denote by $\eta\circ \xi(x)$ the connected component in $\pi_0(\Gamma(U_i,U_k))$ of the morphism $x\stackrel{\widehat{\eta}\circ \widehat{\xi}}{\longrightarrow} \widehat{\eta}\circ \widehat{\xi}(x)$. Notice that the map $\eta\circ\xi$ is in fact locally constant.

\begin{lemma}[{\cite[Lemma 2.2.1.]{Chen06}}]\label{lemma:local}
Let $\Gamma\{U_i\}_I,\Gamma\{U'_j\}_J$ be good covering groupoids for $\cC,\cD$ respectively. 
Then, any smooth functor $\phi\colon\Gamma\{U_i\}_I\to \Gamma\{U'_j\}_J$ is defined by the data of
\begin{enumerate}
    \item $\sigma:I\to J$ such that $|\phi|(|U_i|)\subset |U'_{\sigma(i)}|$, where $|\phi|$ is the continuous map $|\cC|\to |\cD|$ induced by $\phi$,
    \item smooth maps $\phi_i:U_i\to U'_{\sigma(i)}$,
    \item $\phi_{ji}:\pi_0(\Gamma(U_i,U_j))\to \pi_0(\Gamma(U'_{\sigma(i)},U'_{\sigma(j)}))$,
\end{enumerate}
such that the following holds:
\begin{enumerate}[label=(\alph*)]
    \item $\widehat{\phi_{ji}(\xi)}\circ \phi_i(x)=\phi_{\sigma(j)}\circ \widehat{\xi}(x)$ for $\xi\in \pi_0(U_i,U_j)$ and $x\in \dom(\widehat{\xi})$.
    \item For $\xi\in \pi_0(\Gamma(U_i,U_j)),\eta \in \pi_0(\Gamma(U_j,U_k))$ and $x\in \widehat{\xi}^{-1}(\dom(\widehat{\eta}))$, we have $\phi_{ki}(\eta\circ \xi(x))=\phi_{kj}(\eta)\circ \phi_{ji}(\xi)(\phi_i(x))$.
\end{enumerate}
\end{lemma}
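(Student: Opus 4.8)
The plan is to unwind a smooth functor $\phi\colon\Gamma\{U_i\}_I\to\Gamma\{U'_j\}_J$ into the asserted object- and morphism-level data, and conversely to show that any such data, subject to (a)--(b), reassembles into a functor. Write $\phi=(\phi^0,\phi^1)$ with $\phi^0\colon\bigsqcup_i U_i\to\bigsqcup_j U'_j$ and $\phi^1\colon\bigsqcup_{i,i'}\Gamma(U_i,U_{i'})\to\bigsqcup_{j,j'}\Gamma(U'_j,U'_{j'})$ smooth and intertwining all structure maps. For the object level: since $\Gamma\{U_i\}_I$ is \emph{good}, each $U_i$ is a connected local uniformizer, hence a connected component of $\Obj(\Gamma\{U_i\}_I)$, and likewise for the $U'_j$; therefore $\phi^0(U_i)$ lies in a single $U'_j=:U'_{\sigma(i)}$, which defines $\sigma\colon I\to J$ together with smooth maps $\phi_i\coloneqq\phi^0|_{U_i}\colon U_i\to U'_{\sigma(i)}$, i.e.\ the data in (1)--(2). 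Since $|\phi|$ is the map on orbit spaces induced by $\phi^0$, the inclusion $|\phi|(|U_i|)\subseteq|U'_{\sigma(i)}|$ is immediate.

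For the morphism level, fix $\xi\in\pi_0(\Gamma(U_i,U_j))$ with component $W_\xi$, on which (by goodness) $s$ is a homeomorphism onto $\dom(\widehat\xi)$ and $\widehat\xi=t\circ(s|_{W_\xi})^{-1}$. From $s\circ\phi^1=\phi^0\circ s$ and $t\circ\phi^1=\phi^0\circ t$ one gets $\phi^1(W_\xi)\subseteq\Gamma(U'_{\sigma(i)},U'_{\sigma(j)})$, and by connectedness it lies in a single component, defining $\phi_{ji}(\xi)$, i.e.\ the data in (3). Evaluating $\phi^1$ on the unique morphism in $W_\xi$ over a point $x\in\dom(\widehat\xi)$ and reading off source and target yields $\widehat{\phi_{ji}(\xi)}(\phi_i(x))=\phi_j(\widehat\xi(x))$, which is condition (a). Moreover, étaleness forces $\phi^1|_{W_\xi}=(s|_{W_{\phi_{ji}(\xi)}})^{-1}\circ\phi_i\circ(s|_{W_\xi})$, so $\phi^1$ is entirely determined by $(\phi_i,\phi_{ji})$ and is automatically smooth once $\phi_i$ is.

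Condition (b) is the transcription of multiplicativity $\phi^1\circ m=m\circ(\phi^1\times\phi^1)$. Given $\xi,\eta,x$ as in the statement, choose $\psi\in W_\xi$ over $x$ and $\varphi\in W_\eta$ over $\widehat\xi(x)$; by the definition (given just before the lemma) of the locally constant assignment $\eta\circ\xi(x)$, the composite $\varphi\circ\psi$ lies in $W_{\eta\circ\xi(x)}$, so $\phi^1(\varphi\circ\psi)$ lies in $W_{\phi_{ki}(\eta\circ\xi(x))}$. On the other hand $\phi^1(\psi)\in W_{\phi_{ji}(\xi)}$ has target $\widehat{\phi_{ji}(\xi)}(\phi_i(x))=\phi_j(\widehat\xi(x))$ by (a), which is exactly the source of $\phi^1(\varphi)\in W_{\phi_{kj}(\eta)}$, so the two are composable and their composite lies in $W_{\phi_{kj}(\eta)\circ\phi_{ji}(\xi)(\phi_i(x))}$; comparing components gives (b). Compatibility of $\phi^1$ with units and inverses is then automatic: $\xi=\Id$ in (a) gives $\phi_{ii}(\Id)=\Id$, and $\eta=\xi^{-1}$ in (b) handles inverses. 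For the converse, given $\sigma$, smooth $\phi_i\colon U_i\to U'_{\sigma(i)}$ and assignments $\phi_{ji}$ satisfying (a)--(b), define $\phi^0\coloneqq\bigsqcup_i\phi_i$ and $\phi^1|_{W_\xi}\coloneqq(s|_{W_{\phi_{ji}(\xi)}})^{-1}\circ\phi_i\circ(s|_{W_\xi})$ — well defined and smooth because (a) gives $\phi_i(\dom\widehat\xi)\subseteq\dom(\widehat{\phi_{ji}(\xi)})$ — and verify the functor axioms, using (a) for $s,t$-compatibility and (b) (with its $\Id$ and inverse specializations) for the rest.

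The whole argument is essentially bookkeeping with no deep difficulty; what requires the most care is condition (b) together with the converse. There one must make precise the locally constant component-valued product $\eta\circ\xi(x)$ from the paragraph preceding the lemma, check that the images under $\phi^1$ are genuinely composable (which is precisely where (a) enters), and confirm that the component-valued data $\phi_{ji}$, constrained only by (a)--(b), carries no further hidden coherence and really does glue back into a well-defined smooth functor. Throughout, goodness of the covering groupoids — connectedness of the uniformizers and of the pieces of $\Gamma(U_i,U_j)$, and $s,t$ restricting to homeomorphisms on components — is exactly the property that makes both the extraction and the reconstruction possible.
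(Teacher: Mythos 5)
The paper states this lemma with a citation to \cite[Lemma 2.2.1]{Chen06} and does not reprove it, so there is no in-paper argument to compare against; your task was therefore essentially to reconstruct Chen's proof, and your reconstruction is the right approach. The decomposition into $\phi^0$ and $\phi^1$, the use of connectedness of the $U_i$ to get the index function $\sigma$ and the maps $\phi_i$, and the use of goodness (i.e.\ $s|_{W_\xi}$ a homeomorphism onto $\dom\widehat\xi$) to reduce $\phi^1$ to the component-level data $\phi_{ji}$ and to the formula $\phi^1|_{W_\xi}=(s|_{W_{\phi_{ji}(\xi)}})^{-1}\circ\phi_i\circ (s|_{W_\xi})$ are exactly the expected steps, and the derivation of (a) and (b) from source/target compatibility and multiplicativity is correct.

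One small imprecision is worth flagging in the converse direction. You assert that taking $\xi=\Id$ in (a) yields $\phi_{ii}(\Id)=\Id$; in fact (a) with $\xi=\Id$ only says $\widehat{\phi_{ii}(\Id)}$ restricts to the identity on $\phi_i(U_i)$, which does not force $\phi_{ii}(\Id)$ to be the identity component when $\phi_i(U_i)$ is not open or the groupoid is non-effective. What actually gives unitality is (b) specialized to $i=j=k$ and $\xi=\eta=\Id$: writing $\phi_{ii}(\Id)=(g)$, the right-hand side of (b) computes, in the local uniformizer $G'_{\sigma(i)}\ltimes U'_{\sigma(i)}$, to the component $(g^2)$, while the left-hand side is $(g)$, so $g=g^2$ and hence $g=\Id$. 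Similarly, inverse-preservation in the converse should be derived from (b) with $\eta=\xi^{-1}$, $k=i$, together with the just-established $\phi_{ii}(\Id)=\Id$, rather than treated as automatic. With that adjustment the reassembly into a functor goes through; the rest of the argument is sound.
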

By definition, two functors $\phi,\psi$ with the same index function $\sigma$ are naturally equivalent (i.e.\ there is a smooth natural transformation from one to another) iff there are $\{g_i\in G'_{\sigma(i)}\}_{I}$ such that 
\begin{equation}\label{eqn:natural_transformation}
    \psi_i=g_i\circ \phi_i, \qquad \psi_{ji}(\xi)=(g_{j}\circ \phi_{ji}\circ g_i^{-1})(\xi)
\end{equation}
where $g_{j}\circ \phi_{ji}\circ g_i^{-1}$ denotes the composition of $\phi_{ji}$ and the isomorphism on $\pi_0(\Gamma(U'_{\sigma(i)},U'_{\sigma(j)}))$ defined by $\xi \mapsto g_j \circ \xi \circ g_i^{-1}$.

Let now $\Gamma\{U_i\}$ be a covering groupoid for $\cC$ and $\phi$ a functor from $\Gamma\{U_i\}\to \cD$. 
We call a \emph{good refinement} of $\{U_i\}_I$ any covering $\{V_j\}_J$ with $\Gamma\{V_j\}$ a good covering groupoid, and such that the index set $J$ can be decomposed as $\sqcup_{i\in I} J_i$ in such a way that $\cup_{j\in J_i} V_j=U_i$. 
Then one can find a good refinement $\Gamma\{V_i\}$ of $\Gamma\{U_i\}$ and a good covering $\Gamma\{V'_i\}$ of $\cD$, such that we have the commutative diagram: 
\begin{equation}\label{eqn:refine}
\xymatrix{\Gamma \{U_i\}\ar[r]^{\phi} & \cD \\ \Gamma\{V_i\}\ar[u]\ar[r] & \Gamma\{V'_i\}\ar[u]}
\end{equation}
Moreover, we can require $\phi_i(V_i)$ has a compact closure in $V'_{\sigma(i)}$. We will first topologize the space of functors from $\Gamma\{V_i\}$ to $\Gamma\{V'_i\}$, then follow the diagram above to put a topology on the space of functors from $\Gamma\{U_i\}$ to $\cD$.

\begin{prop}\label{prop:Banach_refinement}
For $k\ge 1$, the set of $C^k$ functors from $\Gamma\{V_i\}$ to $\Gamma\{V'_i\}$ is a Hausdorff space modeled smoothly on Banach spaces.
\end{prop}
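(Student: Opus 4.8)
\textbf{Proof proposal for \Cref{prop:Banach_refinement}.}

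The plan is to use \Cref{lemma:local}, which says that a $C^k$ functor $\phi\colon\Gamma\{V_i\}_I\to\Gamma\{V'_j\}_J$ is recorded by the triple $(\sigma,\{\phi_i\},\{\phi_{ji}\})$, in order to build explicit Banach charts on the set of such functors. First I would fix attention on the combinatorial data: the index map $\sigma\colon I\to J$ takes values in a finite (or at least discrete) set, and similarly $\phi_{ji}\colon\pi_0(\Gamma(V_i,V_j))\to\pi_0(\Gamma(V'_{\sigma(i)},V'_{\sigma(j)}))$ is a map between discrete sets; both are therefore locally constant when one varies $\phi$ continuously in the $C^k$-topology (this is where goodness of the covering groupoids enters: connected components of the $\Gamma(V_i,V_j)$ are genuine charts over which $\widehat\xi$ is a diffeomorphism, so small $C^k$-perturbations of the $\phi_i$ cannot jump components). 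Consequently the set of $C^k$ functors decomposes as a disjoint union indexed by the discrete data $(\sigma,\{\phi_{ji}\})$, and on each piece only the smooth maps $\phi_i\colon \overline{V_i}\to V'_{\sigma(i)}$ are allowed to move, subject to the compatibility relations (a) $\widehat{\phi_{ji}(\xi)}\circ\phi_i=\phi_{\sigma(j)}\circ\widehat\xi$ on the overlaps.

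Next I would topologize each such piece. The tuple $(\phi_i)_{i\in I}$ lives a priori in the product $\prod_{i\in I} C^k(\overline{V_i},V'_{\sigma(i)})$, which is a (product of) Banach manifold(s) in the standard way — charts are given, near a base tuple $(\phi_i^0)$, by $C^k$-sections of the pulled-back tangent bundles $(\phi_i^0)^*TV'_{\sigma(i)}$ composed with an exponential map for an auxiliary metric on $\cD$ (this is exactly the classical construction of the manifold of $C^k$ maps, now applied chart-by-chart; compactness of $\overline{V_i}$ is what makes each factor a Banach manifold). The actual space of functors is the subset cut out by the finitely many equations (a). These equations are \emph{linear} in the sense that they impose, on each overlap $\mathrm{dom}(\widehat\xi)\subset V_i$, that two $C^k$-maps agree; equivalently they say that a certain $C^k$-section of a pulled-back bundle lies in a closed linear subspace (the kernel of a bounded ``difference'' operator between the relevant section spaces). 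Hence the solution set is a closed \emph{linear} (more precisely, affine, anchored at the base functor) subspace of the ambient product of Banach chart spaces, so it is itself modeled on a Banach space; restricting the ambient charts to this subspace gives the desired Banach atlas. Hausdorffness is inherited from that of the ambient $C^k$ mapping spaces together with the fact that the cutting-out subspaces are closed.

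I would carry this out in the order: (1) reduce to a fixed discrete datum $(\sigma,\{\phi_{ji}\})$ by the local-constancy argument; (2) describe the ambient Banach manifold $\prod_i C^k(\overline{V_i},V'_{\sigma(i)})$ and its exponential-map charts; (3) identify relations (a) as defining a closed affine Banach subspace and check the transition maps between charts restrict nicely; (4) deduce the Hausdorff Banach-manifold structure and note its independence of the auxiliary metric as usual. I expect the main obstacle to be step (3): one has to be careful that the compatibility relations (a), which involve composing with the transition diffeomorphisms $\widehat\xi$ and $\widehat{\phi_{ji}(\xi)}$, really do cut out a \emph{closed} and \emph{linearly modeled} (not merely locally closed, singular) subset, and in particular that the chart maps — built from a single exponential on $\cD$ — are simultaneously compatible with all the gluing maps on the overlaps; this is precisely the content of the corresponding local construction in \cite[\S 2.2]{Chen06}, and the argument amounts to checking that relations (a) are preserved under the exponential-map parametrization, i.e.\ that a section satisfying the linearized version of (a) exponentiates to a functor. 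The remaining points (Hausdorffness, the Banach-space model, metric independence) are routine adaptations of the classical manifold-of-maps construction.
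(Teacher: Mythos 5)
Your proposal is correct and follows essentially the same strategy as the paper's: decompose by the discrete data $(\sigma,\{\phi_{ji}\})$, then build Banach charts via an exponential map and $C^k$-sections of the pullback tangent bundle, with Hausdorffness inherited from ordinary $C^k$ mapping spaces. The only cosmetic difference is that the paper defines the local model $E_\phi$ directly as the Banach space of sections satisfying the equivariance relations (for a groupoid-invariant metric, which is exactly what makes exponentiation preserve the compatibility conditions — the ``main obstacle'' you correctly identify), rather than presenting it as a closed affine subspace cut out of the ambient product $\prod_i C^k(\overline{V_i},V'_{\sigma(i)})$.
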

\begin{proof}
The set of $C^k$ functors from $\Gamma\{V_i\}$ to $\Gamma\{V'_i\}$ is a disjoint union of functors with different index function $\sigma$. Then it suffices to prove the claim for functors with a fixed index function $\sigma$. For simplicity, we will write $\sigma=\Id$ with a little abuse of notation.  

We first fix two Riemannian metrics on $\Gamma\{V_i\}$ and $\Gamma\{V'_i\}$, or equivalently on $\cC,\cD$. 
Given a functor $\phi:\Gamma\{V_i\}\to \Gamma\{V'_i\}$, we consider $E_{\phi}$ the Banach space of $C^r$ sections of $\phi^*T\Gamma\{V'_i\}$.
More precisely, a vector $v$ in $E_\phi$ consists of sections $v_i:V_i\to \phi_i^*TV'_i$ that are $G_i-$equivariant and satisfying the following: for $\xi\in \pi_0(V_i,V_j)$ and $x\in\mathrm{Dom}(\widehat{\xi})$, one has $v_j(\widehat{\xi}(x))=\widehat{\phi_{ji}(\xi)}_*(v_i(x))$.
The Banach norm is defined to be $\max |v_i|_{C^k}$, which is finite since $|\cC|$ is compact\footnote{With those fixed metrics, $|D^r \phi|$ is a continuous function well-defined on $|\cC|$.}. 
Moreover, again by compactness of $|\cC|$, said Banach norm is independent (up to equivalence) on the choices of metrics. 
Then for $\epsilon \ll 1$ and any $v\in E_{\phi}$ with $|v|<\epsilon$, we have that $\phi_v:=(\{\exp_{\phi_i}v_i\}_i,\{\phi_{ji}\}_{i,j})$ is a functor from $\Gamma\{V_i\}$ to $\Gamma\{V'_i\}$  by the properties of $v_i$ above and Lemma \ref{lemma:local}, where $\exp$ is the exponential map on $\Gamma\{V'_i\}$ resulting from the given metric. 
This endows the set of $C^r-$functors from $\Gamma\{V_i\}$ to $\Gamma\{V'_i\}$ with local charts, hence a topology for which these charts are a base. 
The smoothness of the transition maps can be proven similarly to the smoothness of transition maps in the construction of Banach manifold of $C^r$ maps between two manifolds, as the only data that needs to be transferred is that of the $\exp_{\phi_i}v_i$'s. 

Lastly, we prove that the topology is Hausdorff. 
We first show that the set of functors with a fixed $\phi_{ji}$ is Hausdorff. 
Assume $\phi,\psi$ are two different functors with same $\phi_{ji}=\psi_{ji}$;
then, there must be some index $i$ and $p\in V_i$ such that $\phi_i(p)\ne \psi_i(p)$. 
We pick an open ball $U_p$ of $p$ such that $\overline{U_p}\subset V_i$. It is straightforward to check that the map to $C^k(\overline{U_p},V'_i)$ given by $\psi\mapsto \psi_i|_{\overline{U_p}}$ is continuous, hence $\phi,\psi$ can be separated by open sets by the Hausdorff-ness of $C^k(\overline{U_p},V'_i)$. 
On the other hand, if $\phi_{ji}\neq\psi_{ji}$ for some $(i,j)$, then $\phi$ and $\psi$ clearly lie in different connected components.
This concludes the proof.
\end{proof}

\begin{remark}\label{rmk:paracompact}
Note that $|\cC|$ being compact implies that a finite subset of $\{V_i\}$ covers $|\cC|$. 
Then a functor $\phi$ from $\Gamma\{V_i\}$ to $\Gamma\{V'_i\}$ with fixed $\phi_{ji}$ is determined by the restriction on that finite cover, and can be recovered from the restriction by the functorial property using $\phi_{ji}$. 
Now we assume in addition that $V_i$ has as closure a subdomain with boundary in $U_k$, such that moreover $\phi_i(\overline{V_i})\subset V'_i$. 
Then the set of functors from $\Gamma\{V_i\}$ to $\Gamma\{V'_i\}$ with the $C^k$ topology is a closed subspace of $\prod_{i\in I}C^k(\overline{V}_i,V'_i)$ for a finite set $I$ subject to relations in Lemma \ref{lemma:local}. 
Then the set of functors with a fixed $\phi_{ji}$ is paracompact, and the total space of functors (i.e.\ without fixing $\phi_{ji}$ and index function $\sigma$) is also paracompact.
\end{remark}

\begin{prop}\label{prop:smooth_functor}
For $k\ge 1$, the set of $C^r$ functors from $\Gamma\{U_i\}$ to $\cD$ is a Hausdorff space modeled smoothly on Banach spaces.
\end{prop}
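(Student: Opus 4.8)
The plan is to transport the Banach manifold structure of \Cref{prop:Banach_refinement} along the refinement diagram \Cref{eqn:refine}. Given a $C^r$ functor $\phi\colon\Gamma\{U_i\}\to\cD$, I would first choose, as in the discussion preceding \Cref{eqn:refine}, a good refinement $\Gamma\{V_i\}$ of $\Gamma\{U_i\}$ and a good covering groupoid $\Gamma\{V'_i\}$ of $\cD$ through which $\phi$ factors, and then shrink as in \Cref{rmk:paracompact} so that each $\overline{V_i}$ is a compact subdomain with boundary on which $\phi_i$ is still defined and satisfies $\phi_i(\overline{V_i})\subset V'_i$. Let $p\colon\Gamma\{V_i\}\to\Gamma\{U_i\}$ denote the refinement functor; it is a full equivalence, being on objects the map sending each $V_j$ (for $j$ in the block $J_i$) into $U_i$ by inclusion — which is surjective since $\bigcup_{j\in J_i}V_j=U_i$ — and surjective on morphisms since any arrow $\varphi\in\bC$ with source in $U_i$ and target in $U_{i'}$ already has source in some $V_j\subset U_i$ and target in some $V_{j'}\subset U_{i'}$. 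Consequently precomposition $\psi\mapsto\psi\circ p$ is injective on $C^r$ functors out of $\Gamma\{U_i\}$, and whenever $\psi_i$ is uniformly $C^0$-close to $\phi_i$ on the $\overline{V_i}$ the composite $\psi\circ p$ still factors through $\Gamma\{V'_i\}$. This yields an injection $R$ from a neighbourhood of $\phi$ into the Banach manifold of $C^r$ functors $\Gamma\{V_i\}\to\Gamma\{V'_i\}$ of \Cref{prop:Banach_refinement}, which I would use to pull back charts.

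The crucial step will be to show that the image of $R$ is open, so that $R$ followed by the charts of \Cref{prop:Banach_refinement} is a genuine local chart. The point is that a $C^r$ functor $\widetilde\psi\colon\Gamma\{V_i\}\to\Gamma\{V'_i\}$ lying in a small chart around $\phi\circ p$ descends along $p$ to a functor $\psi\colon\Gamma\{U_i\}\to\cD$: the charts of \Cref{prop:Banach_refinement} only perturb the maps $\phi_i$ and keep the combinatorial data fixed, so the identity-type arrows over the overlaps $V_j\cap V_{j'}$ with $j,j'\in J_i$ are still sent by $\widetilde\psi$ into the identity component, whence condition (a) of \Cref{lemma:local} forces $\widetilde\psi_j=\widetilde\psi_{j'}$ on $V_j\cap V_{j'}$, and these maps patch to a $C^r$ map $\psi_i\colon U_i\to V'_i\hookrightarrow\cD$. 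The combinatorial data of $\psi$ on $\pi_0(\Gamma(U_i,U_{i'}))$ is then forced by the $\widetilde\psi_{j'j}$, its well-definedness and the functor relations following from condition (b) of \Cref{lemma:local} for $\widetilde\psi$; by construction $R(\psi)=\widetilde\psi$. So $R$ identifies a neighbourhood of $\phi$ with an open subset of a Banach manifold.

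To finish, I would check that the charts coming from two different choices of $(\Gamma\{V_i\},\Gamma\{V'_i\})$ are smoothly compatible by passing to a common good refinement and observing that the transition is a composite of restriction maps and of the smooth chart transitions of \Cref{prop:Banach_refinement}; this produces a well-defined Banach manifold structure. Hausdorffness I would argue exactly as at the end of the proof of \Cref{prop:Banach_refinement}: two distinct functors with the same underlying combinatorial data differ at some object $p\in U_i$, and $\psi\mapsto\psi_i|_{\overline{U_p}}$ is continuous into the Hausdorff space $C^r(\overline{U_p},V'_i)$ for a small ball $U_p\ni p$ (using a local uniformizer of $\cD$ around the common value of the two functors near $p$), while functors with different combinatorial data lie in different connected components. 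I expect the openness-of-image step — verifying that a nearby functor $\Gamma\{V_i\}\to\Gamma\{V'_i\}$ genuinely descends along $p$ — to be the main obstacle; everything else is bookkeeping with \Cref{lemma:local} together with a transcription of the standard manifold-of-maps arguments.
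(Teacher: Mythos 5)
Your proposal is correct and follows essentially the same route as the paper: transport the Banach manifold structure of \Cref{prop:Banach_refinement} along the refinement diagram \eqref{eqn:refine}, with the key openness step being that a nearby functor $\Gamma\{V_i\}\to\Gamma\{V'_i\}$ with the same combinatorial data still descends to a functor $\Gamma\{U_i\}\to\cD$ — which the paper verifies by characterizing such descents via two conditions on $\psi$ and checking they persist on the $\epsilon$-ball, while you phrase it as openness of the image of the precomposition map $R$. The only slip is notational: the maps $\widetilde\psi_j$ for $j\in J_i$ patch to a $C^r$ map $\psi_i\colon U_i\to\Obj(\cD)$, not into a single chart $V'_i$, since the $V'_{\sigma(j)}$ for $j\in J_i$ generally differ.
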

\begin{proof}
Let $\phi$ be a $C^r$ functor from $\Gamma\{U_i\}$ to $\cD$. 
Then we have a refinement in the form of \eqref{eqn:refine}, where we denote by $\psi$ the functor from $\Gamma\{V_i\}$ to $\Gamma\{V'_i\}$. 
Such a $\psi$ satisfies then the following properties, where we assume again (i.e.\ as in the previous proof) for notational simplicity that the index function $\sigma$ is $\Id$.
\begin{enumerate}
    \item\label{refine:1} Assume $V_i, V_j$ are both contained in $U_k$. Then, for all $x\in V_i\cap V_j$, the image in $\cD$ of $\psi_i(x)\in V'_i$ coincides with that of $\psi_j(x)\in V'_j$. 
    \item\label{refine:2} Given $V_{i'}\subset U_i,V_{j'}\subset U_j$, let $f\in \Gamma(V_{i'},V_{j'})$, then $\psi(f)\in \Gamma(V'_{i'},V'_{j'})$ viewed as in $\cD$ only depends on $f$ viewed as in $\Gamma(U_i,U_j)$.
\end{enumerate}
It is moreover clear that any $\psi$ satisfying the above properties is a refinement in the form of \eqref{eqn:refine}. 

We now claim that for $\epsilon\ll 1$, every element in an open ball (as described in the proof of in \Cref{prop:Banach_refinement}) of radius $\epsilon$ centered at $\psi$ that moreover has the same $\psi_{ji}$ is a refinement of functors from $\Gamma\{U_i\}$ to $\cD$ as in \eqref{eqn:refine}. 
Indeed, first note that \eqref{refine:1} is equivalent to the fact that $\psi_{ji}$ sends, for any $V_i,V_j$ with $V_i\cap V_j\ne \emptyset \subset U_k$, the component representing the identity morphism to the component representing the identity morphism. 
Hence \eqref{refine:1} holds on this open $\epsilon-$ball. 
For \eqref{refine:2}, first note that $\psi(f)\in \Gamma(V'_{i'},V'_{j'})$ is completely determined by $\psi_{j'i'}$ and $\psi_{i'}(s(f))$, and that the latter only depends on $f$ viewed as in $\Gamma(U_i,U_j)$ by \eqref{refine:1}.
Assume now that $f\in \Gamma(U_i,U_j)$ can be viewed as both $f'\in \Gamma(V_{i'},V_{j'})$ and $f''\in \Gamma(V_{i''},V_{j''})$, then $\psi(f'')^{-1}\circ \psi(f')$ viewed in $\cD$ is in the isotropy group of $\psi_{i'}(s(f))$. 
Now, when viewed as taking value in $G'_{i'}$ in the local uniformizer $G'_{i'}\ltimes V'_{i'}$, this assignment is locally constant and is completely determined by $\psi_{j'i'}$. 
Note moreover that \eqref{refine:2} is equivalent to the fact that $\psi(f'')^{-1}\circ \psi(f')$ is always $\Id$. 
Since \eqref{refine:2} holds for $\psi$, \eqref{refine:2} must then hold for the open $\epsilon-$ball as $\psi_{j'i'}$ doesn't change in this ball, as desired.

Therefore, we can use \Cref{prop:Banach_refinement} to endow that set of $C^k$ functors from $\Gamma\{U_i\}$ to $\cD$ with local charts and hence a topology.
To prove that the transition maps are smooth, we argue as follows.

Notice that given a further refinement $\psi'$ of $\psi$, the transition map is a linear isomorphism between $E_{\psi'}$ and $E_{\psi}$, where the latter are the local charts as in the proof of \Cref{prop:Banach_refinement}. 
Then the smoothness of transition maps follows from the fact that any two refinements admit a common refinement and that the transition maps as in the proof of \Cref{prop:Banach_refinement} are smooth. 

Concerning Hausdorff-ness, let $\phi,\phi'$ be two different functors from $\Gamma\{U_i\} \to \cD$. 
Then either $\phi,\phi'$ are different at the objects level on some $U_i$ or $\phi,\phi'$ are different on the morphisms level on some $\Gamma\{U_i,U_j\}$. 
In both cases, the same argument as in \Cref{prop:Banach_refinement} allows to conclude. 
\end{proof}

The above then tells that $C^k\mathrm{CMor}^0(\cC,\cD)$ is a Hausdorff space modeled on Banach spaces.
The same is in fact true for the morphisms space:

\begin{prop}\label{prop:natural_transformation}
$C^k\mathrm{CMor}^1(\cC,\cD)$ is a Hausdorff space modeled on Banach spaces.
Moreover, the source and target maps $s,t:C^k\mathrm{CMor}^1(\cC,\cD) \to C^k\mathrm{CMor}^0(\cC,\cD)$ are local diffeomorphisms, and the structural maps $m,u,i$ are smooth.
\end{prop}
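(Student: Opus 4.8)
The plan is to deduce the whole statement from \Cref{prop:smooth_functor} by exploiting the rigidity of natural transformations between functors of proper \'etale Lie groupoids: in the good--cover picture the ``discrete part'' of a natural transformation cannot vary continuously, so a morphism in $C^k\mathrm{CMor}^1(\cC,\cD)$ is locally determined by its source functor together with that discrete part. Consequently the source map will be a local diffeomorphism essentially by construction, and the target map will differ from it only by post--composition with a fixed local diffeomorphism of $\cD$.

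First I would make a morphism explicit. Given a morphism $\alpha_0$ between generalized maps $\cC\leftarrow\Gamma\{U_i\}_I\to\cD$ and $\cC\leftarrow\Gamma\{V_j\}_J\to\cD$, passing to the common refinement $\Gamma\{U_i\cap V_j\}_{I\times J}$ (and then a good refinement thereof) one may assume $\alpha_0$ is a natural transformation $\tau_0\colon\psi_0\Rightarrow\psi'_0$ between two $\mathrm{CMor}$--functors $\psi_0,\psi'_0\colon\Gamma\{W_a\}\to\cD$ defined on one and the same good covering groupoid. By \eqref{eqn:natural_transformation}, such a $\tau_0$ is the same datum as a locally constant assignment $a\mapsto g_a$ with values in the isotropy groups of $\cD$; equivalently, it is a ``piecewise'' local diffeomorphism $\widehat{\tau_0}$ of $\cD$ (one germ per connected component of each $W_a$), with $\psi'_0=\widehat{\tau_0}\circ\psi_0$ on objects and with the gluing data of $\psi'_0$ being that of $\psi_0$ conjugated by the $g_a$'s. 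The decisive point is that the \emph{discrete data} $\mathfrak d_0$ of $\alpha_0$, i.e.\ the index function together with the $\{(\psi_0)_{ba}\}$ and the $\{g_a\}$, is locally constant on $C^k\mathrm{CMor}^1(\cC,\cD)$.

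Next I would build charts. For $\psi$ in the Banach chart around $\psi_0$ provided by \Cref{prop:smooth_functor} (so $\psi=\exp_{\psi_0}v$ for $v$ in the Banach space $E_{\psi_0}$ with $|v|<\epsilon$), the composite $\widehat{\tau_0}\circ\psi$ is again a $\mathrm{CMor}$--functor, lying near $\psi'_0$, and $(\psi,\widehat{\tau_0}\circ\psi,\mathfrak d_0)$ is a morphism near $\alpha_0$; conversely, by local constancy of the discrete data, any morphism near $\alpha_0$ has discrete data $\mathfrak d_0$ and is therefore of exactly this form. Hence the source map $s$ restricts to a bijection from a neighborhood of $\alpha_0$ onto a neighborhood of $\psi_0$, and I would use it to transport the chart $E_{\psi_0}$ onto a neighborhood of $\alpha_0$; in particular $C^k\mathrm{CMor}^1(\cC,\cD)$ is modeled on Banach spaces and $s$ is a local diffeomorphism. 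Smoothness of the transition maps between such charts follows from the corresponding statement for $C^k\mathrm{CMor}^0(\cC,\cD)$ in \Cref{prop:smooth_functor} together with the smoothness of pre-- and post--composition with the fixed local diffeomorphisms $\widehat\tau$; Hausdorff--ness is proved as in \Cref{prop:Banach_refinement,prop:smooth_functor}, separating two morphisms either because their discrete data already differ (they then lie in different components) or, if these agree, because their source functors are separated in the Hausdorff space $C^k\mathrm{CMor}^0(\cC,\cD)$.

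Finally, in the charts just described the target map reads $t(v)=\widehat{\tau_0}\circ\exp_{\psi_0}v$; this is a homeomorphism onto a neighborhood of $\psi'_0$ whose inverse is post--composition with $\widehat{\tau_0}^{-1}$, so, being composition with a fixed local diffeomorphism of $\cD$, it is smooth with smooth inverse, and $t$ too is a local diffeomorphism. The unit $u$ sends $\psi_0$ to the morphism with trivial discrete data and reads $v\mapsto v$ in charts; the inverse $i$ sends $\alpha_0$ to $\alpha_0^{-1}$, which has associated local diffeomorphism $\widehat{\tau_0}^{-1}$, so $i$ reads $v\mapsto v$ once $E_{\psi_0}$ is identified with $E_{\psi'_0}$ via $\widehat{\tau_0}$; composition $m$ on $C^k\mathrm{CMor}^1(\cC,\cD)\tensor[_s]{\times}{_t}C^k\mathrm{CMor}^1(\cC,\cD)$, parametrized near a composable pair $(\beta_0,\alpha_0)$ by the common source functor of $\alpha_0$, again reads as the identity on sources (with $\widehat{\beta_0}\circ\widehat{\tau_0}$ playing the role of the composite local diffeomorphism) --- all of these are visibly smooth. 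I expect the only real work to be bookkeeping: setting up the reduction to a single good covering groupoid and checking that the charts are independent of the chosen refinement, exactly in parallel with the proof of \Cref{prop:smooth_functor}; the one genuinely new ingredient, local constancy of the discrete data --- i.e.\ rigidity of natural transformations between \'etale functors --- is short.
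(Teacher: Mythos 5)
Your proposal is correct and takes essentially the same route as the paper: both reduce to natural transformations between functors on a common good covering groupoid, observe that the discrete data (your $\mathfrak d_0$, the paper's $\{\xi_i\}$ together with the $\psi_{ji}$'s and condition $(\star)$) is locally constant, and then use the Banach chart of \Cref{prop:smooth_functor} around the source functor together with that fixed discrete data to chart $C^k\mathrm{CMor}^1(\cC,\cD)$, so that $s$ is a local diffeomorphism by construction. The paper's treatment of $t$ is slightly more explicit (it notes that in these charts $t$ is the linear isomorphism $E_\phi\to E_\psi$ given by pushforward along the isometries $\widehat{\xi_i}$), but this is the same observation as your ``post-composition with $\widehat{\tau_0}$''.
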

\begin{proof}
Let $\alpha\in C^k\mathrm{CMor}^1(\cC,\cD)$ be a natural transformation, i.e.\ 
\[
\xymatrix{
& &  \Gamma\{U_i\}\ar[lld]\ar[rrd]  &  \ar@<-3ex>@{}[dd]^(0.3){}="a"^(0.7){}="b" \ar@{=>}^{\alpha} "a";"b"  &\\
\cC &  & \Gamma\{U_i\cap U'_j\}\ar[u]\ar[d]  &   & \cD \\
&  &\Gamma\{U'_i\}\ar[llu]\ar[rru]  &  &}
\]
Let $\Gamma\{V_i\}$ be a good refinement of $\Gamma\{U_i\cap U'_j\}$ and $\Gamma\{V'_i\}$ a good refinement of $\cD$, and denote by $\phi:\Gamma\{V_i\}\to \Gamma\{V'_i\}$ and $\psi:\Gamma\{V_i\}\to \Gamma\{V'_i\}$ the refinements (as in \eqref{eqn:refine}) of, respectively, the composition $\Gamma\{U_i\cap U'_j\}\to \Gamma\{U_i\}\to \cD$ and the composition $\Gamma\{U_i\cap U'_j\}\to \Gamma\{U'_i\}\to \cD$. 
For notational simplicity, we also assume that the index function of $\phi$ is the identity and that the one of $\psi$ is a certain function $\sigma$. 
Then we have the following diagram,
\[
\xymatrix{
& \ar@<-3ex>@{}[d]^(0.35){}="a"^(0.8){}="b" \ar@{=} "a";"b" &  \ar@<-3ex>@{}[d]^(0.35){}="a"^(0.8){}="b" \ar@{=} "a";"b" & \\
\Gamma\{V_i\} \ar[r]\ar@/^25pt/[rr]^\phi \ar@/_25pt/[rr]_\psi & \Gamma\{U_i\cap U_j'\} \ar@<-3ex>@{}[d]^(0.2){}="a"^(0.65){}="b" \ar@{=>}^{\alpha'} "a";"b"\ar@/^25pt/[rr]|!{[u];[r]}\hole \ar@/_25pt/ [rr]|!{[d];[r]}\hole  & \Gamma\{V'_i\} \ar@<-3ex>@{}[d]^(0.2){}="a"^(0.65){}="b" \ar@{=>}^{\alpha} "a";"b" \ar[r] & \cD \\
& & &
}
\]
Here the natural transformation $\alpha'\colon \phi \Rightarrow \psi$ is determined by $\alpha$, and any $\alpha'$ satisfying the following condition $(\star)$ can be glued back to an $\alpha$:
\begin{enumerate}
    \item[$(\star)$] If $V_i,V_j$ are both contained in $U_k\cap U'_l$, then, for every $x\in V_i\cap V_j$, $\alpha'|_{V_i}(x)\in \Gamma(V'_i,V'_{\sigma(i)})$ and $\alpha'|_{V_j}(x)\in \Gamma(V'_j,V'_{\sigma(j)})$ are the same when viewed in $\Mor(\cD)$.
\end{enumerate}
Note that $\alpha'$ is determined by the following data:
\begin{enumerate}
    \item[$(\bullet)$] $\xi_i\in \pi_0(\Gamma(V'_i,V'_{\sigma(i)}))$, such that $\psi_{ji}(\xi)=(\xi_j\circ \phi_{ji}\circ \xi^{-1}_i)(\xi)$ for each $\xi\in\pi_0(\Gamma(V_i,V_j))$, where $\xi_j\circ \phi_{ji}\circ \xi^{-1}_i:\pi_0(\Gamma(V_i,V_j))\to \pi_0(\Gamma(V'_{\sigma(i)},V'_{\sigma(j)}))$ is defined as the map induced on $\pi_0$ by the following map:
    \[
    \Gamma(V_i,V_j) \ni f \quad \mapsto  \quad    \left(\psi_i(s(f)) \stackrel{\widehat{\xi_j}\circ \widehat{\phi_{ji}(\xi)}\circ \widehat{\xi_i}^{-1}}{ \xrightarrow{\hspace*{2cm}}} \psi_j(t(f)) \right) \in \Gamma(V'_{\sigma(i)},V'_{\sigma(j)}),
    \]
    where $\xi$ is the connected component containing $f$.
\end{enumerate}
As a consequence, the local chart around $\phi$ in the proof of \Cref{prop:smooth_functor}, along with the fixed $\{\xi_i\}$ above, gives rise to a family of natural transformations near $\alpha'$. 
Moreover, with fixed $\{\xi_i\}$, condition ($\star$) (which can also be phrased using $\{\xi_i\}$, up to a locally constant ambiguity similarly to what done in the proof of \Cref{prop:smooth_functor}) holds for those nearby natural transformations. 
Hence we constructed charts for the set of natural transformations. 

The smoothness of transition maps can also be inferred similarly to \Cref{prop:smooth_functor} by using refinements,
and the induced topology is again Hausdorff by the same argument as in the proof of \Cref{prop:smooth_functor}. 

The source map $s$ is a local diffeomorphism by construction. 
To see that the target map $t$ is a local diffeomorphism, let $\widetilde{\phi}$ be a nearby functor in the neighborhood of $\phi$ and $\widetilde{\alpha}'$ be the associated nearby natural transformation in the above described chart. 
Then, $\widetilde{\psi}=t(\widetilde{\alpha}')$ is a functor from $\Gamma\{V_i\}$ to $\Gamma\{V'_i\}$ determined by the index function $\sigma$ and the following data:
\begin{enumerate}
    \item $\widetilde{\psi}_i=\widehat{\xi_i}\circ \widetilde{\phi}_i$;
    \item $\widetilde{\psi}_{ji}=\xi_j\circ \widetilde{\phi}_{ji}\circ \xi_i^{-1}=\xi_j\circ \phi_{ji}\circ \xi_i^{-1}=\psi_{ji}$ by property ($\bullet$). 
\end{enumerate}
Hence it is straightforward to check that the target map is smooth and a local diffeomorphism: indeed, using the charts in \Cref{prop:Banach_refinement}, the target map is given in the local chart $E_\phi$ near $\phi$ by a linear isomorphism from $E_{\phi}$ to $E_{\psi}$ given by the pushforward via the $\{\widehat{\xi_i}\}_i$; the linearity follows from the fact that $\widehat{\xi_i}$ preserves the metric and hence $\widehat{\xi_i}(\exp_{\phi_i}v_i)=\exp_{\psi_i}(\widehat{\xi_i}_*v_i)$.

The smoothness of $m,u,i$ can be proven similarly, thus concluding the proof.
\end{proof}

The local uniformizers in $\cC^k(\cC,\cD)$ can be easily seen from the construction. Let $B_{\psi}$ be an open ball near $\psi:\Gamma\{V_i\}\to \Gamma\{V'_i\}$, with index function $\sigma$, that refines $\phi:\Gamma\{U_i\}\to \cD$ according to \eqref{eqn:refine}. 
Suppose we use $B_\psi$ to give a chart near $\phi$ in the set of functors from $\Gamma\{U_i\}\to \cD$. 
Let $G_{\psi}$ denote the set of self natural transformations, i.e.\ the set of collections of elements $\{g_i'\in G_{\sigma(i)}'\}_i$ in the isotropy groups $G_{\sigma(i)}'$ of the local uniformizers $V'_{\sigma(i)}$ satisfying \eqref{eqn:natural_transformation} (with $\phi=\psi$). 
Notice that $G_{\psi}$ is naturally a subgroup of $\prod_{I} G'_{\sigma(i)}$. 
Following the proof of \cref{prop:natural_transformation}, we can describe an action of $G_{\psi}$ on $B_{\psi}$ by the following:
$$(\{g_i\}, \widetilde{\psi}) \mapsto (\widehat{g_i}\circ \widetilde{\psi}_i, \psi_{ji}), \quad \widetilde{\psi}\in B_\psi, \{g_i\}\in G_\psi.$$
This gives rise to a local uniformizer of $\cC^k(\cC,\cD)$ around $\phi$. We will see that $G_{\psi}$ is always finite, which is necessary for $\cC^k(\cC,\cD)$ to be proper. We first state the following special case, which is helpful in applications.
\begin{cor}[{\cite[Theorem 1.6]{Chen05}}]\label{cor:smooth}
Let $\phi \in C^k(\cC,\cD)$ and assume $|\cC|$ is connected. Then the isotropy of $\phi$ in the groupoid $\cC^k(\cC,\cD)$ is a subgroup of the isotropy of $|\phi|(p)$ for any $p\in |\cC|$. In particular, if $\mathrm{im} |\phi| $ contains a smooth point, then $\phi$ is smooth in  $\cC^k(\cC,\cD)$.
\end{cor}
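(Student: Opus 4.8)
The plan is to reduce the statement to an elementary connectedness argument about self-natural-transformations. First I would identify the isotropy group of $\phi$ in $\cC^k(\cC,\cD)$ with the group $G_\phi$ of self-natural-transformations of an honest functor $F\colon\cE\to\cD$ representing $\phi$, where $\cE=\Gamma\{U_i\}$ is a covering groupoid with $|\cE|=|\cC|$ connected; this identification, together with the fact that the group law there is the strict vertical composition, is exactly what \Cref{prop:full_equivalence}, \Cref{prop:cover_equivalence} and the explicit description of the local uniformizer $G_\psi\ltimes B_\psi$ preceding the statement provide. Concretely, such a $\tau$ is a smooth map $\tau\colon\Obj(\cE)\to\bD$ with $s\circ\tau=t\circ\tau=F$ on objects and $\tau(y)\circ F(\eta)=F(\eta)\circ\tau(x)$ for every arrow $\eta\colon x\to y$ of $\cE$, the composition being $(\tau'\cdot\tau)(x)=\tau'(x)\circ\tau(x)$.

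Next I would fix $p\in|\cC|$, a lift $x_0\in\Obj(\cE)$, and consider the evaluation map $\rho\colon G_\phi\to\bD_{F(x_0)}$, $\tau\mapsto\tau(x_0)$, taking values in the isotropy group $\bD_{F(x_0)}=\{g\in\bD\mid s(g)=t(g)=F(x_0)\}$, which is by definition a representative of the isotropy group of $|\phi|(p)=|F|(|x_0|)$. The composition law makes $\rho$ a group homomorphism, so the whole corollary reduces to the injectivity of $\rho$: the first assertion is precisely that injectivity, and the ``in particular'' clause follows immediately, since if $|\phi|(p)$ is a smooth point its isotropy is trivial, hence $G_\phi$ is trivial and $\phi$ is a manifold point of the Banach orbifold $\cC^k(\cC,\cD)$.

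For injectivity I would argue by a clopen-set argument. Given $\tau\in\ker\rho$, put $Z=\tau^{-1}(u(\Obj(\cD)))=\{y\in\Obj(\cE)\mid\tau(y)=u(F(y))\}$, with $u$ the unit map of $\cD$. Since $\cD$ is \'etale, $u(\Obj(\cD))$ is open and closed in $\bD$ — in a uniformizer $G\ltimes U$ the arrow space is $G\times U$ and the units are $\{\Id\}\times U$, clopen because $G$ is discrete, while global closedness follows from properness of $(s,t)$ — so $Z$ is clopen in $\Obj(\cE)$. Moreover $Z$ is $\cE$-saturated: if $y\in Z$ and $\eta\colon y\to y'$, then naturality gives $\tau(y')\circ F(\eta)=F(\eta)\circ\tau(y)=F(\eta)$, hence $\tau(y')=u(F(y'))$ and $y'\in Z$. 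Therefore $|Z|$ is clopen in $|\cC|$ and nonempty (it contains $x_0$), so $|Z|=|\cC|$ by connectedness, whence $Z=\Obj(\cE)$ and $\tau=\Id_F$ is the neutral element of $G_\phi$.

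The main obstacle is not this clopen argument, which is routine once set up, but the bookkeeping of the first step: making it precise that the isotropy of $\phi$ is genuinely computed by self-natural-transformations of a single covering-groupoid functor $F$ — rather than by the a priori zig-zag description of generalized maps — and that the relevant group structure is the strict vertical composition used above. With the material of \S\ref{ss:morphism} in place this is purely formal, and the proof then amounts to the \'etale-plus-connectedness argument just sketched; this is, of course, essentially \cite[Theorem 1.6]{Chen05}.
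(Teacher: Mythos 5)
Your argument is correct and is in essence the paper's proof: both reduce to the observation that, by naturality together with connectedness of $|\cC|$, a self-natural-transformation of a fixed covering-groupoid representative is determined by its value at a single object, so that the evaluation map into one local isotropy group is injective. The only difference is packaging — you run the connectedness step as a clopen/saturation argument on $\Obj(\cE)$ (where, incidentally, the closedness of the unit section $u(\Obj(\cD))\subset\bD$ is cleanest to justify not via properness of $(s,t)$ but as the equalizer of $\Id_{\bD}$ and $u\circ s$ into a Hausdorff space), whereas the paper chains combinatorially through the indices of a good cover using the freeness of the $G_j'$-action on $\pi_0(\Gamma(V_i',V_j'))$, which plays exactly the role of your ``$Z$ is clopen and saturated'' step.
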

\begin{proof}
We claim that, if $|\cC|$ is connected, then $G_{\psi}$ is a subgroup of each of the $G'_{\sigma(i)}$'s; this would conclude.

Note first that the action of $G_j'$ on $\pi_0(\Gamma(V'_i,V'_j))$ by $(g,\xi)\mapsto g\circ \xi$ has no fixed point for $g\ne \Id \in G_j'$. 
Now, if $\Gamma(V_i,V_j)\ne \emptyset$, then so is $\Gamma(V_i',V_j')$ because of the existence of $\phi$; this implies that $g_i$ determines $g_j$ uniquely by the relation \eqref{eqn:natural_transformation}. 
Since $|\cC|$ is connected, for any two $V_i,V_j$, there exists a sequence $V_{k_1},\ldots, V_{k_n}$ with $V_{k_1}=V_i,V_{k_n}=V_j$ and $\Gamma(V_{k_s},V_{k_{s+1}})\ne \emptyset$. Hence $\{g_i\in G'_{\sigma(i)}\}$ is determined by any one of them, so that $G_\psi$ is in fact a subgroup of each of the $G'_{\sigma(i)}$, and not only of $\prod_I G_{\sigma(i)}'$.

To finish the proof, for a functor $\phi$, we can find good refinements $\Gamma\{V_i\},\Gamma\{V'_i\}$ such that $V'_{i_0}$ contains a point $q$ in the image of the functor with $|q|=p$ in $|\cC|$, and $V'_{i_0}$ is a uniformizer around $q$.
The claim about smoothness of $\cC^k(\cC,\cD)$ at $\phi$ then follows from what proven in the previous paragraph.
\end{proof}

So far, we proved that $\cC^k(\cC,\cD)$ is an \'etale Banach groupoid. In the infinite dimensional case that we consider here, the properness of a groupoid needs the following modification.
\begin{defn}
\label{def:infinite_dim_groupoid}
$\cC$ is a proper \'etale Banach groupoid iff $\Obj(\cC),\Mor(\cC)$ are Hausdorff spaces modeled on open sets in Banach spaces with smooth transition maps with the same properties for structural maps as in \Cref{def:groupoid} except that the proper property is changed to the following:
\begin{enumerate}
    \item[($\dagger$)](Proper.) For every $x\in \Obj(\cC)$, there is an open neighborhood $U_x$, such that $t:s^{-1}(\overline{U_x})\to \Obj(\cC)$ is proper.
\end{enumerate}
\end{defn}

In the finite dimensional case, since $\Obj(\cC)$ is locally compact, the two notions of proneness in \Cref{def:groupoid} and \Cref{def:infinite_dim_groupoid} are equivalent. 
However, in the infinite dimensional case, we no longer have local compactness. 
Such properness above is also used in the definition of polyfolds \cite[Definition 7.1.3]{polyfold}, and it implies that the orbit space with the quotient topology is Hausdorff \cite[Lemma 7.3.4]{polyfold}.

\begin{prop}\label{prop:proper}
$\cC^k(\cC,\cD)$ is a proper \'etale Banach groupoid.
\end{prop}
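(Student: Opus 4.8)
The plan is to verify the modified properness condition $(\dagger)$ from \Cref{def:infinite_dim_groupoid} locally around an arbitrary functor, using the good-cover description of $\cC^k(\cC,\cD)$ obtained in Propositions \ref{prop:smooth_functor} and \ref{prop:natural_transformation}, together with the compactness of $|\cC|$ and the compact Sobolev/Hölder embedding $C^k\hookrightarrow C^{k-1}$ (which is exactly why the index conditions in \Cref{thm:morphism_orbifold} were imposed). First I would fix a point $\phi\in C^k\mathrm{CMor}^0(\cC,\cD)$, represented after refinement by a functor $\psi\colon\Gamma\{V_i\}_I\to\Gamma\{V'_i\}_J$ with $I$ finite (using \Cref{rmk:paracompact}), and with $\overline{V_i}$ a compact subdomain mapped by $\psi_i$ into $V'_i$. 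Take $U_\phi$ to be the open $\epsilon$-ball chart around $\psi$ from \Cref{prop:Banach_refinement}, with $\epsilon$ small; in the trivializing coordinates this chart is an open ball in the Banach space $E_\psi=\bigoplus_{i\in I}\{v_i\in C^k(\overline{V_i};\psi_i^*TV'_i)\}$ cut out by the finitely many linear equivariance relations of \Cref{lemma:local}, hence itself a ball in a Banach space.

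Next I would analyze $s^{-1}(\overline{U_\phi})$. A morphism in $C^k\mathrm{CMor}^1(\cC,\cD)$ over a functor in $\overline{U_\phi}$ is, by the description in the proof of \Cref{prop:natural_transformation}, the data of discrete gluing information $\{\xi_i\in\pi_0(\Gamma(V'_i,V'_{\sigma(i)}))\}$ together with a source functor in $\overline{U_\phi}$; since $I$ is finite and each $\pi_0(\Gamma(V'_i,V'_j))$ is finite (by properness of $\cD$ and goodness of the cover), there are only finitely many choices of $\{\xi_i\}$. So $s^{-1}(\overline{U_\phi})$ is a finite union of slices, each diffeomorphic to $\overline{U_\phi}$, and on each slice the target map $t$ is, in the charts, the restriction of the \emph{linear} pushforward isomorphism $E_\psi\to E_{\psi'}$ by the local diffeomorphisms $\widehat{\xi_i}$, followed by the inclusion of that chart into $C^k\mathrm{CMor}^0(\cC,\cD)$. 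Thus to prove $t\colon s^{-1}(\overline{U_\phi})\to C^k\mathrm{CMor}^0(\cC,\cD)$ is proper it suffices to show: for every sequence $\psi^{(n)}$ in $\overline{U_\phi}$ (in each fixed slice) whose $t$-images $t(\psi^{(n)},\{\xi_i\})$ converge in $C^k\mathrm{CMor}^0(\cC,\cD)$, a subsequence of $\psi^{(n)}$ converges in $\overline{U_\phi}$.

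The crux is this last compactness claim, and it is where the $C^{k}\hookrightarrow C^{k-1}$ compactness is \emph{not} quite enough — the naive bootstrapping only recovers convergence in $C^{k-1}$, losing one derivative. The standard fix, which I expect to be the main obstacle to write carefully, is to observe that convergence of the $t$-images forces uniform $C^k$-bounds on the $\psi^{(n)}$ over each $\overline{V_i}$ (the target functor lives in a fixed chart $E_{\psi'}$, which is a \emph{bounded} $\epsilon$-ball), so the sequence is bounded in $C^k$ and hence, by Arzelà--Ascoli applied to the $k$-th derivatives on the compact domains $\overline{V_i}$, precompact in $C^{k-1}$; one then upgrades the limit to a genuine $C^k$ element and the convergence to $C^k$-convergence by a weak-$*$/closedness argument on the top derivative together with the fact that the limit's $t$-image is the given $C^k$-limit. (Equivalently, in the $W^{k,p,\delta}$ setting one uses reflexivity of $W^{k,p}$ and weak compactness of bounded sets, plus the compact embedding $W^{k,p,\delta}\hookrightarrow W^{k-1,p,\delta'}$ for $\delta'<\delta$ flagged in the earlier remark, to identify the weak limit.) Finally I would note that the finitely many linear equivariance relations defining $U_\phi$ inside the Banach ball are closed conditions, so the limit again lies in $\overline{U_\phi}$; assembling the finitely many slices and finitely many charts $V_i$ gives properness of $t$ on $s^{-1}(\overline{U_\phi})$, and since $\phi$ was arbitrary this verifies $(\dagger)$. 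Combined with the étale Banach groupoid structure already established in Propositions \ref{prop:smooth_functor}--\ref{prop:natural_transformation}, this shows $\cC^k(\cC,\cD)$ is a proper étale Banach groupoid.
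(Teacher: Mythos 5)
Your proposal is correct and follows essentially the same argument as the paper: Arzel\`a--Ascoli via the compact embedding $C^k\hookrightarrow C^{k-1}$ to locate a candidate limit, finiteness of the discrete natural-transformation data $\{\xi_i\}$ to reduce to finitely many slices, and the observation that $t$ restricted to each slice is a chart-level linear isomorphism (pushforward along the $\widehat{\xi_i}$'s), which upgrades the limit and the convergence from $C^{k-1}$ to $C^k$ once the $C^k$ convergence of the $t$-images is given. The only stylistic differences are that the paper reduces the claim to three elementary groupoid properties via a cited result and argues by contradiction, whereas you verify $(\dagger)$ directly as a sequential-compactness statement, and the ``weak-$*$/closedness'' bootstrap you invoke for the top-derivative upgrade is a somewhat roundabout route to what is in fact immediate from the linearity of $t$ on each slice together with the $C^k$ convergence of the targets.
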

\begin{proof}
By \cite[Proposition 3.9]{quotient}, it suffices to prove the following three properties of the \'etale groupoid $\cC^k(\cC,\cD)$.
\begin{enumerate}
    \item\label{p1} The isotropy group $\mathrm{stab}_\phi$ is finite for any object $\phi$.
    \item\label{p2} For every object $\phi$, there is an open neighborhood $U_\phi$, such that for any $\psi\in U_\phi$, $\{\alpha\in \Mor| s(\alpha)=\psi, t(\alpha)\in U_\phi\}$ has exactly $\mathrm{stab}_\phi$ elements.
    \item\label{p3} There exists an open neighborhood $V_\phi\subset \overline{V_\phi}\subset U_\phi$ of $\phi$, such that $t(s^{-1}(\overline{V_\phi}))$ is a closed subset in the space of objects. 
\end{enumerate}
\eqref{p1} follows from the proof of \Cref{cor:smooth} and the fact that $|\cC|$ has finitely many connected components. 
\eqref{p2} follows from the description of the local uniformizers given before the statement of \Cref{cor:smooth}. 
Hence, it only remains to prove \eqref{p3}. 

For this, let $V_{\phi}$ be a small open ball contained in the local uniformizer around $\phi:\Gamma\{U_i\}\to \cD$. 
By the Arzel\'a-Ascoli theorem, we can assume that $V_{\phi}$ has a compact closure in the $C^{k-1}$ topology. 
We can also assume such closure $\overline{V_\phi}$ is contained in a local uniformizer $\widetilde{V}_{\phi}$ in $C^{k-1}\mathrm{CMor}(\cC,\cD)$. 

Now assume by contradiction that \eqref{p3} doesn't hold for $V_\phi$. 
Since $C^k\mathrm{CMor}^0(\cC,\cD)$ is locally metrizable, there is a sequence of functors $\psi_i:\Gamma\{U'_i\}\to \cD$ converging to a functor $\psi_\infty:\Gamma\{U'_i\} \to \cD$ not in  $t(s^{-1}(\overline{V_\phi}))$ and a sequence of functors $\phi_i\in \overline{V_\phi}$, such that, for all $i<\infty$, there is a natural transformation $\alpha_i$ as follows:

\[
\xymatrix{
& &  \Gamma\{U_i\}\ar[lld]\ar[rrd]^{\phi_i}  &  \ar@<-3ex>@{}[dd]^(0.3){}="a"^(0.7){}="b" \ar@{=>}^{\alpha_i} "a";"b"  &\\
\cC &  & \Gamma\{U_i\cap U'_j\}\ar[u]\ar[d]  &   & \cD \\
&  &\Gamma\{U'_i\}\ar[llu]\ar[rru]_{\psi_i}  &  &}
\]
Now, by the assumption that $V_\phi$ has compact closure, (a subsequence of) $\phi_i$ converges to some $\phi_\infty$ in $C^{k-1}\mathrm{CMor}^0(\cC,\cD)$. 
By \eqref{p1} and \eqref{p2}, we know that there are open neighborhoods $\widetilde{V}_{\psi_\infty},\widetilde{V}_{\phi_\infty}$ of $\psi_\infty,\phi_\infty$ in $C^{k-1}\mathrm{CMor}^0(\cC,\cD)$ such that the number of transformations between $f\in \widetilde{V}_{\psi_\infty}, g\in  \widetilde{V}_{\phi_\infty}$ is universally bounded. 
As explained in the proof of \Cref{prop:natural_transformation}, each of the natural transformations $\alpha_i$ is represented by a locally constant collection $\{\xi_j\}_{J_i}$. 
As a result, only finitely many such $\{\xi_j\}_{J_i}$ are needed to describe the natural transformations between $\widetilde{V}_{\psi_\infty}$ and $\widetilde{V}_{\phi_\infty}$. 
Hence there is a subsequence of $\alpha_i$ given by the same collection $\{\xi_j\}_{J}$, and such that $t(\alpha_i)$ converges to $\psi_\infty$ and $s(\alpha_i)$ converges to $\phi_\infty$. 
Then, $\alpha_i$ converges to $\alpha_\infty$ in $C^{k-1}\mathrm{CMor}^1(\cC,\cD)$ with $s(\alpha_\infty)=\phi_\infty$ and $t(\alpha_\infty)=\psi_\infty$. 
Now observe that, in the description of natural transformation $\alpha$ using $s(\phi)$ (or $t(\phi)$) and the collection $\{\xi_i\}$, the regularity of the functor/orbifold map $s(\phi)$ is the same as the regularity of $t(\phi)$\footnote{This is important for the construction of the level structures in the polyfolds of maps between orbifolds.}.  
In particular, $\phi_\infty$ is in $C^k\mathrm{CMor}(\cC,\cD)$. 
Moreover, since the $\alpha_i$'s are induced from the same $\{\xi_j\}_{J}$ and $\psi_i$ converges in the $C^k$ topology, we have that $\phi_i$ also converges in the $C^k$ topology. 
Hence, $\phi_\infty\in \overline{V_{\phi}}$ and $\alpha_\infty\in s^{-1}(\overline{V_\phi})$ with $t(\alpha_\infty)=\psi_\infty$, thus reaching a contradiction.
This concludes the proof.
\end{proof}

Chen's construction \cite{Chen06} uses functors between good covering groupoids represented as in Lemma \ref{lemma:local}, equivalences of self natural transformations in the form of \eqref{eqn:natural_transformation}, and common refinements \cite[Definition 2.2.6.]{Chen06}. 
Nevertheless, the set of equivalence classes of orbifold maps in \cite{Chen06} is isomorphic to $C^k(\cC,\cD)$, and the topology coincides with the quotient topology from the Banach groupoid $\cC^k(\cC,\cD)$, since the base of the topology we describe is precisely the base in \cite{Chen06}. 
Therefore we have the following.
\begin{prop}[{\cite[Theorem 1.4]{Chen06}}]\label{prop:banach_orbifold}
The orbit space $C^k(\cC,\cD)$ with the quotient topology is a Hausdorff and paracompact space. In particular, $C^k(\cC,\cD)$ is a Banach orbifold.
\end{prop}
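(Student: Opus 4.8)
The plan is to read off both topological properties from the fact, established in \Cref{prop:proper}, that $\cC^k(\cC,\cD)$ is a proper \'etale Banach groupoid, using in addition the explicit local uniformizers of $\cC^k(\cC,\cD)$ described in the discussion preceding \Cref{cor:smooth} and the paracompactness of the spaces of functors recorded in \Cref{rmk:paracompact}. Since, as noted just before the statement, this topology and base of charts coincide with the ones in \cite{Chen06}, the result is ultimately \cite[Theorem 1.4]{Chen06}, but I would make the argument explicit in the present language. For Hausdorffness: as $\cC^k(\cC,\cD)$ satisfies the properness condition $(\dagger)$ of \Cref{def:infinite_dim_groupoid}, the orbit space $|\cC^k(\cC,\cD)| = C^k(\cC,\cD)$ is Hausdorff by \cite[Lemma 7.3.4]{polyfold}; concretely, two non-equivalent functors are separated by saturated open sets, the compactness needed for this — not coming from local compactness, which fails in the Banach setting — being supplied by $(\dagger)$ together with the elementary fact that a convergent sequence together with its limit is compact, which lets one extract convergent subsequences of morphisms.

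Next I would record the orbifold charts. The description of the local uniformizers of $\cC^k(\cC,\cD)$ given before \Cref{cor:smooth} shows that every point of $C^k(\cC,\cD)$ has a neighbourhood homeomorphic to $B_\psi/G_\psi$, where $B_\psi$ is an open ball in a Banach space and $G_\psi\leq\prod_I G'_{\sigma(i)}$ is the group of self natural transformations, which is finite by the proof of \Cref{prop:proper}. These are exactly the local models of a Banach orbifold structure; in particular $C^k(\cC,\cD)$ is locally metrizable, the formula $d([x],[y])=\min_{g\in G_\psi}d(x,gy)$ defining a metric on $B_\psi/G_\psi$ since the $G_\psi$-orbits are finite, hence closed.

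It then remains to establish paracompactness. By \Cref{rmk:paracompact}, $C^k\mathrm{CMor}^0(\cC,\cD)$ is paracompact, and since $|\cC|$ is compact it is locally modelled on separable Banach spaces of $C^k$ sections over compact bases, so it is second countable; as the orbit map $\pi\colon C^k\mathrm{CMor}^0(\cC,\cD)\to C^k(\cC,\cD)$ is open and surjective, $C^k(\cC,\cD)$ is second countable as well. Being moreover Hausdorff and locally metrizable, $C^k(\cC,\cD)$ is then metrizable — this last point-set step is the content of \cite[Theorem 1.4]{Chen06} — and in particular paracompact. Combining the three points, $C^k(\cC,\cD)$ is a paracompact Hausdorff space realised as the orbit space of the proper \'etale Banach groupoid $\cC^k(\cC,\cD)$, hence a Banach orbifold; the cases of $W^{k,p}$ maps and $C^\infty$ maps are handled identically.

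The step I expect to be the main obstacle is exactly the point-set topology of the quotient in the absence of local compactness: both Hausdorffness and paracompactness of $C^k(\cC,\cD)$ have to be extracted from the weakened properness condition $(\dagger)$ and the global structure of the map spaces — the paracompactness of the functor spaces of \Cref{rmk:paracompact} and the compactness of $|\cC|$ — rather than by the standard finite-dimensional arguments, and the care lies in keeping track of which compactness inputs are genuinely available.
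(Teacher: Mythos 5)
Your overall route mirrors the paper's — both rely on the properness of $\cC^k(\cC,\cD)$, the identification of the quotient topology with Chen's, and a standard point-set fact to pass from Hausdorffness, a separation/regularity property, and countability to paracompactness. The paper's actual argument is: cite \cite[Theorem~1.4]{Chen06} for both Hausdorffness and second countability, use \Cref{prop:proper} together with \cite[Lemma~7.3.5]{polyfold} to get regularity ($\mathrm{T}_3$), and then apply a theorem that regular plus second countable implies paracompact. You replace the citation of Chen for Hausdorffness by a direct appeal to the properness condition $(\dagger)$ and \cite[Lemma~7.3.4]{polyfold}, and you replace ``regular'' with the stronger ``locally metrizable'' coming from the explicit charts $B_\psi/G_\psi$ — both are fine and, if anything, more self-contained than the paper's one-line citation.

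However, your derivation of second countability has a genuine gap. You assert that, because $C^k\mathrm{CMor}^0(\cC,\cD)$ is locally modelled on separable Banach spaces of $C^k$ sections over the compact base $|\cC|$, it must be second countable. This is not a valid inference: a space locally modelled on separable (even singleton) models can fail to be second countable if it has uncountably many connected components — an uncountable discrete set is the basic counterexample. In the present setting, the connected components of the functor space are indexed by the index function $\sigma$ (valued in the index set $J$ of the good covering groupoid $\Gamma\{V'_j\}$ of $\cD$) together with the finite data $\{\phi_{ji}\}$; you would need to argue that only countably many such pairs occur, which requires in particular a countability assumption on the covering data for $\cD$ (e.g.\ that $|\cD|$ is second countable, as flagged in the remark after \Cref{def:groupoid}). \Cref{rmk:paracompact} gives paracompactness of the functor space but does not assert second countability, and paracompact plus locally second countable does not in general imply second countable. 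The paper avoids this issue entirely by taking second countability of the orbit space as an output of \cite[Theorem~1.4]{Chen06} rather than re-deriving it; if you want the more self-contained route, you need to add the component-counting argument.

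One smaller point: you attribute the metrizability step to \cite[Theorem~1.4]{Chen06}, but what Chen's theorem supplies (per the paper) is Hausdorffness and second countability; the passage from these, together with regularity or local metrizability, to metrizability and hence paracompactness is a separate point-set topology step (Urysohn/Smirnov, or the direct ``regular plus second countable implies paracompact'' the paper cites from \cite{paracompact}). This is only a misattribution and not a logical error, but it is worth keeping the responsibilities straight, since that is precisely the place where the paracompactness — the property actually needed for partitions of unity — is produced.
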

To be more precise, \cite[Theorem 1.4]{Chen06} shows that $C^k(\cC,\cD)$ is Hausdorff and second countable. By \Cref{prop:proper} and \cite[Lemma 7.3.5]{polyfold}, we have $C^k(\cC,\cD)$ is a regular space (i.e.\ a $\mathrm{T_3}$ space). 
Then second countability together with \cite[Theorem 1.2]{paracompact} imply that $C^k(\cC,\cD)$ is paracompact. 
Note that, paracompactness, instead of second countability, is the property we really need, as we will need partition of units.

\subsubsection{Step 3: independence of the construction on the choices\protect\footnote{Note that the independence is not required for geometric applications in this paper, as we only need one Banach orbifold structure to define one symplectic cohomology to obtain the geometric applications.}.}
Let $\Sigma,W$ be two orbifolds and assume $\Sigma$ is compact; consider moreover two orbifold structures $(\cC,\alpha),(\cD,\beta)$ of $\Sigma,W$ respectively.
By definition, we have that $C^k(\cC,\cD)$ is isomorphic to $C^k(\Sigma,W)$ as a set. Therefore \Cref{prop:banach_orbifold} endows $C^k(\Sigma,W)$ with a topology and the structure of a Banach orbifold. 
To finish the proof of Theorem \ref{thm:morphism_orbifold}, we still need to show the construction does not depend on the choices of orbifold structures. In the following, we give a sketch of the proof.

\begin{prop}\label{prop:right_invariance}
Let $\phi:\cD\to \cE$ be an equivalence. Then $\cC^k(\cC,\cD)$ and $\cC^k(\cC,\cE)$ are Morita equivalent.
\end{prop}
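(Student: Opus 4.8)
The plan is to exhibit post-composition with $\phi$ as an equivalence of proper étale Banach groupoids $\Phi_*\colon \cC^k(\cC,\cD)\to \cC^k(\cC,\cE)$; then, together with the identity functor on $\cC^k(\cC,\cD)$, one gets a span of equivalences $\cC^k(\cC,\cD)\stackrel{\Id}{\leftarrow}\cC^k(\cC,\cD)\stackrel{\Phi_*}{\to}\cC^k(\cC,\cE)$, which is precisely a Morita equivalence. On objects, $\Phi_*$ sends a generalized map $\cC\leftarrow\Gamma\{U_i\}\stackrel{\psi}{\to}\cD$ in $C^k\mathrm{CMor}^0(\cC,\cD)$ to $\cC\leftarrow\Gamma\{U_i\}\stackrel{\phi\circ\psi}{\to}\cE$; since $\phi$ is a smooth functor, $(\phi\circ\psi)|_{U_i}$ still extends $C^k$ to $\overline{U_i}$, so this lands in $C^k\mathrm{CMor}^0(\cC,\cE)$. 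On morphisms, it sends a natural transformation $\alpha$ (valued in $\Mor\cD$) to $\phi\circ\alpha$ (valued in $\Mor\cE$), and one checks that the compatibility condition $(\star)$ in the proof of \Cref{prop:natural_transformation} is preserved. This is visibly a functor of groupoids; what remains is to show that it is smooth, étale, fully faithful, and a homeomorphism on orbit spaces.

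For smoothness and the étale property I would work in the charts of \Cref{prop:smooth_functor} and \Cref{prop:natural_transformation}. Given $\psi$ with refinement data $\psi'\colon\Gamma\{V_i\}\to\Gamma\{V'_i\}$ as in \eqref{eqn:refine}, I would use that $\phi$ is a local diffeomorphism on objects — hence, since $s,t$ are étale, also on morphisms — to choose a good covering groupoid $\Gamma\{V''_i\}$ of $\cE$ fine enough that $\phi$ carries each $V'_i$ into a single $V''_i$, so that $\phi\circ\psi'$ refines through $\Gamma\{V''_i\}$. In the resulting charts, $\Phi_*$ is given, in the notation of \Cref{prop:Banach_refinement}, by $v=\{v_i\}\mapsto w=\{w_i\}$ with $\exp_{(\phi\circ\psi)_i}w_i=\phi_i\circ\exp_{\psi_i}v_i$; since $\rd\phi$ is a fiberwise isomorphism and $\phi_i$ is a local diffeomorphism, this is a diffeomorphism of $E_{\psi}$ onto an open subset of $E_{\phi\circ\psi}$, depending smoothly on the base point, so $\Phi_*$ is smooth and étale on objects. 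The same computation applied to the $\{\xi_i\}$-parametrized charts of \Cref{prop:natural_transformation} gives the étale property on morphisms, the point being that $\phi$ fully faithful induces a bijection $\pi_0(\Gamma(V'_i,V'_{\sigma(i)}))\to\pi_0(\Gamma(V''_i,V''_{\sigma(i)}))$ compatible with the groupoid-like composition, so that the natural-transformation data transports. Full faithfulness of $\Phi_*$ itself amounts to the statement that any natural transformation $\beta\colon\phi\circ\psi_1\Rightarrow\phi\circ\psi_2$ lifts uniquely to $\alpha\colon\psi_1\Rightarrow\psi_2$: pointwise $\beta(p)\in\Mor(\cE)_{\phi\psi_1(p),\phi\psi_2(p)}$, which $\phi$ identifies with $\Mor(\cD)_{\psi_1(p),\psi_2(p)}$, and $\alpha=\phi^{-1}\circ\beta$ is smooth (locally $\phi$ is a diffeomorphism on morphisms), natural, and satisfies the covering-groupoid compatibility by faithfulness of $\phi$.

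It remains to see that $|\Phi_*|\colon C^k(\cC,\cD)\to C^k(\cC,\cE)$ is a bijection, whence, being open and continuous because $\Phi_*$ is smooth and étale, it is a homeomorphism. Here I would pass to the equivalent model $C^k\mathrm{Mor}$ and argue with general generalized maps: for surjectivity, given $\cC\leftarrow\cG\stackrel{g}{\to}\cE$, the weak fiber product $\cG\times_{\cE}\cD$ of $g$ and $\phi$ has its projection to $\cG$ an equivalence because $\phi$ is an equivalence, so $\cC\leftarrow\cG\times_{\cE}\cD\to\cD$ is a generalized map whose image under $\Phi_*$ is equivalent, via the fiber-product natural transformation, to the original one; injectivity is the same argument with the roles exchanged, or run through a quasi-inverse of $\phi$. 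This completes the check that $\Phi_*$ is an equivalence of proper étale Banach groupoids, and hence the proof. The main obstacle will be the chart-level bookkeeping in the middle step: producing good coverings of $\cE$ that are simultaneously compatible, through the étale functor $\phi$, with the refinements already chosen to chart $\cC^k(\cC,\cD)$, and verifying carefully that the $\pi_0(\Gamma(-,-))$-data underlying both functors and natural transformations transports bijectively — everything else is a formal consequence of $\phi$ being fully faithful and a local diffeomorphism on objects.
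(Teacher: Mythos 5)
Your proof is essentially correct and uses the same ingredients as the paper's: you exhibit post-composition $L_\phi$ with $\phi$ as the equivalence of groupoids, you use full faithfulness and the local-diffeomorphism property of $\phi$ to get that $L_\phi$ is étale and fully faithful, and you use the weak fiber product $\cG\times_\cE\cD$ to get essential surjectivity. The one structural difference — and it is worth noticing — is that the paper first proves the statement for \emph{full} equivalences, and only afterwards reduces the general case by observing that $\cE\times_\cE\cD\to\cD$ and $\cE\times_\cE\cD\to\cE$ are both full equivalences. In the full-equivalence case, surjectivity of $\phi$ on objects lets one say directly that a good covering groupoid $\Gamma\{V'_i\}$ of $\cD$ with $\phi|_{V'_i}$ a diffeomorphism onto its image \emph{is}, after pushing forward along $\phi$, a good covering groupoid of $\cE$; the local uniformizers of $\cC^k(\cC,\cD)$ around $\psi$ and of $\cC^k(\cC,\cE)$ around $\phi\circ\psi$ can then be literally identified, with no extra charts of $\cE$ to reconcile. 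Your approach attacks the general case head on and is sound, but it is precisely this surjectivity that you sidestep, which is why you are left with the ``chart-level bookkeeping'' you flag as the main obstacle — choosing a good covering $\Gamma\{V''_j\}$ of all of $\cE$ compatible with $\phi(\{V'_i\})$ and matching the $\pi_0(\Gamma(\,\cdot\,,\cdot\,))$-data across the part of $\cE$ that is in the image of $\phi$ versus the part that is not. The paper's two-step reduction eliminates that bookkeeping at the cost of one extra fiber-product observation, which you already use anyway for essential surjectivity; so the reduction buys you a cleaner chart argument for free.
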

\begin{proof}
We first assume $\phi$ is an full equivalence.
Let $L_\phi$ be the functor from $\cC^k(\cC,\cD)$ to $\cC^k(\cC,\cE)$ obtained by composing with $\phi$. 
Since any generalized map $\cC \leftarrow \cX \to \cE$ is equivalent to $\cC \leftarrow \cX \leftarrow \cX\times_{\cE} \cD \to \cD \stackrel{\phi}{\to} \cE$, $L_\phi$ is essentially surjective. 
Up to refining, one can moreover assume that the good covering $\Gamma\{V'_i\}$ of $\cD$ in the construction of local uniformizers above the statement of \Cref{cor:smooth} satisfies that $\phi|_{V'_i}$ is a diffeomorphism onto the image. 
Then, up to composing with the full equivalence $\phi$, $\Gamma\{V'_i\}$ can also be viewed as a good covering of $\cE$. 
It hence follows from the construction of local uniformizers that $L_\phi$ is a local diffeomorphism and fully faithful. 

Next, let $\phi:\cD\to \cE$ be a general equivalence. 
Then, it is easy to check that $\cE\times_{\cE}\cD\to \cD$ and $\cE\times_{\cE}\cD\to \cE$ are full equivalences.
As a consequence, $\cC^k(\cC,\cD)$ and $\cC^k(\cC,\cE)$ are both Morita equivalent to $\cC^k(\cC,\cE\times_{\cE}\cD)$, and hence Morita equivalent to each other.
\end{proof}

\begin{prop}\label{prop:left_invariance}
Let $\phi:\cC\to \cE$ be an equivalence. Then $\cC^k(\cC,\cD)$ and $\cC^k(\cE,\cD)$ are Morita equivalent.
\end{prop}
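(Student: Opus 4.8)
\end{prop}

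\begin{proof}[Proof sketch]
The plan is to run the argument of \Cref{prop:right_invariance} with precomposition in place of postcomposition. I would first treat the case where $\phi\colon\cC\to\cE$ is a \emph{full} equivalence, and let $L^{\phi}\colon\cC^k(\cE,\cD)\to\cC^k(\cC,\cD)$ be the functor given by precomposition with $\phi$, sending a generalized map $[\cE\xleftarrow{\psi}\cX\xrightarrow{g}\cD]$ to $[\cC\leftarrow\cC\times_{\cE}\cX\to\cD]$, where the fiber product is taken over $\phi$ and $\psi$ and the new left and right legs are the projection $\cC\times_{\cE}\cX\to\cC$ and the composite $\cC\times_{\cE}\cX\to\cX\xrightarrow{g}\cD$; on natural transformations $L^{\phi}$ is defined in the evident way. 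For essential surjectivity, given $[\cC\xleftarrow{f}\cX\xrightarrow{g}\cD]$ representing an element of $C^k(\cC,\cD)$, the composite $\phi\circ f\colon\cX\to\cE$ is again an equivalence that is surjective on objects, so $[\cE\xleftarrow{\phi\circ f}\cX\xrightarrow{g}\cD]$ represents an element of $\cC^k(\cE,\cD)$ (passing to the full-equivalence model of \Cref{prop:full_equivalence} if necessary); applying $L^{\phi}$ to it and using the canonical equivalence $\cX\to\cC\times_{\cE}\cX$, $x\mapsto(f(x),\Id_{\phi(f(x))},x)$, recovers $[\cC\xleftarrow{f}\cX\xrightarrow{g}\cD]$ up to equivalence. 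On orbit spaces $|L^{\phi}|\colon C^k(\cE,\cD)\to C^k(\cC,\cD)$ is the bijection induced by precomposing orbifold morphisms with the isomorphism $\phi$, and one checks it is a homeomorphism.

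It remains to verify that $L^{\phi}$ is fully faithful and a local diffeomorphism, and this is where I expect the actual work to lie, although it runs parallel to \Cref{prop:right_invariance}. Working with good covering groupoids, the local description of functors in \Cref{lemma:local}, and the Banach charts of \Cref{prop:Banach_refinement}: since $\phi$ is \'etale and surjective on objects, one can arrange that $\cC\times_{\cE}\cX$ is represented by a good covering groupoid $\Gamma\{U_i\}$ of $\cC$ on which $\phi$ restricts to diffeomorphisms onto the charts of a good covering groupoid of $\cE$, so that precomposition with $\phi$ acts on the defining data $(\phi_i,\phi_{ji})$ of a functor and on the self-natural-transformation groups $G_{\psi}$ of the local uniformizers simply by reindexing along $\phi$. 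From this one reads off, as in \Cref{cor:smooth} and the discussion preceding it, that $L^{\phi}$ carries local uniformizers to local uniformizers, is fully faithful, and is a local diffeomorphism; hence it is an equivalence of proper \'etale Banach groupoids. The one genuine difference from the target case is that precomposition alters the \emph{source} groupoid — replacing a covering groupoid of $\cE$ by the fiber-product cover of $\cC$ — rather than the target; the point to verify with care is that, because $\phi$ is \'etale, this replacement induces canonical isomorphisms of the relevant pullback bundles, hence of the Banach charts $E_{\psi}$, so that the remaining bookkeeping is formally identical to that in the proof of \Cref{prop:right_invariance}.

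Finally, for a general equivalence $\phi\colon\cC\to\cE$, I would form the weak fiber product $\cC\times_{\cE}\cE$ with respect to $\phi$ and $\Id_{\cE}$, which is a proper \'etale Lie groupoid since $\Id_{\cE}$ is transverse to $\phi$, and observe that the two projections $\cC\times_{\cE}\cE\to\cC$ and $\cC\times_{\cE}\cE\to\cE$ are full equivalences, being pullbacks of the equivalences $\Id_{\cE}$ and $\phi$ along object-surjective functors. By the full-equivalence case, precomposition with these two projections realizes both $\cC^k(\cC,\cD)$ and $\cC^k(\cE,\cD)$ as equivalent to $\cC^k(\cC\times_{\cE}\cE,\cD)$, whence $\cC^k(\cC,\cD)$ and $\cC^k(\cE,\cD)$ are Morita equivalent by transitivity, exactly as in \Cref{prop:right_invariance}.
\end{proof}
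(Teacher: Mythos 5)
Your proof is correct and rests on the same core mechanism as the paper's: reduce to the case that $\phi$ is a full equivalence, then pass to covering groupoids $\Gamma\{U_i\}$ of $\cC$ on which $\phi$ restricts to diffeomorphisms onto charts of $\cE$, so that $\Gamma\{U_i\}$ serves simultaneously as a covering groupoid of both $\cC$ and $\cE$ and precomposition with $\phi$ becomes a mere reindexing of the local data $(\phi_i,\phi_{ji})$ and of the self-natural-transformation groups. The difference is one of packaging. The paper sets up the comparison directly by introducing the full subgroupoid $\widetilde{\cC}^k(\cC,\cD)\subset\cC^k(\cC,\cD)$ of functors from exactly these covering groupoids, observing that the inclusion is an equivalence (any covering groupoid refines, by \'etaleness of $\phi$, to one with this property), forming the analogous $\widetilde{\cC}^k(\cE,\cD)$, and then noting that precomposition with $\phi$ identifies the two subgroupoids. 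Your first paragraph instead writes the precomposition functor $L^\phi$ via weak fiber products; this is the right categorical picture, but note two points that you implicitly handle but should make explicit: (i) $\phi\circ f$ need not be surjective on objects when $f$ merely is an equivalence, so ``passing to the full-equivalence model'' is genuinely necessary rather than optional; and (ii) $\cC\times_\cE\cX$ is not a covering groupoid of $\cC$ as it stands, so it must be replaced by an equivalent covering groupoid before $L^\phi$ actually takes values in $\cC^k(\cC,\cD)$ as defined in the paper via $C^k\mathrm{CMor}$. Your second paragraph does exactly that replacement, and the paper's approach simply starts from there, avoiding the fiber-product bookkeeping entirely.
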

\begin{proof}
As before, it is sufficient to prove the case where $\phi$ is a full-equivalence. 
Then, we consider the groupoid $\widetilde{\cC}^k(\cC,\cD)$ whose objects are functors $\Gamma\{U_i\}\to \cD$ from a covering groupoid $\Gamma\{U_i\}$ to $\cD$, which verify the additional condition that $\phi|_{U_i}$ is a diffeomorphism onto the image. 
Similarly, we define $\widetilde{\cC}^k(\cE,\cD)$ using covering groupoids of $\cE$.
%In particular, $\Gamma\{U_i\}$ can also be viewed as a covering groupoid of $\cE$. 
%Let then $\widetilde{\cC}^k(\cE,\cD)$ denote the groupoid defined using functors from such covering groupoids. 
Then it is straightforward to check that the inclusions $\widetilde{\cC}^k(\cC,\cD)\hookrightarrow \cC^k(\cC,\cD)$ and $\widetilde{\cC}^k(\cE,\cD)\hookrightarrow \cC^k(\cE,\cD)$ are equivalences. 
Moreover, the functor $\widetilde{\cC}^k(\cE,\cD)\to \widetilde{\cC}^k(\cC,\cD)$ induced by the pre-composition with $\phi$ is also an equivalence. This concludes the proof.
\end{proof}

Combining Proposition \ref{prop:banach_orbifold}, \ref{prop:right_invariance}, \ref{prop:left_invariance}, we conclude the proof of the independence on the choices in the statement of Theorem \ref{thm:morphism_orbifold}.

\subsubsection{Simplification of the orbifold of maps.}
In the construction of $\cC^k(\cC,\cD)$, we needed to refine the covering groupoids of $\cC$ in order to obtain all maps between $\cC,\cD$. 
This is indeed necessary, as we will prove below that some covering groupoid cannot detect all maps between $\cC,\cD$.
On the other hand, it is helpful to determine when $\cC^k(\cC,\cD)$ has an equivalent description by honest functors and natural transformations between two fixed Lie groupoids, so that we have a ``global" description of the groupoid of maps.

\begin{prop}\label{prop:global}
Let $\cD$ be an action groupoid $G\ltimes M$. 
Assume that a covering groupoid $\Gamma\{U_i\}$ of $\cC$ has the property that each $U_i$ is a simply connected manifold (in particular second countable). 
Let also $\mathcal{CF}^k(\Gamma\{U_i\},\cD)$ denote the category (groupoid) whose objects are $C^k$ functors from $\Gamma\{U_i\}$ to $\cD$, and whose morphisms are $C^k$ natural transformations. 
Then, the natural inclusion $\mathcal{CF}^k(\Gamma\{U_i\},\cD)\to\cC^k (\cC,\cD)$ is an equivalence.
\end{prop}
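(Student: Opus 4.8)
The plan is to show that the natural inclusion functor $\iota\colon\mathcal{CF}^k(\Gamma\{U_i\},\cD)\to\cC^k(\cC,\cD)$ is fully faithful, is a local diffeomorphism on the object level, and induces a homeomorphism of orbit spaces; equivalently (and this is how I would package it) that $\iota$ is fully faithful, essentially surjective, and a local diffeomorphism on objects. Write $\cD=G\ltimes M$, so $\Obj\cD=M$ and $\Mor\cD=G\times M$ with $s(g,x)=x$, $t(g,x)=g\cdot x$. The structural feature that makes the statement work, and which is false for a general target, is that every morphism of $\cD$ carries a \emph{globally defined} label in the finite group $G$, locally constant in families; over a connected base such a label is constant and over a simply connected base the associated flat $G$-torsor is trivial. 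The reductions of Step~1 and Step~2 (Proposition~\ref{prop:cover_equivalence}, the charts $B_\psi$ of Proposition~\ref{prop:smooth_functor}) will let me always present objects of $\cC^k(\cC,\cD)$ by functors $\Gamma\{V_j\}\to\cD$ out of good refinements $\Gamma\{V_j\}$ of $\Gamma\{U_i\}$.

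\emph{Fully faithfulness and the object-level claim.} Let $\phi,\psi$ be $C^k$ functors $\Gamma\{U_i\}_I\to\cD$ and let $\alpha$ be a morphism $\iota\phi\to\iota\psi$ in $\cC^k(\cC,\cD)$. By the description of $C^k\mathrm{CMor}^1$ in Step~1 and Proposition~\ref{prop:cover_equivalence}, $\alpha$ is represented by a natural transformation $\alpha'\colon\phi\circ r\Rightarrow\psi\circ r$ for some refinement $r\colon\Gamma\{V_j\}_J\to\Gamma\{U_i\}_I$ with $J=\bigsqcup_i J_i$ and $\bigcup_{j\in J_i}V_j=U_i$. On each connected $V_j$ the $G$-component of $\alpha'|_{V_j}$ is locally constant, hence a single element $g_j\in G$; compatibility of $\alpha'$ on an overlap $V_j\cap V_{j'}$ with $j,j'\in J_i$, together with the fact that $\phi,\psi$ already live on $\Gamma\{U_i\}$ (so the overlap morphisms within $U_i$ are sent to identity-labelled morphisms), forces $g_j=g_{j'}$. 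Thus the $\{g_j\}_{j\in J_i}$ descend to $h_i\in G$, and one checks directly that $\{h_i\}_{i\in I}$ is a natural transformation $\phi\Rightarrow\psi$ in $\mathcal{CF}^k(\Gamma\{U_i\},\cD)$ representing $\alpha$; distinct natural transformations on $\Gamma\{U_i\}$ pull back to distinct ones on $\Gamma\{V_j\}$, giving injectivity. For the object level, note that after fixing the discrete transition data $\psi_{ji}$ the chart $B_\psi$ of Step~2 modelling $\cC^k(\cC,\cD)$ near $\iota\phi$ is literally the chart modelling $\mathcal{CF}^k(\Gamma\{U_i\},\cD)$ near $\phi$ (this is the content of the ``same $\psi_{ji}$'' discussion in the proof of Proposition~\ref{prop:smooth_functor}), so $\iota$ is an injective local diffeomorphism on objects.

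\emph{Essential surjectivity.} By Proposition~\ref{prop:cover_equivalence} an arbitrary object of $\cC^k(\cC,\cD)$ is equivalent to a functor $\psi\colon\Gamma\{V_j\}_J\to\cD$ with $\Gamma\{V_j\}$ a good refinement of $\Gamma\{U_i\}$ as above. Fix $i$: restricting $\psi$ to $\{V_j\}_{j\in J_i}$ gives $C^k$ maps $\psi_j\colon V_j\to M$ and, on each component of each overlap, a constant $g_{jj'}\in G$ with $\widehat{g_{jj'}}\circ\psi_j=\psi_{j'}$ there, satisfying the cocycle relation $g_{j'j''}g_{jj'}=g_{jj''}$ by functoriality (Lemma~\ref{lemma:local}(b)). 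This is a \v{C}ech $1$-cocycle with values in the discrete group $G$ on the cover $\{V_j\}_{j\in J_i}$ of $U_i$, i.e.\ a flat principal $G$-bundle on $U_i$; since $U_i$ is simply connected this bundle is trivial, so there exist $h_j\in G$ ($j\in J_i$) with $h_{j'}g_{jj'}=h_j$ on overlaps. Setting $\Phi_i:=\widehat{h_j}\circ\psi_j$ on $V_j$ gives a well-defined $C^k$ map $U_i\to M$, and the collection $\{h_j\}_{j\in J}$ obtained by doing this for every $i$ is precisely a self-natural-transformation datum in the sense of \eqref{eqn:natural_transformation}, so $\widetilde\psi:=(\{\widehat{h_j}\circ\psi_j\},\{h_{j'}\psi_{j'j}h_j^{-1}\})$ is a functor $\Gamma\{V_j\}\to\cD$ with $\widetilde\psi\cong\psi$ in $\cC^k(\cC,\cD)$. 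One then verifies, using Lemma~\ref{lemma:local}(a),(b) and goodness of $\Gamma\{V_j\}$, that the maps $\Phi_i$ together with the $G$-labels on components of $\Gamma(U_i,U_{i'})$ (well defined because the $G$-torsor has been trivialized inside each $U_i$) satisfy the functor cocycle conditions of Lemma~\ref{lemma:local}, defining $\Phi\colon\Gamma\{U_i\}\to\cD$ with $\widetilde\psi=\Phi\circ r$. Hence $\psi\cong\iota\Phi$, proving essential surjectivity; combined with the previous paragraph, $|\iota|$ is a continuous open bijection, hence a homeomorphism, and $\iota$ is an equivalence.

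\emph{Main obstacle.} The crux is the last step of essential surjectivity: translating the clean statement ``a flat $G$-bundle over a simply connected base is trivial'' into the concrete combinatorial language of Lemma~\ref{lemma:local} and checking that the descended object-level maps $\Phi_i$ genuinely assemble into a functor on $\Gamma\{U_i\}$ rather than merely on some further refinement — that is, that trivializing the transition $G$-torsor \emph{within} each chart $U_i$ does not leave residual monodromy at the morphism level \emph{between} charts $U_i,U_{i'}$. Everything else — the reductions via Proposition~\ref{prop:cover_equivalence}, fully faithfulness, the identification of charts, and the bookkeeping of natural transformations via \eqref{eqn:natural_transformation} — is routine once the simply-connected trivialization is installed.
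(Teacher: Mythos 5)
Your proposal is correct and follows essentially the same approach as the paper's proof: essential surjectivity is reduced to the triviality over each simply connected $U_i$ of the nonabelian Čech $1$-cocycle / flat $G$-torsor formed by the $G$-labels of the intra-chart transition morphisms of a refinement $\psi$, which yields a natural transformation making $\psi$ gluable to a functor on $\Gamma\{U_i\}$. The only difference is emphasis — you spell out full faithfulness (which the paper dismisses as ``by definition'') and the chart-level identification (which the paper defers to the remark following the proposition), while leaving the final gluing check as routine, exactly as the paper does.
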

\begin{proof}
The natural inclusion $\mathcal{CF}^k(\Gamma\{U_i\}_{I},\cD)\to\cC^k (\cC,\cD)$ is fully faithful by definition. It remains to prove essential surjectivity. 

Let $\psi$ be a $C^k$ functor from $\Gamma\{V_j\}_{J}\to \cD$. 
Since any refinement of $\psi$ as in \Cref{eqn:refine} is isomorphic to $\psi$ in $\cC^k(\cC,\cD)$, we may assume that $J$ has a decomposition $\sqcup_{i\in I} J_i=I$, such that $U_i=\cup_{j\in J_i} V_j$. 
Since each $U_i$ is a manifold, we may also assume $V_j\cap V_{j'}$ is connected for $j,j'\in I_i$, by requiring for instance that the $V_j$'s are geodesically convex in some complete metric on $U_i$. 
We then want to prove that, after a natural transformation, $\psi$ can be glued to a functor from $\Gamma\{U_i\}_{I}$ to $\cD$. 

By definition, for $j,j'\in J_i$, the identity morphism $\Id_x$ viewed as in $\Gamma(V_j,V_{j'})$ for $x\in V_j\cap V_{j'}$ is sent by $\psi$ to a morphism in $\cD$; we use $g_{x,j,j'}$ to denote the element in $G$ representing the connected component containing the image morphism. 
Since $V_{j}\cap V_{j'}$ is connected, we moreover have that $g_{x,j,j'}$ is independent of $x$ in $V_j\cap V_{j'}$, hence it will be denoted by $g_{j,j'}$. 
Then the functorial property of $\psi$ implies that $g_{j,j'}$ satisfies the cocycle condition $g_{j',j''}g_{j,j'}=g_{j,j''}$ if $V_j\cap V_{j'}\cap V_{j''}\ne \emptyset$ for $j,j',j''\in J_i$. 
Since $U_i$ is simply connected, the first C{\v e}ch cohomology with $G$-coefficient of the open cover $\{V_j\}_{J_i}$ vanishes. 
In particular, there is a primitive $\{g_j\in G\}_J$, such that $g_{j,j'}=g_{j'}^{-1}g_j$. 
Then, the assignment $V_j\mapsto g_j$ defines a natural transformation from $\psi\colon \Gamma\{V_j\}_J\to \cD$ to another functor $\psi'\colon\Gamma\{V_j\}_J\to\cD$, which at the objects level is just $x\in V_j\mapsto g_j\psi_j(x)$.
It is now direct to check that $\psi'$ can be glued to a functor $\phi$ on $\Gamma\{U_i\}$, hence concluding the proof.
\end{proof}

The groupoid $\mathcal{CF}^k(\Gamma\{U_i\},\cD)$ can be equipped with a smooth structure in a similar way to what done in the proof of \Cref{prop:Banach_refinement}.
Then, under the assumptions of \Cref{prop:global}, the natural inclusion $\mathcal{CF}^k(\Gamma\{U_i\},\cD)\to\cC^k (\cC,\cD)$ is an equivalence of proper \'etale Banach groupoids.
In particular, $\mathcal{CF}^k(\Gamma\{U_i\},\cD)$ is an orbifold structure for the orbifold of maps. 
Hence, in view of \Cref{thm:morphism_orbifold}, if $W$ is a global quotient, we can describe the orbifold structure on $C^k(\Sigma,W)$ by the quotient of the space of functors by the natural transformations, provided we represent $\Sigma$ by a covering groupoid $\Gamma\{U_i\}_I$ such that each $U_i$ is a simply connected manifold.

\begin{example}
  As an example, the inertia groupoid $\wedge \cD$ is the groupoid of constant loops, i.e.\ of constant maps from $\bbS^1$ to $\cD$. 
  Since we are considering constant loops, locally, we can consider the uniformizers around the image, which is an action groupoid. 
  Then, we can consider the groupoid $\cS^1$, representing $\bbS^1$, with objects the points of an open interval whose morphism set consists of the identity at each point and of the natural identification of two open smaller intervals at the two ends. 
  Then, by Proposition \ref{prop:global}, the groupoid of constant (on object level) functors from $\cS^1$ to $\cD$ is precisely $\wedge \cC$. 
  On the other hand, if we use the trivial groupoid to represent $\bbS^1$, i.e.\ the one with only identity morphisms, the functors to $\cD$ only detects the maps corresponding to $\cD\subset \wedge \cD$.
\end{example}

\begin{example}\label{ex:constant}
   Given a proper étale groupoid $\cD$, $\cD^k$ denotes the groupoid of constant maps from the sphere with $k$ punctures $\mathring{S}_k$ to $\cD$. 
   One way to obtain a groupoid whose object space has simply connected components is to choose a point on $\mathring{S}_k$, then cut the surface open along rays connecting the point with each puncture. 
   Then a constant functor from this groupoid to $\cD$ can is equivalent to an assignment, for each cut, of an element $g_i$ in the isotropy group of the point which constitutes the image of the functor at the object level. 
   By looking at the intersection point of the cuts, one can see that the relation $g_1\ldots g_k=\Id$ must hold (assume for simplicity that the rays have been chosen such that cyclic order is the same as the indices). 
   Natural transformations are then just given by conjugations of the $g_i$'s. 
   
   One can also view elements $\cD^k$ as constant maps from the orbifold sphere $S_k$ with at most $k$ orbifold point whose isotropy are the cyclic groups $\ZZ/o(g_i)\ZZ$, with $o(g_i)$ the order of $g_i$ in the isotropy group of the image point.
   Notice that, a priori, in addition to the charts describing $\mathring{S}_k$, we need to add $k$ charts $\ZZ/o(g_i)\ZZ\ltimes \DD$. 
   In order to build a constant functor with fixed $o(g_i)$'s, we then also need to assign an element from $G$ to one of the morphisms connecting the disk chart near each puncture with the ``central'' chart, i.e.\ the one coming from the cut of $\mathring{S}_k$ (all others will then be determined by the functoriality). 
   However different choices give equivalent functors via the natural transformation given by conjugation on each disk chart.  In particular, this gives rise to a groupoid that is equivalent to the groupoid of constant maps from $\mathring{S}_k$.
   
   We point out, even though the second interpretation of $\cD^k$ in terms of constant maps from orbifold spheres may seem more natural at first sight, in this setting it is more constructive to think of $\cD^k$ as orbifold maps from $\mathring{S}_k$.
   In general, when we consider orbifold Gromov-Witten invariants or orbifold symplectic field theory, we also consider maps from punctured spheres $\mathring{S}_k$ to the target symplectic manifold, such that the map has certain exponential decay near the punctures. If one of the punctures is a removable singularity, but the limit is a singular point of the target symplectic manifold, then we may need to introduce a singularity on the sphere if we want to extend the map to the puncture, and whether this is necessary depends on the limit of the nearby loops near the puncture in the inertial orbifold $\wedge  \cD$. 
   This perspective also explains why constraints in orbifold Gromov-Witten are from the inertia groupoid.
\end{example}

\begin{remark}
An alternative, but equivalent way of understanding the maps to a global quotient using $G$-bundles can be found in \cite[Theorem 2.45]{Ruan07}, which is used in \cite[\S 2.5]{Ruan07} to explain $\wedge \cD$ and $\cD^k$.
\end{remark}

We end this subsection by the following slightly more global version of \Cref{prop:global}.
\begin{prop}\label{prop:global2}
Let $\Gamma\{V_j\}_{J}$ be a good covering groupoid of $\cD$. 
Suppose $\cC$ has a covering groupoid $\Gamma\{U_i\}_{I}$ with $I$ finite and $U_i$ simply connected.
Consider also a partition $I=\bigsqcup_{i=1}^n I_i$ and a function $\sigma:\{1,\ldots,n\}\to J$, and let $\mathcal{CF}^k(\Gamma\{U_i\},\Gamma\{V_j\},\sigma)$ denote the groupoid whose objects are $C^k$ functors from $\Gamma\{U_i\}$ to $\Gamma\{V_j\}$ sending $U_i$ to $V_{\sigma(s)}$ if $i\in I_s$. 
Then $\mathcal{CF}^k(\Gamma\{U_i\},\Gamma\{V_j\},\sigma)$  is equivalent to the full subgroupoid of $\cC^k(\cC,\cD)$ whose orbit space consists of maps $\phi$ such that  $\vert\phi\vert(|U_i|)\subset |V_{\sigma(s)}|$ for $i\in I_s$.
\end{prop}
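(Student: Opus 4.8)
The plan is to follow essentially the same strategy as in the proof of \Cref{prop:global}, but keeping track of the extra bookkeeping imposed by the partition $I=\bigsqcup_{s=1}^n I_s$ and the function $\sigma$. First I would observe that the natural inclusion $\mathcal{CF}^k(\Gamma\{U_i\},\Gamma\{V_j\},\sigma)\to\cC^k(\cC,\cD)$ lands, by construction, in the full subgroupoid whose orbit space consists of those maps $\phi$ with $\phi(|U_i|)\subset|V_{\sigma(s)}|$ for $i\in I_s$; this is clear because every object of $\mathcal{CF}^k(\Gamma\{U_i\},\Gamma\{V_j\},\sigma)$ sends $U_i$ into $V_{\sigma(s)}$, and any natural transformation between two such functors stays within the same $V_{\sigma(s)}$-chart at the object level. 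As in \Cref{prop:global}, full faithfulness of this inclusion is immediate from the definitions of $C^k\mathrm{CMor}^1$, since the morphisms in $\mathcal{CF}^k(\Gamma\{U_i\},\Gamma\{V_j\},\sigma)$ are by definition exactly the $C^k$ natural transformations between such functors. So the content of the statement is essential surjectivity onto that full subgroupoid.

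For essential surjectivity, I would start with an arbitrary $\phi\in\cC^k(\cC,\cD)$ in the prescribed full subgroupoid, representing it by a functor $\psi\colon\Gamma\{W_l\}_L\to\Gamma\{V_j\}_J$ from some good covering groupoid of $\cC$ (using \Cref{prop:cover_equivalence} and that any two coverings admit a common refinement). After passing to a further refinement, I may assume $L$ decomposes as $\bigsqcup_{i\in I}L_i$ with $\bigcup_{l\in L_i}W_l=U_i$, and moreover — since each $U_i$ is simply connected, in particular connected, a manifold — I may take the $W_l$ ($l\in L_i$) to be geodesically convex for some complete metric on $U_i$, so that all their pairwise (and triple) intersections are connected. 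Because $\phi(|U_i|)\subset|V_{\sigma(s)}|$ for $i\in I_s$, I may also arrange the index function of $\psi$ to send every $l\in L_i$ to $\sigma(s)$ when $i\in I_s$. Now the identity morphism at $x\in W_l\cap W_{l'}$, viewed in $\Gamma(W_l,W_{l'})$, is sent by $\psi$ to a morphism of $\Gamma\{V_j\}$ whose connected component is recorded by an element $g_{l,l'}$ of the isotropy group $G_{\sigma(s)}$ of the uniformizer $V_{\sigma(s)}$ (here I use that the target of all relevant charts is the \emph{same} $V_{\sigma(s)}$, which is the point of the hypothesis on $\sigma$); connectedness of $W_l\cap W_{l'}$ makes $g_{l,l'}$ independent of $x$, and functoriality gives the cocycle relation $g_{l',l''}g_{l,l'}=g_{l,l''}$ on triple intersections inside each $U_i$.

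The key step is then exactly the \v{C}ech-cohomology vanishing argument of \Cref{prop:global}: for each $i\in I_s$ the cover $\{W_l\}_{l\in L_i}$ of the simply connected $U_i$ has vanishing first \v{C}ech cohomology with coefficients in the (discrete) group $G_{\sigma(s)}$, so the cocycle $(g_{l,l'})$ is a coboundary, i.e.\ there are $g_l\in G_{\sigma(s)}$ with $g_{l,l'}=g_{l'}^{-1}g_l$. Applying $l\mapsto g_l$ as a natural transformation produces a functor $\psi'$ (at the object level, $x\in W_l\mapsto g_l\psi_l(x)$) whose transition data over each $U_i$ is trivial, hence which glues to an honest functor $\phi'\colon\Gamma\{U_i\}_I\to\Gamma\{V_j\}_J$ sending $U_i$ to $V_{\sigma(s)}$ for $i\in I_s$; that is, $\phi'\in\mathcal{CF}^k(\Gamma\{U_i\},\Gamma\{V_j\},\sigma)$, and $\phi'$ is isomorphic to $\phi$ in $\cC^k(\cC,\cD)$. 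This establishes essential surjectivity and hence the equivalence. The only real subtlety — and the step I'd expect to cost the most care — is checking that the gluing of $\psi'$ across \emph{different} $U_i$'s (i.e.\ along morphisms $f\in\Gamma(W_l,W_{l'})$ with $l\in L_i$, $l'\in L_{i'}$, $i\neq i'$, possibly in different partition blocks) is automatically compatible and does not require a further choice: this is handled exactly as in \Cref{prop:global} and \Cref{prop:smooth_functor}, where such gluing data is determined by the object-level behavior together with the already-fixed transition maps, but one should verify that the normalization $g_l$ we introduced over each $U_i$ does not obstruct it — which it does not, since over each $U_i$ we simply changed the representative $\psi$ by a self-natural-transformation, an operation under which the equivalence class in $\cC^k(\cC,\cD)$ is unchanged. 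Finally, one notes that when $n=1$ and $\sigma$ is surjective onto the index of a single chart (equivalently, when $\cD=G\ltimes M$ is a global quotient), this recovers \Cref{prop:global}.
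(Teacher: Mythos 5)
Your strategy is the right one and follows the paper's, but you have quietly skipped the single step that is actually new in this proposition relative to \Cref{prop:global}. You write ``Because $\phi(|U_i|)\subset|V_{\sigma(s)}|$ for $i\in I_s$, I may also arrange the index function of $\psi$ to send every $l\in L_i$ to $\sigma(s)$'' as though this came for free with the refinement. It does not. Knowing $|\psi|(|W_l|)\subset|V_{\sigma(s)}|$ only tells you something about images in the orbit space; the actual local representative $\psi_l$ of your chosen functor $\psi$ may well send $W_l$ into some \emph{other} chart $V_p$ that merely projects onto the same region of $|\cD|$. Refining the $W_l$'s does not fix this, because the problem is which $V_j$-chart the functor targets, not how small $W_l$ is. What the paper does here — and what your write-up needs — is to exploit the \emph{goodness} of the covering $\Gamma\{V_j\}$: since $W_l$ is connected and $|\psi_l(W_l)|\subset|V_{\sigma(s)}|$, there is an element $\xi\in\pi_0(\Gamma(V_p,V_{\sigma(s)}))$ whose associated local diffeomorphism $\widehat{\xi}$ carries $\psi_l(W_l)$ into $V_{\sigma(s)}$; the assignment $x\mapsto(\psi_l(x)\xrightarrow{\widehat{\xi}}\widehat{\xi}\psi_l(x))$ is then a natural transformation to a functor $\psi'$ with $\psi'_l(W_l)\subset V_{\sigma(s)}$. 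Only after this chart normalization is your index-function claim valid.

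Once that is in place, the rest of your argument is sound. Two small remarks on presentation. First, after normalizing, the paper simply invokes \Cref{prop:global} on each block $\Gamma\{W_l\}_{\cup_{i\in I_s}L_i}\to\Gamma\{U_i\}_{I_s}$, whereas you re-derive the \v{C}ech-vanishing argument inline; both are fine, but invoking the previous proposition is cleaner and makes explicit where simple-connectedness of $U_i$ enters. Second, your closing worry about gluing across different $U_i$'s (and hence across different partition blocks) is correctly resolved: modifying $\psi$ by a natural transformation on $\Gamma\{W_l\}$ does not change its class in $\cC^k(\cC,\cD)$, and the extension of the resulting $\psi'$ to the coarser cover $\Gamma\{U_i\}$ is forced on the morphism level by functoriality once the objectwise data is compatible, exactly as in the proof of \Cref{prop:global}. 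With the normalization step added, your proof is complete and coincides with the paper's.
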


\begin{proof}
We use $\cC^k(\cC,\Gamma\{V_j\},\sigma)$ to denote the full subgroupoid of $\cC^k(\cC,\Gamma\{V_j\})$ whose orbit space consists of maps $\phi$ such that $\phi(|U_i|)\subset |V_{\sigma(s)}|$ for $i\in I_s$. 
As before, the natural inclusion $\mathcal{CF}^k(\Gamma\{U_i\},\Gamma\{V_j\},\sigma)\subset \cC^k(\cC,\Gamma\{V_j\},\sigma)$ is fully faithful. 
We need to show that the inclusion is essentially surjective. 
In other words, as in the proof of \Cref{prop:global}, given a functor $\psi:\Gamma\{W_k\}_{K}\to \Gamma\{V_j\}$, such that $K=\sqcup_{i\in I} K_i$, $\cup_{k\in K_i} W_k=U_i$ and $|\psi|(|U_i|)\subset |V_{\sigma(s)}|$ for $i\in I_s$, we want to glue $\psi$ (possibly after natural transformation) to a functor from $\Gamma\{U_i\}$. 

We can assume (up to substituting with a refinement) that the $W_k$'s are geodesically convex. 
Note now that for $i\in I_s,k\in K_i$, even though we have $|\psi|(|W_k|)\subset |V_{\sigma(s)}|$, we may not have $\psi_k(W_k)\subset V_{\sigma(s)}$;
assume $\psi_k(W_k)\subset V_{p}$.
Since $W_k$ is connected, by the goodness of the covering $\{V_j\}$, we can find $\xi\in \pi_0(\Gamma(V_p,V_{\sigma(s)}))$ such that $\widehat{\xi}\circ \psi_k(W_k)\subset V_{\sigma(s)}$. 
Therefore by considering the natural transformation $\tau:x\in W_k\mapsto (\psi_k(x) \stackrel{\widehat{\xi}}{\to} \widehat{\xi}\circ \psi_k(x))$, we get a functor $\psi'$ equivalent to $\psi$, such that $\psi'_k(W_k)\subset V_{\sigma(s)}$. 
Hence, we can assume that for $i\in I_s,k\in K_i$ we have $\psi_k(W_k)\subset V_{\sigma(s)}$.  
Then we can glue $\psi$ on $\Gamma\{W_k\}_{\cup_{i\in I_s}K_i}$ to a functor on $\Gamma\{U_i\}_{I_s}$ by \Cref{prop:global}. This finishes the proof.
\end{proof}

\begin{remark}
It is not always true that functors from a groupoid whose object space has simply connected components capture all orbifold maps.
The obstruction is often already in the extension at the object level from a refined representation. 
For example, consider maps from the interval $(0,3)$ to itself. 
Then functors from $(0,3)=\Gamma\{(0,3)\}$ to the covering groupoid $\Gamma\{(0,2), (1,3) \}$ do not capture the identity map. 
\end{remark}

\subsection{Homotopy classes of orbifold maps}
\label{sec:homotopy_groups}
Let $X,Y$ be two orbifolds, two maps $f,g:X\to Y$ are called \emph{homotopic} if there exits a map $F:X\times [0,1]\to Y$ such that $F|_{X\times \{0\}}=f$ and $F|_{X\times \{1\}}=g$. 
Then it is easy to check that the existence of a homotopy constitutes an equivalent relation on the set of maps from $X$ to $Y$, which shall be denoted by $[X;Y]$. 
The key difference from the manifold case is that we no longer have unique concatenation of homotopies, i.e.\ given two homotopies $F,G:X\times [0,1]\to Y$ with $F|_{X\times \{1\}}=G|_{X\times \{0\}}$, there will be multiple ways to glue to a larger homotopy $G\circ F$ if $F|_{X\times \{1\}}=G|_{X\times \{0\}}$ has non trivial isotropy in the orbifold\footnote{or just orbispace if we only consider continuous orbifold maps.} of orbifold maps. 
The simplest example is the following: gluing the two ends of $[0,1]\to \bullet/G$ can be done in $|\Conj(G)|$ many different ways, and each of these gluings give rise to maps $S^1\to \bullet/G$ in different homotopy classes. 
The cost of such ambiguity is that the homotopy classes of pairs $[(S^n,\pt),(X,x_0)]$ is no longer a group, unless the base point $x_0$ has no isotropy. 

\begin{remark}
The homotopy groups as homotopy classes of orbifold maps were studied by Chen \cite{Chen06bis}. However, to overcome the above mentioned ambiguity in order to construct a group structure, \cite{Chen06bis} considered a \emph{different} orbifold of orbifold maps, where the natural transformation is required to be $\Id$ near the base point of the target.
\end{remark}

To every proper \'etale Lie groupoid $\cC$, one can associate a topological space $B\cC$ called the classifying space, which is the geometric realization of the nerve of the category $\cC$ \cite[p.24-25]{Ruan07}. 
The homotopy type of the classifying space is unique for Morita equivalent  proper \'etale Lie groupoids \cite[p.25]{Ruan07}, hence we may use $BX$ to denote the homotopy type of the classifying space of any orbifold $X$. 
For a manifold $M$, since both  $[M;\cC]$ and $[M;B\cC]$ classify $\cC$-principal bundles over $M$ up to concordance (\cite{MR2048526,lerman2010orbifolds} for the former, \cite{MR263104,MR1112197} for the latter), we have the following.

\begin{prop}
Let $M$ be a manifold, and $X$ an orbifold. 
Then, $[M;X]$ is isomorphic to $[M;BX]$.
\end{prop}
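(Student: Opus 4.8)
The plan is to identify both sides with the set of concordance classes of $\cC$-principal bundles over $M$, where $\cC$ is any proper \'etale Lie groupoid representing the orbifold structure of $X$. First I would fix such a representative $\cC$. By the definition of orbifold morphisms and of orbifold homotopy (see \S\ref{sec:homotopy_groups} and \Cref{defn:orbifold}), the set $[M;X]$ is canonically identified with $[M;\cC]$, the set of equivalence classes of generalized maps $M\to\cC$ modulo homotopy: indeed $M\times[0,1]$ is represented by $\cC\times\cI$, where $\cI$ is the trivial groupoid on $[0,1]$, so a homotopy from $f$ to $g$ is exactly a generalized map $M\times[0,1]\to\cC$ restricting to (representatives of) $f$ and $g$ at the two ends. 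Moreover this set is independent of the chosen representative of $X$, since pre- and post-composition with a Morita equivalence is a bijection on equivalence classes of generalized maps; likewise $BX$ has a well-defined homotopy type by \cite[p.~25]{Ruan07}, so $[M;BX]=[M;B\cC]$.

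Next I would invoke the groupoid--bundle dictionary. By \cite{lerman2010orbifolds,MR2048526}, for any manifold $N$ the set of equivalence classes of generalized maps $N\to\cC$ is in natural (in $N$) bijection with the set of isomorphism classes of $\cC$-principal bundles over $N$. Applying this to $N=M$ and to $N=M\times[0,1]$ and using naturality with respect to the two inclusions $M\hookrightarrow M\times[0,1]$, together with the description of homotopy in the previous paragraph, one gets that $[M;\cC]$ is in bijection with the set of \emph{concordance} classes of $\cC$-principal bundles over $M$ --- two such bundles being concordant if they are the restrictions to $M\times\{0\}$ and $M\times\{1\}$ of some $\cC$-principal bundle over $M\times[0,1]$.

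On the other side, the classifying space $B\cC$ carries a universal $\cC$-principal bundle (equivalently, a universal Haefliger $\cC$-structure), and by \cite{MR263104,MR1112197} pullback of this universal object along continuous maps induces a bijection between $[M;B\cC]$ and the set of concordance classes of $\cC$-principal bundles over $M$; here one uses that $M$, being a manifold, is paracompact and has the homotopy type of a CW complex, and that homotopic maps into $B\cC$ pull back concordant bundles. Composing the two bijections through this common set of concordance classes of $\cC$-principal bundles over $M$ yields the desired isomorphism $[M;X]\cong[M;BX]$, which is moreover easily seen to be induced by the natural map $X\to BX$.

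The only genuine content, beyond the bookkeeping above, is the compatibility of the two equivalence relations with the single notion of concordance of $\cC$-principal bundles: on the $[M;\cC]$ side one must know that two generalized maps are equivalent precisely when the associated bundles are isomorphic, and that this correspondence is natural in the base so that it intertwines restriction to the two ends of $M\times[0,1]$; on the $[M;B\cC]$ side one needs the analogous statement for the universal bundle. Both are contained in the cited references, but I expect the naturality of the groupoid--bundle dictionary --- ensuring that homotopies of generalized maps match concordances of bundles --- to be the step requiring the most care.
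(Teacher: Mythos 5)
Your proof is correct and follows exactly the same route as the paper: both sides are identified with concordance classes of $\cC$-principal bundles over $M$, using the same references (\cite{lerman2010orbifolds,MR2048526} for $[M;\cC]$ and \cite{MR263104,MR1112197} for $[M;B\cC]$). The paper merely compresses this into a single sentence, while you have usefully spelled out the intermediate steps and flagged where the naturality needs care.
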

Using the fact that $[S^1;N]\simeq \pi_1(N)/\rho$ for $N$ a topological space and $\rho$ the action by conjugation of $\pi_1(N)$ on itself, one can deduce the following corollary, which will be used in \S \ref{S4} and \S\ref{S5}.
\begin{cor}\label{cor:pi0_free_loop_space}
$\pi_0(\cL(\CC^n/G))=[S^1;\CC^n/G]$ is isomorphic to $\Conj(G)$.
\end{cor}
\begin{remark}
When the target is a global quotient, one can use \cite[Theorem 2.45]{Ruan07}, which is a special form of identifying orbifold maps with principle bundles, to obtain the above results. One can also obtain \Cref{cor:pi0_free_loop_space} using \Cref{prop:global}.
\end{remark}

In view of the above discussion, one can also define the orbifold homotopy groups as $\pi_n^{\orb}(X):=\pi_n(BX)$. 
An instant corollary is that $[S^1;X]=\Conj(\pi_1^{\orb}(X))$ when $X$ is connected. We also have the following Van Kampen theorem.

\begin{prop}
Assume $X$ is an orbifold, such that $X=U\cup V$ and $W=U\cap V$ are all connected non-empty open sets. Then we have 
$$\pi^{\orb}_1(X)=\pi^{\orb}_1(U)*_{\pi^{\orb}_1(W)}\pi^{orb}_1(V).$$
\end{prop}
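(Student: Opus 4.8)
The plan is to reduce the assertion to the ordinary Seifert--van Kampen theorem via the definition $\pi_1^{\orb}(X)=\pi_1(BX)$. Under this definition the two homomorphisms in the amalgamated product are those induced on fundamental groups by the maps of classifying spaces $B\cW\to B\cU$ and $B\cW\to B\cV$ coming from the inclusion morphisms $W\hookrightarrow U$, $W\hookrightarrow V$ of orbifolds; so it suffices to exhibit $BX$ as a union of two path-connected open subsets that are, compatibly, copies of $BU$ and $BV$ with intersection $BW$, and then to quote the classical theorem.

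First I would fix a proper \'etale Lie groupoid $\cC$ with $|\cC|=X$ and orbit map $q\colon C\to X$, and let $\cU,\cV,\cW$ be the full subgroupoids of $\cC$ on the open $\bC$-invariant subsets $q^{-1}(U)$, $q^{-1}(V)$, $q^{-1}(W)=q^{-1}(U)\cap q^{-1}(V)$ of $C$. These are again proper \'etale Lie groupoids, they represent $U,V,W$ as orbifolds, and the obvious inclusion functors realize the orbifold inclusions. Moreover $BU=B\cU$ is path-connected: the natural quotient map $B\cU\to|\cU|=U$ has path-connected fibres (classifying spaces of the isotropy groups), so $\pi_0(B\cU)=\pi_0(U)$, and $U$ is connected; likewise for $B\cV,B\cW$.

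Next I would analyse the nerve. Writing $N_0\cC=C$, $N_1\cC=\bC$, and $N_k\cC=\bC\tensor[_s]{\times}{_t}\cdots\tensor[_s]{\times}{_t}\bC$ ($k$ factors) for the space of strings of $k$ composable arrows, the ``all vertices'' map $N_k\cC\to C^{k+1}$ is continuous, so $N_k\cU:=\{x\in N_k\cC\mid \text{every vertex of }x\text{ lies in }q^{-1}(U)\}$ and $N_k\cV$ are open in $N_k\cC$. Since all vertices of a composable string are identified in $|\cC|$ and $U\cup V=X$, these cover $N_k\cC$, with $N_k\cU\cap N_k\cV=N_k\cW$. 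Hence $N_\bullet\cC$ is, levelwise and therefore as a simplicial space, the pushout of $N_\bullet\cU\hookleftarrow N_\bullet\cW\hookrightarrow N_\bullet\cV$; as geometric realization is a left adjoint, $BX=B\cC$ is the pushout of $B\cU\hookleftarrow B\cW\hookrightarrow B\cV$ in topological spaces.

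The remaining, and I expect main, obstacle is to promote this strict pushout to a bona fide open cover, i.e.\ to check that geometric realization carries the levelwise open embeddings $N_\bullet\cU,N_\bullet\cV\hookrightarrow N_\bullet\cC$ to open embeddings $B\cU,B\cV\hookrightarrow BX$, so that $(BX;B\cU,B\cV)$ is an excisive triad to which Seifert--van Kampen applies. This holds because the nerve of $\cC$ is a \emph{proper} simplicial space --- its degeneracies are iterated unit maps, hence inclusions of submanifolds and in particular closed cofibrations --- and realization of proper simplicial spaces preserves open embeddings; alternatively one replaces $BX$ by a homotopy-equivalent CW complex in which $BU$, $BV$ are subcomplexes meeting in $BW$ and invokes the CW version of van Kampen. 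Either way, with a path-connected excisive (or CW) triad in hand, the classical theorem gives $\pi_1(BX)=\pi_1(BU)*_{\pi_1(BW)}\pi_1(BV)$, which is the claim.
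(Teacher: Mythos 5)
Your proof takes essentially the same route as the paper: represent $U,V,W$ by full subgroupoids of a groupoid for $X$, pass to classifying spaces, and apply the classical Seifert--van Kampen theorem to $B\cC=B\cU\cup B\cV$ with $B\cW=B\cU\cap B\cV$. The paper states this in one line; you supply the supporting details (path-connectedness of $B\cU,B\cV,B\cW$, the levelwise nerve decomposition, and the openness/excision point needed to legitimately invoke van Kampen) that the paper leaves implicit.
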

\begin{proof}
Let $\cC$ be an orbifold structure of $X$ and $\cU,\cV,\cW$ are the full subcategory representing $U,V,W$. Then the claim follows from the Van Kampen theorem applied to $B\cC=B\cU\cup B\cV$ and $B\cW=B\cU\cap B\cV$.
\end{proof}

\subsection{Complex line bundles and their first Chern classes}
\label{sec:complex_line_bundles}

In this section we will describe a natural extension of classical results on complex line bundles and their first Chern classes.
As this is the case that will be relevant for us, we make the assumption that the given orbifold $X$ is \emph{effective}.

This assumption enters into play in this discussion via \cite[Corollary 1.52]{Ruan07}:
for effective $X$,
the classifying space $BX$ of $X$ is homotopy equivalent to $EO(n) \times_{O(n)} Fr(X)$.
Here, $Fr(X)$ is the frame bundle of $X$, which is a \emph{smooth manifold} when $X$ is effective, $EO(n)$ is the total space of the universal bundle $EO(n)\to BO(n)$ over the classifying space $BO(n)$, and $EO(n) \times_{O(n)} Fr(X)= EO(n) \times Fr(X) / O(n)$ where $O(n)$ acts diagonally in the natural way.
In other words, the orbifold (co)homology $H^{\orb}_*(X;R)$, $H^*_{\orb}(X;R)$ (where $R$ is any commutative ring) and the orbifold homotopy groups $\pi_n^{\orb}(X)$, all just defined as (co)homology and homotopy groups of the classifying space $BX$, are respectively just the \emph{equivariant} (co)homology and homotopy groups of the smooth manifold $Fr(X)$ w.r.t.\ its natural $O(n)-$action.

For notational simplicity, let $M=Fr(X)$ and $G=O(n)$ for the rest of the section. 
Because of what just said, one can then just look at the $G-$equivariant theories on $M$.
More precisely, 
by definition of vector bundles over groupoids \cite[Definition 2.25]{Ruan07},
any complex line orbibundle $\pi\colon E\to X$ comes from a (smooth) complex line bundle $\tau\colon L\to M$ which has a natural $G-$action for which $E=L/G$.
We then get the following commutative diagram:
\begin{equation*}
\begin{tikzcd}
L
\arrow[r]
\arrow[d, "\tau"]
& 
E
\arrow[d, "\pi"] 
\\
M
\arrow[r]
&  
X
\end{tikzcd}
\end{equation*}
The converse is also true, namely that every complex line bundle over $M=Fr(X)$ whose total space has an action of $G=O(n)$ making the projection $G-$equivariant comes from a complex line orbibundle over $X=M/G$.

Now, each complex $G-$line bundle $L\to M$ has an equivariant first Chern class $c_1^G(L)$, defined as the first Chern class of the associated line bundle $EG\times_G L \to EG\times_G M$, which lives in $H^2(EG\times_G M;\bbZ)$, that is (by definition) the second invariant cohomology group $H^2_{G}(M;\bbZ)$ of $M$.
Notice that, in our setting, $H^2_G(M;\bbZ)$ is (again by definition) just $H^2_{\orb}(M;\bbZ)$.
Moreover, by the results in \cite{HatYos76}, such $c_1^G$ gives an isomorphism between the set of $G-$isomorphism classes of complex $G-$line bundles $L\to M$ and the $G-$equivariant cohomology group $H^2_G(M;\bbZ)$.
Hence, complex line orbibundles over $X$ are in correspondence with $H^2_{\orb}(M;\bbZ)$ via the (equivariant) first Chern class of the associated complex $G-$line bundle $L\to M$.

	\section{Symplectic cohomology of exact orbifold fillings}\label{S3}

Let $W$ be an exact orbifold filling of a contact \emph{manifold}. In this section, we will discuss the Hamiltonian-Floer theory on $W$. Instead of using abstract virtual techniques like \cite{CheRua01,orbiLagrangian} to deal with the most general case, we use more classical transversality methods like perturbing the almost complex structure as in \cite{MS12,AudDamBook}.

\subsection{Geometric setup}
Let $(\widehat{W},\widehat{\lambda})$ denote the completion of $(W,\lambda)$.
Namely, $\widehat{W}=W\cup [1,\infty)\times M$ with $\widehat{\lambda} = \lambda \cup r\alpha$, with $r\in[1,\infty)$, $M=\partial W$ and $\alpha = \lambda\vert_{M}$ the induced contact form.
Given a smooth Hamiltonian $H_t:S^1\times \widehat{W}\to \RR$, the Hamiltonian vector field $X_{H_t}$ defined by $\rd H_t=\rd\widehat{\lambda}(\cdot,X_{H_t})$ is well-defined as an $S^1-$parametric section of the tangent bundle $T\widehat{W}$. A Hamiltonian orbit is defined as a smooth (orbifold) map $\gamma:S^1\to \widehat{W}$ such that $\gamma'(t)=X_{H_t}(\gamma(t))$. 
An orbit is non-degenerate iff the linearized return map does not have $1$ as eigenvalue. 

\begin{example}
Let $G\subset U(n)$ such that $\CC^n/G$ is an isolated singularity. 
We consider the Hamiltonian $H= Ar^2$ on $G\ltimes \CC^n$, then $X_H=\sum_{i=1}^n Ax_i\partial_{y_i}-Ay_i\partial_{x_i}$. 
Moreover, if $A\not \in \frac{2\pi}{|G|}\ZZ$, then all $1$-periodic orbits of $X_H$ are constant at $0$, i.e.\ parametrized by the the inertia orbifold $\wedge (\bullet/G)$, and all of them are non-degenerate.
\end{example}

\begin{example}\label{ex:Morse}
Let $G\subset U(n)$ such that $\CC^n/G$ is an isolated singularity. 
Let $H$ be a function on $G\ltimes \CC^n$, i.e.\ $H$ can be viewed as a $G$-invariant function on $\CC^n$.
We claim $0$ must be a critical point. Assume otherwise that $\nabla H(0)\ne 0$.
As the level set $H^{-1}(H(0))$ near $0$ is $G$-invariant and the singularity is isolated, the induced action on $T_0\CC^n/T_0H^{-1}(H(0))=\langle \nabla H(0) \rangle$ must be non-trivial. 
But this contradicts that $\nabla H$ is $G-$equivariant at $0$.

Now if $0$ is a Morse critical point of $H=\epsilon(r_1^2-r_2^2)$ on $\CC^n=\CC^{n_1}\oplus \CC^{n_2}$ as a decomposition of the $G$ representation $G\ltimes \CC^{n_1+n_2}$, the Conley-Zehnder index of the constant loop $(0,(g))$ is independent of the Morse index of $0$ when $\epsilon$ is small and $g\ne \Id$. 
The reason for this is that the correction from $g$ in the trivialization of the tangent bundle is bigger than the term from the Hessian of $H$; see \Cref{ex:SU} for details. 
This corresponds to the fact $(0,(g))$ generates a class in the Chen-Ruan cohomology, seen as part of the zero action symplectic cohomology, no matter which $C^2$-small $H$ we choose.
Indeed, in the continuation map between the $C^2-$small Hamiltonians $\epsilon(r_1^2-r_2^2)$ and $\epsilon(r_1^2+r_2^2)$, the constant cylinder over $(0,(g))$ for $g\ne \Id$ always exists and is cut out transversality. In particular, it induces an isomorphism on the cohomology of those non-trivial constant loops.
\end{example}

In the following, we assume the Liouville form $\lambda$ restricts to a non-degenerate contact form on $\partial W$. 
For $a\notin \cS(\lambda)$, the \emph{spectrum of periods of Reeb orbits}, we will consider two types of ``admissible'' Hamiltonians of slope $a$. 
A Hamiltonian $H_t:S^1\times \widehat{W}\to \RR$ is called \emph{admissible of type (I)} if the following holds.
\begin{enumerate}
	\item $H_t=0$ on $W$.
	\item $H_t=h(r)$ with $h'(r)=a$ for $r>1+\epsilon$ for some $\epsilon>0$.
	\item The non-constant Hamiltonian orbits of $X_{H_t}$ are non-degenerate.
\end{enumerate}
In this case, the periodic orbits of $X_{H_t}$ are either non-degenerate non-constant orbits outside $W$, or constant orbits parametrized by the inertia orbifold $\wedge W$.

A Hamiltonian is called \emph{admissible of type (II)} if the following holds.
\begin{enumerate}
    \item $H_t$ on $W$ is $C^2$-small Morse and independent of $S^1$.
    \item $\partial_rH_t>0$ on $\partial W$.
    \item $H_t=\sum_{i=1}^{n} \epsilon(x_i^2+y_i^2)+C$ near an isolated singularity $\CC^n/G$, for $G\subset U(n)$ and $0<\epsilon\ll 1$. In other words, $H_t$ is a Morse function on $W$, such that the singularities are local minimums (note that we use here Proposition \ref{prop:decomposition}). 
    \item  $H_t=h(r)$ with $h'(r)=a$ for $r>1+\epsilon$ for some $\epsilon>0$.
    \item The non-constant Hamiltonian orbits of $X_{H_t}$ are non-degenerate.
\end{enumerate}

\begin{remark}
The Morse-ness and $C^2-$smallness on $W$ implies the facts that $X_{H_t}$ has no non-constant orbits in $W$ and that every constant orbit in it (i.e.\ a critical point $x$ along with a conjugacy class $(g)$ in $\stab_x$) is non-degenerated with natural rational grading $2\age(g)+\ind(x)$ as in \Cref{ex:U}, where $\ind(x)$ is the Morse index of $x$.
\end{remark}

Using a type (II) Hamiltonian results in a more classical construction of the symplectic cohomology, as in e.g.\ \cite{biased} for smooth fillings. 
For type (I) Hamiltonians, we have a ``Morse-Bott" family of constant orbits parametrized by $\wedge W$. 
Therefore, we need the following admissible Morse function to break the symmetry. 
A smooth function on $W$ is called \emph{admissible} if the following holds.
\begin{enumerate}
	\item $\partial_rf>0$ on $\partial W$.
	\item $f=\sum_{i=1}^{2n} x_i^2+C$  near an isolated singularity $\CC^n/G$, for $G\subset U(n)$.
	\item $f$ is Morse on the smooth part of $W$.
\end{enumerate}
In other words, $f$ can be chosen as a type (II) Hamiltonian restricted to $W$.
In the smooth filling setting, the construction using a type (I) Hamiltonian along with an admissible Morse function can be found for instance in \cite{ADC}, which provides a clean setup for neck-stretching argument.
In the orbifold setting, the analytical setups for type (I) and (II) Hamiltonians are slightly different, as we shall explain in \S \ref{SS:analytical}, but they give rise to isomorphic symplectic cohomology.

\subsection{Analytical setup for moduli spaces.}\label{SS:analytical}
An $S^1-$dependent almost complex structure $J$ is called \emph{admissible} if the following conditions hold.
\begin{enumerate}
    \item $J$ is $S^1$-independent on $W$.
    \item $J$ is compatible with the symplectic form $\rd \widehat{\lambda}$. 
    \item $J$ is \emph{cylindrically convex} near some $r=r_0>1+\epsilon$, i.e.\ $\widehat{\lambda} \circ J = \rd r$ near $r=r_0$ where the Hamiltonian is linear with slope $a$.
\end{enumerate}
Heuristically speaking, the symplectic cohomology is the Morse cohomology of the following symplectic action on the space of free loops of $\widehat{W}$,
\begin{equation}\label{eqn:action}
    \cA_{H_t}(x)=-\int x^*\widehat{\lambda}+\int_{S^1} H_t\circ x (t)\rd t,
\end{equation}
w.r.t.\ the $\emph{$L^2$}$ metric on free loops induced by the metric $\omega(\cdot, J\cdot)$. In the following, we discuss the construction for type (I) and (II) Hamiltonians separately, as they are slightly different. 

\subsubsection{Type (II) Hamiltonians.}
For a type (II) Hamiltonian $H_t$, we consider non-constant solutions to the following equation satisfying the following finite energy condition:
\begin{equation}\label{eqn:Floer}
 u:\RR_s\times S^1_t\to \widehat{W}, \quad \partial_su+J_t(\partial_tu-X_{H_t})=0,\quad 0<E(u):=\frac{1}{2}\int_{\RR\times S^1} |\partial_su|^2+|\partial_tu-X_{H_t}|^2 \rd s \rd t<\infty.
\end{equation}
Then we have the following properties for $u$.
\begin{enumerate}
    \item\label{up1} When $E(u)$ is finite and periodic orbits of $H_t$ are isolated, one can see that $\displaystyle\lim_{s\to \pm \infty} u$ are periodic orbits of $H_t$ via the same argument as that in \cite[Proposition 1.21]{floer}. 
    The only difference compared to the manifold case is that the limit orbit is from the orbifold of free loops, and it could be a singular element.
    When periodic orbits of $H_t$ are non-degenerate, as assumed by the definition of type (II) Hamiltonians, we have
    $$|\partial_s u(s,t)|<ce^{-\delta |s|},$$
    for some $\delta,c>0$. 
    Moreover, $\delta$ can be chosen uniformly for any finite energy $u$ if there are only finitely many periodic orbits.
    \item Let $\displaystyle x:=\lim_{s\to \infty} u $ and $ y:= \lim_{s\to -\infty} u$.
    Then, $E(u)=\cA_{H_t}(y)-\cA_{H_t}(x)$.
    \item By elliptic regularity, $u$ is necessarily $C^\infty$.
    \item Since $u$ is defined on the manifold $\RR \times S^1$ and $u$ is non-constant, by \Cref{thm:morphism_orbifold} and \Cref{cor:smooth}, there is a smooth neighborhood of $u$ in $W^{k,p}(\RR\times S^1, \widehat{W})$ which is modeled on a small ball of $W^{k,p}(u^*T\widehat{W})$. 
    In particular, the linearized operator $D_u:W^{k,p}(u^*T\widehat{W})\to W^{k-1,p}(u^*T\widehat{W})$ has the same form as in the manifold case, which is Fredholm by \cite[Theorem 2.2]{floer}.
    
    Alternatively, we can consider the space of Sobolev maps with exponential decay $W^{k,p,\delta'}$, where $\delta'$ is smaller than the weight $\delta$ that appears in \eqref{up1} above. 
    Then, the linearized operator $D_u:W^{k,p,\delta'}(u^*T\widehat{W})\to W^{k-1,p,\delta'}(u^*T\widehat{W})$ is again Fredholm and has the same index as before. 
    Such modification would cause no difference in the moduli spaces by what said in \eqref{up1} above, but will be important in a polyfold construction, which we do not deal with here.
    \item  When we trivialize $u^*T\widehat{W}$ as a symplectic vector bundle, we get two Conley-Zehnder indices $\mu_{\CZ}(x),\mu_{\CZ}(y)$ using the induced trivialization on $x^*T\widehat{W},y^*T\widehat{W}$. Then the index of $D_{u}$ is given by $\mu_{\CZ}(x)-\mu_{\CZ}(y)$. 
    \item $u$ is somewhere injective by \cite{transverse,AudDamBook}. 
\end{enumerate}
For $x,y$ in the set $\cP(H_t)$ of $1-$periodic orbits of $H_t$, we define the following moduli space:
\begin{equation}\label{eqn:moduli_I}
    M_{x,y}\coloneqq \widetilde{M}_{x,y}/\RR, 
    \quad
    \widetilde{M}_{x,y}:=\left\{u:\RR\times S^1 \to \widehat{W}\left| \partial_su+J_t(\partial_tu-X_{H_t})=0, \displaystyle \lim_{s\to \infty} u =x, \lim_{s\to -\infty} u = y        \right.\right\}.
\end{equation}
Here, $\widetilde{M}_{x,y}$ is the zero set of a Fredholm section on a Banach orbifold.
Moreover, since we will only consider curves with positive energy, the $u\in M_{x,y}$ that are of interest for us are not contained in singularities of $\widehat{W}$.
In other words, according to \Cref{cor:smooth}, $\widetilde{M}_{x,y}$ is the zero set of a Fredholm section on a Banach \emph{manifold}, just like the Hamiltonian-Floer theory for symplectic manifolds.

\subsubsection{Type (I) Hamiltonians.}
We also consider same solutions to \eqref{eqn:Floer} for a type (I) Hamiltonian $H_t$. Note that there are three types of periodic orbits for $H_t$, namely non-constant orbits of $H_t$ on $\widehat{W}\setminus W$, a family of constant orbits parametrized by $W$, and isolated constant orbits $\wedge W \setminus W$, which will be referred to as nontrivial \emph{twisted sectors} (following the terminology in \cite{CheRua04}). 

Since the symplectic action of constant orbits is zero and that of non-constant orbits is negative, given a finite energy non-constant Floer cylinder $u$, then $\displaystyle \lim_{s\to+ \infty} u$ must be a non-constant orbit. 
In the following we separate the discussion into  the three possible cases for the negative asymptotic orbit.

\textbf{Case 1:} $\displaystyle\lim_{s\to -\infty} u$ is a non-constant orbit. This is identical to the type (II) Hamiltonian case. 

\textbf{Case 2:} $\displaystyle\lim_{s\to -\infty} u$ is a constant orbit $y$ from $\wedge W \backslash W$. 
This is also similar to the type (II) Hamiltonian case. 
However note that since our Hamiltonian $H$ is $0$ on $W$, $y$ is degenerate. Nevertheless, there exists $C\in \RR$ such that $u(s,t)\in W$ for $s<C$. 
Now, assume that $y=(p,(g))$, where $p$ is a singular point of $W$ and $(g)$ is a nontrivial conjugacy class of the isotropy group at $p$. 
Since $\displaystyle\lim_{s\to -\infty} u=y$, we may assume for $s<C$ that $u(s,t)$ is contained in a neighborhood of $p$ modeled by a local uniformizer $G\ltimes \CC^{n}$. Then by Proposition \ref{prop:global}, we know that $u(s,t)$ is represented by a map from $\widetilde{u}:= (-\infty,C)\times [0,1]\to \CC^{n}$ with  $\widetilde{u}(s,0)=g\cdot \widetilde{u}(s,1)$ and $\displaystyle\lim_{s\to -\infty} \widetilde{u}=0$. 
If we holomorphically identify  $(-\infty,C)\times [0,1]$ with the sector $\{0<r<1, 0\le \theta\le \frac{2\pi}{o(g)} \}$, where $o(g)$ is the order of $g$, we get a canonical functor $\widehat{u}:\ZZ/o(g)\ZZ\ltimes \DD^*\to G\ltimes \CC^{n}$, which is also holomorphic. 
Then the classical removal of singularity \cite[Theorem 4.1.2]{MS12} on $\DD^*$ implies that the extension by $\widehat{u}(0)=0$ is holomorphic and is apparently a functor $\widehat{u}\colon\ZZ/o(g)\ZZ\ltimes \DD\to G\ltimes \CC^{n}$. 
This has two consequences: 
\begin{enumerate}
    \item we can view $u$ as a map from $\CC$ to $\widehat{W}$ but $0\in \CC$ is an orbifold point with order $o(g)$;
    \item we have $|\partial_su(s,t)|<Ce^{-\frac{2\pi}{o(g)}|s|}$ for $s\ll 0$.\footnote{This follows from that $|\partial_{s'}\widehat{u}(s',t')|<Ce^{-2\pi|s'|}$ and that $t'=o(g)t \mod 1,s'=o(g)s$.} 
\end{enumerate}
In view of these, we define 
$$M_{x,y}:=\left\{ u:\RR\times S^1 \to \widehat{W}\left| \partial_su+J_t(\partial_tu-X_{H_t})=0, \displaystyle \lim_{s\to \infty} u =x, \lim_{s\to -\infty} u = y\right. \right\}/\RR$$
as the (modulo by the $\RR-$action of the) zero set of the Floer equation on the spaces of Sobolev maps from $\RR\times S^1\to \widehat{W}$ with the above asymptotics and an exponential decay near the negative end with weight $\delta<\frac{2\pi}{o(g)}$.
The exponential decay is crucial as $y$ is a degenerate isolated orbit. 
In particular, the linearized operator is again Fredholm.

\textbf{Case 3:} $\displaystyle\lim_{s\to -\infty} u$ is neither a non-constant orbit or a constant orbit from $\wedge W \backslash W$.  
The remaining constant orbits parametrized by $W$ are only Morse-Bott non-degenerate on the interior of $W$ 
and fail to be Morse-Bott along $\partial W$. 
In view of this,   $\displaystyle\lim_{s\to-\infty} u$ may not exist a priori, but the limit set of $u$ for $s\to -\infty$ is contained in $W$. 

We would then like to apply the removal of singularity to view it as a curve defined on $\CC$. 
Now, the problem is that $u(s,\cdot)$ may in general not be contained in $W$ where $H_t=0$ for $s\ll 0$. 
However, since the actual moduli space we count will be a fiber product with a stable orbifold of $\nabla f$, for an admissible Morse function $f$ which is strictly contained in the interior of $W$, said problem will in fact not occur in our setting.

More precisely, we first consider the following moduli space. 
Let $W_{\delta}$ be the complement of a small tubular neighborhood of $\partial W$ where $\nabla f\ne 0$. 
For $x$ in the set $\cP^*(H_t)$ of non-constant $1-$periodic orbits of $H_t$, we define
\begin{equation}\label{eqn:B}
    B_x:=\left\{ u:\RR\times S^1\to \widehat{W}\left|\begin{array}{l}
         \partial_su+J_t(\partial_tu-X_{H_t})=0,\displaystyle\lim_{s\to \infty} u=x, \text{ and}\\
         \text{the limit set of $u$ for $s\to -\infty$ is contained in $W_\delta$.}
    \end{array}      \right. \right\}/\RR
\end{equation}
Then by the removal of singularity\footnote{
To be more precise, following the proof of \cite[Theorem 4.1.2]{MS12}, we have that $u$ can be $C^0$ extended on $\CC$ as a map from $\CC$ to $W$. 
Then depending on whether $u(0)$ is a singularity, we can apply the usual removal of singularity or pass to a local uniformizer as before. 
}, $B_x$ has the following equivalent description
\begin{equation}\label{eqn:B'}
    B_x:=\left\{ u:\CC \to \widehat{W}\left|
         \partial_su+J_t(\partial_tu-X_{H_t})=0,\displaystyle\lim_{s\to \infty} u =x, u(0)\in W_{\delta}  \right. \right\}/\RR
\end{equation}
where $z=e^{2\pi(s+it)}$ are the polar coordinates on $\CC$ and the equation degenerates to the Cauchy-Riemann equation for $s\ll 0$ since $u(0)\in W_{\delta}$. 
Other analytical properties (Fredholm property, etc.) for elements in those moduli spaces are identical to the type (II) case, with the only difference that the expected dimension of $B_x$ is $\mu_{\CZ}(x)+n-1$\footnote{Since $x$ is contractible in $W$, $\mu_{\CZ}(x)$ is canonically defined.}.

Lastly, note that $B_x$ is equipped with an orbifold evaluation map $B_x\to W, u\mapsto u(0)$. Now, let $y$ be a critical point of $f$ and $W^s(y)$ denote the stable orbifold of $y$. Then, we define $M_{x,y}$ to be the orbifold fiber product $B_x\times_W W^s(y)$.
\begin{remark}\label{rmk:unstable}
More formally speaking, the stable orbifold $W^s(y)$ of a critical point $y$ is the space of gradient flows $\gamma:[0,\infty)\to W$ of $f$ such that $\displaystyle\lim_{s\to\infty} \gamma=y$. Then $W^s(y)$ is equipped with an evaluation map to $W$ by $\gamma\mapsto \gamma(0)$. In the orbifold case, the gradient flow $\gamma$ can be an orbifold point of the space of maps from $[0,\infty)$ to $W$. For example, let $y$ be a singularity, which is a local minimum of an admissible Morse function $f$, then $W^s(y)$ is the orbifold $\bullet/\stab_y$ represented by the constant flow. 
\end{remark}

\begin{remark}
Our point of view is to always work with maps from a manifold to the target orbifold.
The advantage is that the Fredholm problem is identical to the manifold case. 
Moreover, since the singularities are isolated in our setting, any non-constant Floer cylinder is a smooth point in the orbifold of maps from the cylinder, i.e.\ the Fredholm problem is not equivariant. 
In particular, all of the moduli spaces in the definition of symplectic cohomology will be manifolds instead of orbifold. 
This turns out be crucial for the geometric applications in \S \ref{S5}. 
However, having manifolds as moduli spaces is not in general possible; see \S \ref{SS:fail}.
Moreover, to avoid discussing Fredholm theory on orbifold, one could also define orbifold Gromov-Witten invariants\footnote{Indeed, the orbifold index formula \cite[Proposition 4.2.2]{CheRua04}, originally from the de-singularization of the pullback orbifold bundle \cite[\S 4.2]{CheRua04},  can be reformulated using punctured surfaces. On the other hand, the general analytical theory of Cauchy-Riemann operators on orbibundles seems to be absent. } from finite energy holomorphic curves from a smooth punctured Riemann surface. 
The orbifold points on the domain are only necessary if we wish to compactify the curve as in Case 2, 3 above. 
\end{remark}

\begin{example}\label{ex:hand}
Let $E(a_1,\ldots,a_n)$ denote the ellipsoid $\frac{|z_1|^2}{a_1^2}+\ldots+\frac{|z_n|^2}{a_n^2}\le 1$ with $0<a_1<\ldots<a_n$. 
Since the $\ZZ/2\ZZ$ action by the antipodal map on $\CC^n$ preserves the standard Liouville form on $E(a_1,\ldots,a_n)$, we get an exact orbifold filling $E(a_1,\ldots,a_n)/(\ZZ/2\ZZ)$ of $(\mathbb{RP}^{2n-1},\xi_{\std})$. 
Let $\gamma_0$ denote the contractible Reeb orbit contained in the $z_1$-plane that is parametrized as starting from  $(a_1,0,\ldots,0)$.
Following the argument in \cite[Proposition 2.12]{Zho}, the following moduli space (we consider the SFT analogue without involving a Hamiltonian) is compact with algebraic count $2$ for $q$ a smooth point near the boundary:
$$\cN_{\gamma_0,q}:=\left\{u:\CC\to \widehat{E}(a_1,\ldots,a_n)/(\ZZ/2\ZZ)\left|\overline{\partial}_J u=0, u(0)=q, \lim_{s\to \infty} u = \gamma_0  \right.\right\}/\RR.$$
On the other hand, the algebraic count of the analogously defined $\cN_{\gamma_0,0}$ is $1$ (and in fact, the geometric count is $1$, given by the $z_1$-plane).
More formally, using Proposition \ref{prop:global}, an orbifold map $u\in \cN_{\gamma_0,0}$ can be viewed as a holomorphic map $\tilde{u}:\CC\to \widehat{E}(a_1,\ldots,a_n)$ such that $\tilde{u}(0)=0$ and $\lim_{s\to \infty}\tilde{u}=\tilde{\gamma}_0$, where $\tilde{\gamma}_0$ is one of the two lifts of $\gamma_0$ as parametrized Reeb orbits, i.e.\ the shortest Reeb orbit on  $E(a_1,\ldots,a_n)$  parametrized as starting from $(a_1,0,\ldots,0)$ or $(-a_1,0,\ldots,0)$. 
Then the algebraic count of all the lifts is $2$, one from each $\tilde{\gamma}_0$. 
But the two lifts give the same orbifold map in the quotient, so we have $\#\cN_{\gamma_0,0}=1$. 
This is a feature instead of a problem of the holomorphic curves in orbifolds, as $\cN_{\gamma_0,0}$ is not cobordant to $\cN_{\gamma_0,q}$, but rather $\cN_{\gamma_0,0}\times_{\{0\}/(\ZZ/2\ZZ)}\{0\}$ is cobordant to $\cN_{\gamma_0,q}$. 
We will further explore this phenomenon in \Cref{prop:restriction} and \S \ref{S4}. Another worth-noting point is that even though $\cN_{\gamma_0,0}$ is defined differently from $M_{\gamma_0,0}$ for a type (I) Hamiltonian, they are the same moduli space (up to the difference between SFT and Hamiltonian setup). 
Indeed, by definition we have $M_{\gamma_0,0}=\cN_{\gamma_0,0}\times_{\{0\}/(\ZZ/2\ZZ)} W^s(0)$; hence, since $W^s(0)$ is an orbifold $\bullet/(\ZZ/2\ZZ)$ whose evaluation map to $\{0\}/(\ZZ/2\ZZ)$ is the isomorphism, we get $M_{\gamma_0,0}=\cN_{\gamma_0,0}$.
\end{example}

\subsection{Conley-Zehnder indices and grading.}\label{SS:grading}

The discussion of the Conley-Zehnder indices and the grading is identical to the smooth case.
We will grade our symplectic cohomology by $n-\mu_{\CZ}(x)$.
We list their properties for completeness and discuss the rational grading as well as the relation with  $\age(g)$ in \eqref{eqn:age}. 
Given a non-degenerate Hamiltonian orbit $x$ and a trivialization $\Psi$ of the complex line bundle $x^*\det_{\CC}T\widehat{W}$, we have a Conley-Zehnder index $\mu^{\Psi}_{\CZ}(x)\in \ZZ$. 
In general, the Conley-Zehnder index is well-defined (independent of the trivialization) in $\ZZ/2\ZZ$.
In particular, the symplectic cohomology can always be graded over $\ZZ/2\ZZ$.

\subsubsection{Integral grading.}
By the discussion in \S\ref{sec:complex_line_bundles}, $\det_{\CC}TW$ can be trivialized if and only if the integral first Chern class $c^{\ZZ}_1(\det_{\CC}TW)=c^{\ZZ}_1(W)\in H^2(BW;\ZZ)$ vanishes. 
For example, when $G\subset SU(n)$, the orbifold $\CC^n/G$ has vanishing integral first Chern class. 
By fixing a trivialization $\Psi$ of $\det_{\CC}TW$, we can assign to every non-degenerate Hamiltonian orbit $x$ a $\ZZ-$valued Conley-Zehnder index. 
Now, the difference between two trivializations is measured by a homotopy class $\alpha\in [W;S^1]$, which gives a difference of Conley-Zehnder indices $\mu^{\Psi}_{\CZ}(x)$ and $\mu^{\Psi'}_{\CZ}(x)$ equal to $2\deg(\alpha\circ x)$, where $\alpha\circ x$ is seen as a homotopy class of maps from $S^1$ to $S^1$. 
In particular, the Conley-Zehnder indices for orbits with torsion homotopy class is well-defined independently of the trivialization. 
For the other orbits, the $\ZZ$-valued Conley-Zehnder index depends on the choice of trivialization. 
However, we will often suppress the superscript $\Psi$ for simplicity.

\begin{example}\label{ex:SU}
Let $G\subset SU(n)$, such that $W=\CC^n/G$ has an \emph{isolated} singularity. Note that $\age(g)$ is an integer as $\det g =1$ for $g\in G$.
We consider the non-degenerate Hamiltonian $H=\epsilon r^2$ for $0<\epsilon\ll 1$ and the constant orbit $x=(0,(g))$, where $g\ne \Id\in G$ and $(g)$ is the conjugacy class of $g$.
Up to conjugation, we may assume $g=\diag(e^{a_1\i},\ldots, e^{a_n\i})$ for $0< a_i<2\pi$.
Then, we can trivialize $x^*TW$ by $S^1\times \CC^n \to x^*TW, (t,v)\mapsto (t,\diag(e^{ta_1\i},\ldots, e^{ta_n\i})v)$. 
With such trivialization $\Psi'$, the linearized Hamiltonian diffeomorphism is given by $\diag(e^{t(a_1+\epsilon)\i},\ldots, e^{t(a_n+\epsilon)\i})$, and the resulting Conley-Zehnder index is $n$ as $0<a_i+\epsilon<2\pi$,\cite[\S 2.4]{MR1702944}.
However, the considered trivialization $\Psi'$ on $x^*\det_{\CC} TW$ is not induced from a global trivialization $\Psi$ of $\det_{\CC}TW$. 
In fact, the difference between $\Psi'$ and $\Psi$  is given by $S^1\to U(1), t\mapsto e^{t\sum a_i \i}$. 
Hence, we have $\mu^{\Psi}_{\CZ}=n-2\age(g)$. 
In particular, when we grade symplectic cohomology by $n-\mu^{\Psi}_{\CZ}(x)$, the grading of $x=(0,(g))$ is $2\age(x)$, which is precisely the degree shift in $H^*_{\CR}(\CC^n/G)$. 

Now assume the representation $G\subset SU(n)\ltimes \CC^{n}$ has a decomposition into $\CC^{n_1}\oplus \CC^{n_2}$, and consider $H=\epsilon(r_1^2-r^2_2)$ for radial distances $r_1,r_2$ from the origin on $\CC^{n_1},\CC^{n_2}$ respectively.
Repeating the argument above, we have the linearized Hamiltonian diffeomorphism is given by $\diag(e^{t(a_1+\epsilon)\i},\ldots, e^{t(a_{n_1}+\epsilon)\i},e^{t(b_1-\epsilon)\i},\ldots, e^{t(b_{n_2}-\epsilon)\i} )$ with a similar trivialization as above.
Therefore the grading of $(0,(g))$ is again $2\age(g)$ for $g\ne \Id$, if we use a global trivialization. 
On the other hand, the grading of $(0,(\Id))$ is $2n_2$, i.e.\ the Morse index.
Such phenomenon is expected, because, as long as $g\ne \Id$, the critical point $(0,(g))$ contributes a class in the Chen-Ruan cohomology, seen as the zero-action part of symplectic cohomology, in degree $2\age(g)$ no matter what the Hamiltonian we use is.
\end{example}

\subsubsection{Rational grading.}
More generally, if we only have $c_1^{\QQ}(TW)=0$, i.e.\ there exists $N\in \NN$, such that $c_1^{\ZZ}((\det_{\CC}TW)^{\otimes N})=0$, then one can define a $\QQ$-valued Conley-Zehnder index following \cite[\S 4]{McLean16}.
In the following, we recall the definition. 

Assume $c_1^{\ZZ}((\det_{\CC}TW)^{\otimes N})=0$; then, $(\det_{\CC} TW)^{\otimes N}=\det_{\CC}(\oplus^N TW)$ is trivial. 
We fix a trivialization $\Psi$ of $(\det_{\CC} TW)^{\otimes N}$, and, for every Hamiltonian orbit $x$, we define $\mu^{\Psi}_{\CZ}(x):=\frac{\mu^{\Psi}_{\CZ}(\oplus^N \rd \phi_t)}{N}$, where $\rd \phi_t$ is the linearized Hamiltonian flow on $x^*TW$. 
Now given any trivialization $\Phi$ of $x^*\det_{\CC}TW$, let $A\in H^1(S^1)$ be the difference between $\Phi^{\otimes N}$ and $\Psi$. 
Then, we have
$$N\mu^{\Phi}_{\CZ}(x)=N\mu^{\Psi}_{\CZ}(x)+2A.$$
As a consequence of the definition, using $n-\mu_{\CZ}^{\Psi}(x)$ as grading, the differential, which will be defined in \S\ref{sec:sympl_cohom}, also has degree $1$. 
In particular, we will have a rational grading on the symplectic cohomology. 

The relation between the rational grading and the $\ZZ/2$ grading is not easy to state. 
One special case is that when $N$ is odd, then the parity of $\mu^{\Phi}_{\CZ}(x)$ is the same as the parity of the numerator of $\mu_{\CZ}^{\Psi}(x)$.

Lastly, we point out that, in general, the rational Conley-Zehnder index $\mu^{\Psi}_{\CZ}(x)$ depends on $N$ and $\Psi$. 
This being said, we have the following for orbits with torsion homotopy class.
\begin{prop}
    If $c_1^{\QQ}(W)=0$, the $\QQ$-valued Conley-Zehnder index is well-defined for orbits with torsion homotopy class.
\end{prop}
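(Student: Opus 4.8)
The plan is to show that both sources of ambiguity in $\mu^{\Psi}_{\CZ}(x)$ — the choice of the integer $N$ with $c_1^{\ZZ}((\det_{\CC}TW)^{\otimes N})=0$, and the choice of the trivialization $\Psi$ of $(\det_{\CC}TW)^{\otimes N}$ — become harmless once the free homotopy class $[x]\in[S^1;W]=\Conj(\pi_1^{\orb}(W))$ consists of torsion elements. I would first dispose of the dependence on $N$ by a stabilization argument, reducing everything to a comparison of two trivializations of the \emph{same} power, and then dispose of the dependence on the trivialization using the torsion hypothesis together with the change-of-trivialization formula already recalled above.

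For the stabilization step: if $\Psi$ trivializes $(\det_{\CC}TW)^{\otimes N}=\det_{\CC}(\oplus^{N}TW)$, then $\Psi^{\otimes m}$ trivializes $(\det_{\CC}TW)^{\otimes mN}=\det_{\CC}(\oplus^{mN}TW)$, and — since homotopy classes of symplectic trivializations of a bundle over $S^{1}$ are detected by the induced trivialization of the determinant line — the symplectic trivialization of $x^{*}(\oplus^{mN}TW)$ determined by $\Psi^{\otimes m}$ is homotopic to the $m$-fold direct sum of the one determined by $\Psi$. By additivity of the Conley–Zehnder index under direct sums, $\mu^{\Psi^{\otimes m}}_{\CZ}(\oplus^{mN}\rd\phi_{t})=m\,\mu^{\Psi}_{\CZ}(\oplus^{N}\rd\phi_{t})$, hence $\mu^{\Psi^{\otimes m}}_{\CZ}(x)=\mu^{\Psi}_{\CZ}(x)$. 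Applying this with $m=N_{2}$ to a first choice $(N_{1},\Psi_{1})$ and with $m=N_{1}$ to a second choice $(N_{2},\Psi_{2})$, both become trivializations of $(\det_{\CC}TW)^{\otimes M}$ with $M=N_{1}N_{2}$, and the value of $\mu_{\CZ}(x)$ is unchanged.

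It then remains to compare two trivializations $\Psi,\Psi'$ of $(\det_{\CC}TW)^{\otimes M}$. Their difference is a homotopy class $\alpha\in[BW;S^{1}]=H^{1}_{\orb}(W;\ZZ)$, and by the recalled formula the integer Conley–Zehnder indices of $\oplus^{M}\rd\phi_{t}$ with respect to the two induced trivializations of $x^{*}(\oplus^{M}TW)$ differ by $2\deg(\alpha\circ x)=2\langle\alpha,x_{*}[S^{1}]\rangle$, where $x_{*}[S^{1}]\in H_{1}^{\orb}(W;\ZZ)=\pi_{1}^{\orb}(W)^{\mathrm{ab}}$ is the image of the fundamental class of $S^{1}$. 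If $[x]$ is represented by a torsion element $g\in\pi_{1}^{\orb}(W)$, then $x_{*}[S^{1}]=[g]$ is torsion; and since $H_{0}(BW;\ZZ)$ is free, the universal coefficient theorem gives $H^{1}_{\orb}(W;\ZZ)\cong\mathrm{Hom}(H_{1}^{\orb}(W;\ZZ),\ZZ)$, so the pairing vanishes and $\mu^{\Psi'}_{\CZ}(x)=\mu^{\Psi}_{\CZ}(x)$. Combining the two steps shows $\mu^{\Psi}_{\CZ}(x)$ is independent of all choices.

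The argument is essentially bookkeeping; the only points I expect to require care are (a) verifying in the stabilization step that $\Psi^{\otimes m}$ induces, up to homotopy, the $m$-fold direct-sum trivialization, which is where one uses that over $S^{1}$ the determinant line already determines the homotopy class of a symplectic trivialization, and (b) correctly interpreting $x_{*}[S^{1}]$ and the pullback bundle $x^{*}TW$ when $x$ is a \emph{constant orbifold loop} at a singular point, in which case $[x]$ is the conjugacy class of an element of the (finite, hence torsion) isotropy group and $x^{*}TW$ is the corresponding $g$-twisted bundle over $S^{1}$, as in \Cref{ex:SU}. Neither point is a genuine difficulty, so I do not anticipate a serious obstacle.
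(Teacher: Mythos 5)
Your proof is correct and follows essentially the same two-step strategy as the paper's: a stabilization argument showing $\mu^{\Psi^{\otimes m}}_{\CZ}(x)=\mu^{\Psi}_{\CZ}(x)$ to reduce to a common power, plus the observation that changing the trivialization shifts the index by a multiple of $\deg(\alpha\circ x)$, which vanishes for torsion $[x]$. You merely reorder the two steps and supply details (additivity of CZ under direct sums, the universal-coefficient argument) that the paper's proof leaves implicit.
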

\begin{proof}
    For a fixed $N$ such that $c_1((\det_{\CC}TW)^{\otimes N})=0$, $\mu_{\CZ}^{\Psi}(x)$ does not depend on the trivialization $\Psi$ since $x$ has a torsion homotopy class. 
    Moreover, considering the trivialization $\Psi^{\otimes M}$ of $(\det_{\CC}TW)^{\otimes NM}$, then  $\mu_{\CZ}^{\Psi}(x)=\mu_{\CZ}^{\Psi^{\otimes M}}(x)$. 
    As a consequence, the $\QQ$-valued Conley-Zehnder index is independent of $N$.
\end{proof}

\begin{example}\label{ex:U}
Let $G\subset U(n)$, such that $\CC^n/G$ is an isolated singularity. 
Then, $c^{\QQ}_1(\CC^n/G)=0$ and $\pi_1(\CC^n/G)$ is torsion.
As a consequence, there is a well-defined $\QQ$-grading. 
Let now $H=\epsilon r^2$ for $\epsilon\ll 1$ as before.
Then, by the same argument in \Cref{ex:SU}, the grading of $(0,(g))$ is $2\age(g)$. 
On the other hand, the rational Conley-Zehnder index here is precisely the one used in \cite{McLean16}. 
Moreover, the Reeb dynamics and the associated Conley-Zehnder indices on $(S^{2n-1}/G,\xi_{std})$ have already been discussed in \cite{McKay}, and will be recalled in \S \ref{SS:Reeb}. 
Using this, by \cite[Theorem 1.1]{McLean16}, $\CC^n/G$ is terminal iff $\min\{\mu_{\CZ}(x)+n-3\}>0$ iff $\min_{g\in G}\{2n-2\age(g)-2\}>0$, and $\CC^n/G$ is canonical iff $\min_{g\in G}\{2n-2\age(g)-2\}\ge 0$.
\end{example}

\subsection{Transversality, compactness, gluing and orientations.} 
\subsubsection{Transversality.}
In our setting, it turns out that there is no difference in the treatment of transversality for uncompactified moduli spaces with respect to the manifold case, since our curves are somewhere injective:
\begin{prop}\label{prop:transverse}
    For generic choices of admissible almost complex structure $J$, $M_{x,y}$ is a manifold of the expected dimension, which is $|y|-|x|$ when $c_1^{\QQ}=0$.
\end{prop}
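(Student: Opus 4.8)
The plan is to reduce the claim to the standard transversality theorem for Floer cylinders in the smooth manifold case, using the geometric features of the orbifold setup that have already been isolated in the preceding discussion. The key point, which has been repeatedly emphasized in \S\ref{SS:analytical}, is that every Floer cylinder $u$ appearing in $M_{x,y}$ with positive energy is \emph{somewhere injective} (property (6) in the list of analytical properties) and, crucially, is \emph{not contained in the singular locus} of $\widehat W$: since the periodic orbits of interest have positive action difference, $u$ cannot be constant, and by \Cref{cor:smooth} a non-constant curve whose image meets the smooth part is a smooth point of the Banach orbifold of maps $W^{k,p}(\RR\times S^1,\widehat W)$, so that a neighborhood of $u$ is modeled on a \emph{Banach manifold} $W^{k,p}(u^*T\widehat W)$ (or its weighted variant $W^{k,p,\delta}$). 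In other words, the entire Fredholm problem defining $\widetilde M_{x,y}$ takes place on an honest Banach manifold equipped with an honest Fredholm section $\overline\partial_J$, exactly as in \cite{floer,McDuff17,AudDamBook}, with no isotropy and no virtual perturbation needed.

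First I would set up the universal moduli space: fix $k,p$ with $\dim(\RR\times S^1)<(k-2)p$ as in \Cref{thm:morphism_orbifold}, let $\cJ$ be the Banach manifold of admissible $C^\ell$ (or $C^\epsilon$-Floer) almost complex structures $J$ that are $S^1$-independent and cylindrical near $r_0$ as in \S\ref{SS:analytical}, and consider the section $\overline\partial\colon \cB\times\cJ\to \cE$, where $\cB$ is the Banach manifold of $W^{k,p}$ (resp.\ $W^{k,p,\delta}$) maps $\RR\times S^1\to\widehat W$ asymptotic to $x,y$ that are \emph{not} contained in the singular set, and $\cE\to\cB\times\cJ$ is the bundle with fiber $W^{k-1,p}(\Lambda^{0,1}u^*T\widehat W)$. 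The linearization $D_u$ is Fredholm of index $\mu_{\CZ}(x)-\mu_{\CZ}(y)$ (resp.\ the shifted index in Case 3) by \cite[Theorem 2.2]{floer}. The core computation is that the vertical differential of $\overline\partial$, i.e.\ $D_u$ together with the derivative in the $J$-direction $Y\mapsto \tfrac12 Y(u)\,du\circ j$, is surjective at every zero; this is where somewhere-injectivity is used, via the standard argument (an element of $\coker D_u$ orthogonal to all variations of $J$ must vanish on an open set of injective points, hence identically, by the unique continuation property of $D_u^*$). Then the universal moduli space $\cM^{\mathrm{univ}}_{x,y}=\overline\partial^{-1}(0)$ is a Banach manifold, the projection $\cM^{\mathrm{univ}}_{x,y}\to\cJ$ is Fredholm of the expected index, and the Sard--Smale theorem gives a residual set of $J$ for which $\widetilde M_{x,y}$ is a manifold of the expected dimension; intersecting over the countably many pairs $(x,y)$ (finitely many orbits, hence genuinely countable) and quotienting by the free $\RR$-action on non-constant cylinders gives the statement for $M_{x,y}$. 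A final standard step upgrades $C^\ell$ genericity to $C^\infty$ genericity by a Taubes-type argument.

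For the Case 3 moduli spaces $B_x\times_W W^s(y)$, I would additionally note that, since the evaluation $B_x\to W,\ u\mapsto u(0)$ lands in the \emph{smooth} region $W_\delta$ by construction, and the stable orbifold $W^s(y)$ (which may be a point-orbifold $\bullet/\stab_y$ when $y$ is a singular local minimum) maps to $W$, the orbifold fiber product of \S\ref{sec:fiber_products} is still taken over a smooth target and, after a further generic perturbation of $f$ (or of $J$) to achieve transversality of the evaluation against $W^s(y)$, is again a manifold of the expected dimension $\mu_{\CZ}(x)+n-1-\ind(y)$; when $y$ is a singular minimum the fiber product $B_x\times_{\{p\}/\stab_y}\bullet/\stab_y$ is simply $B_x$, as observed in \Cref{ex:hand}. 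The main obstacle, and the only place any real work is needed, is verifying the surjectivity of the universal linearization precisely at curves that graze the singular locus without lying in it: one must check that the somewhere-injective point can be chosen in the smooth part of $\widehat W$ (which is automatic, as the singular locus is isolated and $u$ is non-constant, so $u^{-1}(\mathrm{sing})$ is discrete) and that the perturbations of $J$ supported away from a neighborhood of the singularities still suffice — this works because admissible $J$'s are unconstrained on the smooth part, and the relevant injective points lie there. Everything else is a verbatim transcription of the smooth theory.
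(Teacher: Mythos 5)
Your proposal is correct and follows essentially the same route as the paper's proof: reduce to the smooth Banach-manifold Fredholm picture via the observation that non-constant cylinders avoid the (isolated) singular locus and hence are smooth points of the Banach orbifold of maps (\Cref{cor:smooth}), invoke somewhere-injectivity for the universal moduli space argument and Sard--Smale, handle the Case (3) fiber product $B_x\times_W W^s(y)$ by transversality of the evaluation map $u\mapsto u(0)$ against stable manifolds, and upgrade from $C^\ell$ to $C^\infty$ genericity by the Taubes argument. The additional details you supply (discreteness of $u^{-1}(\mathrm{sing})$, the fact that injective points can be placed in the smooth locus, the index bookkeeping for the fiber product) are consistent with the paper's brief proof and with its cited references.
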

\begin{proof}
    For type (II) Hamiltonians, since our curves are somewhere injective, the statement follows from the usual universal moduli space argument, as there is no curve in the differential resting on the orbifold singularities.
    For type (I) Hamiltonians, when $y$ is critical point of $f$, the transversality for $M_{x,y}$ follows from  \cite[Proposition 3.4.2, Lemma 3.4.3]{MS12} implying that the evaluation map $B_x\to W, u\mapsto u(0)$ is transverse to the stable orbifolds of $\nabla f$. 
    The other cases for type (I) Hamiltonians are similar to the type (II) case.  
    Notice that, strictly speaking, to use the Sard-Smale theorem in the universal moduli space argument, one can only work with $C^k$ almost complex structures.
    However, using the argument of Taubes \cite[Theorem 5.1]{transverse}, one can boost the generic almost complex structure to be smooth. 
    A guide towards how to obtain a complete proof of transversality for generic smooth almost complex structures can be found in \cite{smooth}.
\end{proof}

\subsubsection{Compactness and gluing.}
Let's start by discussing compactness.
First, by the integrated maximum principle  \cite[Lemma 2.2]{cieliebak18}, the curves in all the moduli spaces we consider stay in a bounded subset of $\widehat{W}$. 
Solutions to \eqref{eqn:Floer} have then uniform $C^1$ bound, as, otherwise, the usual sphere bubble argument \cite[\S 6.6]{AudDamBook} and removal of singularity for finite energy curve as in Case 2 of the setup for type (I) Hamiltonians would give a non-constant holomorphic map from the sphere possibly with one orbifold point, contradicting exactness. 
In particular, the space of solutions to \eqref{eqn:Floer} is compact in the $C_{\rm{loc}}^{\infty}$ topology.

Next, when we consider the quotient space $M_{x,y}$, the usual breaking phenomena can happen. 
More explicitly, in the type (II) case, given a sequence $u_n\in M_{x,y}$, by the same argument of \cite[Theorem 9.1.7]{AudDamBook}, one can find
\begin{enumerate}
    \item a subsequence of $u_n$, still denoted by $u_n$ for simplicity;
    \item periodic orbits $x_0=x,x_1,\ldots,x_{l+1}=y$;
    \item $s_n^k\in \RR$ with $\displaystyle \lim_{n\to \infty }(s^{k}_n-s^{k+1}_n)=\infty$, $0\le k\le l-1$;
    \item $u^k\in M_{x_k,x_{k+1}}$;
\end{enumerate}
such that for every $0\le k \le l$, we have $u_n(\cdot-s_n^k,\cdot)$ converges to $u^k$ in $C^{\infty}_{\rm{loc}}$ topology. 
With \cite[Proposition 9.1.2]{AudDamBook}, this yields that $\bigsqcup_{l,x_i} M_{x,x_1}\times \ldots \times M_{x_l,y}$ is a compact Hausdorff space just like in \cite[Corollary 9.1.9]{AudDamBook}. 
However, this is not the right compactification to work with, as it cannot be endowed with a smooth structure by the gluing map. 
In short, the problem is that in general a broken curve is not just a tuple of curves;
more precisely, the issue appears when one of $x_i,1\le i \le l$ is an orbit with isotropy. 

In the situation considered in this paper, this situation arises only  when one of the $x_i$, with $1\le i \le l$, is a constant orbit of the form $(p,(\Id))$ for an orbifold singularity $p$ of $W$. 

To see this, we may assume that $H$ attains its minimum on any of the singularities. 
If there is a breaking at the orbit $(p,(g))$ for $g\ne \Id$, the negative asymptotic is necessarily a non-singular critical point $q$ of $H$, as only those constant orbits have larger symplectic action. Then the Floer cylinder between $(p,(g))$ and $q$ is a  
Morse trajectory for the $C^2$ small $H$ \cite[Theorem 7.3(1)]{MR1181727}, which cannot have $(p,(g))$ as the asymptotic orbit, thus giving a contradiction. We hence now discuss more in detail what the correct compactification should be.

As this is the only problematic case in our setting, let's assume that one of the intermediate orbits, say $x_1$ without loss of generality, is $(p,(\Id))$. 
By construction, for any $\epsilon>0$, there exist $A,N\gg 0$, such that the energy of
$u_n$ restricted to $[A+s^1_n, -A+s^0_n]\times S^1$ 
is smaller than $\epsilon$ for $n\ge N$. 
As a consequence, by \cite[Proposition 1.21]{floer}, we can assume that $u_n|_{[A+s^1_n, -A+s^0_n]\times S^1}$ is mapped to a neighborhood of $x_1$ in the loop space. 
In particular,  we can assume $u_n([A+s^1_n, -A+s^0_n]\times S^1)$ is contained in a local uniformizer of $p$ modeled on $\CC^n/\stab_p$.  
Note that $u_n|_{[A+s^1_n, -A+s^0_n]\times S^1}$  captures the ends $u^0|_{(-\infty,-A]\times S^1}$ and $u^1|_{[A,\infty)\times S^1}$. 
Since $x_1$ is contractible in $\CC^n/\stab_p$ according to \Cref{cor:pi0_free_loop_space}, the (non-constant) orbifold map $u_n:[A+s^1_n, -A+s^0_n]\times S^1\to \CC^n/\stab_p$ admits $\stab_p$ different ways to be written as maps $[A+s^1_n, -A+s^0_n]\times S^1\to \CC^n$. 
Moreover, possibly after passing to a subsequence, we get a breaking of those lifts in $\CC^n$. 

Guided by this discussion, we then define \textbf{broken Floer trajectory} a tuple of connecting trajectories $(u^0,\ldots,u^k)$ as in (1)$-$(4) above, with the additional specification, if one breaking happens at an orbit in the form of $(p,(\Id))$, of an equivalence class of breaking of the lifted curves in the local uniformizer $\CC^n$ of $p$. 
Here, two broken curves in $\CC^n$ are considered equivalent iff one of them is the image of the other under the $\stab_p$ action on $\CC^n$. 
For example, if $\displaystyle\lim_{s\to -\infty}u^0=(p,(\Id))=\lim_{s\to \infty}u^1$, then there are $|\stab_p|$ broken Floer trajectories corresponding to the tuple $(u^0,u^1)$. 
More formally, the space of broken Floer trajectories with components from $M_{x_0,x_1},\ldots,M_{x_k,x_{k+1}}$ is given by the fiber product of orbifolds 
\[
M_{x_0,x_1}\times_{x_1}\ldots \times_{x_k}M_{x_k,x_{k+1}}.
\]
Then similar to the manifold case, we have the following.
\begin{prop}\label{prop:compact}
    For type (II) Hamiltonians, we have
    \begin{equation}\label{eqn:compact}
        \cM_{x,y}:=
        \bigsqcup_{
        {\footnotesize
        \begin{array}{c}
             k\in \NN \\
             z_1,\ldots,z_k
        \end{array}
        } 
        }
        M_{x,z_1}\times_{z_1} \ldots \times_{z_k} M_{z_k,y} 
    \end{equation}
    is compact and Hausdorff in the Gromov-Floer topology.
\end{prop}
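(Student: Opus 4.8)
The plan is to upgrade the standard Gromov--Floer compactness argument for Floer cylinders in symplectic manifolds, as presented in \cite[Chapter 9]{AudDamBook}, to the present orbifold setting, the only genuinely new point being the behaviour at breaking orbits carrying nontrivial isotropy. I would first record the two a priori bounds. Energy: for $u\in M_{x,y}$ one has $E(u)=\cA_{H_t}(y)-\cA_{H_t}(x)$, and since every piece of a broken configuration has positive energy and energy is additive under breaking, the total energy of any element of $\cM_{x,y}$ is bounded by the same constant $\cA_{H_t}(y)-\cA_{H_t}(x)$. Confinement: by the integrated maximum principle \cite[Lemma 2.2]{cieliebak18}, all curves in question stay in a fixed compact subset of $\widehat{W}$. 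Together with the exclusion of bubbling --- a nonconstant holomorphic sphere, or sphere with a single orbifold point obtained via the orbifold removal-of-singularity of \S\ref{SS:analytical}, would carry positive $\rd\widehat{\lambda}$-area, contradicting exactness --- these yield uniform $C^1$, hence $C^\infty_{\mathrm{loc}}$, bounds on solutions of \eqref{eqn:Floer}, together with the exponential convergence of finite-energy cylinders to their asymptotic orbits \cite[Proposition 1.21]{floer}.

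Next, given a sequence $u_n\in M_{x,y}$, I would run the breaking extraction of \cite[Theorem 9.1.7]{AudDamBook} essentially verbatim: after passing to a subsequence one obtains periodic orbits $x=x_0,\ldots,x_{l+1}=y$, shifts $s_n^k$ with $s_n^k-s_n^{k+1}\to\infty$, and limits $u^k\in M_{x_k,x_{k+1}}$ such that $u_n(\cdot-s_n^k,\cdot)\to u^k$ in $C^\infty_{\mathrm{loc}}$. The orbifold modification enters only through the intermediate orbits $x_1,\ldots,x_l$: by the action filtration and the fact that, for a type (II) Hamiltonian, the orbifold singularities are local minima of $H$ (\Cref{prop:decomposition}), none of the $x_i$ with $1\le i\le l$ can be a nontrivial twisted constant orbit $(p,(g))$ with $g\ne\Id$ --- a breaking there would force an adjacent piece to be a Morse trajectory of the $C^2$-small $H$ with asymptote $(p,(g))$, which does not exist. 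The only intermediate orbits with nontrivial isotropy are therefore the constant orbits $(p,(\Id))$ at orbifold points $p$.

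At such a breaking I would argue as in the paragraph preceding the statement: on the long neck $[A+s_n^i,-A+s_n^{i-1}]\times S^1$ the energy is $<\epsilon$ for $n\gg 0$, so $u_n$ there maps into a uniformizer $\CC^n/\stab_p$; since the constant loop at $p$ is contractible in $\CC^n/\stab_p$ by \Cref{cor:pi0_free_loop_space}, this restriction admits $|\stab_p|$ lifts to $\CC^n$, and passing to a further subsequence produces a breaking of the lifted curves, well defined up to the $\stab_p$-action, i.e.\ an element of the fiber product over $\bullet/\stab_p$. Assembling the tuple $(u^0,\ldots,u^l)$ with this extra lifting datum at each isotropic breaking orbit yields precisely a point of $M_{x,z_1}\times_{z_1}\cdots\times_{z_l}M_{z_l,y}$, so every sequence in $\cM_{x,y}$ subconverges inside $\cM_{x,y}$. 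Hausdorffness then follows as in \cite[Proposition 9.1.2, Corollary 9.1.9]{AudDamBook}: the asymptotic orbits and the breaking pattern are determined by the sequence, unique continuation for \eqref{eqn:Floer} pins down the limit pieces up to the $\RR$-translations already quotiented out, and the lifting datum at each $(p,(\Id))$ is read off from $u_n$ on the neck, unambiguously once one passes to the $\stab_p$-quotient defining the fiber product.

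I expect the main obstacle to be exactly this last structural point: making rigorous that the Gromov--Floer limit of a sequence breaking at $(p,(\Id))$ is the fiber-product configuration --- in particular that the neck cannot ``drift'' between two distinct lifts in $\CC^n$ and that the equivalence class of lifted breakings is genuinely continuous along the sequence --- together with the orbifold removal-of-singularity input needed to rule out orbifold-sphere bubbles. The remainder is a routine transcription of the manifold proof, relying on \Cref{prop:transverse} for the fact that the open strata $M_{x,y}$ are manifolds and on \Cref{cor:smooth} so that all non-constant cylinders are smooth points of the relevant Banach orbifold.
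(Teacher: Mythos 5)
Your proof follows essentially the same route as the paper: the same a priori energy bound and confinement via the integrated maximum principle, the same exclusion of (orbifold-)sphere bubbling by exactness, the same appeal to the breaking extraction from \cite[Theorem 9.1.7]{AudDamBook}, the same Morse-trajectory argument to rule out intermediate breakings at $(p,(g))$ with $g\ne\Id$, and the same local lifting to $\CC^n$ on the long neck to interpret a breaking at $(p,(\Id))$ as a point of the fiber product over $\bullet/\stab_p$. The only minor imprecision is that the total energy of a broken configuration in $\cM_{x,y}$ equals $\cA_{H_t}(y)-\cA_{H_t}(x)$ (it is not merely bounded by it), but this does not affect the argument.
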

The advantage of working with such broken Floer trajectories is that we can write out a pregluing map, as the local broken curve can be viewed as contained in a local chart $\CC^n$. 
Then we can deduce a gluing map when each component is cut out transversely like in the manifold case.
In particular, we have the following.
\begin{prop}\label{prop:glue}
    For the almost complex structures in \Cref{prop:transverse} and type (II) Hamiltonians, $\cM_{x,y}$ is a  manifold with boundary $\sqcup \cM_{x,z}\times_z \cM_{z,y}$ whenever the expected dimension is at most one.
\end{prop}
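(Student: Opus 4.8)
The plan is to reduce the statement to the standard gluing theorem of Hamiltonian Floer theory (\cite{floer}, \cite[\S 9]{AudDamBook}, \cite[\S 10]{McDuff17}), exploiting the structural fact already used for \Cref{prop:transverse}: every non-constant Floer cylinder in the moduli spaces under consideration is somewhere injective and is a \emph{smooth} point of the Banach orbifold of maps, so its linearized Cauchy--Riemann operator, pre-gluing, and the relevant Sobolev functional-analytic package are literally those of the smooth manifold setting. First I would observe that, since the expected dimension of $\cM_{x,y}$ is at most one, at most one breaking can occur: if $\dim M_{x,y}=0$ then any broken stratum $M_{x,z}\times_z M_{z,y}$ has expected dimension $-1$ and hence is empty for generic $J$, so $\cM_{x,y}=M_{x,y}$ is a compact $0$-manifold by \Cref{prop:transverse} and \Cref{prop:compact}; and if $\dim M_{x,y}=1$ the only nonempty broken strata are the \emph{rigid} ones, with $\dim M_{x,z}=\dim M_{z,y}=0$. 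In particular no corners appear, and it suffices to produce, near each point of each rigid broken stratum, a chart modeled on $[0,\infty)$ whose boundary point is that broken Floer trajectory.

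For a rigid broken trajectory in which the intermediate orbit $z$ has trivial isotropy --- in particular whenever $z$ is non-constant, or $z=(p,(\Id))$ with $p$ a smooth point --- this is verbatim the gluing construction of \cite[\S 9.2--9.3]{AudDamBook}: one forms the pre-glued cylinder depending on a one-dimensional neck-length parameter $R\in(R_0,\infty)$ (one-dimensional since the orbits are non-degenerate and the $\RR$-shift has already been quotiented out), uses the transversality of the two pieces to obtain a uniformly bounded right inverse for the linearized operator, applies the quadratic estimate and Newton iteration to correct the pre-glued map to an honest solution $v_R$, and checks that $R\mapsto v_R$ is a smooth embedding converging to the broken configuration as $R\to\infty$. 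The orbifold-specific case is $z=(p,(\Id))$ for an orbifold singularity $p$. Here I would use the description of broken Floer trajectories recalled just before the statement: near the breaking, the two ends to be glued lie in a local uniformizer $\CC^n/\stab_p$, and a broken Floer trajectory records, in addition to $(u^0,u^1)$, a choice of lifts of these two ends to the manifold $\CC^n$ up to the diagonal $\stab_p$-action. Fixing such lifts one pre-glues and runs Newton iteration \emph{inside $\CC^n$}, exactly as in the manifold case, and then projects the resulting solution to $\CC^n/\stab_p\subset\widehat W$; replacing the chosen lifts by a simultaneous $\stab_p$-translate replaces the glued curve by its translate, so the resulting orbifold curve, hence the gluing map, is well defined on the $\stab_p$-torsor of such choices. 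Since the fiber product $M_{x,z}\times_z M_{z,y}$ over the orbit $z=(p,(\Id))$ has $|\stab_p|$ elements lying over a given tuple $(u^0,u^1)$ --- exactly the count produced by the fiber-product formula of \eqref{eqn:pushpull} and \Cref{prop:composition} --- one gets $|\stab_p|$ distinct collars, ending at $|\stab_p|$ distinct boundary points, which is consistent because gluing two different relative lifts yields genuinely distinct orbifold curves.

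It then remains to check, as in \cite[\S 9.3]{AudDamBook}, that each gluing map is injective with open image, and that $M_{x,y}$ together with all the gluing collars covers $\cM_{x,y}$: the latter follows from the compactness statement of \Cref{prop:compact} together with the standard fact that a sequence in $M_{x,y}$ Gromov--Floer converging to a rigid broken configuration eventually lies in the corresponding collar (for a breaking at $(p,(\Id))$ this last argument is carried out after lifting to the uniformizer $\CC^n$, where it is the usual manifold argument). Putting this together, $\cM_{x,y}$ is covered by charts modeled on $\RR$ and on $[0,\infty)$, with boundary exactly $\sqcup_z M_{x,z}\times_z M_{z,y}=\sqcup_z \cM_{x,z}\times_z\cM_{z,y}$, as claimed.

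I expect the main difficulty to be the equivariant gluing at the orbits $(p,(\Id))$: one must verify that pre-gluing and the Newton iteration can be performed $\stab_p$-equivariantly in the chart $\CC^n$ so that the construction descends and is independent of the chosen lifts, and that the glued object --- a priori only a solution of the Floer equation on a manifold domain --- is a bona fide orbifold map to $\widehat W$. All other ingredients are a routine transcription of the manifold gluing theorem, which is available precisely because the unbroken moduli spaces $M_{x,y}$ are smooth manifolds rather than orbifolds.
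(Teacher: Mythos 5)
Your proposal is substantively correct and matches the approach the paper takes, which is only sketched there in the two sentences before the statement ("we can write out a pregluing map, as the local broken curve can be viewed as contained in a local chart $\CC^n$; then we can deduce a gluing map when each component is cut out transversely like in the manifold case"). Your fleshing-out of this — rigid broken strata only, lifting both ends near a breaking at a singular $(p,(\Id))$ to the uniformizer $\CC^n$, running Newton iteration there, projecting, and observing that the $|\stab_p|$ different relative lifts give $|\stab_p|$ distinct collars consistent with the fiber-product count of \Cref{prop:composition} and \eqref{eqn:pushpull} — is exactly what the paper has in mind.

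One point you should make explicit rather than leave tacit: your case analysis treats only (a) intermediate orbits $z$ with trivial isotropy and (b) $z=(p,(\Id))$ with $p$ an orbifold singularity, but omits the twisted sectors $z=(p,(g))$ with $g\ne\Id$, which do a priori have nontrivial isotropy $C(g)$ in the loop space. The reason they cannot occur as intermediate breakings — which the paper records just before \Cref{prop:compact}, and which uses that type (II) Hamiltonians have the singularities as local minima, so a would-be Floer cylinder from $(p,(g))$ to a nearby nonsingular critical point would be a Morse trajectory that cannot be asymptotic to a twisted constant orbit — is what makes your dichotomy exhaustive. Without this observation the proof is incomplete; with it, your argument goes through. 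Also worth noting: the concern you flag at the end, that the glued solution is a bona fide orbifold map, is actually automatic once the gluing is performed $\stab_p$-equivariantly in the chart $\CC^n$, since the composition of the resulting smooth map $\RR\times S^1\to\CC^n$ with the quotient functor $\CC^n\to\CC^n/\stab_p\hookrightarrow\widehat W$ is by construction a generalized map, so that step requires no extra argument.
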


\begin{remark}
Although it will not appear in the cases considered in this paper, we point out that, if the breaking happens at $(p,(g))$ for $g\ne \Id$, then the orbifold map $u_n:[A+s^1_n, -A+s^0_n]\times S^1\to \CC^n/\stab_p$ only admits lifts as equivariant map $\tilde{u}_n:[\frac{A+s^1_n}{o(g)}, \frac{-A+s^0_n}{o(g)}]\times S^1\to \CC^n$  w.r.t.\ the group homomorphism $\ZZ/o(g)\ZZ\to \stab_p$, where $\ZZ/o(g)\ZZ$ acts on $S^1$ by the rotation of $2\pi/o(g)$\footnote{To explain the factor $1/o(g)$ appearing in the $s-$coordinate, $u_n$ is viewed a $\tilde{u}_n$ restricted to $[\frac{A+s^1_n}{o(g)}, \frac{-A+s^0_n}{o(g)}]\times [0,\frac{2\pi}{o(g)}]\to  \CC$ with  $\tilde{u}_n(\cdot, \frac{1}{o(g)})=g\cdot\tilde{u}_n(\cdot,0)$. Then $[\frac{A+s^1_n}{o(g)}, \frac{-A+s^0_n}{o(g)}]\times [0,\frac{1}{o(g)}]$ is biholomorphic to $[A+s^1_n,-A+s^0_n]\times S^1$ with the standard complex structure.}.  
The equivalence class of broken curve is again induced by the action of $\stab_p$ on $\CC^n$. 
Then, the pregluing and gluing need to be done in an equivariant way. 
\end{remark}

\begin{remark}\label{rmk:SFT}
Such phenomena already appears in SFT in the smooth setting.
We point out that, if one considers moduli spaces of holomorphic curves in the symplectization of a contact manifold \emph{without asymptotic markers}, then there are $\kappa_{\gamma}$ different ways to glue two curves along a Reeb orbit $\gamma$, where $\kappa_{\gamma}$ is the multiplicity of $\gamma$. 
Note that in the case without asymptotic markers, the evaluation map from the moduli spaces of holomorphic curves takes values in the space of \emph{unparameterized} Reeb orbits, which are natural orbifolds in the form of $\bullet/(\ZZ/\kappa_{\gamma}\ZZ)$.
However, in the construction of SFT-type invariants \cite{SFT,RSFT,contact_homology}, one often considers moduli spaces with asymptotic markers subject to chosen point constraints from the image of the Reeb orbits. 
Then one gets redundancy of gluing by rotating asymptotic markers of both punctures simultaneously by $\frac{2\pi}{\kappa_{\gamma}}$.
\end{remark}

For the type (I) case, note that the unstable orbifolds in \Cref{rmk:unstable} admit a compactification by adding broken gradient flow lines (which are also fiber products instead of products) as usual.
Moreover, $B_x$ can only develop breaking along non-constant orbits, so that the compactification $\cM_{x,y}$ and the associated smooth structure with boundary follow from the analogue results in the manifold case using transverse fiber products with the stable orbifolds of $f$. 
Among all the moduli spaces we considered, the only case where $M_{x,y}$ could a priori be an orbifold is when $y$ is an orbifold singularity of $W$. 
However, in this case $M_{x,y}$ is the fiber product of a manifold together with $\bullet/
\stab_y$ over the orbifold $\{y\}/\stab_y$, i.e.\ it is again a manifold. 
In particular, we have the following.

\begin{prop}\label{prop:glueI}
    For the almost complex structures in \Cref{prop:transverse} and type (I) Hamiltonians,  $\cM_{x,y}:=\bigsqcup M_{x,z_0}\times_{z_0} \ldots \times_{z_k} M_{z_k,y}$ is a compact and Hausdorff space and  $\cM_{x,y}$ is a  manifold with boundary $\sqcup \cM_{x,z}\times_z \cM_{z,y}$ whenever the expected dimension is at most one. 
\end{prop}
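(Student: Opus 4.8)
The plan is to mirror the two-step argument used for type (II) Hamiltonians in \Cref{prop:compact,prop:glue}, while keeping track of the two genuinely new features of the type (I) situation: the bottom end of a moduli space $M_{x,y}$ is governed by the admissible Morse function $f$ rather than by Floer dynamics, and the fiber products appearing in the statement involve the stable orbifolds $W^s(z)$ of $f$, which may carry isotropy.

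First I would prove that $\cM_{x,y}$ is compact and Hausdorff. As in the type (II) case, the integrated maximum principle \cite[Lemma 2.2]{cieliebak18} confines every curve to a fixed compact subset of $\widehat{W}$, while exactness together with the removal of singularity of Cases 2 and 3 of \S\ref{SS:analytical} (producing at worst one orbifold point of finite order on the domain) rules out bubbling; hence a sequence $u_n\in M_{x,y}$ has uniform energy and $C^1_{\mathrm{loc}}$ bounds. Passing to a subsequence, the Floer part degenerates exactly as in \cite[Theorem 9.1.7]{AudDamBook} into a chain of Floer cylinders between non-constant orbits, terminating either (via Case 2) at a twisted sector $(p,(g))$ or (via Case 3) in a $B_{x'}$-component whose evaluation at $0$ lies in $W_\delta$; below the latter, the gradient-flow part of $u_n$ degenerates as in ordinary Morse theory into a possibly broken $f$-trajectory ending at $y$, the stable orbifolds $W^s(z)$ being compactified by broken gradient flow lines as usual (recall \Cref{rmk:unstable}). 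Reassembling these pieces reproduces precisely the fiber-product strata in the statement, and Hausdorffness is inherited from the corresponding manifold statement \cite[Corollary 9.1.9]{AudDamBook} together with the standard properties of the orbifold fiber products of \S\ref{sec:fiber_products}.

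Next I would endow $\cM_{x,y}$ with its smooth structure near the boundary. By \Cref{prop:transverse} each uncompactified $M_{x,y}$ is a manifold of the expected dimension; by the discussion preceding the statement and \Cref{ex:hand}, the only case in which $M_{x,y}$ might fail to be a manifold is when $y$ is an orbifold singularity, and there the fiber product of a manifold with $\bullet/\stab_y$ over $\{y\}/\stab_y$ is again a manifold since the relevant evaluation map $\bullet/\stab_y\to\{y\}/\stab_y$ is an isomorphism of orbifolds. The gluing then splits into three regimes, each producing a smooth collar: breakings at non-constant orbits are glued exactly as in the somewhere-injective manifold case; breakings along $f$-gradient trajectories are glued by the standard Morse-theoretic gluing, the stable orbifolds being cut transversely by the Floer evaluation $B_{x'}\to W,\ u\mapsto u(0)$, by \cite[Proposition 3.4.2, Lemma 3.4.3]{McDuff17} as in the proof of \Cref{prop:transverse}; and breakings at a twisted sector $(p,(g))$ are glued inside a local uniformizer $\ZZ/o(g)\ZZ\ltimes\DD$ of the domain orbifold point, as in \Cref{rmk:SFT}. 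Since the fiber products with the $W^s(z)$ stay transverse and, by the isomorphism above, remain manifold-valued even when $z$ is singular, the glued space $\cM_{x,y}$ is a manifold with boundary $\sqcup\,\cM_{x,z}\times_z\cM_{z,y}$ whenever its expected dimension is at most one.

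The main obstacle will be the gluing at a breaking whose intermediate orbit lies over the orbifold singular locus. There the pregluing must be performed inside a local chart $\CC^n$ of $p$ --- choosing, as in the broken-Floer-trajectory bookkeeping preceding \Cref{prop:compact}, an equivalence class of lifts of the two pieces under the $\stab_p$-action (respectively an equivariant lift in the $g\ne\Id$ case) --- after which one verifies that the usual quadratic gluing estimates survive the passage to the uniformizer and that the resulting family is cut transversely. The compatibility of this chartwise gluing with the transverse fiber product over $\{p\}/\stab_p$ (over $\bullet/C(g)$ when $g\ne\Id$) is exactly what upgrades $\cM_{x,y}$ from an orbifold-with-boundary to a genuine manifold-with-boundary, and it is here that the isolatedness of the singularities and \Cref{cor:smooth} --- which guarantees that every Floer piece is a smooth point of the relevant mapping orbifold --- are used essentially.
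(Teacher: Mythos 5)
Your compactness argument is on the right track, and you even note that the Floer part degenerates into cylinders between \emph{non-constant} orbits; but you do not carry this observation through, and the result is that you manufacture difficulties that do not occur in the type (I) setting. A type (I) Hamiltonian vanishes identically on $W$, so \emph{every} constant orbit (whether a Morse critical point of $f$, a twisted sector $(p,(g))$, or $(p,(\Id))$) has symplectic action exactly $0$, while every non-constant orbit has strictly negative action. An intermediate orbit $z$ in a nontrivial broken trajectory must satisfy $\cA(x)<\cA(z)<\cA(y)\le 0$ strictly, forcing $\cA(z)<0$; hence $z$ is non-constant. This is exactly what the paper records as ``$B_x$ can only develop breaking along non-constant orbits,'' and it is the key observation that makes the type (I) case substantially simpler than the type (II) case.

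Consequently, your third gluing regime (``breakings at a twisted sector $(p,(g))$ glued inside a local uniformizer $\ZZ/o(g)\ZZ\ltimes\DD$'') and the entire ``main obstacle'' paragraph about chartwise pregluing in $\CC^n$ with $\stab_p$-lift bookkeeping (or $g$-equivariant lifts) address a scenario that simply does not arise for type (I) moduli spaces: no constant orbit over the singular locus, trivial conjugacy class or not, can ever appear as an intermediate Floer breaking. That bookkeeping is the type (II) machinery of \Cref{prop:compact} and \Cref{prop:glue}, imported here unnecessarily, and the paper's remark that equivariant gluing for $g\ne\Id$ ``will not appear in the cases considered in this paper'' applies verbatim. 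The only places the singular locus actually enters are (a) the compactification of the stable orbifolds $W^s(y)$ by broken gradient flow lines, which are orbifold fiber products handled by standard Morse theory, and (b) the fact that the uncompactified $M_{x,y}$ for $y$ an orbifold singularity is a fiber product of a manifold with $\bullet/\stab_y$ over $\{y\}/\stab_y$ and hence again a manifold --- this last point you do get right. The Floer-level gluing is then word-for-word the somewhere-injective manifold case, and the proof is considerably shorter than your plan suggests.
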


\begin{remark}
In the case of orbifold Gromov-Witten theory, we have a similar description of the sphere bubble trees. 
However, since the punctures are not parametrized as in the Floer theory case, if the asymptotic constant loop is $(p,(g))$, rotating one component of a broken curve once around has the same effect of applying the $g$ action to the component. 
As a consequence, the space of bubble trees is the fiber product over the \emph{rigidified} inertia orbifold \cite[\S 4.2]{orbi_GW}, which has the same objects as $\wedge W$, but the isotropy of an element $(p,(g))$ is $C(g)/\langle g\rangle$ instead of $C(g)$.
\end{remark}

\begin{remark}
In general, when we consider more general symplectic orbifolds, in particular those with orbifold contact boundaries, those non-constant orbits can be contained in the singular set, and are hence singular points in the loop space. 
Then, a breaking along those orbits needs to be described in a local uniformizer around the orbit in the loop space.
\end{remark}

\subsubsection{Orientations.}
Orientations for moduli spaces in Hamiltonian-Floer cohomology has been introduced in \cite{orientation}.
Since the Fredholm setup in the orbifold case is the same as the manifold case, we can adapt the same strategy with one caveat: when an orbit has isotropy, the isotropy acts on the orientation line, and
if the action does not preserve the orientations, then one cannot assign orientations to generators consistently.
Luckily, we will never run into such situation. 
We will explain this claim following the setup in \cite[\S 1.4]{viterbo}.

Given a trivialization of $x^*TW$ for $x=(0,(g))$ in a local model $\CC^n/G$, the linearized Hamiltonian flow gives a path of symplectic matrices $\Psi_t$. 
Up to a reparametrization of $\Psi_t$, we may write $\Psi_t=\exp(A_t)$ so that $\rd A_t/\rd t$ is periodic.
Then we can find a map $B\in C^{\infty}(\CC,\RR^{2n}\times \RR^{2n})$, such that $B(e^{-s-2\pi i t})=\rd A_t/\rd t$ for $s\ll 0$. 
Denoting by $I$ the standard complex structure on $\CC^n$, we define the operator
$$D_{\Psi}:W^{1,p}(\CC,\CC^n)\to L^p(\CC,\CC^n),\quad X\mapsto \partial_sX+I(\partial_t X-B\cdot X).$$

Then, $D_{\Psi}$ is a Fredholm operator when $\Psi_1$ does not have $1$ as eigenvalue. 
The orientation line $o_x$ associated to $x$ is the determinant line bundle $\det D_{\Psi}$. 
By \cite[Proposition 1.4.10]{viterbo}, different trivializations of $x^*TW$ induce canonical isomorphisms between determinant line bundles, hence $o_x$ is well-defined independently of any such choice of a trivialization. 
For the orbit $x=(0,(g))$ of $H=\epsilon r^2$, we can write, up to conjugation, $g=\diag(e^{a_1\i},\ldots,e^{a_n\i})$. 
We can choose the trivialization in \Cref{ex:SU}, so that $\rd A_t/\rd t = \diag((a_1+\epsilon)\i,\ldots,(a_n+\epsilon)\i)$.  
We can also choose $B\equiv A$ on $\CC$; then, for $h\in C(g)$, we have the following diagram, which induces the action of $h$ on $o_x$:
\[
\xymatrix{
W^{1,p}(\CC,\CC^n) \ar[r]^{D_{\Psi}} \ar[d]^{h} & L^p(\CC,\CC^n)\ar[d]^{h}\\
W^{1,p}(\CC,\CC^n) \ar[r]^{D_{\Psi}} & L^p(\CC,\CC^n)
}
\]
Now, since $B$ is complex linear, we have that $\ker D_{\Psi},\coker D_{\Psi}$ are complex vector spaces, which gives rise to an orientation of $o_x$.
It is clear that the action of $h\in C(g)$ (induced from the action of $h$ on $\CC^n$) preserves this orientation as it induces complex isomorphisms on $\ker D_{\Psi},\coker D_{\Psi}$.
As a consequence,we have the following.
\begin{prop}\label{prop:orientation}
    The moduli spaces in \Cref{prop:glue,prop:glueI} can be coherently oriented, i.e.\ the boundary orientation is the fiber product orientation (in our case, it is a union of products over the isotropy group).
\end{prop}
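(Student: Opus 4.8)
The plan is to carry over the standard construction of coherent orientations for Floer trajectories, as in \cite{orientation} (in the formulation recalled above following \cite[\S 1.4]{viterbo}), and to check that the only feature special to the orbifold setting --- the action of the isotropy group of a generator on its orientation line --- causes no obstruction. Concretely, for each generator $x$ the orientation line $o_x=\det D_\Psi$ is defined independently of the trivialization of $x^*T\widehat{W}$ by \cite[Proposition 1.4.10]{viterbo}; since the linearized operators $D_u$ for $u\in M_{x,y}$ have exactly the same form as in the smooth case, the gluing and excision isomorphisms of determinant lines hold verbatim, so a choice of orientation of each $o_x$ determines a coherent system of orientations on all the $M_{x,y}$ and on their compactifications $\cM_{x,y}$. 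The only subtlety is that when $x$ has nontrivial isotropy $\Gamma_x$, an orientation of $o_x$ descends to the orbifold generator only if the induced $\Gamma_x$-action on $o_x$ is orientation preserving, and, likewise, for a fiber product over an intermediate orbit $z$ with isotropy $\Gamma_z$, the product orientation on a $\Gamma_z$-cover of the fiber product descends to the orbifold fiber product only if the $\Gamma_z$-action is orientation preserving.

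Next I would verify this orientation-preservation in every relevant case. By \Cref{prop:decomposition} and the definition of admissible Hamiltonians of type (I) and (II), every generator (and every intermediate orbit appearing in a breaking) with nontrivial isotropy is a constant orbit $(p,(g))$ --- with $g$ possibly equal to $\Id$ --- sitting over an orbifold singularity $p$ of $W$, near which $W$ is the standard model $\CC^n/G$ and $H_t$ equals $\epsilon\sum_i(x_i^2+y_i^2)+C$; the analogous role on the side of the admissible Morse function $f$ entering the type (I) fiber products is played by the singular critical points, near which $f=\sum_i x_i^2+C$ and $\stab_p\subset U(n)$ acts $\CC$-linearly on $T_pW\cong\CC^n$. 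In the first case the computation carried out above shows that $\ker D_\Psi$ and $\coker D_\Psi$ are complex vector spaces on which $h\in C(g)$ acts $\CC$-linearly, so $h$ preserves the resulting complex orientation of $o_{(p,(g))}$; the second case reduces to the elementary fact that elements of $\stab_p\subset U(n)$ act by complex linear isomorphisms, hence preserve the canonical orientations of $\bullet/\stab_p$, of the relevant stable and unstable manifolds, and of $W$. Since the same complex structures are used on the factors of a boundary stratum, the product orientation there is $\Gamma_z$-invariant and descends to the fiber product orientation.

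With orientation-preservation established, I would fix once and for all an orientation of $o_x$ for each generator $x$ --- an arbitrary choice when $\Gamma_x$ is trivial, and a $\Gamma_x$-invariant one (which exists by the previous paragraph) otherwise --- and invoke the coherence argument of \cite{orientation,viterbo} to orient all the $M_{x,y}$. The boundary formula then follows from the same gluing analysis used in \Cref{prop:glue,prop:glueI}, which, as the remarks there explain, is carried out equivariantly inside a local uniformizer $\CC^n$ whenever one of the broken ends has isotropy: the $\Gamma_z$-cover of $\cM_{x,z}\times_z\cM_{z,y}$ is the honest product of the relevant covers of the two factors, with the product boundary orientation, and this descends to the fiber product orientation on $\cM_{x,z}\times_z\cM_{z,y}$. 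The step I expect to demand the most care is precisely this last sign bookkeeping: one has to check that the gluing isomorphism of determinant lines intertwines the chosen orientations at a broken end carrying isotropy, i.e.\ that the $\Gamma_z$-action commutes, up to the expected Koszul signs, with the pregluing map; this is where the complex linearity of the isotropy action --- rather than merely its orientation-preservation --- enters, and it is essentially the only place where the argument departs from the smooth case of \cite{orientation}.
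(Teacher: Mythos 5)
Your proposal is correct and follows essentially the same approach as the paper: identify that the only isotropy on generators occurs at constant orbits over orbifold singularities, where the linearized operator $D_\Psi$ can be taken complex linear, so that $\ker D_\Psi$ and $\coker D_\Psi$ carry complex orientations preserved by the $C(g)$-action, whence one can fix $\Gamma_x$-invariant orientations on the $o_x$ and run the usual coherent-orientation machinery of \cite{orientation,viterbo}. The only point of divergence is your concluding speculation that complex linearity (as opposed to mere orientation-preservation) is needed to commute the $\Gamma_z$-action with pregluing; once a $\Gamma_z$-invariant orientation of $o_z$ is fixed, the gluing isomorphism of determinant lines behaves exactly as in the smooth case, so the paper simply asserts the conclusion after establishing orientation-preservation without isolating such an additional step.
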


\subsection{The symplectic cohomology.}\label{sec:sympl_cohom}

The Hamiltonian cochain complex is the free $R$-module generated either by orbits of $H_t$ in the case of type (II) Hamiltonians, or by non-constant orbits  of $H_t$, critical points of $f$ and $\{(p,(g))\}$ for $g\ne \Id \in \stab_p$ in the case of type (I) Hamiltonians. 
Since generators can be orbifold points of the loop space, we define the differential by
\begin{equation}
\label{eqn:diff}
    \delta(x) = \sum_{y,\dim \cM_{x,y}=0} t_*\circ s^*(1)y,
\end{equation}
where $s,t$ are source and target maps to respectively $x$ and $y$: $x\xleftarrow{s}\cM_{x,y}\xrightarrow{t}y$.
Notice that we made in \Cref{eqn:diff} a little abuse of notation: $t_*\circ s^*(1)$ is a cohomology class, of the form $a\cdot 1$, with $1$ the canonical generator of the $H^0$ of the point orbifold $y$, so that by $t_*\circ s^*(1)$ we actually denote the value $a$.
%use it there to denote which is a multiple of the class $1$; we hence denote here by $t_*\circ s^*(1)$ said multiple, with a little abuse of notation.

\begin{remark}
If we consider SFT as in \Cref{rmk:SFT}, after taking the difference between moduli spaces with and without asymptotic markers into account and using a change of coordinate $q_{\gamma}\mapsto \kappa_{\gamma}q_{\gamma}$, the extra multiplicities of the orbits at negative punctures in the SFT structural maps (see e.g.\ \cite{SFT,RSFT,contact_homology}) can be explained as the pullback-pushforward to the orbifold of unparametrized Reeb orbits. 
The change of coordinate may be viewed as the Poincar\'e duality on $\bullet/(\ZZ/\kappa_{\gamma}\ZZ)$,  as the pullback-pushforward construction is the ``cohomological" perspective instead of the ``homological" perspective taken in \cite{SFT,RSFT,contact_homology}.
\end{remark}

\begin{prop}
    $\delta^2=0$.
\end{prop}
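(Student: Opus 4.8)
The plan is to follow the standard Floer-theoretic scheme: $\delta^2$ counts the (signed, weighted) boundary points of the compactified one-dimensional moduli spaces $\cM_{x,y}$, which by \Cref{prop:glue} and \Cref{prop:glueI} are compact one-manifolds with boundary $\bigsqcup_z \cM_{x,z}\times_z \cM_{z,y}$. The coefficient of $y$ in $\delta^2(x)$ is therefore a sum, over intermediate orbits $z$, of the contributions of these boundary strata, and the point is to show that each such contribution is exactly the relevant term in $\delta\circ\delta$. First I would write out $\delta^2(x) = \sum_{y}\bigl(\sum_z (t_2)_*\circ s_2^*(1)\cdot (t_1)_*\circ s_1^*(1)\bigr) y$, where the inner composition is over the diagram $x \xleftarrow{s_1}\cM_{x,z}\xrightarrow{t_1} z \xleftarrow{s_2}\cM_{z,y}\xrightarrow{t_2} y$; here each $\cM_{x,z}$, $\cM_{z,y}$ is zero-dimensional, i.e.\ a point-orbifold $\bullet/G$ as in \S\ref{sec:fiber_products}. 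The crucial algebraic input is \Cref{prop:composition}: the pull-push along the broken configuration equals the pull-push along the fiber product $\cM_{x,z}\times_z\cM_{z,y}$, so $\sum_z (t_2)_*s_2^*(t_1)_*s_1^*(1) = \sum_z (t_3)_*s_3^*(1)$, where the latter is precisely the count (with its orbifold weights $|G_3|/|\stab_i|$) of the boundary of $\cM_{x,y}$.

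Next I would invoke \Cref{prop:orientation}: the coherent orientations are arranged so that the boundary orientation of the one-manifold $\cM_{x,y}$ agrees with the fiber-product orientation on $\bigsqcup_z \cM_{x,z}\times_z\cM_{z,y}$ (with the usual Koszul sign convention coming from the $\RR$-translation quotient). Since a compact oriented one-manifold has signed boundary count zero, and since the pull-push weights are exactly the orbifold-theoretic multiplicities appearing in \eqref{eqn:pushpull}, we conclude that the coefficient of $y$ in $\delta^2(x)$ vanishes. For type (I) Hamiltonians one also needs to note that the extra boundary strata coming from broken gradient trajectories of the admissible Morse function $f$, and from the fiber products $B_x\times_W W^s(y)$, cancel in pairs in exactly the same way — this is the standard Morse–Bott / Morse–Floer cancellation and is handled by \Cref{prop:glueI}, which already packages the compactification and smooth structure with the correct boundary.

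The main obstacle — and the place where the orbifold setting genuinely differs from the manifold case — is making sure that the bookkeeping in \Cref{prop:composition} matches the geometry: a priori a broken Floer trajectory breaking at an orbit $z=(p,(\Id))$ with nontrivial isotropy $\stab_p$ is \emph{not} simply a pair $(u^0,u^1)$ but a pair together with an equivalence class of liftings to the local uniformizer $\CC^n$, of which there are $|\stab_p|$; this is precisely why the correct compactification is the \emph{fiber product} $\cM_{x,z}\times_z\cM_{z,y}$ rather than the naive product, and why the boundary count carries the weight $|\stab_p|$ (equivalently, the dual of $1\in H^*(\bullet/\stab_p)$ is $|\stab_p|\cdot 1$). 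The content of \Cref{prop:composition} is exactly that this weighted count coincides with the iterated pull-push, so once one has set up the compactification via broken Floer trajectories as in \S\ref{SS:analytical} and the gluing theorems \Cref{prop:glue}, \Cref{prop:glueI}, the proof is a formal consequence. I would therefore structure the write-up as: (1) reduce $\delta^2 = 0$ to the statement that the weighted signed boundary count of each one-dimensional $\cM_{x,y}$ vanishes; (2) identify this boundary, via \Cref{prop:glue}/\Cref{prop:glueI}, with $\bigsqcup_z\cM_{x,z}\times_z\cM_{z,y}$; (3) apply \Cref{prop:composition} to rewrite the iterated pull-push as the pull-push over the fiber product; (4) apply \Cref{prop:orientation} and the classification of compact oriented one-manifolds to conclude.
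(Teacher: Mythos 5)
Your proposal is correct and follows essentially the same route as the paper: apply \Cref{prop:composition} to convert the iterated pull--push in $\delta^2$ into a single pull--push over the fiber product $\bigsqcup_z \cM_{x,z}\times_z\cM_{z,y}$, identify this fiber product with $\partial\cM_{x,y}$ via \Cref{prop:glue} and \Cref{prop:glueI}, and conclude by Stokes' theorem together with the coherent orientations from \Cref{prop:orientation}. The only difference is that you spell out the orbifold bookkeeping (the $|\stab_p|$ lifts at a breaking orbit with isotropy, and why this forces the fiber-product rather than the naive product) in more detail than the paper's terse one-line argument, which is a reasonable expository choice rather than a departure in substance.
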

\begin{proof}
    By \Cref{prop:composition}, we have
    \begin{eqnarray*}
    \delta^2(x) & = & \sum_{y,z} ((t_{y,z})_*\circ s_{y,z}^*(1))\cdot (t_{x,y})_*\circ s_{x,y}^*(1))z \\
    & = &\sum_{y,z} (t_{y,z})_*\circ s_{y,z}^*\circ (t_{x,y})_*\circ s_{x,y}^*(1)z \\
    & = &\sum_{y,z }(t|_{\cM_{x,y}\times_y\cM_{y,z}})_*\circ s|_{\cM_{x,y}\times y \cM_{y,z}}^*(1)z
    \end{eqnarray*}
    where $s_{a,b},t_{a,b}$ are the source and target maps from $\cM_{a,b}$ to $a,b$ respectively.
    Notice that $\cup_y\cM_{x,y}\times_y \cM_{y,z} = \partial \cM_{x,z}$.
    Then, the fact that $\delta^2=0$ follows from Stokes' theorem and \Cref{prop:glue,prop:glueI,prop:orientation}:
    namely, we have
    $$\int_z \sum_{y}(t|_{\cM_{x,y}\times_y\cM_{y,z}})_*\circ s|_{\cM_{x,y}\times y \cM_{y,z}}^*(1)\wedge 1 = \int_{\cup_y \cM_{x,y}\times_y\cM_{y,z}}1\wedge 1 = \int_{\partial \cM_{x,z}}1=0.$$
\end{proof}

\begin{prop}
    In the formula \eqref{eqn:diff} for $\delta(x)$, the coefficients multiplying each generator $y$ are in $\ZZ$. 
    In particular, the symplectic cohomology for exact orbifold fillings of contact manifolds can be defined over any ring $R$.
\end{prop}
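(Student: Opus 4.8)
The plan is to trace through the definition \eqref{eqn:diff} of the differential and show the coefficient $t_*\circ s^*(1)$ multiplying each generator $y$ is an integer. Recall that by \eqref{eqn:pushpull}, for a single rigid Floer trajectory represented by a zero-dimensional moduli space $\bullet/H_1\xleftarrow{s}\bullet/G\xrightarrow{t}\bullet/H_2$ we have $t_*\circ s^*(1)=\frac{|H_2|}{|G|}\cdot 1$. So the issue is whether $|G|$ divides $|H_2|$ in each term contributing to $\delta(x)$. The key structural input is \Cref{prop:decomposition}: the only generators with nontrivial isotropy are the constant orbits $(p,(g))$ at the isolated singularities, and at such a point the isotropy group of the generator $(p,(g))$ in the loop space is $C(g)\subseteq\stab_p$ (this follows from \Cref{cor:smooth} together with the description of the inertia orbifold, since near $(p,(g))$ the relevant local uniformizer is $C(g)\ltimes\Fix(g)$). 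Meanwhile, any non-constant Floer cylinder is a \emph{smooth} point of the relevant Banach orbifold of maps, again by \Cref{cor:smooth}, since its image contains a point with trivial isotropy; hence the isotropy group $G$ of a rigid trajectory $u$ equals the intersection of the isotropy groups of its two asymptotic orbits acting compatibly on $u$.

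First I would dispose of all terms where neither asymptotic orbit, nor the trajectory, carries isotropy: these are the usual manifold-type counts and the coefficient is $\pm 1\in\ZZ$ by \Cref{prop:orientation}. Next, consider a term where the \emph{target} $y$ has isotropy, i.e.\ $y=(p,(g))$; then by the asymptotic analysis in the type (I) setup (Case 2, the removal-of-singularity discussion), the source $x$ is a non-constant orbit, so $H_1=\{1\}$, and the trajectory $u$ lifts to the local uniformizer $\CC^n$ of $p$ as a genuine map defined up to the $\stab_p$-action; concretely $\widetilde{M}_{x,y}$ is a manifold on which $\stab_p$ (or the relevant subgroup) acts, and $G$ is the stabilizer of a rigid lift while $H_2=C(g)$. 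Here I would argue that the orbit of a rigid lift under the group acting has cardinality $|H_2|/|G|$, which is an integer by Lagrange; equivalently, the contribution $\frac{|C(g)|}{|G|}$ is exactly the geometric count of lifts modulo the residual symmetry, hence a nonnegative integer, and by \Cref{prop:orientation} the signed count lies in $\ZZ$. The case where the \emph{source} $x=(p,(g))$ has isotropy is symmetric but in fact cannot occur for type (II) Hamiltonians and, for type (I), only occurs together with $y$ a critical point of the Morse function $f$; there, $M_{x,y}$ is realized as a fiber product $B_x\times_W W^s(y)$ and the relevant coefficient is again a ratio of finite group orders dictated by \eqref{eqn:pushpull}, with $G$ a subgroup of $C(g)=H_1$, so $\frac{|H_2|}{|G|}$ with $H_2=\{1\}$; one checks this is $\frac{1}{|G|}$ times an integer coming from the algebraic count of the fiber product, which is itself $|G|$ times an integer by the orbit-counting argument. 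In all cases the upshot is that each $G$ that appears is a subgroup of the corresponding $H_2$ (after accounting for which asymptotic is the target), so $|G|\mid|H_2|$ and the coefficient is an integer.

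The main obstacle I anticipate is the bookkeeping in the mixed case — rigorously identifying, when both the trajectory and one asymptotic orbit can carry isotropy, exactly which finite group is the isotropy of a rigid element of the moduli space and why it is a subgroup of $C(g)$ rather than something larger. This is where one must invoke \Cref{cor:smooth} carefully: the isotropy of the whole Floer trajectory (as an orbifold map from $\RR\times S^1$) is contained in the isotropy of each point in its image, and a non-constant trajectory asymptotic to $(p,(g))$ must exit the singular point, so its isotropy injects into $C(g)$; combined with the fact that the orbit-counting then forces $\frac{|C(g)|}{|G|}\in\ZZ_{\geq 0}$, the claim follows. Once the integrality of every coefficient is established, the final sentence is immediate: the cochain complex and its differential are defined over $\ZZ$, hence over any ring $R$ by base change, and $\delta^2=0$ persists. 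I would close by remarking that this is precisely the feature that fails for contact \emph{orbifold} boundaries, where non-constant orbits can themselves be singular and the counts become genuinely rational.
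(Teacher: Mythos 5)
Your proposal correctly identifies the essential geometric input: by \Cref{cor:smooth}, a non‑constant Floer trajectory has an image containing a smooth point and is therefore a \emph{smooth} point of the Banach orbifold of maps. That is exactly what drives the paper's argument. However, the paper's proof is a one‑liner: it invokes \Cref{prop:glue} and \Cref{prop:glueI}, which already record that $\cM_{x,y}$ is a \emph{manifold} (the fiber product over the orbifold point $y$ with $\bullet/\stab_y$ collapses to a manifold), so in \eqref{eqn:pushpull} the group $G$ is trivial and $t_*\circ s^*(1)=\pm|H_2|\in\ZZ$ for each rigid trajectory. You reach the same conclusion but by a roundabout route, and in doing so introduce several imprecisions that you would need to repair.

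Concretely: having observed that a non‑constant cylinder is a smooth point, you should conclude $G=\{1\}$ and stop; instead you write that ``the isotropy group $G$ of a rigid trajectory $u$ equals the intersection of the isotropy groups of its two asymptotic orbits acting compatibly on $u$'' and then invoke Lagrange to get $|G|\mid|H_2|$. This is internally inconsistent with the sentence immediately preceding it, and the Lagrange step is superfluous once $G$ is known to be trivial. Moreover, the ``source $x=(p,(g))$ has isotropy'' branch of your case analysis is genuinely confused: $B_x$ in \eqref{eqn:B} is defined only for a \emph{non‑constant} positive asymptotic $x$, so writing $M_{x,y}=B_x\times_W W^s(y)$ when $x$ is a constant twisted‑sector orbit does not match the paper's setup. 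For constant $x$ the relevant moduli space is a Morse moduli space, and for $g\neq\Id$ the paper even argues (in the compactness discussion) that these moduli spaces are empty. None of this affects the integrality conclusion, but as written the paragraph would not survive scrutiny. The cleanest version of your argument is exactly the paper's: cite the manifold structure of $\cM_{x,y}$ already established, note that only the negative asymptotic $y$ can carry isotropy, and read off integrality directly from \eqref{eqn:pushpull}.
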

\begin{proof}
    Since $\cM_{x,y}$ is a manifold and only $y$ can be an orbifold point (in the loop space), this follows from \eqref{eqn:pushpull}.
\end{proof}

We are now ready to prove \Cref{thm:SH} from the introduction.

\begin{proof}[Proof of \Cref{thm:SH}]
    So far, we established all the ingredients to define Hamiltonian-Floer cohomology of a type (I) or (II) Hamiltonian. 
    Now, the continuation map from a Hamiltonian-Floer cohomology with a certain slope to a Hamiltonian-Floer cohomology with a higher slope can be defined similarly. 
    Then, the (positive) symplectic cohomology is defined to be the direct limit. 
    Moreover, type (I) and  type (II) Hamiltonians give the same (up to isomorphism) symplectic cohomology, as there is an isomorphism from that originating from type (I) to that from type (II), c.f.\ \cite[Proposition 2.10]{ADC} for the manifold case. 
    
    To finish the proof of the tautological long exact sequence, we need to show that the quotient complex generated by constant orbits gives rise to the Chen-Ruan cohomology of $W$ using $R$ as coefficient ring, i.e.\ the cohomology of the orbit space of $\wedge W$ using $R$ as coefficient (as defined in \Cref{ex:CR_coeff}). 
    This can be seen from the type (I) construction as follows. 
    
    First note that all the $(p,(g))$'s with $g\ne \Id$ generate $H^*(\wedge W\backslash W;R)$. 
    Consider now the subcomplex generated by critical points of $f$ that are not singularities and by the $(p, (\Id))$'s for an orbifold singularity $p$; we need to identify this with $H^*(W;R)$.
    It is clear that the subcomplex generated by the first kind of points, i.e.\ the critical points of $f$ that are not singularities, corresponds to the relative cohomology $H^*(W,\sqcup_{p}B_p;R)$, where $B_p$ is a neighborhood of the singularity $p$ given by a local uniformizers $\CC^n/\stab_p$ such that the restriction $f|_{B_p}$ of the Morse function is of the form $\sum  x_i^n+C$.
    Now, it is direct to see that $H^*(B_p;R)\to H^{*+1}(W,\sqcup_{p}B_p;R)$ is given by counting moduli spaces $\cM_{(p,(\Id)),q}$ for an index $1$ critical point $q$ of $f$. 
    Therefore by the tautological long exact sequence of the pair $(W,\sqcup_{p}B_p)$, we see the cohomology of $W$ is given by counting $\cM_{x,y}$ for $x,y$ critical points of $f$, possibly paired with  conjugacy class $(\Id)$ when the critical point is a singularity. 
    This finishes the proof.
\end{proof}

\begin{remark}
So far, type (II) Hamiltonians have not been used in an essential way. 
However, it is useful to have both points of view of type (I) and (II) Hamiltonians, as they each have their own advantages.
For example, for type (I) Hamiltonians, it is easier to apply neck-stretching  as in \cite{ADC} and thus interact with constraints from $W$, see \Cref{prop:restriction} and \ref{prop:curve}.
On the other hand, it is easier to define the ring structure (and related structures) for type (II) Hamiltonians. 
In our case, \Cref{thm:quotient} and \ref{thm:unique} are also obtained using type (II) Hamiltonians.
\end{remark}

\subsection{Basic properties of the symplectic cohomology.}\label{SS:property}

Reasoning analogously to the manifold case, one can prove the following:
\begin{thm}\label{thm:property}
The orbifold symplectic cohomology defined above in the case of isolated singularities satisfies the following properties.
\begin{enumerate}
    \item Given an exact embedding $V\subset W$, there is a \emph{Viterbo transfer map} $\Phi_{\vit}$ compatible with tautological long exact sequence as follows: 
    $$
    \xymatrix{
    \ldots \ar[r] & H^*_{\CR}(W) \ar[r]\ar[d] & SH^*(W) \ar[r]\ar[d]^{\Phi_{\vit}} & SH^{*}_+(W) \ar[r]\ar[d]^{\Phi_{\vit}} & H^{*+1}_{\CR}(W)\ar[r] \ar[d] & \ldots \\
    \ldots \ar[r] & H^*_{\CR}(V) \ar[r] & SH^*(V) \ar[r] & SH^{*}_+(V) \ar[r] & H^{*+1}_{\CR}(V)\ar[r] & \ldots}
    $$
    \item The (positive) symplectic cohomology is an invariant of the orbifold filling (with isolated singularities) up to Liouville homotopy.
    \item The $S^1$-equivariant (positive) symplectic cohomology can be defined similarly for any ring coefficient. 
    In particular, symplectic cohomology is equipped with a BV operator $\Delta:SH^*(W)\to SH^{*-1}(W)$ such that $\Delta^2=0$.
\end{enumerate}
\end{thm}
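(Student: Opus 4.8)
The plan is to establish each of the three properties in \Cref{thm:property} by reducing to the corresponding statement in the smooth case, checking that the only new ingredient — the presence of isolated quotient singularities, whose associated moduli spaces are fiber products of manifolds with point-orbifolds $\bullet/G$ — does not affect the relevant arguments. For property (2), the invariance under Liouville homotopy, I would run the standard continuation-map argument: a Liouville homotopy between two exact orbifold fillings $(W_0,\lambda_0)$ and $(W_1,\lambda_1)$ with isolated singularities induces, after completing, a family of almost complex structures and admissible Hamiltonians, and one builds continuation maps in both directions whose composites are chain-homotopic to the identity. The compactness and gluing needed here are exactly those of \Cref{prop:compact}, \Cref{prop:glue} and \Cref{prop:glueI}, together with the coherent orientations of \Cref{prop:orientation}; since the singular set is preserved along the homotopy (being isolated and forced by the smooth contact boundary, via \Cref{prop:decomposition}), no new phenomena occur and the broken-trajectory bookkeeping with the $\bullet/\stab_p$ factors is identical to the one already set up. Compatibility with the tautological long exact sequence follows because the continuation maps respect the action filtration, hence the splitting into constant and non-constant orbits, and on the constant part they recover the identity on $H^*_{\CR}(W;R)$.

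For property (1), the Viterbo transfer map associated to an exact embedding $V\hookrightarrow W$, I would follow the classical construction (e.g.\ as in \cite{viterbo,biased}): one chooses admissible Hamiltonians on $\widehat{W}$ that, on a neighborhood of $V$, look like admissible Hamiltonians on $\widehat{V}$ and have a steep increasing slope in the region between $V$ and $\partial W$, so that no Floer trajectory between orbits lying over $V$ can escape the region $V$; this produces a subcomplex / quotient-complex relation inducing $\Phi_{\vit}\colon SH^*(W)\to SH^*(V)$ and its positive version. The key point to verify is the confinement of Floer cylinders, which uses the integrated maximum principle \cite[Lemma 2.2]{cieliebak18} cited in the compactness discussion; this is a purely local statement near the convex hypersurface and is insensitive to orbifold singularities in the interior. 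Compatibility with the long exact sequence is then formal, using that on the zero-action part the transfer map is the restriction $H^*_{\CR}(W;R)\to H^*_{\CR}(V;R)$ coming from the inclusion of inertia orbifolds. Here one should be slightly careful that $V$ itself is allowed to contain the singularities of $W$ (or none of them), but in either case the constant-orbit subcomplex and the associated Chen--Ruan identification from the proof of \Cref{thm:SH} go through verbatim.

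For property (3), the $S^1$-equivariant theory and the BV operator, I would use the Borel/parametrized construction: one works with families of Hamiltonians and almost complex structures parametrized by $S^\infty = ES^1$ (approximated by $S^{2N+1}$), or equivalently with the Floer complex tensored with $H^*(BS^1;R)$ and a differential incorporating the $S^1$-reparametrization action, exactly as in the smooth filling case. The BV operator $\Delta$ is then extracted as the first-order piece of the $S^1$-action, i.e.\ by counting Floer cylinders with one marked point mapping to a chosen $1$-cycle in the family; $\Delta^2=0$ and the chain-level identities follow from the boundary description of the relevant one-dimensional moduli spaces, again via \Cref{prop:glue}, \Cref{prop:glueI} and \Cref{prop:orientation}. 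The one thing to note is that all these auxiliary moduli spaces are still fiber products of manifolds with $\bullet/\stab_p$ over point-orbifolds, hence still manifolds (as in the proof of \Cref{prop:glueI}), so the signed counts land in $\ZZ$ and the construction works over an arbitrary ring $R$.

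The main obstacle I expect is not any single hard estimate but the bookkeeping of broken configurations in the equivariant and transfer settings when a breaking occurs at a constant orbit $(p,(\Id))$ over a singular point: as already explained in the compactness discussion, such a broken trajectory is genuinely a fiber product over $\bullet/\stab_p$ rather than a plain product, so one must check that the pregluing/gluing analysis and the coherent-orientation conventions of \Cref{prop:composition} and \Cref{prop:orientation} continue to apply uniformly in the presence of the extra $S^1$ or interpolation parameters. Since the local model near $p$ is still $\CC^n/\stab_p$ and the $\stab_p$-action preserves orientations of the relevant determinant lines (by the complex-linearity argument in the proof of \Cref{prop:orientation}), I expect this to go through, but it is the step that requires the most care; everything else is a faithful transcription of the smooth arguments.
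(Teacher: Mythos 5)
Your proposal is correct and takes essentially the same approach that the paper implicitly relies on: the paper gives no explicit proof of this theorem, merely stating ``Reasoning analogously to the manifold case, one can prove the following,'' and your write-up is a faithful elaboration of that analogy, correctly identifying that the only new feature (breakings at constant orbits over isolated singularities giving fiber products over $\bullet/\stab_p$ rather than plain products) is already handled by the compactness, gluing, and orientation results of \S3, and that the resulting moduli spaces remain manifolds so the counts land in $\ZZ$ over any coefficient ring.
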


\begin{thm}[\cite{Cieliebak02}]
Let $W'$ be obtained from attaching a subcritical handle to an exact orbifold $W$ with contact manifold boundary, then $\Phi_{\vit}:SH^*(W')\to SH^*(W)$ is an isomorphism.	
\end{thm}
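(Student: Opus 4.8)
The plan is to mimic the classical subcritical handle attachment argument of Cieliebak \cite{Cieliebak02} (see also the exposition in \cite{biased}), which in the smooth case shows that attaching a subcritical Weinstein handle does not change symplectic cohomology, and to check that the orbifold singularities — which by \Cref{prop:decomposition} are isolated and disjoint from where the surgery happens — do not interfere with any step. First I would observe that the subcritical handle is attached along an isotropic sphere in the \emph{smooth} contact boundary $\partial W$ (recall that $\partial W$ has trivial isotropy by definition of exact orbifold filling), so the entire cobordism from $W$ to $W'$, including the handle and a collar of the boundary, takes place in the smooth locus of $W'$. In particular, after completing, one can choose all auxiliary data (Hamiltonians, almost complex structures) so that near the handle they look exactly as in the smooth case, and so that a neighborhood of each orbifold singularity is a fixed local model $\CC^n/G_i$ with a type (II) Hamiltonian of the form $\sum \epsilon(x_i^2+y_i^2)+C$ as in \S\ref{SS:analytical}.

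The key steps, in order, would be: (1) Set up the Viterbo transfer map $\Phi_{\vit}\colon SH^*(W')\to SH^*(W)$ from \Cref{thm:property}(1), using the exact embedding $W\hookrightarrow W'$. (2) Compute $SH^*(W')$ by stretching the neck along $\partial W$, so that Floer cylinders for $W'$ either stay in (the completion of) $W$ or enter the handle region; by exactness and the index/action estimates of Cieliebak, the generators coming from Reeb orbits created by the handle attachment cancel in pairs (the handle contributes a Morse--Bott family, or after perturbation a pair of critical points of difference $1$ in index, whose contribution to $SH$ is acyclic). This is the heart of the argument and is where one invokes \cite{Cieliebak02}; the point is that this computation is entirely local to the smooth handle region and therefore identical to the manifold case. (3) Check that the orbifold generators $(p,(g))$ and the Chen--Ruan part $H^*_{\CR}(W)$ are untouched: the constant orbits at the singularities $p$, and the Floer/continuation cylinders among them, are supported in the fixed local models $\CC^n/G_i$ which are common to $W$ and $W'$, so the transfer map restricts to the identity on this part, compatibly with the tautological long exact sequence of \Cref{thm:SH} and the naturality in \Cref{thm:property}(1). (4) Conclude that $\Phi_{\vit}$ is an isomorphism on the subquotient $SH^*_+$ coming from orbits in the handle and boundary region (by the smooth computation) and on $H^*_{\CR}$ (by the previous step, together with the fact that subcritical handle attachment does not change $H^*(W)$, hence not $H^*_{\CR}(W)$ either), and then deduce it is an isomorphism on $SH^*$ by the five lemma applied to the morphism of exact triangles.

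The main obstacle I expect is making precise the claim that the neck-stretching and gluing analysis near the handle is genuinely unaffected by the presence of the orbifold points elsewhere. Concretely, one must ensure that the compactness statements \Cref{prop:compact} and \Cref{prop:glueI} — which in the orbifold setting involve the fiber-product-over-isotropy compactification — degenerate to the ordinary manifold compactification for the cylinders relevant to the handle computation, i.e.\ that no broken configuration relevant to the handle cancellation passes through an orbit of the form $(p,(\Id))$ with $p$ singular. This should follow, as in the last paragraph of \S\ref{SS:analytical}, from choosing $H$ to attain a strict minimum with the model form $\sum\epsilon(x_i^2+y_i^2)+C$ at each singularity and from action/index considerations forcing the handle-related breakings to occur only at the non-singular Reeb orbits created by the surgery; but it requires care to state cleanly. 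A secondary, more bookkeeping-level, obstacle is verifying the compatibility of $\Phi_{\vit}$ with \emph{both} the long exact sequence and the decomposition of $H^*_{\CR}(W)$ in \Cref{ex:CR_coeff}, so that one can genuinely run the five lemma; this is routine given the functoriality already asserted in \Cref{thm:property}, but should be spelled out.
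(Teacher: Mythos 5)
The paper gives no proof of this statement — it is recorded as a theorem with the citation \cite{Cieliebak02} in the header, and the adaptation to the orbifold setting is left implicit — so there is no argument to compare yours against directly. That said, your outline contains a genuine error in step (4). The assertion that ``subcritical handle attachment does not change $H^*(W)$, hence not $H^*_{\CR}(W)$ either'' is false: attaching an index-$k$ Weinstein handle with $1\le k<n$ changes the homotopy type of the domain. The simplest example is a $1$-handle attached to $B^{2n}$, $n\ge 2$, which produces a domain homotopy equivalent to $S^1$, so $H^1$ jumps from $0$ to $\ZZ$. Consequently $H^*_{\CR}(W')\ne H^*_{\CR}(W)$ (the twisted sectors do agree, since the singularities are disjoint from the handle, but the untwisted summand $H^*$ changes), and then the exact triangle of \Cref{thm:SH} combined with the target conclusion $SH^*(W')\cong SH^*(W)$ forces $SH^*_+(W')\ne SH^*_+(W)$ as well — the ball example makes this explicit. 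So neither of the two vertical maps you would feed into the five lemma, on $H^*_{\CR}$ and on $SH^*_+$, is an isomorphism; the five-lemma reduction is in fact circular, since the isomorphism on $SH^*$ is precisely what prevents the other two from being isomorphisms.

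The remedy is to drop steps (3)--(4) and promote step (2) to the actual proof, which is what Cieliebak does: fix a cofinal family of admissible Hamiltonians on $\widehat{W'}$ adapted to the handle decomposition, analyze the action and Conley--Zehnder index of the new Reeb orbits created in the handle region, and show that the natural chain maps between the Floer complexes of $\widehat{W}$ and $\widehat{W'}$ (and the continuation maps among them) are quasi-isomorphisms, so that $\Phi_{\vit}$ is an isomorphism in the direct limit. That analysis is local to the smooth handle model, and hence literally unchanged in the orbifold setting — this is the correct form of your observation that the singularities do not interfere, rather than the statement that $H^*_{\CR}$ or $SH^*_+$ are individually preserved. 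Your concern at the end about broken configurations passing through constant orbits at singular points is legitimate and is resolved exactly as you suggest: take the type (II) Hamiltonian to have its standard model form of \S\ref{SS:analytical} near each singularity, so that action and grading bar any handle-related breaking from reaching a singular constant orbit.
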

More generally, the same should hold for flexible handle attachments by \cite{BEE}.  
Lastly, we point out that it is a natural question that of understanding the behavior of orbifold symplectic cohomology under attachment of orbifold handles; we hope to address this problem in a sequel paper.

\begin{remark}
There are many constructions in symplectic cohomology that are orthogonal to the one in this paper, hence can be realized using classical tools. 
To name a few, the cascades constructions in \cite{cascades,divisor}, the neck-stretching arguments in \cite{exact,cieliebak18,ADC}, and the construction of invariants associated to cobordisms in \cite{cieliebak18}.  
\end{remark}

The following proposition is crucial for the geometric applications in \S \ref{S5}. 
Let $SH^*_{+,S^1}(W)$ be the positive equivariant symplectic cohomology. 
We then denote by $\eta,\eta_{S^1}$ the following compositions:
$$\eta:SH^*_+(W;\ZZ)\to H^{*+1}_{\CR}(W;\ZZ)\to H^{*+1}(W;\ZZ)\to H^{*+1}(Y;\ZZ)\to H^0(Y;\ZZ)=\ZZ,$$
$$\eta_{S^1}:SH^*_{+,S^1}(W;\ZZ)\to H^{*+1}_{\CR}(W;\ZZ)\otimes \ZZ[u,u^{-1}]/u\to H^{*+1}(Y;\ZZ)\otimes \ZZ[u,u^{-1}]/u\to H^0(Y;\ZZ)=\ZZ.$$
Here, in the definition of $\eta$ the second map is the natural projection $H^{*+1}_{\CR}(W;\ZZ)\to H^{*+1}(W;\ZZ)$ that forgets the cohomology of the twisted sectors, and the third is just the restriction map; the analogous composition in the definition of $\eta_{S^1}$ is represented by just one arrow (the second one).

\begin{prop}\label{prop:restriction}
Let $Y$ be a contact manifold and $W$ an exact orbifold filling with isolated singularities. 
Assume there is $N\in\ZZ$ and a class $x$ in $ SH^*_+(W;\ZZ)$ or $SH^*_{+,S^1}(W;\ZZ)$, such that $\eta(x)=N$ or $\eta_{S^1}(x)=N$ respectively. Then the order of the isotropy groups of points in $W$ must divide $N$.
\end{prop}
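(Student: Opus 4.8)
The plan is to exploit the tautological long exact sequence together with the geometry of the moduli spaces $B_x$ (for type (I) Hamiltonians), and to trace a class $x$ with $\eta(x)=N$ through an explicit count. First I would set up the picture: a class $x\in SH^*_+(W;\ZZ)$ with $\eta(x)=N$ means that, after choosing an admissible type (I) Hamiltonian $H_t$ and an admissible Morse function $f$, there is a cocycle representing $x$ built out of non-constant orbits, whose image under the connecting map $SH^*_+(W;\ZZ)\to H^{*+1}_{\CR}(W;\ZZ)\to H^{*+1}(W;\ZZ)\to H^{*+1}(Y;\ZZ)=\ZZ$ equals $N$. Unwinding \Cref{thm:SH} and the construction of the differential $\delta$ in \eqref{eqn:diff}, this connecting map is literally a count of moduli spaces $\cM_{x,q}$ where $q$ runs over the ``top'' critical points of $f$ near $\partial W$ that generate $H^{*+1}(Y;\ZZ)$; so $N = \sum_{q} \#\cM_{x,q}$ over those $q$, the count being the pushforward-pullback coefficient $t_*\circ s^*(1)$.

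Next I would run a neck-stretching / point-constraint argument, exactly as in \Cref{prop:restriction}'s companion statement and as in \cite{ADC}: instead of evaluating at the top critical points of $f$, one evaluates the curves in $B_x$ at a point $p$ of $W$ with nontrivial isotropy $\stab_p$. The key structural input is \eqref{eqn:B}--\eqref{eqn:B'}: $B_x$ carries an orbifold evaluation map $B_x\to W$, $u\mapsto u(0)$, and $\cM_{x,y}$ is the fiber product $B_x\times_W W^s(y)$. When $y=p$ is an orbifold singularity that is a local minimum of $f$, one has $W^s(p)=\bullet/\stab_p$ with evaluation map the (orbifold) inclusion, so $\cM_{x,p}=B_x\times_{\{p\}/\stab_p}\bullet/\stab_p$. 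Using \eqref{eqn:pushpull}, the coefficient attached to $p$ in $\delta(x)$ is $\frac{|\stab_p|}{|G|}$-type expression; more precisely, tracing through \Cref{prop:composition} and the fact that $\cM_{x,p}$ is a manifold (Proposition on compactness/gluing, \Cref{prop:glueI}), the count $t_*\circ s^*(1)$ picks up a factor of $|\stab_p|$ in the denominator relative to the manifold count of $B_x$ evaluated at a nearby smooth point $q$. Concretely: the algebraic count of $B_x$ through a generic smooth point $q\in W$ equals $N$ (this is the content of $\eta(x)=N$, using that $H^{*+1}(W;\ZZ)\to H^{*+1}(Y;\ZZ)$ and the duality pairing identify the connecting-map coefficient with such a count, moving $q$ by a homotopy inside $W$ from a collar point to an interior point); but the count of $B_x$ through the singular point $p$ must be an integer, namely $N/|\stab_p|$, because the $|\stab_p|$ lifts of each orbifold curve in the local uniformizer $\CC^n/\stab_p$ get identified in the quotient (this is precisely the mechanism illustrated in \Cref{ex:hand}, where $\#\cN_{\gamma_0,0}=1=2/|\ZZ/2\ZZ|$). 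Hence $|\stab_p|$ divides $N$.

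For the $S^1$-equivariant case with $\eta_{S^1}(x)=N$, the argument is identical after replacing $SH^*_+$ by $SH^*_{+,S^1}$ and the moduli spaces by their equivariant (cascade or parametrized) analogues; the same fiber-product-over-$\bullet/\stab_p$ phenomenon and \eqref{eqn:pushpull} apply, and integrality of the equivariant count forces $|\stab_p|\mid N$.

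I expect the main obstacle to be making rigorous the claim that ``the count of $B_x$ through the singular point $p$ equals $\frac{1}{|\stab_p|}$ times the count through a nearby smooth point.'' This requires: (i) a homotopy argument moving the point constraint from a collar region of $\partial W$ (where it computes $\eta(x)$ via the exact triangle and the restriction to $Y$) into the interior and onto $p$, controlling compactness so that no curve escapes to the cylindrical end or bubbles off (exactness and the integrated maximum principle \cite{cieliebak18} handle this, as in \Cref{prop:glueI}); (ii) the precise bookkeeping, via \Cref{prop:composition} and \eqref{eqn:pushpull}, that the orbifold fiber product $\cM_{x,p}=B_x\times_{\{p\}/\stab_p}(\bullet/\stab_p)$ has $t_*\circ s^*(1)$ equal to the manifold count of $B_x$ evaluated at $p$, lifted to the uniformizer $\CC^n$ and divided by $|\stab_p|$; and (iii) checking that $B_x$ itself, and its relevant fiber products, remain \emph{manifolds} (not orbifolds) so that the counts are honest integers — this is guaranteed by \Cref{cor:smooth} together with exactness forcing curves in $B_x$ to be non-constant, hence somewhere injective and unobstructed for generic $J$ by \Cref{prop:transverse}. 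Everything else is a routine transcription of the smooth neck-stretching machinery of \cite{ADC,cieliebak18} to the present orbifold setting, where the only new feature is the $|\stab_p|$-to-$1$ covering of orbifold curves by their lifts in the local uniformizer.
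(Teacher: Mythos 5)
Your proposal is correct and takes essentially the same route as the paper: the paper realizes your "homotopy of the point constraint" as a Stokes-theorem argument applied to the compactification of $B_x\times_W[0,1]$ along an embedded path $I\colon[0,1]\to W$ from the singular point $p$ to a boundary point $q$, and the factor $|\stab_p|$ drops out exactly as you describe, via the identity $\int_{M\times_{\{p\}/\stab_p}\{p\}}1=|\stab_p|\cdot\int_M 1$ for the manifold $M$ of curves constrained through $p$.
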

\begin{proof}
Let $q\in Y$ and $H_t$ a type (I) Hamiltonian. 
We consider, for $y$ in the set $\cP^*(H_t)$ of non-constant $1-$periodic orbits, the moduli space $\cM_{y,q}$, which is the compactification of $B_y\times_W \{q\}$. 
Following the argument in \cite[\S 3]{ADC}, 
one can see that $\eta(y)$ is defined by counting zero dimensional moduli spaces of the type $\cM_{y,q}$. 

Now let $p$ be a singularity of $W$, and fix a smooth embedding $I:[0,1]\to W$ such that $I(0)=p$ and $I(1)=q$. 
Assume for simplicity that the class $x$ as in the statement is represented by an orbit, 
also denoted by $x$ with a slight abuse of notation. 
Then we can consider the compactification of $B_x\times_W [0,1]$, which is an oriented compact orbifold with boundary. 
More precisely, its boundary is made up of three types of curves: (1) those coming from Floer breakings for $B_x$, which will cancel each other as $x$ represents a cohomology class in $SH^*_+(W;\ZZ)$;
(2) those in the fiber product with $I|_{\{1\}}$, which is $\cM_{x,q}$; 
(3) the fiber product with $I|_{\{0\}}$, i.e.\ a fiber product of the form $M\times_{\{p\}/\stab_p}\{p\}$, for a zero dimensional moduli space $M$ consisting of solutions to \eqref{eqn:B'} with positive asymptotic $x$ and $u(0)=p$. 
Therefore by Stokes' theorem, we have
$$N=\eta(x)=\int_{\cM_{x,q}}1=\int_{M\times_{\{p\}/\stab_p}\{p\}}1=|\stab_p|\cdot \int_{M}1.$$
Hence $|\stab_p|$ divides $N$. 
The proof for the $\eta_{S^1}$ case is completely analogous.
\end{proof}

\subsection{When does classical transversality fail?}\label{SS:fail}
Although we cover different features of Floer theories of symplectic orbifolds, considering only isolated singularities (which corresponds to the smooth boundary case in the exact symplectic orbifold setting) allows to avoid significant technical difficulties. 
In the following, we explain two situations where we are forced to leave the realm of geometric transversality arguments, even though sphere bubbles can be avoided. 

\subsubsection{Continuation maps for more general Hamiltonians.}\label{SSS:continuation}
In the definition of symplectic cohomology for manifolds, one only requires $H$ to have slope $a$ when $r\gg 0$. 
The additional requirement on $C^2$-smallness is to obtain the definition of positive symplectic cohomology and the tautological exact sequence. 
Moreover, having the Hamiltonian (close to) zero on $W$ resembles more closely the constructions in SFT.
However, the freedom of considering more general Hamiltonians allows to obtain a simple proof of the fact that $SH^*(\CC^n;R)=0$ for any ring $R$ \cite[(3f)]{biased}. 
In principle, such freedom, even though allowed, comes with restrictions on the choice of coefficient.

\begin{example}\label{ex:vanish}
Suppose we use $H=Ar^2$ to define the Hamiltonian-Floer cohomology on $\CC^n/G$. 
Then we can find a suitable sequence of $A_i$ going to $\infty$, such that $H_i=A_ir^2$ are non-degenerate with only periodic orbits $(0,(g))$ for $g\in G$.
Moreover, one can check that the rational grading of $(0,(g))$ goes to $-\infty$ as $i\to \infty$. 
Assuming that the continuation map from $H_i$ to $H_{i+1}$ is well-defined, one could then define a new symplectic cohomology $\mathcal{SH}$ as the direct limit. 
With this definition, $\mathcal{SH}^*(\CC^n/G)=0$. 
\end{example}

\begin{example}\label{ex:non_vanish}
We will argue that $\mathcal{SH}$ ``defined" in \Cref{ex:vanish} is \emph{not} the same object as defined in \Cref{thm:SH}.
Following the discussion in \cite[Remark 2.16]{Zho}, the contact boundary $(\mathbb{RP}^3,\xi_{\std})$ is equipped with a Boothby-Wang contact form and has an exact filling $T^*S^2$.
The Boothby-Wang contact form can be perturbed into a non-degenerate contact form with exactly two simple Reeb orbits $\gamma,\delta$, where the period of $\gamma$ is slightly smaller.
One can then consider the moduli spaces involved in the definition of positive symplectic cohomology for both $\CC^2/(\ZZ/2\ZZ)$ and $T^*S^2$. 
By neck-stretching and index computation, in addition to cylinders in the symplectization that can be identified for  $\CC^2/(\ZZ/2\ZZ)$ and $T^*S^2$, we also need to count cylinders with multiple negative punctures all asymptotic to $\gamma$ as well as augmentation curves to $\gamma$. 
In $\CC^2/(\ZZ/2\ZZ)$, there are no augmentation curves to $\gamma$, as $\gamma$ is not contractible; on the other hand, in $T^*S^2$, there are $2$ of them. 
Therefore in $\ZZ/2\ZZ$ coefficients, we have $SH^*_+(\CC^2/(\ZZ/2\ZZ),\ZZ/2\ZZ)=SH^*_+(T^*S^2;\ZZ/2\ZZ)$. 
Since the latter is infinitely generated, we have $SH^*(\CC^2/(\ZZ/2\ZZ);\ZZ/2\ZZ)\ne 0$. 
Therefore the definition in \Cref{ex:vanish} is, at least for some choice of coefficient, not equivalent to the definition in \Cref{thm:SH} in general.
\end{example}

Note that the continuation map in \Cref{ex:vanish} solves,
\begin{equation}\label{eqn:continuation}
    \partial_su+J_t(\partial_su-X_{H_{s,t}})=0
\end{equation}
for $H_{s,t}=A_{i+1}r^2$ when $s\ll 0$ and $H_{s,t}=A_{i}r^2$ when $s\gg 0$. 
As explained in \Cref{ex:Morse}, $0$ is a critical point of $H_{s,t}$ for every $s,t$. 
The trivial cylinder over $(0,(g))$ is also a solution to \eqref{eqn:continuation}. 
However the Fredholm index is smaller than $0$ by our choice of $A_i$, which makes it not cut-out transversely. 
Such phenomenon cannot be perturbed away (as the constant map is not somewhere injective)\footnote{In manifold case, we perturb away such phenomenon by perturbing $H$, since $H$ is not required to respect any symmetry. 
}. 
Such problem will be inherited in the construction of continuation maps between the construction in \Cref{thm:SH} and \Cref{ex:vanish}, as well as the homotopy between different continuation maps. 
In particular, one is forced to use abstract machineries (Kuranishi structures, virtual fundamental cycles or polyfold) to count the moduli spaces. 
Moreover, the non-transverse constant cylinder above also has isotropy in the moduli space, which potentially leads to the necessity of multi-valued perturbations. 
Indeed, it is in the homotopy that the count must be defined over a ring like $\QQ$. 
This explains the discrepancy between \Cref{ex:vanish,ex:non_vanish}. 
We will take a closer look at such phenomenon and prove \Cref{thm:quotient} in \S \ref{S4}.

\subsubsection{Pair of pants product.}\label{SSS:product} 
It is natural to expect that the symplectic cohomology of orbifold fillings carries a ring structure, such that the map $H^*_{\CR}(W)\to SH^*(W)$ in the tautological exact triangle of \Cref{thm:SH} becomes a ring map.
As usual, the ring structure should be given by counting the moduli space of pair of pants (e.g.\ \cite[\S 2.3.1]{viterbo}) as follows:
\begin{equation}\label{eqn:pants}
    (\rd u-\alpha \otimes X_{H^P})^{0,1}=0, \quad \int_{P}|\rd u-\alpha \otimes X_{H^P}|^2<\infty,
\end{equation}
where
\begin{enumerate}
    \item $P$ is a sphere punctured at $z_1,z_2,z_3$, where the punctures $z_1,z_2$ are equipped with positive cylindrical ends $\epsilon_1,\epsilon_2: [0,\infty)\times S^1\to P$, and the puncture $z_3$ is equipped with a end cylindrical end $\epsilon_3: (-\infty,0]\times S^1\to P$;
    \item $\alpha$ is a $1-$form on $P$, such that $\epsilon_i^*\alpha=\rd t$;
    \item $H^P$ is a family of Hamiltonians parametrized over $P$, all with linear slopes given by a function $b^P\colon P\to \RR$ satisfying $\rd(b^P\alpha)\leq 0$, and such that on the ends $\epsilon_i$ we have $H^P_{\epsilon_i(s,t)}=H_t^i$, where $H_t^i$ is a type (II) Hamiltonian for $1\le i \le 3$;
    \item $u:P\to \widehat{W}$ is an orbifold map.
\end{enumerate}
Then the finite energy condition implies that $u$ is asymptotic to orbits of $H^1,H^2,H^3$ near the punctures $z_1,z_2,z_3$ respectively. 
The existence of a Gromov-Floer compactification again follows from the integrated maximum principle  using $\rd (b^P\alpha)\le 0$ as in \cite[Lemma 2.3.9]{viterbo}.
Moreover, if $u$ is not constant, then it is not a singular point in the orbifold of maps from $P$ to $\widehat{W}$, hence somewhere injectivity of $u$ implies that we can choose $J$ generically to get transversality.

On the other hand, since $\rd H^P(p)=0$ if $p$ is a singularity of $W$ by \Cref{ex:Morse}, the constant maps from $P$ to $\widehat{W}$ described by $W^3\backslash W$ in \Cref{ex:constant} are always solutions of \eqref{eqn:pants}. 
Let $u\in W^3\backslash W$ be $(p,(g_1,g_2,g_3))$ for $g_1,g_2,g_3\in \stab_p\backslash \{\Id\}$ such that $g_1g_2g_3=\Id$; recall that $(g_1,g_2,g_3)$ denotes the equivalence class under the relation $(g_1,g_2,g_3)\simeq (hg_1h^{-1},hg_2h^{-1},hg_3h^{-1})$. 
Following \Cref{ex:Morse}, the asymptotics at $z_1,z_2$ are $(p,(g_1)),(p,(g_2))$ respectively, and  the one at $z_3$ is $(p,(g^{-1}_3))$ (note the asymptotic is with respect to the orientation from the negative cylindrical end). The Fredholm index of $u$ is given by 
$$\ind D_u=-2\age(g_1)-2\age(g_2)+2\age(g^{-1}_3)\le 0
\footnote{In general $\age(A)+\age(B)\ge \age(AB)$ for $A,B\in U(n)$. 
To see this, note that $\age(A)+\age(B)=\age(AB)$ in $\QQ/\ZZ$, then it is easy to show that $\age(A)+\age(B)\ge \age(AB)$ for $\age(A)$ small and the general case follows.}.
$$

When $\ind D_u=0$, $u$ is cut out transversely with isotropy $C(g_1)\cap C(g_2)$\footnote{To see this, note that $\ker D_u=\{0\}$ for any such $u$, then $D_u$ is transverse iff $\ind D_u=0$. }.
The pullback-pushforward counting of the moduli spaces of  those constant pair of pants gives rise to the Chen-Ruan product in \S \ref{SS:CR}. 
When $\ind D_u<0$, those $u$ are not cut out transversely, but equipped with an obstruction bundle $\coker D_u$. 
In the definition of Chen-Ruan product on $H^*_{\CR}(W)$, we simply disregard them, since $\dim  W^3\backslash W=0$ and there cannot be any contribution from $u$ when $\ind D_u<0$.   

However, in the definition of ring structure on $SH^*(W)$,
the above constant curves can appear as a component in the compactification of the moduli spaces of non-constant pair of pants. 
As a result, we do not know if the zero dimensional ones of these moduli spaces (which can be assumed to be transversely cut-out) are compact. 
Moreover, even if we knew that all of them are compact, it is not clear that the count gives an operator compatible with the differential. 
This being said, one does have obstruction bundles, so, at least theoretically, the gluing methods in \cite{hutchings2007gluing, hutchings2009gluing} might solve the problem. 
We point out however that this situation here is more complicated than that in ECH, as the base of the obstruction bundle here is given by abstract moduli spaces of non-constant holomorphic curves, which we seem to have no a priori control.

In the following, we will make some speculations, \emph{assuming the ring structure of $SH^*(W;\QQ)$ can be defined and has usual properties like symplectic cohomology for manifold fillings}. 
However, the following results (and stronger ones) will be obtained in \S \ref{S5} without appealing to the ring structure (at least not in a direct way, even though the ``essence'' of the argument is the same). 

\begin{example}
Let $V$ be a Liouville domain and consider the contact manifold $Y=\partial(V\times \DD)$. 
If the properties of symplectic cohomology used in  \cite[Theorem 1.1]{zhou2020} hold for orbifolds, then  any exact orbifold filling $W$ of $Y$ has vanishing rational symplectic cohomology and $SH^*_+(W;\QQ)\simeq H^{*+1}_{\CR}(W;\QQ)\to H^{*+1}(Y;\QQ)$ is injective. 
Now, if $W$ carries a singularity, then $H^{*}_{\CR}(W;\QQ)\to H^{*}(Y;\QQ)$ is never injective, hence we conclude that $W$ is smooth. 
This is a proof of Theorem \ref{thm:sphere_have_smooth_fillings} assuming the ring structure.
\end{example}

\begin{example}
Let $W$ be an orbifold filling of $(\mathbb{RP}^{2n-1},\xi_{std})$ for $n\ne 2^k$. If the properties of symplectic cohomology used in \cite{Zho} holds for orbifolds, then we have $\dim H^*_{\CR}(W;\QQ)\le 2$. 
Since we know that $W$ cannot be smooth by \cite{Zho}, it must have exactly one singularity modeled on the quotient $\CC^n/(\ZZ/2\ZZ)$ by the antipodal map, as $\ZZ/2\ZZ$ is the only group with two conjugacy classes and there is a unique unitary $\ZZ/2\ZZ-$action on $\CC^n$ giving an isolated singularity.  
\end{example}

In conclusion, following the discussion in \S \ref{SSS:continuation}, \ref{SSS:product}, as far as only exact orbifold fillings of contact manifolds are concerned, the main problem to overcome is the possible presence of constant maps concentrated in an orbifold singularity that are not cut out transversely. 
If one can avoid them, then any construction on exact symplectic manifolds can be realized in the orbifold setting with classical tools, and this is in particular the case with results in \S \ref{SS:property}. 
This being said, if we consider general orbifold fillings of contact orbifolds, the situation gets much more complicated, and
it seems that one has to deploy some abstract machinery. 
In sequel papers, we will study such questions in more general settings.

	\section{Symplectic cohomology of $\CC^n/G$}\label{S4}

\subsection{Reeb dynamics of $(S^{2n-1}/G,\xi_{\std})$.}\label{SS:Reeb}
Since $G\subset U(n)$, we have that $\alpha=\frac{1}{2}\sum (x_i\rd y_i-y_i\rd x_i)=-\frac{1}{2}r\rd r\circ I$ is $G-$invariant, where $I$ is the standard complex structure on $\bbC^n$.
Hence $\alpha$ descends to a contact form $\alpha_{\std}$ on $(S^{2n-1}/G,\xi_{\std})$.
We now recall the description of the Reeb dynamics of $\alpha_{\std}$ from \cite{McKay}.

For $l>0\in \RR$ and $g\in V$, we define 
$$V_{g,l}=\left\{v\in \CC^n\left|g(v)=e^{l\i}v\right.\right\}.$$
It is easy to check that $h(V_{g,l})=V_{hgh^{-1},l}$ for $h\in G$. 
Since the Reeb flow of $\alpha$ on $S^{2n-1}$ is given by $\phi_t(z)=e^{t\i}z$, we have the following properties of the Reeb dynamics of $\alpha_{\std}$ on $S^{2n-1}/G$.

\begin{enumerate}
    \item Let $B_{g,l}=\pi_G(V_{g,l}\cap S^{2n-1})$, where $\pi_G$ is the quotient map $S^{2n-1}\to S^{2n-1}/G$. 
    Then, $B_{g,l}=(V_{g,l}\cap S^{2n-1})/C(g)$ is a submanifold of $S^{2n-1}/G$ of dimension $2\dim_{\CC} V_{g,l}-1$. 
    Moreover, $B_{g,l}=B_{hgh^{-1},l}$,
    hence, for $(g)\in \Conj(G)$, one can define $B_{(g),l}$ to just be $B_{g,l}$.
    \item $\displaystyle \sqcup_{(g),l} B_{(g),l}$ is the space of all \emph{parametrized} Reeb orbits of $\alpha_{\std}$. 
    For $z\in B_{(g),l}$, the time $l$ Reeb flow from $z$ is an orbit with homotopy class $(g)\in \Conj(G)=\pi_0(\cL (S^{2n-1}/G))=\pi_0(\cL( \CC^n/G))$ (recall \Cref{cor:pi0_free_loop_space}), where $\cL$ denotes the loop space.
    \item  The generalized Conley-Zehnder index\footnote{Similarly to \Cref{ex:U}, since $c^{\QQ}_1(\CC^n/G)=0$ and $\pi_0(\cL \CC^n/G)$ is torsion, the generalized Conley-Zehnder index is also well-defined over $\QQ$.} of the family $B_{(g),l}$ is given in \cite[Theorem 2.6]{McKay} by 
    $$\mu_{\CZ}(B_{(g),l})=n-2\age(g)+2\sum_{l'<l}\dim_{\CC} V_{g,l'}+\dim_{\CC}V_{g,l}.$$
    Then we have $\mu_{\CZ}(B_{(g),l+2k\pi})=\mu_{\CZ}(B_{(g),l})+2kn$. 
\end{enumerate}

In the following, we will consider Hamiltonians $H$ on $\CC^n/G$ which are type (I) and autonomous, and such that $H=0$ for $r<1$ and $H=f(r^2)$\footnote{Note that $r^2$ is the cylindrical coordinate on $\CC^n/G$.}  on $r\ge 1$, with $f''(x)\ge 0$ for $x\ge 1$ and $f'(x)=a \notin \frac{2\pi \NN}{|G|}$ for $x>1+\epsilon$ for some $\epsilon>0$.
The non-constant Hamiltonian orbits of $H$ are parametrized by $B_{(g),l}$ for $l<a$. 
In particular,the symplectic cohomology can be equivalently defined using the cascades construction, c.f.\ \cite{cascades,divisor}, i.e.\ we need to choose a Morse function $f_{(g),l}$ for each $B_{(g),l}$ and the generators from non-constant orbits of $H$ are replaced by critical points of $f_{(g),l}$. 
The Conley-Zehnder index of a critical point $x$ of $f_{(g),l}$ is given by 
\begin{equation}\label{eqn:CZ}
    \mu_{CZ}(x)=n-2\age(g)+2\sum_{l'<l}\dim_{\CC}V_{g,l'}+1+\ind(x),
\end{equation}
where $\ind(x)$ is the Morse index of $x$. With such conventions, the flow lines in $B_{(g),l}$ are \emph{negative} gradient flow lines of $f_{(g),l}$ in the cascades construction. On the other hand, if we use $f_{(g),l}$ to perturb the Hamiltonian following \cite{cascades}, then each critical point of $x$ gives rise to a non-degenerate Hamiltonian orbit $\gamma_x$, whose Conley-Zehnder index is given by \eqref{eqn:CZ}. The symplectic action of $\gamma_x$ is greater than that of $\gamma_y$ iff $f_{(g),l}(x)<f_{(g),l}(y)$.

\subsection{Null-homotopy of $|G|\cdot\Id$.}
Let $\CC^n_*$ denote $\CC^n\backslash\{0\}$. 
Among all Hamiltonian orbits of $H$, the most special ones are those in the family $B_{(\Id),2\pi}$, which are lifted from $\CC^n_*/G$ to $\CC^n_*$ by the family of simple orbits of the Hamiltonian $\pi^*_GH$ on $\CC^n_*$. 
Now, one can arrange that $f_{(\Id),2\pi}$ has a unique local minimum, which will be denoted by $\gamma_0$ (we can think of it as the contractible non-constant Hamiltonian orbit with maximum symplectic action after perturbation induced from $f_{(\Id),2\pi}$).
\begin{prop}\label{prop:curve}
When $n\ge 2$, we have $\# \cM_{\gamma_0,(0,(\Id))}=1$ (for both type (I) and (II) Hamiltonians).
\end{prop}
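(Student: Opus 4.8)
The plan is to reduce the count of $\cM_{\gamma_0,(0,(\Id))}$ to a count that already appears in the smooth setting, via a neck-stretching argument along $S^{2n-1}$ together with the observation that the relevant curves and their lifts can be made explicit. First I would note that, by the discussion of type (I) moduli spaces in \S\ref{SS:analytical}, an element of $\cM_{\gamma_0,(0,(\Id))}$ is (the $\RR$-quotient of) a finite-energy Floer cylinder $u$ with positive asymptotic $\gamma_0$ and whose negative limit set is contained in the smooth interior, which after removal of singularity is a map $u\colon\CC\to\widehat{W}$ with $u(0)=0$; since $(0,(\Id))$ has trivial isotropy on the relevant component (it is the constant orbit at the smooth point $0$ of $\CC^n$, sitting in $\cC^1_{(\Id)}\cong\cC$), the orbifold fiber product $B_{\gamma_0}\times_W W^s(0)$ is just $B_{\gamma_0}$ evaluated at $0$, so there is no isotropy correction to worry about here, unlike the $(0,(g))$, $g\ne\Id$, or $q$-a-singular-point cases of \Cref{ex:hand}. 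The key structural point is that, because $\gamma_0$ is lifted from $\CC^n_*/G$ by a \emph{simple} orbit of $\pi_G^*H$ on $\CC^n_*$ in the family $B_{(\Id),2\pi}$, any such $u$ lifts — by \Cref{prop:global} and the contractibility of $\gamma_0$ in $\CC^n/G$ (recall $[\,S^1;\CC^n/G\,]=\Conj(G)$ and $(\Id)$ is the class of $\gamma_0$) — to a genuine Floer cylinder $\widetilde u\colon\CC\to\widehat{\CC^n}$ with $\widetilde u(0)=0$ and positive asymptotic one of the lifts of $\gamma_0$, and conversely every lift descends; so $\#\cM_{\gamma_0,(0,(\Id))}$ equals the count of the corresponding $\CC^n$-moduli space divided by $|G|$ (the $|G|$ lifts of the asymptotic $\gamma_0$).

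Next I would compute the upstairs count. Here the target is $\widehat{\CC^n}$ (the cone on $S^{2n-1}$), $\gamma_0$ lifts to the contractible degree-$1$ Reeb orbit of maximal action in the subcritical setting, and the moduli space in question is precisely the one computed in the $\CC^n$-case: it is well known (and can be taken from the references on $SH^*(\CC^n)$, e.g.\ the computation underlying \cite[(3f)]{biased}, or directly from \cite{Zho}, cf.\ the discussion in \Cref{ex:hand} and \Cref{ex:non_vanish}) that the analogous moduli space $\cN_{\widetilde\gamma_0,0}$ of holomorphic planes through $0$ asymptotic to a fixed lift $\widetilde\gamma_0$ has algebraic count $1$, realized geometrically by a single complex line (the $z_1$-plane in suitable coordinates, for the ellipsoid normalization), and for $n\ge2$ this is the only such curve by an automatic transversality / index argument. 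Summing over the $|G|$ choices of lift of $\gamma_0$ gives algebraic count $|G|$ upstairs, hence $\#\cM_{\gamma_0,(0,(\Id))}=|G|/|G|=1$ downstairs. I would phrase this cleanly using the formalism of \Cref{ex:hand}: $M_{\gamma_0,0}=\cN_{\gamma_0,0}\times_{\{0\}/\stab_0}W^s(0)$, but since $0$ is a smooth point $\stab_0=\{\Id\}$ and $W^s(0)$ is a point, so $M_{\gamma_0,0}=\cN_{\gamma_0,0}$, and $\cN_{\gamma_0,0}$ is the quotient of the $|G|$-element set of lifts by the $G$-action permuting them freely, giving a single orbifold curve with the geometric count $1$ given by the image of the complex line through $0$.

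For the type (II) statement I would invoke the isomorphism between the type (I) and type (II) constructions from the proof of \Cref{thm:SH} (cf.\ \cite[Proposition 2.10]{ADC}): under this isomorphism $\gamma_0$ and the constant orbit at $0$ correspond, and the linear-part / index bookkeeping matches, so the relevant $0$-dimensional moduli space for a type (II) Hamiltonian has the same algebraic count; alternatively one runs the same lift-and-count argument directly with the type (II) Hamiltonian $\sum_i\epsilon(x_i^2+y_i^2)$ near $0$, where $(0,(\Id))$ is an honest non-degenerate Morse minimum with trivial isotropy. I expect the \textbf{main obstacle} to be the transversality/compactness bookkeeping of the \emph{downstairs} moduli space: one must check that no extra components appear in its Gromov–Floer compactification — in particular that no breaking occurs at the intermediate orbit $(0,(g))$ with $g\ne\Id$ (which would bring in the subtle lift-multiplicity phenomena of \Cref{rmk:SFT} and the equivariant gluing of \S\ref{SS:analytical}), and that the constant-sphere-bubble-at-$0$ degeneration is excluded by exactness. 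This is handled by the action/index estimates of \S\ref{SS:Reeb}: any such breaking would force a Floer cylinder between $\gamma_0$ (or an intermediate non-constant orbit) and $(0,(g))$ of the wrong index for $C^2$-small data, exactly as in the argument ruling out breakings at $(p,(g))$, $g\ne\Id$, in the proof of \Cref{prop:compact}, so that for $n\ge2$ the compactified moduli space is already a compact zero-manifold and its count agrees with the explicit geometric count $1$.
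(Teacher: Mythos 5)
Your approach differs from the paper's in a meaningful way: you try to compute $\cN_{\gamma_0,0}$ directly by lifting Floer planes through $0$ to $\widehat{\CC^n}$ and dividing by $|G|$ — essentially generalizing what the paper does by hand in \Cref{ex:hand} for the ellipsoid quotient by $\ZZ/2\ZZ$. The paper's proof instead proceeds indirectly: it first establishes $\#\cM_{\gamma_0,q}=|G|$ for a \emph{smooth} point constraint $q$ (via neck-stretching along a push-in $Y'$ of $\partial\BB/G$ plus the covering trick of \cite[Prop.\ 2.21 Step 3]{Zho}), and then transports the constraint from $q$ to the singular point $0$ via the Stokes-theorem cobordism of \Cref{prop:restriction}, picking up the factor $|\stab_0|=|G|$. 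Both arguments ultimately rest on relating the count to one upstairs in a smooth cover; the paper does so in the neck region $\RR\times S^{2n-1}$, you do so in the whole completion $\widehat{\CC^n}$.

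There are, however, genuine errors in your write-up. First, you assert that ``$(0,(\Id))$ has trivial isotropy on the relevant component.'' This is false: in $\cC^1_{(\Id)}\cong\cC=G\ltimes\CC^n$, the constant loop at $0$ is just the point $0\in\CC^n/G$, which has isotropy $C(\Id)\cap\stab_0=G$. The identification $\cM_{\gamma_0,(0,(\Id))}=B_{\gamma_0}\times_W W^s(0)=\cN_{\gamma_0,0}$ does hold, but because the isotropies of $\{0\}/G$ and of $W^s(0)=\bullet/G$ \emph{cancel} (the evaluation $W^s(0)\to\{0\}/G$ is an isomorphism of orbifolds, as spelled out at the end of \Cref{ex:hand}), not because they are trivial. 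Framing this as ``no isotropy, unlike the $q$-a-singular-point case of \Cref{ex:hand}'' inverts the logic — \Cref{ex:hand} \emph{is} precisely this case. Second, the claim $\#\cN_{\widetilde\gamma_0,0}=1$ in $\widehat{\CC^n}$ for the lifted ($G$-invariant) data is asserted as ``well known'' and backed up by ``automatic transversality,'' which is a $4$-dimensional mechanism and does not apply for $n\ge 3$; what one actually needs here is the same neck-stretching / action / grading bookkeeping the paper invokes, or the independence-of-$J$ argument feeding into $SH^*(\CC^n)=0$. Third, the type (II) case is dispatched by ``invoke the isomorphism,'' whereas the paper does a concrete Stokes/continuation-map computation — in particular identifying $\cH_{(0,(\Id)),(0,(\Id))}$ with the constant $\bullet/G$ via $S^1$-invariance — to transport the count from type (I) to type (II). So the blueprint is right and largely parallel to the paper, but the gaps listed above would need to be filled to make it a complete proof.
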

Here $\cM_{\gamma_0,(0,(\Id))}$ can either be understood as the moduli space in \S \ref{S3} for $\gamma_0$ a Hamiltonian orbit from a perturbation induced by $f_{(\Id),2\pi}$, or as its cascades version.
Note that \Cref{prop:curve} was worked out by hand in \Cref{ex:hand} for some special cases. 
We also point out that, since non-constant Hamiltonian orbits of $H$ with larger symplectic action (i.e.\ smaller contact action) are not contractible,
there are no multiple level structure in the cascades. 
In particular, $\cM_{\gamma_0,(0,(\Id))}$ consists of Floer curves $u:\CC\to \CC^n/G$ such that $u(0)=0$ and $\displaystyle\lim_{s\to \infty} u(s,0)=\gamma_0\in B_{(\Id),2\pi}$, since $\gamma_0$ corresponds to the minimum of $f_{(\Id),2\pi}$.

\begin{proof}[Proof of \Cref{prop:curve}]
We first prove the claim for the type (I) Hamiltonians. Let $q$ be a smooth point of $\BB/G$, where $\BB=\BB_1$ is the unit ball in $\CC^n$. We consider the moduli space
\begin{equation}\label{eqn:moduli_q}
    \cM_{\gamma_0,q}=\left\{u:\CC\to \CC^n/G\left|\partial_su+J_t(\partial_tu-X_H)=0,\displaystyle \lim_{s\to \infty}u=\gamma_0, u(0)=q \right.\right\}\footnote{Or $\displaystyle \lim_{s\to \infty}u(s,0)=\gamma_0\in B_{(\Id),2\pi}$ in the cascades version.}.
\end{equation}
Since $\gamma_0$ has maximum symplectic action (or the minimum of $f_{(g),l}$) among those contractible orbits, we know that $\cM_{\gamma_0,q}$ is compact of dimension zero for generic $J_t$, and that its algebraic count is independent of $q$. 
Next by applying neck-stretching to a push-in $Y'$ of $\partial \BB/G$ such that $q$ is outside of $Y'$ (i.e.\ on the non-compact component of the complement of $Y'$), one can conclude that $\cM_{\gamma_0,q}$ is also contained outside of $Y'$, as all shorter Reeb orbits are non-contractible. 
Now we can apply the same covering trick in \cite[Prop 2.12 Step 3]{Zho} to obtain that $\#\cM_{\gamma_0,q}=|G|$. 
Then, following the same argument as in \Cref{prop:restriction}, we get $\#\cM_{\gamma_0,(0,(\Id))}=1$.

To prove the statement for a type (II) Hamiltonian $H_t$, we consider the continuation map from a type (II) construction using $H_t$ to a type (I) construction using $f=H_t|_{\BB/G}$ as in the proof of \Cref{thm:SH}, i.e.\ as in \cite[Proposition 2.10]{ADC}.
We use $\tilde{\cM}_{\gamma_0,(0,(\Id))}$ to denote the counterpart for a type (II) Hamiltonian.  
Let also $\cH_{x,y}$ denote the moduli space from the definition of the continuation map from $x$ to $y$, as in \cite[Proposition 2.10]{ADC}.
By using Stoke's theorem and considering the boundary of $\cH_{\gamma_0,(0,(\Id))}$, we get the relation 
\begin{equation}\label{eqn:stokes}
    \int_{\cH_{\gamma_0,\gamma_0}\times \cM_{\gamma_0,(0,(\Id))}} 1=\int_{\tilde{\cM}_{\gamma_0,(0,(\Id))}\times_{\{0\}/G} \cH_{(0,(\Id)), (0,(\Id))}}1.
\end{equation}
Now, $\cH_{\gamma_0,\gamma_0}$ consists of one point, the constant cylinder. 
On the other hand, we define
$$
\tilde{B}_{(0,(\Id))}:=\left\{u:\CC\to \CC^n/G\left|\partial_su+J_t(\partial_t u-\rho(s)X_{H_t})=0, \displaystyle \lim_{s\to\infty} u = (0,(\Id)), u(0)\in \BB_{1-\delta}/G \right.\right\}\footnote{This is a slight abuse of notion, as $\tilde{B}_{(0,(\Id))}$ is not a set but rather a groupoid, as we are considering maps to an orbifold.},$$
i.e. the analogue of \eqref{eqn:B'}  without quotient by $\RR$ for the homotopy of Hamiltonians $\rho(s)H_t$, where $\rho(s)=1$ for $s\gg 0$ 
and $\rho(s)=0$ for $s\ll 0$. 
Then $\cH_{(0,(\Id)), (0,(\Id))}$ is defined as the fiber product of $\tilde{B}_{(0,(\Id))}$ with the stable orbifold for $f$ of the point $0$ over $\CC^n/G$. Now since the stable orbifold of $0$ is also $\bullet/G$ corresponding to the singular point $0$,  we have 
 $$\cH_{(0,(\Id)), (0,(\Id))}=\left\{ u\in \tilde{B}_{(0,(\Id))}, u(0)=0\right\}.$$ 
Since we can achieve transversality for $\cH_{(0,(\Id)), (0,(\Id))}$ for $J_t$ time-independent on $\BB/G$ and then curves in $\cH_{(0,(\Id)), (0,(\Id))}$ must be $S^1$-invariant as the expected dimension is $0$. 
Hence the equation becomes $\partial_su+\rho(s)\nabla f=0,\partial_tu=0$, and we see that $\cH_{(0,(\Id)), (0,(\Id))}$ is also $\bullet/G$ consisting only of the constant map resting at $0$. As a consequence, we have that \# $\tilde{\cM}_{\gamma_0,(0,(\Id))}$ is also $1$ by \eqref{eqn:stokes}.
\end{proof}

\begin{proof}[Proof of \Cref{thm:quotient}]
Since the other potential components (i.e.\ besides $(0,(id))$) of the differential of $\gamma_0$ must be orbits with bigger symplectic action, which are all non-contractible, we in fact have $\delta(\gamma_0)=|G|(0,(\Id))$. 
Notice that, as $(0,(\Id))$ should represent the unit in the expected ring structure on $SH^*(\CC^n/G;\QQ)$, assuming the ring structure is well defined we would immediately get that $SH^*(\CC^n/G;\QQ)=0$.
This being said, one can understand the vanishing phenomenon without appealing to the full product structure.

In order to do so, we look at the moduli space $\cM_{\gamma_0,x,y}$ in the (tentative) definition of ring structure in \Cref{SSS:product} with type (II) Hamiltonians $H_1,H_2,H_3$ associated to the three punctures, so that $H_1$ observes $\gamma_0$ as an orbit, i.e.\ that its asymptotic slope will make $\gamma_0$ a generator. 
Assuming transversality for all relevant moduli spaces, by counting $\cM_{\gamma_0,x,y}$, we get a map $\psi:C^*(H_2)\to C^{*-1}(H_3)$ such that $\psi(x)=\sum_{\cM_{\gamma_0,x,y}} t_*\circ s^*(1)y$.
Similarly, we define $\phi:C^*(H_2)\to C^{*}(H_3)$ by counting $\cM_{(0,(\Id)),x,y}$. 
Then, we have
\begin{equation}\label{eqn:homotopy}
    |G|\phi=\delta \circ \psi-\psi\circ \delta,
\end{equation}
i.e.\ $\psi$ is a null-homotopy of $|G|\phi$. On the other hand, $\phi$ is homotopic to the continuation map $\iota\colon C^*(H_2)\to C^*(H_3)$, e.g.\ by the TQFT formalism in \cite{MR3065181}. To realize all these moduli spaces and the associated algebraic relations, note that in the compactification of $\cM_{\gamma_0,x,y}$ and $\cM_{(0,\Id),x,y}$, the only component involving a constant map concentrated in the singularity is given by the conjugacy class of $(\Id,g,g)$, which is cut out transversely. 
Therefore, we are free from the issues in \S \ref{SSS:product} and transversality can be obtained through perturbing the almost complex structure. In particular, $|G|\iota$ is null homotopic for any $H_2$ and $H_3$ such that $H_3>H_1+H_2$, hence $SH^*(\CC^n/G;R)=0$ if $|G|$ is invertible in $R$.

From the discussion above, it only remains to prove that $SH^*(\CC^n/G;R)\ne 0$ if $|G|$ is not invertible in $R$. 
Note that $\gamma_0$ has cohomological grading $-1$, which is the unique maximum among all possible contractible generators, since we arranged that $f_{(\Id),2\pi}$ has unique local minimum. Therefore, if $\delta(x)=(0,(\Id))$ for some $x$, then $x$ must in fact be $r\cdot \gamma_0$ for some $r\in R$, which then gives $r\cdot \vert G\vert =1$ in $R$. In particular, if $(0,(\Id))$ is trivial in cohomology then $|G|$ is invertible, which then concludes the proof.
\end{proof}

\begin{remark}
For $0<\epsilon\ll 1$, consider the continuation map $\phi_{12}$ from $H_1=\epsilon r^2$ to $H_2=(2\pi+\epsilon)r^2$, and then $\phi_{23}$ from the latter to a type (II) Hamiltonian $H_3$ with slope $(2\pi+2\epsilon)$.
Notice that the orbits of $H_2$ are constant loops $(0,(g))$ with cohomological grading $2\age(g)+2n$. 
As explained in \Cref{ex:vanish}, the continuation map $\phi_{12}$ from $H_1\to H_2$ should be zero, as the grading regions do not intersect, even though the relevant moduli spaces are not cut-out transversely.
The continuation map $\phi_{23}$ from $H_2$ to $H_3$ has no transversality issue. 

Next we consider the composition of those two continuation maps. 
Following the usual formalism of Hamiltonian-Floer cohomology, the composition should be homotopic to the continuation map $\phi_{13}$ from $H_1$ to $H_3$. 
Let then $\eta$ denote the homotopy. 
Since $\phi_{13}((0,(\Id)))=(0,(\Id))$ and $\phi_{23}\circ \phi_{12}((0,(\Id)))=0$ by degree reasons, we must have $\delta\circ \eta((0,(\Id)))=(0,(\Id))$. 
Therefore $\eta(0,(\Id))$ can only be $\frac{1}{|G|}\gamma_0$, i.e.\ the homotopy can only be defined on $\QQ$. 
This can be explained as follows. 
The continuation map $\phi_{12}$ involves some non-transversely cut-out moduli spaces, which are also orbifold points in the orbifold of maps.
These will be inherited in the construction of $\eta$. 
When we apply perturbative virtual techniques (Kuranishi, polyfold) to the problem, since we start from orbifold points, the perturbed moduli space tends to be an orbifold, or a weighted manifold/orbifold.
On the other hand, since the target end of the homotopy are not loops with isotropy, we cannot compensate the isotropy/weight of the moduli spaces by the isotropy of the orbit, hence the structural map is very likely to be only rational.
\end{remark}

	\section{Restrictions of singularities}\label{S5}
In this section we prove \Cref{thm:sphere_have_smooth_fillings}--\ref{thm:unique}. 
In view of \Cref{prop:restriction}, in order to obtain restrictions of singularities for all possible exact orbifold fillings, we need to find an integer $N$ that is in the image of $\eta$ or $\eta_{S^1}$ for \emph{any} exact orbifold filling.

\subsection{Order of singularities.}
\begin{proof}[Proof of \Cref{thm:sphere_have_smooth_fillings}]
    Following the proof of \cite[Theorem 1.1]{zhou2020}, one can see that there exists a Hamiltonian which has a product form on the cylindrical end of the completion as in \cite{zhou2020}, and such that there is a class $x\in SH^*_+(V\times \DD)$ with $\eta(x)=1$, and moreover both properties are independent of exact orbifold fillings/augmentations. 
    Then, given an exact orbifold filling $W$ of $\partial(V\times\DD)$, by \Cref{prop:restriction},  for any $p\in W$ the order of $\stab_p$ must divide $1$.
    That is, every exact orbifold filling is in fact smooth.
\end{proof}

\begin{proof}[Proof of \Cref{thm:order_Brieskorn}]
By the proof of \cite[Theorem A]{dilation} and \cite[Remark 5.8]{dilation}, in the case of the Milnor fiber $W_{k,n}$ of $x_0^k+\ldots+x_n^k=0$, we have
the following:
\begin{itemize}
    \item the image of $\eta_{S^1}$ contains $k!$ if $k\le n$;
    \item the image of $\eta_{S^1}$ contains $(k-1)!$ if either $k<\frac{n+1}{2}$ or  $k=\frac{n+1}{2}$ and $k$ is not divided by a non-trivial square.
\end{itemize}
We then need to show that such phenomenon persists on any exact orbifold fillings when $k<n$.

First note that, arguing as in the proof of \cite[Theorem A]{ADC}, one can see that the class $x\in SH^*_{+,S^1}(W_{k,n};\ZZ)$ contributing to the non-trivial image of $\eta_{S^1}$ is from the orbit with minimal period in the Boothby-Wang form. 
In particular, a neck-stretching argument shows that there is no action-room for the differential of $x$ to depend on fillings/augmentations, hence $x$ is closed in $SH^*_{+,S^1}(W;\ZZ)$ for any exact orbifold filling $W$. 

Next, we need to show that $\eta_{S^1}(x)$ is independent of fillings. 
This is only true when $k<n$. 
More precisely, when we apply neck-stretching to compute $\eta_{S^1}(x)$ as in \cite{dilation,ADC}, by action reasons, the only possible configuration not completely contained in the symplectization is a curve with one negative puncture. 
But then the dimension is negative, because when $k<n$ the SFT grading of any orbit is positive. 
(This is not the case for $k=n$, as the minimal Reeb orbit, after perturbing the Boothby-Wang form, has SFT grading $0$ and $\eta_{S^1}(x)$ may depend on fillings/augmentations in general.)
\end{proof}

%{\color{blue}
%BACKUP: 
%\begin{proof}[Proof of \Cref{thm:order_Brieskorn}]
%By the proof of \cite[Theoerm A]{dilation} and \cite[Remark 5.8]{dilation}, for the Milnor fiber $W_{k,n}$ of $x_0^k+\ldots+x_n^k=0$ with $k\le n$, we have $\eta_{S^1}$ hits $k!$, when $k<\frac{n+1}{2}$ or if $k=\frac{n+1}{2}$ but $k$ is not divided by a non-trivial square, we have $\eta_{S^1}$ hits $(k-1)!$. We need to show that such phenomenon persists for any exact orbifold fillings when $k<n$. First note the class $x\in SH^*_{+,S^1}(W_{k,n};\ZZ)$ contributing to the non-trivial image of $\eta_{S^1}$ is from orbits of the minimal period in the Boothby-Wang form. In particular, there is no action room for the differential of $x$ to depend on fillings/augmentations by neck-stretching, hence $x$ is closed in $SH^*_{+,S^1}(W;\ZZ)$ for any exact orbifold filling $W$. Next, we need to show that $\eta_{S^1}(x)$ is independent of fillings. This is only true when $k<n$, more precisely, when we apply neck-stretching to compute $\eta_{S^1}(x)$ as in \cite{ADC,dilation}, by action reasons, the only possible configuration not completely contained in the symplectization is a curve with one negative puncture. But then the dimension is negative, as when $k<n$, the SFT grading of any orbit is positive. This is not the case for $k=n$, as the minimal Reeb orbit (after perturbing the Boothby-Wang form) has SFT grading $0$ and $\eta_{S^1}(x)$ depends on fillings/augmentations in general.
%\end{proof}
%}

\begin{proof}[Proof of \Cref{cor:order_dilation}]
By the proof \Cref{thm:order_Brieskorn}, for the Milnor fiber $T^*S^3$ of $x_0^2+x_1^2+x_2^2+x_3^2=0$ one can find a class $x\in SH^*_{+,S^1}(T^*S^3;\ZZ)$ that is represented by Reeb orbits perturbed from the minimal period orbits of the Boothby-Wang form and such that  $\eta_{S^1}(x)=1$. 
Moreover, the SFT grading of orbits with smaller period is positive. 
The first property guarantees that $x$ is closed in $SH^*_{+,S^1}(W;\ZZ)$ for any exact orbifold filling $W$ of $ST^*S^3$, while the second property guarantees that $\eta_{S^1}(x)=1$ for any exact orbifold filling. 
It is clear that both conditions are preserved for the product and Lefschetz fibration, see \cite[Lemma 7.2]{MR2929070} for example. 
We point out that the hypothesis on the vanishing of first Chern class in the statement of \Cref{cor:order_dilation} is used to maintain the effectiveness of the dimension count in the neck-stretching, by using the SFT grading to prove that $\eta_{S^1}(x)=1$ for any filling/augmentation. 
Then the claim follows from \Cref{prop:restriction}.
\end{proof}

\begin{proof}[Proof of \Cref{thm:order}]
We prove the second claim first. We will follow the setup in \cite{Zho}.
In this argument, we follow the notation in \cite[\S 3]{Zho}. 
From the proof of \cite[Proposition 3.1]{Zho}, for any exact orbifold filling $W$, we can find integers $c_{i.j}$ such that

$$\delta_+(x)=0, \text{ and }  \eta(x)=k^n, \text{ where } x:=k^{n-1}\check{\gamma}_0^k+\sum_{i=1}^{k-1}\sum_{j=1}^{k-i}c_{i,j}\check{\gamma}^i_j,$$
and $\delta_+$ is the differential of the positive symplectic cohomology. 
Therefore, $|\stab_p|$ divides $k^n$ by \Cref{prop:restriction}.

To prove the first claim, we will use equivariant symplectic cohomology instead. 
We denote by 
\[
\{\delta^0_+:=\delta_+,\delta^1_+,\ldots,\delta^k_+,\ldots \}
\]
the $S^1$ structure on the positive symplectic cochain complex as in \cite[{section 2}]{dilation}. 
Then for an exact orbifold filling $W$ of the lens space,  we claim the following properties for $l\le k$:
\begin{enumerate}
    \item\label{delta1}  $\delta^0_+(\check{\gamma}^l_0)\in \langle \hat{\gamma}^1_0,\ldots,\hat{\gamma}^{l-1}_0\rangle$ by \cite[Proposition 3.1 Step 4]{Zho},
    where $\langle A, B,\ldots \rangle$ denotes the $\ZZ$-module generated by $A,B,\ldots$.
    \item\label{delta2}  $\delta_+^1(\check{\gamma}^l_0)-l\hat{\gamma}^l_0\in \langle \hat{\gamma}^1_0,\ldots,\hat{\gamma}^{l-1}_0 \rangle$.
    \item\label{delta3} For $m\ge 2$, $\delta_+^m(\check{\gamma}^l_0)\in  \langle \hat{\gamma}^1_0,\ldots,\hat{\gamma}^{l-1}_0 \rangle$.
\end{enumerate}
The presence of the leading term $l\hat{\gamma}^l_0$ of $\delta_+^1(\check{\gamma}_0^l)$ in \eqref{delta2} follows from \cite[Lemma 3.1]{bourgeois2017s}, 
and the fact that the rest is in the claimed $\ZZ-$module follows the same action and homotopy class argument in \cite[Proposition 3.1 Step 4]{Zho}. 
For \eqref{delta3}, the fact that $\delta_+^m(\check{\gamma}^l_0)\in  \langle \hat{\gamma}^1_0,\ldots,\hat{\gamma}^{l}_0 \rangle$ follows from the same argument as in \cite[Proposition 3.1 Step 4]{Zho}. 
Moreover if $\delta_+^m(\check{\gamma}^l_0)$ contains some component in $\hat{\gamma}^l_0$, then the moduli space for $\delta_+^m$ must be contained in the symplectization after neck-stretching, since there is no action room to develop negative punctures. 
Therefore the moduli space for $\delta_+^m$ has expected dimension $2m-2>0$ when $m\ge 2$. Then \eqref{delta3} follows.

The differential for the $S^1$-equivariant positive symplectic cohomology $\delta_+^{S^1}$ is defined as $\sum_{i=0}^\infty u^i\delta^i_+$ on $C_+\otimes \ZZ[u,u^{-1}]/\ZZ[u]$, where $C_+$ is the cochain complex for the regular positive symplectic cohomology. Now, write $\delta_+(\check{\gamma}^k_0)=\sum_{i=1}^{k-1}a_i \hat{\gamma}^i_0$. Let $a'_i\in \ZZ$ be $\frac{a_i(k-1)!}{i}$, 
then,
\begin{eqnarray*}
\delta^{S^1}_+\left((k-1)!\check{\gamma}^k_0-\sum_{i=1}^{k-1} a'_i\check{\gamma}^i_0u^{-1}\right) & \stackrel{\text{by } \eqref{delta2}}{=} &\sum_{i=1}^{k-1}a_i(k-1)!\hat{\gamma}_0^i-\sum_{i=1}^{k-1} ia'_i\hat{\gamma}_0^i-\delta_+(\sum_{i=1}^{k-1}a'_i\check{\gamma}_0^i)u^{-1} \\
&  \stackrel{\text{by } \eqref{delta1}}{\in} & \left\langle \hat{\gamma}^1_0,\ldots,j!\hat{\gamma}^j_0,\ldots, (k-2)!\hat{\gamma}^{k-2}_0\right\rangle u^{-1},
\end{eqnarray*}
as $a'_i$ can be divided by $(i-1)!$. Similarly by \eqref{delta1}-\eqref{delta3}, we can then find $b_i$ for $1\le i \le k-2$, such that $b_i$ is divided by $(i-1)!$ and 
$$\delta^{S^1}_+\left((k-1)!\check{\gamma}^k_0-\sum_{i=1}^{k-1}a'_i\check{\gamma}^i_0u^{-1}-\sum_{i=1}^{k-2}b_i\check{\gamma}^i_0u^{-2}\right)\in \left\langle \hat{\gamma}^1_0,\ldots,j!\hat{\gamma}^j_0,\ldots,(k-3)!\hat{\gamma}^{k-3}_0\right\rangle\otimes \langle 1, u^{-2}\rangle.$$
Continuing the argument, we conclude that there are integers $c_{i,j}$ for $i\le k-j$ and $1\le j\le k-1$ such that 

$$\delta^{S^1}_+(x)=0, \text{ where } x:=(k-1)!\check{\gamma}_0^k+\sum_{j=1}^{k-1}\sum_{i=1}^{j}c_{i,j}\check{\gamma}^{i}_0u^{-j}.$$

Lastly, we claim that $\eta_{S^1}(x)=k!$; this would conclude the proof by \Cref{prop:restriction}. Following the notion in \cite{dilation}, let $\{\delta_{+,0}^m\}$ denote the part of $S^1$-structure going from the positive cochain complex to the zero cochain complex.
We then have
$$\eta_{S^1}\Big(\sum_{i=0}^m x_iu^{-i}\Big)=\sum_{i=0}^m \langle \delta^i_{+,0}(x_i), 1 \rangle.$$
Therefore the fact that $\eta_{S^1}((k-1)!\check{\gamma}_0^k )=k!$ follows from the fact that $\#\cM_{\check{\gamma}^k_0,q}=k$ \cite[Proposition 3.1 Step 3]{Zho} for a point $q$ in the boundary.
(Note that, by \cite[Remark 2.13]{Zho}\footnote{
It is explained in \cite[Remark 2.13]{Zho} that $\cM_{\check{\gamma}^k_0,q}$ is contained in the symplectization after neck-stretching in a pure SFT setup. 
What was not noticed there is that, with the autonomous Hamiltonian setup in \cite{Zho}, the contact energy (used before \cite[Propositon 2.11]{Zho}) is again non-negative and is zero iff the curve is contained in the image of a Reeb trajectory times $\RR$. 
Therefore the argument in \cite[Remark 2.13]{Zho} holds for the setup with autonomous Hamiltonians that only depends on $r$. 
}, 
such property requires only $n\ge 2$, and not necessarily $k<n$.) 
We now need to argue that $\langle \delta^{m}_{+,0}(\check{\gamma}^l_0), 1 \rangle=0$ for $m\ge 1$ and $l<k$. 

For this, we apply neck-stretching to the relevant moduli spaces. Ssince $\check{\gamma}^l_0$ is not contractible in the boundary, 
by the same argument of \cite[Proposition 3.1 Step 4]{Zho}, in the limit case we get a SFT building whose top level has negative punctures asymptotic to orbits $\{\gamma^{i_1}_0,\ldots,\gamma^{i_s}_0\}$ with $\sum_j i_j =l$.
Then, by the same argument of \cite[Remark 2.13]{Zho}, such curve can be seen to have zero contact energy, hence it is contained in a trivial cylinder. 
Therefore, by choosing a generic $q$, one can prove that $\langle \delta^{m}_{+,0}(\check{\gamma}^l_0), 1\rangle=0$ for $m\ge 1$ and $l<k$, thus concluding the proof.
\end{proof}

\begin{remark}
It is plausible to believe that \eqref{G1} of \Cref{thm:order} can be generalized to $\CC^n/G$ for $n\ge 2$ to obtain that $|\stab_p|$ divides $|G|!$ for any point $p$ of an exact orbifold filling of $(S^{2n-1}/G,\xi_{\std})$.  
We also point out that the conclusion  ``$|\stab_p|$ divides $|G|!$'' cannot in general be replaced with the simpler ``$|\stab_p|$ divides $|G|$''. 
Indeed, consider the link of a surface $A_n-$singularity, i.e.\ the link of $x^{n+1}+y^2+z^2=0$ or the link of $\CC^2/(\ZZ/(n+1)\ZZ)$, where the generator acts by $\diag(e^{\frac{2\pi \i}{n+1}},e^{\frac{-2\pi \i}{n+1}})$. 
Note that there is an exact cobordism from the link of the $A_n-$singularity to the link of the $A_{n+1}-$singularity. 
Therefore the link of the surface $A_{n}-$singularity can have exact orbifold fillings with singularities with isotropy any of the cyclic groups  $\ZZ/2\ZZ,\ldots, \ZZ/(n+1)\ZZ$. 
\end{remark}

\subsection{Non-existence of cobordisms.}
\label{sec:non_exist_cobord}
The collection of exact cobordisms puts a natural partial order on the collection of contact manifolds. 
More precisely, following the notion in \cite{RSFT}, we have a poset $\overline{\con}_{\le}$, which consists of contact manifolds up to exact cobordisms (i.e.\ $Y_1,Y_2$ are equivalent if there are exact cobordisms between $Y_1,Y_2$ in both directions). 
We define $[Y_1]\le [Y_2]$ iff there is an exact cobordism from $Y_1$ to $Y_2$. It is clear that $\overline{\con}_{\le}$ is not totally ordered, as $\emptyset$ and any overtwisted contact manifolds are not comparable by well-known obstructions to fillability for overtwisted structures (and the fact that there are no exact symplectic caps). 
However, if ignore the limit case of the empty manifold, i.e.\ consider $\overline{\con}^{\ne \emptyset}_{\le}$ in the notation of \cite{RSFT}, it is no longer clear that we have pairs of contact manifolds without exact cobordisms in neither direction. 

In dimension $3$, an example of such a pair was explained to us by Chris Wendl, and uses several rather deep results on contact $3$-folds, as follows. 
Let $(Y,\eta_0)$ be the not exactly fillable contact manifold found by Ghiggini \cite{ghiggini2005strongly}.
In particular, there is no exact cobordism from $(S^3,\xi_{\std})$ to $(Y,\eta_0)$, and we want to argue that there is no exact cobordism in the opposite direction either.
As observed in \cite{bowden2012exactly}, $Y$ admits a Liouville pair $(\eta_0,\eta)$ \cite[Definition 1]{massot2013weak}, so in particular there is a connected exact filling for $(Y,\eta_0)\sqcup(-Y,\eta)$.  
If there is an exact cobordism from $(Y,\eta_0)$ to $(S^3,\xi_{\std})$, then there is a connected exact filling of $(S^3,\xi_{\std})\sqcup (-Y,\eta)$, contradicting that $(S^3,\xi_{\std})$ is not co-fillable \cite{etnyre2004planar}. 

In higher dimensions, following the same idea above, we may obstruct exact cobordisms in one direction by considering one exactly fillable contact manifold such as $(S^{2n-1},\xi_{\std})$ and one of the contact manifolds without exact fillings from \cite{Zho}, or without strong fillings from \cite{bougeois}. 
For the latter case, Moreno and the second author explored the possibility of obstructing exact cobordisms in the other direction using the hierarchy functor in \cite{RSFT}, and gave obstructions to exact cobordisms with certain topological property (see \cite{RSFT} for details). 
Using Theorem \ref{thm:sphere_have_smooth_fillings}, we can obtain such pairs in higher dimensions without any topological constraint.

\begin{proof}[Proof of \Cref{cor:pair_without_exact_cobord}]
Let $V$ be any Liouville domain of dimension $2n-2$ for $n\ge 3$. Since $(S^{2n-1}/(\ZZ/k\ZZ),\xi_{\std})$ has an exact orbifold filling $\CC^n/(\ZZ/k\ZZ)$ with a singularity, then Theorem \ref{thm:sphere_have_smooth_fillings} implies that there is no exact cobordism from  $(S^{2n-1}/(\ZZ/k\ZZ),\xi_{\std})$ to $Y=\partial(V\times \DD)$. On the other hand, by \cite{Zho}, $(S^{2n-1}/(\ZZ/2\ZZ),\xi_{\std})$ has no exact filling when $n\ne 2^k$ and $(S^{2n-1}/(\ZZ/3\ZZ),\xi_{\std})$ has no exact filling when $n=2^k$ for $k\ge 2$. Then $(S^{2n-1}/(\ZZ/2\ZZ),\xi_{\std}),Y$ is the desired pair if $n\ne 2^k$ and $(S^{2n-1}/(\ZZ/3\ZZ),\xi_{\std}),Y$ is the desired pair if $n= 2^k$ for $k\ge 2$. 
\end{proof}
As a corollary, $\overline{\con}^{\ne \emptyset}_{\le}$ is not a totally ordered set for any dimension $>1$. 

\begin{proof}[Proof of \Cref{cor:no_orbifold_cobordism}]
By \Cref{cor:order_dilation}, there is no exact orbifold cobordism from $(\mathbb{RP}^5,\xi_{\std})$ to $ST^*S^3$. 

On the other hand, as used in the proof of \Cref{thm:order}, the map $\eta$ has nontrivial image for any exact orbifold filling of $(\mathbb{RP}^5,\xi_{\std})$. 
Now, assume that there is an exact orbifold cobordism from $ST^*S^3$ to $\mathbb{RP}^5$. 
Then, there is an exact orbifold filling of $\mathbb{RP}^5$ containing $T^*S^3$ as an exact subdomain. Since $\eta$ has nontrivial image for any exact orbifold filling of $(\mathbb{RP}^5,\xi_{\std})$, the Viterbo transfer map in \Cref{thm:property} implies that $\eta$ is nontrivial for $T^*S^3$, which contradicts that $SH^*_+(T^*S^3)\to H^{*+1}(T^*S^3)$ is zero.

For $n\ge 4$, let now $V$ be a Liouville domain of dimension $2n-6$ with $SH^*(V)\ne 0$ and $c_1(V)=0$.
Then, there are no exact orbifold cobordisms between $\partial(T^*S^3\times V)$ and $(\mathbb{RP}^{2n-1},\xi_{\std})$ in either direction by the same argument using \Cref{cor:order_dilation}.
\end{proof}

\subsection{Uniqueness of singularity.}

\begin{proof}[Proof of \Cref{thm:unique}]
In view of \cite[Theorem 1.1]{Zho} and \Cref{thm:order}, it suffices to prove that an exact orbifold filling $W$ cannot have (at least) two singularities modeled on $\CC^n/(\ZZ/2\ZZ)$. 
Assuming otherwise, there are two singularities modeled on $\CC^n/(\ZZ/2\ZZ)$, which we denote by  $p_1,p_2$.
Following the convention in \cite{Zho}, we pick a Hamiltonian $H$ of slope $<2+2\epsilon$\footnote{This is different from the convention in \S \ref{S4} by a factor of $\pi$.} so that the Reeb orbit with maximal period observed by $X_H$ is $\gamma_0^2$.  
This $H$ is of type (I) but also autonomous. 
Then, by \cite[Proposition 2.15]{Zho} there exists a rational number $a$ such that $\check{\gamma}^2_0+a\check{\gamma}^1_1$ 
is closed in $SH^*_+(W)$ and satisfies $\eta(\check{\gamma}^2_0+a\check{\gamma}^1_1)=2$.

Let now $x\in \{\check{\gamma}^2_0,\check{\gamma}^1_1\}$, $y\in \{(p_1,(\tau)),(p_2,(\tau))\}$  where $\tau\ne \Id \in \ZZ/2\ZZ$, and $z$ be a non-constant orbit.
Denote then by $M_{x,y,z}$ the moduli space of
$$u:\RR\times S^1\setminus{(0,0)} \to \widehat{W},\quad \partial_s u+J_t(\partial_tu-X_H)=0,$$
subject to the following conditions:
\begin{enumerate}
    \item $\int |\partial_su|^2<\infty$;
    \item $\displaystyle\lim_{s\to \infty} u=x$;
    \item $\displaystyle\lim_{s\to -\infty} u=z$;
    \item $\displaystyle\lim_{(s,t)\to (0,0)}u=y$.
\end{enumerate}
(Strictly speaking we need to involve cascades as we are using an autonomous Hamiltonian, but we omit such setup for simplicity of notation.)

Let now $\cM_{x,y,z}$ denote the compactification of $M_{x,y,z}$. As our Hamiltonian is type (I) which is not strictly Morse-Bott, the only potential problem in the compactification is when the cylindrical end of $x$ develops a breaking involving a constant orbit. 
However, this implies that the remaining part has negative energy due to our choices of $y$ and $z$, hence such situation cannot in fact happen. 
Moreover, there is no constant curve concentrated in a singularity in the compactification $\cM_{x,y,z}$, since we assume that $z$ is non-constant. 
Therefore, there is no transversality issue for $\cM_{x,y,z}$. 

Let now $C_+(H),C_0(H)$ denote the positive/zero Hamiltonian-Floer cochain complex associated to $H$. 
We define $\phi:\langle (p_1,(\tau)),(p_2,(\tau)) \rangle_R \to C_+(H)$ by the unique $R-$linear map defined on the generators as follows, which has degree $-1$,
%$$\phi(x)=\sum_y \#\cM_{\check{\gamma}^2_0,x,y} \cdot y+a\#\cM_{\check{\gamma}^1_0,x,y}\cdot y,$$
$$\phi(y)=\sum_z \#\cM_{\check{\gamma}^2_0,y,z} \cdot z+a\#\cM_{\check{\gamma}^1_0,y,z}\cdot z.$$
Let also $\psi$ denote the map $C_+(H)\to C_0(H)\to \langle (p_1,(\tau)),(p_2,(\tau))\rangle$ (which has degree $1$ to be more precise), where the last map is just the natural projection map. 
Then, we claim that $\psi\circ \phi$ is $2\Id$. 

To see this, we consider the compactified moduli spaces $\cM_{x,y,z}$ where $y,z$ are both in the set $\{(p_1,(\tau)),(p_2,(\tau))\}$. 
Again, there is no problem with the compactification.
Indeed, if the cylindrical end of $x$ develops a breaking ``asymptotic" to a constant orbit (in the sense that the limit set is contained in $W$), then the remaining part has zero energy, hence it must be constant. 
Therefore, we must have $y=z$ and the breaking at $x$ must be in fact asymptotic to $(p_1,(\Id))$ or $(p_2,(\Id))$. 
Moreover, the only constant curve concentrated in a singularity that can appear in $\cM_{x,y,z}$ is $(p_1,(\Id,\tau,\tau))$ or $(p_2,(\Id,\tau,\tau))$, and both are cut-out transversely.
Hence there is no transversality issue for $\cM_{x,y,z}$. 
Then, by the argument in \Cref{prop:restriction}, \cite[Proposition 2.12, 2.14]{Zho} implies that $\#\cM_{\check{\gamma}^2_0,(p_*,(\Id))}=1$ and  $\#\cM_{\check{\gamma}^1_1,(p_*,(\Id))}=0$ for $*=1,2$. By looking at the boundary of those $1$-dimensional $\cM_{x,y,z}$, we hence get $\psi\circ \phi=2\Id$.

Finally, note that $\hat{\gamma}^2_0,\check{\gamma}^2_0$ are not in the image of $\phi$, 
for otherwise the Floer cylinder in the definition of $\phi$ would either be a constant cylinder or have negative energy, and in neither case it can be asymptotic to $(p_*,(\tau))$ at $(0,0)$. 
On the other hand, if $\hat{\gamma}^2_0,\check{\gamma}^2_0$ are not in the image of $\phi$, by \cite[Proposition 2.9]{Zho}, we have that $\mathrm{im} (\psi\circ \phi)$ has at most rank $1$ (contributed by $\check{\gamma}^1_0$), which is in contradiction with the fact that $\psi\circ \phi=2\Id$.
This concludes the proof by contradiction of the fact that the exact filling $W$ does not have two singularities modeled on $\CC^n/(\ZZ/2\ZZ)$.
\end{proof}

\begin{remark}
The proof of \Cref{thm:unique} is motivated from the use of the ring structure in \cite{Zho}, or the null-homotopy in the proof of \Cref{thm:quotient}. 
Indeed, $\phi$ is the null-homotopy restricted to $\langle (p_1,(\tau)),(p_2,(\tau)) \rangle$.
However, without the homotopy class information of orbits in \Cref{thm:quotient}, the general construction of the null-homotopy will run into constant curves like $(p_*,(\tau,\tau,\Id))$, which is not cut transversely. 
Moreover, if one tries to generalize the argument to lens spaces considered in \cite[Theorem 1.3]{Zho} by looking at the analogue of $\psi\circ \phi$ from $\langle (p_1,(\tau)), \ldots, (p_1,(\tau^{p-1})), (p_2,(\tau)), \ldots (p_2,(\tau^{p-1})) \rangle$ to itself, where $\tau$ is generator of $\ZZ/p\ZZ$ for an odd prime $p$, one runs into constant curves like $(p_*,(\tau^k,\tau^m,\tau^{k+m}))$, which might not be cut-out transversely, depending on $k,m$ and the $\ZZ/p\ZZ$ action on $\CC^n$.
\end{remark}

\begin{remark}
If we assume the ring structure is defined for symplectic cohomology with $\QQ$-coefficient, then, by \cite[Proposition 3.1 Step 8]{Zho} and \Cref{thm:order}, any exact orbifold filling of lens spaces considered in \cite[Theorem 1.3]{Zho} has exactly one singularity modeled on $\CC^n/(\ZZ/p\ZZ)$ for some $\ZZ/p\ZZ$ action on $\CC^n$.
\end{remark}
	\bibliographystyle{alpha}
	\bibliography{biblio}
	\Addresses

\end{document}